\documentclass[10pt]{book}
\usepackage{amsmath}
\usepackage{amsthm}
\usepackage{amsfonts}
\usepackage{graphicx}
\usepackage{slashed}
\usepackage{mathrsfs}
\usepackage[all]{xy}
\usepackage{enumerate}
 \fontencoding{T1}
  \fontfamily{garamond}
  \fontseries{m}
  \fontshape{it}
  \fontsize{12}{15}
  \selectfont

\newcommand{\Nat}{\mathbb{N}} 
\newcommand{\Z}{\mathbb{Z}}

\newcommand{\R}{\mathbb{R}}
\newcommand{\C}{\mathbb{C}}

\newcommand{\M}{\mathscr{M}}
\newcommand{\A}{\mathscr{A}}
\newcommand{\B}{\mathscr{B}}
\newcommand{\G}{\mathscr{G}}

\newcommand{\F}{\mathscr{F}} 

\newcommand{\E}{\mathscr{E}}
\newcommand{\W}{\mathscr{W}}
\newcommand{\V}{\mathscr{V}}

\newcommand{\ima}{\mathrm{Im }}

\newcommand{\sD}{\slashed D}

\newcommand{\ti}{\times}

\newcommand{\ra}{\rightarrow}

\newtheorem{thh}{Theorem}[section]
\newtheorem{re}[thh]{Remark}
\newtheorem{de}[thh]{Definition}
\newtheorem{prop}[thh]{Proposition}

\newtheorem{lemma}[thh]{Lemma}
\newtheorem{coro}[thh]{Corollary}

%
%

%

\begin{document}
\titlepage
\selectfont{}
\vspace*{-2cm}

\begin{flushleft}
	\vspace{0.2cm}
	\LARGE AIX-MARSEILLE UNIVERSIT\'E\\
%
	\vspace{0.2cm}
	\Large ECOLE DOCTORALE EN MATH\'EMATIQUES ET INFORMATIQUE DE MARSEILLE - ED 184\\
	\vspace{0.2cm}
	\normalsize FACULT\'E DE SCIENCES ST. CHARLES \\
	\vspace{0.2cm}
	INSTITUT DE MATH\'EMATIQUES DE MARSEILLE, UMR 7373\\
    \begin{center}
		\vspace{1cm}
		\textbf{TH\`ESE DE DOCTORAT}\\
    \end{center}
	\vspace{0.5cm}
    Discipline: \textbf{Math\'ematiques}\\
    Sp\'ecialit\'e: \textbf{G\'eometrie}\\
    \begin{center}
        \vspace{0.5cm}
        \textbf{Diogo VELOSO}\\
        \vspace{0.8cm}
        \Large \bf{Seiberg-Witten theory on 4-manifolds with periodic ends}\\
    \end{center}
	\vspace{ 0.8cm}
    \normalsize Soutenue le 19 d\'ecembre 2014\\

\vspace{0.3cm}
  
    Rapporteurs:

\vspace{0.3cm}

\begin{tabular}{ ll  }
    	\textbf{ROLLIN, Yann} & Universit\'e de Nantes \\
    	\textbf{SAVELIEV, Nikolai} & University of Miami\\
\end{tabular}

\vspace{0.3cm}
Membres du jury :
\vspace{0.3cm}

\begin{tabular}{ ll  }
    	\textbf{ROLLIN, Yann} & Universit\'e de Nantes \\
       \textbf{DRUET, Olivier} & Universit\'e Lyon 1 \\
    \textbf{KOLEV, Boris} & Universit\'e d'Aix-Marseille \\      
       \textbf{TOMA, Matei} & Universit\'e de Lorraine  \\   
     \textbf{TELEMAN, Andrei} & Universit\'e d'Aix-Marseille  \\
\end{tabular}

\vspace{0.3cm}
Directeur de th\`ese
\vspace{0.3cm}

\begin{tabular}{ ll  } 
     \textbf{TELEMAN, Andrei} & Universit\'e d'Aix-Marseille  \\
\end{tabular}
\end{flushleft}

\usefont{T1}{bch}{m}{n}\selectfont{}


\chapter*{Abstract}

In this thesis we prove fundamental analytic results  dedicated to  a new version of  Seiberg-Witten theory: the cohomotopical Seiberg-Witten theory  for a Riemannian, $\mathrm{Spin}^{c}(4)$ 4-manifold with periodic ends,  $(X,g,\tau)$  . Our results show that, under certain technical assumptions on $(X,g,\tau)$,  this new version is coherent and leads to Seiberg-Witten type invariants  for this new class of 4-manifolds.

\vspace{1 pc}
First,  using Taubes Fredholmness criteria for end-periodic operators on manifolds with periodic ends, we show that, for a Riemannian 4-manifold with periodic ends $(X,g)$, verifying certain topological  conditions, the Laplacian $\Delta_+:L^{2}_{2}(\Lambda^{2}_{+})\ra L^{2}(\Lambda^{2}_{+})$ is a Fredholm operator.  This  allows us to prove an important Hodge type decomposition for positively weighted Sobolev 1-forms on $X$. 

Next we prove, assuming non-negative scalar curvature on  each end and certain technical topological conditions,  that the associated Dirac operator associated with an end-periodic connection (which is ASD at infinity) is Fredholm.

In the second part of the thesis we establish an isomorphism between between the de Rham cohomology group, $H^{1}_{dR}(X,i\R)$ (which is a topological invariant of $X$) and the harmonic group intervening in the above Hodge type decomposition of the space of positively weighted Sobolev 1-forms on $X$.  We also prove two important short exact sequences relating the gauge group of our Seiberg-Witten moduli problem and the cohomology group $H^{1}(X,2\pi i\Z)$.

In the third part, following ideas of Kronheimer-Mrowka in the compact case, we prove our main results: the coercivity of the Seiberg-Witten map and compactness of the moduli space for a  4-manifold with periodic ends $(X,g,\tau)$ verifying  the above conditions.

Finally, using our coercitivity  property, we show (using the formalism developed by Okonek-Teleman) that a Seiberg-Witten type  cohomotopy invariant associated to $(X,g,\tau)$ can be defined.  Explicit computations and general properties of this new invariant  (which require radically different techniques)  will be considered in future articles.

\vspace{0.5cm}
Keywords : 4-manifold, Seiberg-Witten theory, Dirac operator, Fredholm operator, cohomotopic invariant.

\vspace{1 pc}
AMS math classification: 47A53, 53C07, 53C27, 57R57, 58D27

\chapter*{Resum\'e}

Dans cette th\`ese on prouve des r\'esultats analytiques fondamentaux sur une nouvelle version de la th\'eorie de Seiberg-Witten: la th\'eorie cohomotopique  de Seiberg-Witten pour des 4-vari\'etes Riemanniennes  $\mathrm{Spin}^{c}(4)$ a bouts p\'eriodiques, $(X,g,\tau)$. Nos r\'esultats montrent, que sur certaines conditions techniques en $(X,g,\tau)$, cette nouvelle version est coh\'erente et m\`ene a des invariants de Seiberg-Witten pour cette classe de vari\'etes.

\vspace{1 pc}
Premi\`erement, en utilisant le crit\`ere de Fredholmit\'e de Taubes pour des operateurs p\'eriodiques dans des vari\'etes a bouts p\'eriodiques, on montre que pour une 4-variet\'e Riemmanienne a bouts p\'eriodiques $(X,g)$ v\'erifiant certaines conditions topologiques, le Laplacian $\Delta_+:L^{2}_{2}(\Lambda^{2}_{+})\ra L^{2}(\Lambda^{2}_{+})$ est un op\'erateur de Fredholm.  Cela nous permet de prouver une importante d\'ecomposition de type Hodge pour des 1-formes de Sobolev de $X$, a poids positif. 

Ensuite on prouve, en assumant certaines conditions topologiques et courbure scalaire non-negative sur les bouts, que l'op\'erateur de Dirac associ\'e a une connection p\'eriodique  (ASD a l'infini) est Fredholm.

Dans la deuxi\`eme partie de la th\`ese on d\'emontre un isomorphisme entre le groupe de cohomologie de de Rham $H^{1}_{dR}(X,i\R)$,  et le groupe harmonique intervenant dans la decomposition de Hodge des 1-formes Sobolev de $X$ a poids positif.  On prouve aussi l'existence de deux s\'equences exactes courtes liant le groupe de jauge de l'espace de modules de Seiberg-Witten  et le groupe de cohomologie $H^{1}(X,2\pi i\Z)$.

Dans la troisi\`eme partie, en utilisant des id\'ees de Kronheimer-Mrowka pour le cas compact, on prouve les principaux r\'esultats: la coercitivit\'e de l'application de Seiberg-Witten et la compacit\'e de l'espace de moduli pour une 4-variet\'e a bouts p\'eriodiques $(X,g,\tau)$, v\'erifiant les conditions mentionn\'ees plus haut.

Finalment, utilisant la coercivit\'e, on montre employant le formalisme de Okonek-Teleman, l'existence d'un invariant cohomotopique de type Seiberg-Witten type associ\'e a $(X,g,\tau)$.  Calculs explicits et propriet\'es de ce nouveau invariant (qui exigent techniques radicalement diff\' erents)  s\'eront consider\'es dans futurs articles.

\vspace{0.5cm}
Mots cl\'es: 4-variet\'e, th\'eorie de Seiberg-Witten, op\'erateur de Dirac, op\'erateur de Fredholm, invariant cohomotopique.

\chapter*{Remerciements}

Je tiens a remercier d'abord a mon directeur de th\`ese, Andrei Teleman, pour son support, sa patience et sa pers\'ev\'erance, sans lui cette th\`ese n'aurait pas \'et\'e possible.

Ma gratitude a Yann Rollin et a Nikolai Saveliev, pour l'honneur qui me conc\`edent en acceptant d'\^etre rapporteurs de cette th\`ese.

Je remercie aussi a Olivier Druet, Boris Kolev et a Matei Toma qui ont accept\'e de faire parte du jury.

Je remercie a tout le personnel du CMI et de l'I2M (ancien LATP) pour m'avoir accueilli et permis un bon d\'eroulement de ma th\`ese.. En sp\'ecial  aux membres du groupe AGT (Analyse G\'eometrie et Topologie) et a tout le personnelle administratif qui permet que la machine marcher en particulier a Marie Christine Tort et a Nelly Samut.

Je remercie a mes coll\`egues doctorants, en particulier a Saurabh, Rima, Slah, Chady, Laurent, Kaidi, Nhan, Arash, Benjamin... et tous les autres!

Finalement je veux exprimer ma gratitude a tous ceux qui me sont proches, en particulier a ma famille, et qui m'ont soutenu dans tous les moments de cette th\`ese.

\tableofcontents

\chapter{Introduction}\label{intro}

 \section{The Seiberg-Witten equations}
 
 \def\Herm{\mathrm{Herm}}
 
 The Seiberg-Witten equations have been introduced in the seminal article \cite{W}, and found rapidly spectacular applications in  4-dimensional differential topology. Compared with  the ASD equations used in Donaldson theory \cite{DK}, the Seiberg Witten equations have an Abelian symmetry group, and yield compact moduli spaces. Therefore, one of the hardest difficulties in Donaldson theory (the construction of a natural compactification of the ASD moduli spaces) does not occur at all in Seiberg-Witten theory. We recall briefly the
Seiberg-Witten  equations and the  definition of the Seiberg-Witten moduli spaces. 
 
Let $(X,g)$ be an oriented Riemannian 4-manifold, and denote by $P_g$ the  $\mathrm{SO}(4)$-bundle of orthonormal frames of $X$ which are compatible with the orientation. Endow  $(X,g)$  with a  $Spin^{c}(4)$ structure $\tau:Q\to P_g$, and denote by $\det(Q)$, $\Sigma^\pm$ the determinant line bundle, respectively the spinor bundles of $\tau$. The Seiberg-Witten map associated with $(X,g,\tau)$ is the map 
$$SW:\A(\det Q)\ti \mathscr{C}^{\infty}(X,\Sigma^+)\ra \mathscr{C}^{\infty}(X,\Sigma^-)\ti  \mathscr{C}^{\infty}(\Herm_0(\Sigma^+))$$
given by
\begin{equation}\label{SWMAP}
  SW(A,\varphi)=(\sD_A\varphi, \Gamma(F_{A}^+)-(\varphi\otimes\varphi)_0),
  \end{equation}
  where $\Gamma:i\Lambda^2_+\to \Herm_0(\Sigma^+)$  is the bundle isomorphism induced by $\tau$. For a self-dual form $\eta\in   \mathscr{C}^{\infty}(i\Lambda^2_+)$ we define the $\eta$-perturbed  Seiberg-Witten map by
  
  \begin{equation}\label{SWETA}
  SW_\eta(A,\varphi)=(\sD_A\varphi, \Gamma(F_{A}^++\eta)-(\varphi\otimes\varphi)_0).
  \end{equation}
  
 The map $SW_\eta$ is equivariant with respect to the natural actions of the gauge group $\G:=\mathscr{C}^{\infty}(X,S^1)$   on the spaces  
 $$\A:=\A(\det Q)\ti \mathscr{C}^{\infty}(X,\Sigma^+),\ \mathscr{C}^{\infty}(X,\Sigma^-)\ti  \mathscr{C}^{\infty}(\Herm_0(\Sigma^+)).$$
Denote by $\A^*$ the open subspace of  $\A$ consisting of pairs with non-trivial spinor component, and by $\B$,  $\B^*$ the quotients of $\A$,   respectively $\A^*$  by  the gauge group $\G$. After suitable Sobolev completions, the projection $\A^*\to \B^*$ becomes a principal $\G$-bundle. The ($\eta$-perturbed) Seiberg-Witten moduli space associated with $(X,g,\tau)$ is the quotient
$$\M:=SW^{-1}(0)/\G\subset \B, \M_\eta:=SW^{-1}_\eta(0)/\G\subset \B.
$$
One of the fundamental results in classical Seiberg-Witten theory is the following compactness theorem:
\begin{thh} For any perturbation form $\eta\in\mathscr{C}^{\infty}(i\Lambda^+)$ the moduli space  $\M_\eta$ is compact.
\end{thh}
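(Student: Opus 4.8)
The plan is to establish the \emph{a priori} $C^0$-bound on the spinor component, then bootstrap to full $C^\infty$-control and invoke Arzel\`a-Ascoli together with the quotient topology. Throughout, since $X$ is compact, we may work with fixed Sobolev completions $L^2_k$ for $k$ large and the moduli space $\M_\eta$ will be the zero locus of the induced map modulo the ($L^2_{k+1}$) gauge group.

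\medskip

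\textbf{Step 1: The pointwise spinor estimate.} The key input is the Weitzenb\"ock formula for the Dirac operator,
\begin{equation*}
\sD_A^*\sD_A\varphi=\nabla_A^*\nabla_A\varphi+\tfrac{s}{4}\varphi+\tfrac12\Gamma(F_A^+)\varphi,
\end{equation*}
where $s$ is the scalar curvature of $g$. On a solution of $SW_\eta=0$ one has $\sD_A\varphi=0$ and $\Gamma(F_A^+)=(\varphi\otimes\varphi)_0-\Gamma(\eta)$. Feeding the second equation into the Weitzenb\"ock identity and pairing with $\varphi$, one estimates $\tfrac12\Delta|\varphi|^2+|\nabla_A\varphi|^2$ from above; the quadratic term $\langle(\varphi\otimes\varphi)_0\varphi,\varphi\rangle=\tfrac14|\varphi|^4$ has a favorable sign, and the remaining terms are controlled by $(\sup|s|+C\sup|\eta|)|\varphi|^2$. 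At an interior maximum of $|\varphi|^2$ the Laplacian term is $\geq 0$, whence $|\varphi|^2\leq \sup|s_-| + C\sup|\eta|$ pointwise on $X$. (Here I use that $X$ is compact so such a maximum is attained; this is exactly the step that breaks down for periodic ends and must be replaced by the coercivity statement proved later in the thesis.)

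\medskip

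\textbf{Step 2: Curvature bound and bootstrapping.} Once $\|\varphi\|_{C^0}$ is bounded, the second Seiberg-Witten equation gives a uniform $C^0$-bound on $F_A^+$, hence on $F_A^+$ in every $L^p$. Combined with the topological constraint that $[F_A]$ represents a fixed cohomology class (so $F_A^-$ is controlled by $F_A^+$ via elliptic estimates for $d^+\oplus d^*$ modulo the fixed harmonic part), this bounds $\|F_A\|_{L^p}$ for all $p$. Working in a local Coulomb (or global) gauge, this bounds the connection in $L^2_1$; elliptic regularity applied alternately to the equation $\sD_A\varphi=0$ (bounding $\varphi$ in $L^2_2$) and to the curvature equation (bounding $A$ in $L^2_2$) produces, by the standard Seiberg-Witten bootstrap, uniform bounds on $(A,\varphi)$ in $L^2_k$ for every $k$ after gauge transformation.

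\medskip

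\textbf{Step 3: Extraction of a convergent subsequence.} Given a sequence $[\p_n]\in\M_\eta$, choose gauge representatives with the uniform $L^2_{k+1}$-bounds from Step 2. By Rellich the inclusion $L^2_{k+1}\hookrightarrow L^2_k$ is compact, so a subsequence converges strongly in $L^2_k$ to a limit $(A_\infty,\varphi_\infty)$, which still solves $SW_\eta=0$ and whose class lies in $\M_\eta$; passing to the quotient, $[\p_{n}]\to[A_\infty,\varphi_\infty]$ in $\B$. Hence every sequence in $\M_\eta$ has a convergent subsequence, i.e. $\M_\eta$ is sequentially compact, and since $\B$ is metrizable this gives compactness. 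The main obstacle is genuinely Step 1: securing the pointwise spinor bound — everything afterwards is the routine elliptic bootstrap of Seiberg-Witten theory, whereas the maximum-principle argument is precisely what fails on noncompact manifolds and motivates the coercivity results that occupy the later chapters of this thesis.
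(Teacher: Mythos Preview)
Your proposal is correct and follows precisely the approach the paper indicates: the paper does not give a detailed proof of this classical result but simply states that ``the proof uses the Weitzenb\"ock formula and the maximum principle to get first an a priori $\mathscr{C}^0$-bound of the spinor component, and then standard elliptic bootstrapping techniques,'' which is exactly your Steps~1--3. Your observation that the maximum-principle step is what fails on noncompact manifolds and motivates the coercivity results of the thesis is also on point.
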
 
The proof uses the Weitzenb\"ock formula and the maximum principle to get first an a priori $\mathscr{C}^0$-bound of the spinor component, and then standard elliptic bootstrapping techniques.\\
The  moduli space of  ($\eta$-perturbed)  {\it irreducible} monopoles is the open  subspace
 $$\M^*:=\M\cap \B^*, \ \M^*_\eta:=\M_\eta\cap \B^*
 $$
 of $\M$ (respectively $\M_\eta$).
Note that $\M^*$ ($\M^*_\eta$) can be identified with the zero locus of  the section  $sw$ ($sw_\eta$) induced by $SW$ ($SW_\eta$) in the bundle
 $$\A^*\times_\G (\mathscr{C}^\infty(\Sigma^-)\oplus \mathscr{C}^\infty(\Herm_0(\Sigma^+))
 $$
 over $\B^*$, which becomes Fredholm after suitable Sobolev completions.
 
Using the infinite dimensional version of Sard theorem, one can prove that for a "generic" perturbation form $\eta$, the   map $SW_\eta$ is submersive at any {\it irreducible} vanishing point, in particular the moduli space $\M^*_\eta$ is smooth and has ``the expected dimension"
$$w_\tau:=\frac{1}{4}\left[c_1(\det(Q))^2-2e(X)-
3\sigma(X)\right]=\mathrm{index}_\R(sw_\eta)
$$
 at any point.  Fixing an orientation of the  line  $\det(H^1(X,\R))\otimes \det(H^2_+(X,\R))^\vee$, one can define an orientation of the smooth manifold   $\M^*_\eta$ for any such   generic perturbation form.
 
 Moreover, one can prove that the space of perturbations $\eta$ for which  reducible solutions appear  in the moduli space ( i.e. for which $\M^*_\eta\ne \M_\eta$) has codimension $b_+(X)$ in the space of perturbations, hence its complement is connected when $b_+(X)\geq 2$. 
 
 The Seiberg-Witten invariants  are defined following the standard pattern  used in many gauge theories: evaluate a canonical cohomology class on the fundamental class of a moduli space. In our case, for  a 4-manifold with $b_+(X)\geq 2$ and a  cohomology class $c\in H^*( \B^*,\Z)$ we put
 $$SW(X,c):=\langle c,[\M_\eta]\rangle
 $$
 where $\eta$ has been chosen such that  $\M^*_\eta= \M_\eta$ and $SW_\eta$ is  submersive at any   vanishing point.  The definition can be adapted to the case $b_+(X)=1$, but in this case one obtains, for any class $c\in H^*( \B^*,\Z)$,  two invariants $SW^\pm(X,c)$ which are related by a wall crossing formula \cite{OT1}.

 \section{Generalizations of Seiberg-Witten equations}\label{intro2}

Soon after the birth of Seiberg-Witten theory, several authors have introduced interesting generalizations
of the theory on non-compact manifolds. The first contribution in this direction is due to Kronheimer-Mrowka \cite{KM2}, who studied the Seiberg-Witten equations on AFAK (asymptotically flat almost K\"ahler) 4-manifolds, proving an interesting result about the  finiteness of the set of homotopy classes
of semi-fillable contact structures on 3-manifolds. Similar versions of  the Seiberg-Witten equations on non-compact manifolds have been studied by Mrowka-Rollin \cite{MR} and  Biquard \cite{Bi}, who used these equations on manifolds with finite volume  conical ends  to prove an unicity theorem for a class of complete Einstein metrics  on a class of quotients of the complex hyperbolic space.

An important direction in the development of the Seiberg-Witten theory on non-compact manifolds concern the class of 4-manifolds with cylindric ends, and the relations between the 4-dimensional Seiberg-Witten invariants for such manifolds and the Seiberg-Witten  Floer invariants for 3-manifolds.  Fundamental contributions in these directions are the remarkable monographs of  Nicolaescu \cite{N}, Kronheimer-Mrowka \cite{KM} and Fr\o yshov \cite{F2}, \cite{F3}. These developments have been inspired by Yang-Mills Floer theory, whose main analytic tool is the theory of the ASD equation on 4-manifolds with cylindrical ends (see \cite{MMR}, \cite{T2}, \cite{D}).  \\

An important remark: in Donaldson theory we have not  only  an extension  of the theory of the ASD equation on manifolds with cylindrical ends  but also an interesting extension, due to Taubes, on manifolds with {\it periodic} ends,  \cite{Taubes}. Taubes introduced and studied the ASD equation on this class of non-compact manifolds, and used the resulting moduli spaces to prove a spectacular theorem concerning the cardinality of diffeomorphism classes of exotic $\R^4$'s. 

To our knowledge, up till now the Seiberg-Witten equations on 4-manifolds with {\it periodic} ends  have not been studied yet. This thesis can be viewed as an attempt to fill this gap. Note, however, that Ruberman-Saveliev \cite{RS1}, \cite{RS2}, and Mrowka-Ruberman-Saveliev \cite{MRS} studied and used end-periodic Dirac operators on 4-manifolds with periodic ends to define new Seiberg-Witten type invariants for closed 4-manifolds with $b_+=0$.\\

Recently, going in a different direction,    Furuta  and Bauer found an interesting and efficient  refinement of the classical Seiberg-Witten theory on {\it closed} 4-manifolds. This new theory originated in Furuta's idea to use ``finite dimensional" approximations of the Seiberg-Witten map (instead of the moduli spaces of its zeroes) to define invariants. The invariants obtained using the  ``stable homotopy class" defined by the system of finite dimensional approximations of the Seiberg-Witten map are called cohomotopy Seiberg-Witten invariants, or Bauer-Furuta invariants.  A new version of these invariants (which are better adapted for manifolds with $b_1>0$ and $b_+=1$) has been constructed later by Okonek-Teleman [OT2].
We recall briefly the formalism of  [OT2]:

\def\dR{\mathrm{dR}}

Fix a basis connection $A_0\in  \A(\det(Q))$  and denote by $\A_0(\det(Q))$ the affine subspace
$$\A_0(\det(Q)):=A_0+Z^1_{\dR}(X,i\R).
$$
of $\A(\det(Q))$. In other words, $\A_0(\det(Q))$ is the affine subspace of connections on $\det(Q)$ having the same curvature as $A_0$.  The space $\A(\det(Q))$ can be written as
$$\A(\det(Q))=\A_0(\det(Q))+d^*(\Omega^2_+(i\R)), 
$$
hence any connection $A\in \A(\det(Q))$ can be written in a unique way as $A'+ v$ with $A'\in \A_0(\det(Q))$ and $v\in  \V:=d^*(\Omega^2_+(i\R))$.

Denote by $\G_{x_0}\subset\G$ the kernel of the evaluation map $\mathrm{ev}_{x_0}: \G\to S^1$, and note that $\G_{x_0}$ acts freely on $\A(\det(Q))$ leaving  invariant its affine subspace 
 $\A_0(\det(Q))$. The quotient  $T:=\A_0(\det(Q))/\G_{x_0}$  can be identified with $H^1(X,i\R)/H^1(X,2\pi i\R)$, hence is a torus of dimension $b_1(X)$. The projection 
 $$\A_0(\det(Q))\to T
 $$
 can be regarded  as a principal $\G_{x_0}$-bundle. Letting $\G_{x_0}$ act on the spaces $\mathscr{C}^{\infty}(\Sigma^\pm)$ in the natural way, we obtain associated complex vector bundles
 $$\E:=\A_0(\det(Q))\times_{\G_{x_0}} \mathscr{C}^{\infty} (\Sigma^+),\ \F:=\A_0(\det(Q))\times_{\G_{x_0}} \mathscr{C}^{\infty}(\Sigma^-) $$
 on $T$. Put  now $\W:= \mathscr{C}^{\infty}(\Herm_0(\Sigma^+))\simeq \Omega^2_+(X,i\R)$. With these notations we see that the Seiberg-Witten map descends to an $S^1$-equivariant map over $T$ 
\begin{equation}\label{SWoverT}
\begin{array}{c}
\unitlength=1mm
\begin{picture}(20,12)(-5,-4)
\put(-6,4){$\V\times \E$}
\put(5,5){\vector(2,0){10}}
\put(16,4){$ \W\times \F$}
\put(7.5,6.5){$SW$}
\put(2,2){\vector(2, -3){5}}
\put(9,-8){$T$}
\put(18,2){\vector(-2, -3){5}}
\put(16,-8){.}
\end{picture} 
\end{array} 
\end{equation}
\vspace{2mm}

After suitable Sobolev completions (which make $\V$, $\W$ Hilbert spaces, and $\E$, $\F$ Hilbert bundles over $T$), the linearization of this map at the zero section can be written as $(d^+,\sD)$, where $d^+:\V\to \W$ is a linear embedding with cokernel $i\mathbb{H}^2_+$, and  $\sD$ is a family of complex Fredholm operators parameterized by $T$.  This $S^1$-equivariant map over $T$ is used in [OT2] to define the Seiberg-Witten cohomotopy invariants. The fundamental analytic property of $SW$ which allows the construction of these invariants is its  coercivity property:
\begin{thh}\label{CoercOT}  For every constant $c>0$  there exists $C_c\geq 0$ such that     the implication%
$$\|SW(v,e)\| \leq c\Rightarrow \|(v,e)\|\leq C_c 
$$
holds for pairs $(v,e)\in \V\times\E$. 
\end{thh}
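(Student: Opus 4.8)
The plan is to follow the classical Kronheimer--Mrowka argument, adapted to the compact manifold $X$ underlying this finite-dimensional-over-$T$ setup, and establish the estimate in two stages: first an $L^\infty$-bound on the spinor, then a bootstrap that controls $v$ and the full Sobolev norm of $e$. Concretely, suppose $\|SW(v,e)\|\le c$, which means $\|\sD_A\varphi\|\le c$ and $\|\Gamma(F_A^+)-(\varphi\otimes\varphi)_0\|\le c$ where $A=A_0+v'+v$ with $v'\in Z^1_{\dR}$ fixed by the torus coordinate and $\varphi=e$. The first step is the pointwise Weitzenb\"ock computation: apply $\sD_A^*\sD_A$ to $\varphi$ and use the Lichnerowicz--Weitzenb\"ock formula $\sD_A^*\sD_A=\nabla_A^*\nabla_A+\tfrac{s}{4}+\tfrac12\Gamma(F_A^+)$, then substitute $\Gamma(F_A^+)=(\varphi\otimes\varphi)_0+O(c)$ to obtain an inequality of the form $\Delta|\varphi|^2 + (\text{something}\cdot|\varphi|^2)|\varphi|^2 \le (\text{const involving }c)$ pointwise. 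Since $X$ is closed (the end-periodic structure is encoded in the bundle over $T$, not in a non-compact base), the maximum principle applies directly at a point where $|\varphi|^2$ is maximal, yielding $\|\varphi\|_{L^\infty}\le \kappa(c)$ for some constant depending only on $c$, the metric, and $\sup|s|$.

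The second step is to upgrade this $C^0$-bound on $e$ to a bound on $\|e\|$ in the relevant Sobolev norm and simultaneously to bound $\|v\|$. For the spinor: having $\|\varphi\|_{L^\infty}\le\kappa(c)$, the curvature equation gives $\|F_A^+\|_{L^\infty}\le \kappa(c)^2 + \tilde c$; since $F_A^+ = F_{A_0}^+ + d^+v'+d^+v$ and $d^+v'=0$ along the fibre (the $A_0$-part has fixed curvature up to the closed perturbation), this reads $\|d^+v\|_{L^\infty}\le$ const$(c)$. Because $v\in\V=d^*(\Omega^2_+(i\R))$, the operator $d^+\!\restriction_\V$ has closed range with cokernel $i\mathbb{H}^2_+$ (stated in the excerpt as part of the linearization), hence is injective with a bounded inverse on its image; elliptic estimates for $d^+\oplus d^*$ on closed $X$ then give $\|v\|_{L^2_k}\le C\|d^+v\|_{L^2_{k-1}}\le$ const$(c)$ for the chosen $k$. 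With $\|v\|$ now controlled, $\sD_A = \sD_{A_0+v'} + \rho(v)$ differs from the fixed family $\sD_{A_0+v'}$ by a zeroth-order Clifford-multiplication term whose operator norm is controlled by $\|v\|$; so $\|\sD_{A_0+v'}\varphi\| \le \|\sD_A\varphi\| + \|\rho(v)\varphi\| \le c + C\|v\|_{L^\infty}\|\varphi\|_{L^\infty} \le$ const$(c)$. Finally, elliptic estimates for the Dirac operator (using that $\sD_{A_0+v'}$ is a first-order elliptic operator on the closed manifold $X$, with a uniform estimate across the compact parameter torus $T$) bootstrap the $L^\infty$-bound plus the $L^2_0$-bound on $\sD_A\varphi$ into $\|\varphi\|_{L^2_k}\le$ const$(c)$. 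Combining, $\|(v,e)\|\le C_c$ as claimed.

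The main obstacle I anticipate is \emph{uniformity over the torus} $T$ and the consistent bookkeeping of which connection data is fixed versus varying. In the genuinely end-periodic Seiberg-Witten setting one would face non-compactness, failure of the naive maximum principle, and the need for Taubes-type Fredholmness inputs; but here the formalism of Okonek--Teleman has already pushed all of that into the Hilbert-bundle structure over the compact base $T$, so the remaining issue is purely that constants (the Weitzenb\"ock lower bound for $\sD^*\sD$, the elliptic constant for $d^+\oplus d^*$, the Dirac elliptic constant) must be taken uniformly as the basepoint moves over $T$. Since $T$ is compact and the family depends continuously (indeed smoothly) on the parameter, this is a compactness-of-parameter-space argument, but it must be stated carefully. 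A secondary subtlety is that $v'$ ranges over the closed forms representing a cohomology class: one must check that the $d^+\oplus d^*$ elliptic estimate and the Dirac perturbation bound are insensitive to this choice (they are, because $d^+v'=0$ and the gauge slice fixes $d^*v'$), so that the final constant $C_c$ genuinely depends only on $c$ and the fixed geometric data.
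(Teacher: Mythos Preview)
Your proof has a genuine gap in the first step. The maximum-principle argument for the $L^\infty$-bound on $\varphi$ works cleanly only when $\sD_A\varphi=0$: in that case the Weitzenb\"ock formula, combined with the curvature equation, yields a differential inequality of the form $\Delta|\varphi|^2 + \tfrac14|\varphi|^4 \le C|\varphi|^2$ at a maximum. Here, however, $\sD_A\varphi=\psi$ with $\psi$ merely Sobolev-bounded, and your Weitzenb\"ock computation produces the term $\langle\sD_A^*\psi,\varphi\rangle$. Since $\sD_A^*$ depends on $A=A_0+v'+v$, this term contains a contribution of size $|v|\cdot|\psi|\cdot|\varphi|$ at the maximum point, and $v$ is precisely the quantity you plan to bound in the second step \emph{using} $\|\varphi\|_{L^\infty}$. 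The argument is circular: neither bound can be established first. A secondary problem is that your pointwise substitution $\Gamma(F_A^+)=(\varphi\otimes\bar\varphi)_0+O(c)$ requires the error term to be $L^\infty$-bounded, not merely Hilbert-norm bounded, which already presupposes a specific high Sobolev regularity not asserted in the statement.

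Note that this theorem is stated in the paper only as background from [OT2]; no proof is given there. The paper's own contribution is the analogous coercivity on manifolds with periodic ends (Chapter~4), and the method used there---which is also the standard one in the closed case---avoids the maximum principle entirely. One uses the energy identity
\[
\|SW(A,\varphi)\|_{L^2}^2=\widetilde{\mathscr E}^{\mathrm{an}}(A,\varphi)-\widetilde{\mathscr E}_{A_0}
\]
to obtain an initial $L^2_1$-bound on $(v,\varphi)$ \emph{simultaneously}: the terms $\tfrac14\int|dv|^2$, $\int|\nabla_A\varphi|^2$ and $\tfrac14\int|\varphi|^4$ on the left are all controlled by the bounded right-hand side after absorbing cross-terms via Young's inequality. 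One then bootstraps to higher Sobolev norms using elliptic regularity for the Fredholm linearization $(d^+,\sD_{A_0})$. The point is that the energy identity is an integrated statement that breaks the circular dependence between $v$ and $\varphi$; the maximum principle, by contrast, is the right tool for the compactness of the moduli space (Theorem~1.0.1, where $SW=0$ exactly), which is why the paper mentions it only in that context.
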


In [OT2] the authors have also pointed out that the map (\ref{SWoverT}) over $T$ can be obtained in a simpler way: one can  replace $\A_0(\det(Q))$ by a finite dimensional affine  subspace of  $\A(\det(Q))$ and the gauge group $\G_{x_0}$ by a discrete group. Let $\mathbb{H}^1\subset Z^1_\dR(X,i\R)$  be the imaginary harmonic space of $X$ and
$$G:=\{\theta\in \G|\ \theta^{-1} d\theta\in \mathbb{H}^1\}=\{\theta\in \G|\  d^*(\theta^{-1} d\theta)=0\}.
$$
The map  $\theta\mapsto [\theta^{-1} d\theta]_{\dR}$ defines an epimorphism $p:G\to 2\pi i H^1(X,\Z)$, hence a short exact sequence
$$1\to S^1\to G\stackrel{p}{\longrightarrow} 2\pi i H^1(X,\Z)\to 0.
$$
It's easy to see that replacing $\A_0(\det(Q))$ by $A_0+ \mathbb{H}^1$ and $\G_{x_0}$ by the discrete group
$$G_{x_0}:=\ker[\mathrm{ev}_{x_0}: G\to S^1]\simeq  2\pi i H^1(X,\Z)$$
 in the construction $\E$, $\F$ and    (\ref{SWoverT}), one obtains  equivalent objects. This construction shows that the Hilbert bundles  $\E$, $\F$ come with a natural \textit{flat} connections.\\
 
 Our results will deal with the following natural
 \\
 \\
 {\bf Question:} {\it Can one generalize this construction to the framework of 4-manifolds with periodic ends?  Does the obtained bundle map $SW$ satisfy a similar coercivity condition?}\\
 
Note that, for a manifold  $X$ with periodic ends,   $H^1(X,\R)$, $H^1_{c}(X,\R)$ can be both infinite dimensional, so it is not clear at all what space will play the role of the harmonic space $\mathbb{H}^1$.

\section{The results}

\def\Eg{\mathfrak{E}}
\def\End{\mathrm{End}}

By definition (see section  \ref{periodicend} for details), a manifold with periodic ends has a finite set of ends which will be denoted by $\Eg$. For every $e\in \Eg$ we have an open submanifold $\End_e(X)\subset X$ representing the end $e$, which can be obtained as an infinite union $\cup_{i\in\Nat} W_{e,i}$, where $W_{e,i}$ are copies of the same connected, open manifold $W_e$ with two ends: a positive and a negative end. The union $\End_e(X)=\cup_{i\in\Nat} W_{e_i}$ is obtained by glueing the positive end of each $W_{e_i}$ to the negative end of $W_{e_{i+1}}$. Identifying the two ends of  $W_e$ one obtains a closed manifold  $Y_e$ with $b_1(Y_e)\geq 1$, which will be called the ``generalized torus" corresponding to the end $e$.  For every $e\in \Eg$ the open manifold $\End_e(X)$ comes with a natural map $p_e:\End_e(X)\to Y_e$  which identifies  $\End_e(X)$ with a ``half-cyclic cover" of $Y_e$ . 

Note that our conditions will use essentially the fixed periodic ends structure and are not intrinsically associated to the manifold $X$.

 A Riemannian manifold  with periodic ends is a Riemannian manifold $(X,g)$, where $X$ is  a manifold with periodic ends, and $g$ is end-periodic, i.e. for every $e\in \Eg$ the  restriction of $g|_{\End_e(X)}$ coincides with $(p_e)^*(g_e)$ for a metric $g_e$ on $Y_e$. A $Spin^c(4)$ Riemannian 4-manifold with periodic ends is a triple $(X,g,\tau)$, where $(X,g)$ is  a Riemannian 4-manifold  with periodic ends, and $\tau$ is an end periodic $Spin^c(4)$-structure on $(X,g)$, i.e., it  is endowed with    fixed isomorphisms $\tau|_{\End_e(X)}=(p_e)^*(\tau_e)$ for   $Spin^c(4)$-structures $\tau_e:Q_e\to P_{g_e}$ on the generalized tori  $Y_e$.

\begin{center}
\includegraphics[scale=0.5]{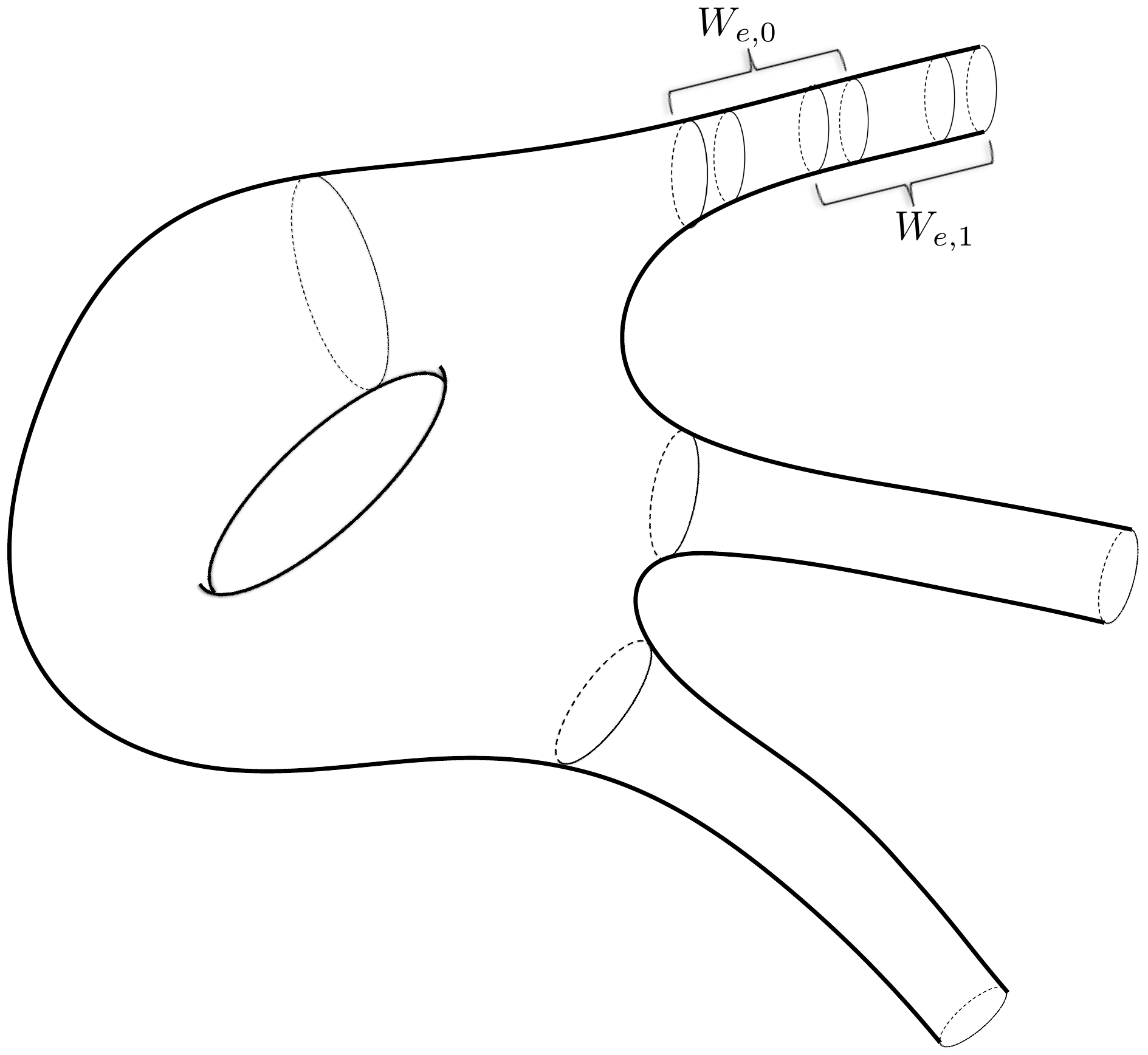}
\end{center}

Our main results concern $Spin^c(4)$ Riemannian 4-manifold $(X,g,\tau)$ with periodic ends satisfying the following conditions
\begin{enumerate}
\item  \label{TopoEnds} (Topological conditions on the periodic ends) For  every $e\in\Eg$ one has
\begin{enumerate}
\item  $b_+(Y_e)=0$,
\item $H_1(W_e,\Z)$ is torsion.
\end{enumerate}
\item  \label{TopoSpin} (Topological conditions on the $Spin^c(4)$-structure on the ends) For every $e\in \Eg$ one has
$$c_1(\det(Q_e))^2+b_2(Y_e)=0.
$$
\item  \label{sg} (Riemannian condition on the ends) For every $e\in \Eg$  the scalar curvature $s_{g_e}$ is non-negative on $Y_e$.
\end{enumerate}
If one assumes that $H_1(W_e,\Z)$ is finitely generated, then the condition $H_1(W_e,\Z)$ is torsion means simply $b_{1}(W_e)=0$. Note  that the condition $H_1(W_e,\Z)$ is torsion implies $b_1(Y_e)=1$. We will explain now our results, pointing out the role of each of these conditions  in our arguments. \\

We start by studying the ``Fredholmness"   of the relevant operators. The first result in this direction is
\begin{thh}  \label{FredDelta} Suppose that  $(X,g)$ is a Riemannian 4-manifold   with periodic ends such that $b_+(Y_e)=0$ and $H_1(W_e,\Z)$ is torsion for any $e\in\Eg$. Then  the operator
$$\Delta_+:L^{2}_{k+1}(i\Lambda^2_+)\to L^{2}_{k-1}(i\Lambda^2_+)
$$
is Fredholm. In particular, there exists $\epsilon >0$ such that for every $w\in (-\epsilon, \epsilon)$ the operator 
$$\Delta_+:L^{2,w}_{k+1}(i\Lambda^2_+)\to L^{2,w}_{k-1}(i\Lambda^2_+)
$$
is Fredholm and its kernel  $\mathbb{H}^2_+$  is independent of $w\in (-\epsilon, \epsilon)$.
\end{thh}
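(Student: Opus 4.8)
The plan is to derive the statement from Taubes's Fredholmness criterion for end-periodic operators recalled in Section~\ref{periodicend}. First I would observe that $\Delta_+$ is a second-order elliptic operator on $i\Lambda^2_+$ — it is the Laplacian in top degree of the elliptic ASD complex $\Omega^0\xrightarrow{d}\Omega^1\xrightarrow{d^+}\Omega^2_+$ — and that it is end-periodic: since the metric and the orientation are end-periodic, so is the bundle $\Lambda^2_+$, and the restriction of $\Delta_+$ to the end $\mathrm{End}_e(X)$ equals $p_e^*$ of the corresponding operator $\Delta_{+,e}$ on the generalized torus $Y_e$. Complexifying (which does not affect Fredholmness), Taubes's criterion then reduces the claim to showing that, for each $e\in\Eg$, the operator $\Delta_{+,z}$ obtained from $\Delta_{+,e}$ by twisting with the flat complex line bundle $L_z$ on $Y_e$ of holonomy $z$ along the deck generator of the cyclic cover defining the end is invertible for every $z$ with $|z|=1$.

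Next I would analyze this twisted operator. For $|z|=1$ the bundle $L_z$ is unitary, so $\Delta_{+,z}$ is a formally self-adjoint elliptic operator on the closed $4$-manifold $Y_e$, hence invertible exactly when its kernel is trivial; and since $*\omega=\omega$ on self-dual $2$-forms, a short computation identifies $\ker\Delta_{+,z}$ with the space $\mathcal H^2_+(Y_e,L_z)$ of $d_z$-closed $L_z$-valued self-dual $2$-forms, of complex dimension $b_+(Y_e,L_z)$. The Hermitian pairing $(\alpha,\beta)\mapsto\int_{Y_e}\alpha\wedge\bar\beta$ is positive on $\mathcal H^2_+(Y_e,L_z)$ and negative on $\mathcal H^2_-(Y_e,L_z)$, so its signature is $b_+(Y_e,L_z)-b_-(Y_e,L_z)$; by the Atiyah--Singer index theorem for the signature operator twisted by the flat bundle $L_z$ this signature equals $\mathrm{sign}(Y_e)$ (because $\mathrm{ch}(L_z)=1$ for a flat bundle). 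Combined with $b_+(Y_e,L_z)+b_-(Y_e,L_z)=b_2(Y_e,L_z)$ and the hypothesis $b_+(Y_e)=0$, which forces $\mathrm{sign}(Y_e)=-b_2(Y_e)$, this gives
$$b_+(Y_e,L_z)=\tfrac12\big(b_2(Y_e,L_z)-b_2(Y_e)\big),$$
so it remains to prove that the twisted Betti number $b_2(Y_e,L_z)$ is constant equal to $b_2(Y_e)$ as $z$ runs over the unit circle.

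This is where the topological hypotheses on the ends enter. Let $\widetilde Y_e$ be the infinite cyclic cover of $Y_e$ associated to the end (of which $\mathrm{End}_e(X)$ is the positive half), viewed as a module over $\Lambda=\C[t,t^{-1}]$; by the universal coefficient exact sequence for the base change $\Lambda\to\C_z:=\Lambda/(t-z)$ one has $b_2(Y_e,L_z)=\dim_{\C}\big(H_2(\widetilde Y_e;\C)\otimes_\Lambda\C_z\big)+\dim_{\C}\mathrm{Tor}^\Lambda_1\big(H_1(\widetilde Y_e;\C),\C_z\big)$. Writing $\widetilde Y_e$ as the union of its fundamental domains $W_{e,i}$ glued along the cross-sections yields a Mayer--Vietoris / Wang-type exact sequence of $\Lambda$-modules relating $H_*(\widetilde Y_e;\C)$ to $H_*(W_e;\C)$ and to $H_*$ of the cross-section; using $H_1(W_e;\C)=0$ (valid since $H_1(W_e,\Z)$ is torsion) together with $b_1(Y_e)=1$ (noted in the discussion above), this sequence forces $H_1(\widetilde Y_e;\C)=0$, so the $\mathrm{Tor}_1$ term vanishes for every $z$. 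Finally, by Milnor's duality theorem for the infinite cyclic cover of the closed oriented $4$-manifold $Y_e$, the $\Lambda$-torsion submodule of $H_2(\widetilde Y_e;\C)$ is isomorphic, up to the involution $t\mapsto t^{-1}$, to the $\Lambda$-torsion submodule of $H_1(\widetilde Y_e;\C)=0$; hence $H_2(\widetilde Y_e;\C)$ is a free $\Lambda$-module and $\dim_{\C}\big(H_2(\widetilde Y_e;\C)\otimes_\Lambda\C_z\big)=\mathrm{rk}_\Lambda H_2(\widetilde Y_e;\C)$ is independent of $z$. Therefore $b_2(Y_e,L_z)$ is constant in $z$, equal to its value $b_2(Y_e)$ at $z=1$, so $b_+(Y_e,L_z)=0$; thus $\Delta_{+,z}$ is invertible for all $|z|=1$, and Taubes's criterion yields that $\Delta_+$ is Fredholm.

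For the last assertion: the family $z\mapsto\Delta_{+,z}$ is holomorphic on $\C^*$ and invertible on the compact set $\{|z|=1\}$, and its "indicial set" $\{z\in\C^*:\Delta_{+,z}\text{ non-invertible}\}$ is a closed (analytic) subset, hence it misses some annulus $e^{-\epsilon}\le|z|\le e^{\epsilon}$; by the weighted form of Taubes's criterion this makes $\Delta_+\colon L^{2,w}_{k+1}(i\Lambda^2_+)\to L^{2,w}_{k-1}(i\Lambda^2_+)$ Fredholm for all $w\in(-\epsilon,\epsilon)$, and since $(-\epsilon,\epsilon)$ contains no critical weight (the associated values of $|z|$ all lie in the annulus above), the standard Lockhart--McOwen/Taubes theory gives that the kernel $\mathbb{H}^2_+$ is the same finite-dimensional space for every such $w$. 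I expect the main obstacle to be precisely the vanishing $b_+(Y_e,L_z)=0$ for all $z$ on the unit circle: the reduction to the twisted operators on the $Y_e$ is routine once the end-periodic theory is in place, but turning the hypotheses "$b_+(Y_e)=0$" and "$H_1(W_e,\Z)$ torsion" into this vanishing requires the less obvious combination of the twisted signature identity with Milnor duality for infinite cyclic covers.
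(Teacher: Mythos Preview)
Your proof is correct, but the route you take to show $\dim\ker\Delta_{+,z}=0$ for all $|z|=1$ is genuinely different from the paper's.

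The paper works with the elliptic ASD complex $\mathscr{C}_\xi^+: 0\to A^0(Y_e,L^\xi)\xrightarrow{d_\xi}A^1(Y_e,L^\xi)\xrightarrow{d_\xi^+}A^2_+(Y_e,L^\xi)\to 0$. It first computes $H^0$ and $H^1$ of this complex directly, via a \v{C}ech/Mayer--Vietoris argument for the \'etale cover $\pi:W_e\to Y_e$: one finds $H^1(Y_e,\C_\xi)\cong K_\xi\subset H^1(W_e,\C)$ for $\xi\neq 1$, and the hypothesis $H_1(W_e,\Z)$ torsion gives $H^1(W_e,\C)=0$, hence $K_\xi=0$. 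Since both $H^0(\mathscr{C}_\xi^+)$ and $H^1(\mathscr{C}_\xi^+)$ jump by one at $\xi=1$ while the index of the complex is constant, $\dim\mathbb{H}^2_{+,\xi}=H^2(\mathscr{C}_\xi^+)$ is independent of $\xi\in S^1$, hence equals $b_+(Y_e)=0$.

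You instead invoke the twisted signature theorem to get $b_+(Y_e,L_z)-b_-(Y_e,L_z)=\mathrm{sign}(Y_e)$ and reduce to the constancy of $b_2(Y_e,L_z)$, which you then prove via the $\Lambda=\C[t,t^{-1}]$--module structure on $H_*(\widetilde Y_e;\C)$: Mayer--Vietoris on the fundamental domains gives $H_1(\widetilde Y_e;\C)=0$, and Poincar\'e duality over $\Lambda$ (Milnor duality) forces $H_2(\widetilde Y_e;\C)$ to be $\Lambda$--free, so $b_2(Y_e,L_z)=\mathrm{rk}_\Lambda H_2(\widetilde Y_e;\C)$ is constant. Both arguments ultimately hinge on an index constancy (of the ASD complex in the paper, of the signature operator in your proof) together with a vanishing that is extracted from $H_1(W_e;\C)=0$; the paper's route is more elementary and self-contained, while yours situates the result within the broader theory of infinite cyclic covers and would adapt more readily if one wanted to weaken the hypothesis on $W_e$.

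For the last assertion the paper gives a sharper argument than your appeal to ``no critical weight'': since $\Delta_+$ is formally self-adjoint, it has index zero at $w=0$, hence also for nearby $w$; duality gives $\dim\ker\Delta_{+,w}=\mathrm{codim}\,\mathrm{Im}\,\Delta_{+,w}=\dim\ker\Delta_{+,-w}$, and combining this with the obvious inclusions $\ker\Delta_{+,w}\subset\ker\Delta_{+,0}\subset\ker\Delta_{+,-w}$ for $w>0$ forces all three kernels to coincide.
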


The result follows from Taubes's Fredholmness criterion (see \cite{Taubes} Lemma 4.3). In order to prove that the hypothesis of this criterion is satisfied we will  need   the following vanishing theorem: For any $\xi \in\C^*$ consider  the  locally constant sheaf (local coefficient system) $\C_\xi$ on $Y_e$  which is obtained using the \'etale cover $W_e\to Y_e$, the constant sheaf $\C$ on $W_e$  and the  automorphy factor $\xi$.  Applying a \v{C}ech type computation in the  \'etale topology of $Y_e$ we will show that $H^1(Y_e,\C_\xi)=0$ for any $\xi\in S^1\setminus\{1\}$.  \\

Using this result we will obtain a Hodge type decomposition theorem in dimension 1. For any $e\in\Eg$, let $\rho_e$ be a smooth real function on $X$ which vanishes on the complement of $X\setminus\End_e(E)$  and is constant 1  on $\End_e(E)\setminus W_{e,0}$. Put
$$F^\Eg:=\big\{\sum_{e\in\Eg} \rho_e \theta_e|\ \theta_e\in\R\big\}\simeq\R^\Eg.
$$
With this notation we will prove:
\begin{thh} \label{HodgeDec1}  In the conditions of Theorem \ref{FredDelta}, there exists $\epsilon>0$ such that for every $w\in(0,\epsilon)$ we have an $L^2$-orthogonal direct sum decomposition
$$L^{2,w}_k(i\Lambda^1)=d(L^{2,w}_k(i\Lambda^0)\oplus iF^{\Eg})\oplus \mathbb{H}^1_w\oplus d^*(L^2_{k+1,w}(i\Lambda^2_+)),
$$
where $\mathbb{H}^1_w$ is defined by
$$\mathbb{H}^1_w:=\ker [(d^*,d^+):L^{2,w}_k(i\Lambda^1)\to L^{2,w}_{k-1}(i\Lambda^0)\oplus L^{2,w}_{k-1}(i\Lambda^2_+)]=
$$
$$=\ker [(d^*,d):L^{2,w}_k(i\Lambda^1)\to L^{2,w}_{k-1}(i\Lambda^0)\oplus L^{2,w}_{k-1}(i\Lambda^2)]\ ,
$$
is finite dimensional, and can be identified  with $H^1(X,i\R)$.
\end{thh}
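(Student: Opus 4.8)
The plan is to derive the decomposition from the Fredholmness of $\Delta_+$ established in Theorem~\ref{FredDelta}, together with standard elliptic theory on manifolds with periodic ends. First I would consider the operator $D := (d^* , d^+) : L^{2,w}_k(i\Lambda^1) \to L^{2,w}_{k-1}(i\Lambda^0) \oplus L^{2,w}_{k-1}(i\Lambda^2_+)$. Its formal adjoint (for small weight) is $D^* = d \oplus d^*$, and one computes $D^* D = \Delta$ on $1$-forms up to lower-order-controlled terms, while the relevant second-order operator on the target is essentially $\Delta_0 \oplus \Delta_+$. Since $\Delta_+$ is Fredholm by Theorem~\ref{FredDelta} and $\Delta_0$ on functions is Fredholm on small positive weights by the same Taubes criterion (here one uses $b_1(Y_e)=1$, so the relevant cohomology $H^0(Y_e,\C_\xi)$ vanishes for $\xi\neq 1$, exactly as in the proof of Theorem~\ref{FredDelta}), the operator $D$ is Fredholm on $L^{2,w}_k$ for $w$ in a punctured neighbourhood of $0$; choosing $w>0$ small gives the closed range needed for an orthogonal decomposition $L^{2,w}_k(i\Lambda^1) = \ker D \oplus \overline{\ima D^*}$, and elliptic regularity identifies $\ker D$ with the smooth space $\mathbb{H}^1_w$. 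The equality of the two descriptions of $\mathbb{H}^1_w$ (using $d^+$ versus $d$) follows because a form in the kernel is closed and coclosed, hence $d^- $-part is also zero by the same Weitzenb\"ock/integration argument that shows harmonic forms are genuinely harmonic.

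Next I would refine $\overline{\ima D^*}$ into $d(L^{2,w}_k(i\Lambda^0)) \oplus d^*(L^2_{k+1,w}(i\Lambda^2_+))$, plus the extra finite-dimensional piece $d(iF^\Eg)$. The point is that on positively-weighted spaces the constant functions (and more generally the functions $\rho_e$, which are bounded but not in $L^{2,w}$ for $w>0$) are not available as potentials, so $d$ applied to the span $iF^\Eg$ of the cutoff functions produces closed $1$-forms that decay (since $d\rho_e$ is compactly supported) and are $L^{2,w}$ but are not in the image of $d$ restricted to $L^{2,w}$ functions; these account for the ``missing" reduced cohomology at infinity and, together with $\mathbb{H}^1_w$, assemble to give all of $H^1(X,i\R)$. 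Concretely, I would show $\overline{\ima d^*}$ (on $1$-forms, from $2$-forms) decomposes as self-dual and anti-self-dual parts, use the fact that $d^-(d^*\Omega^2_-) $ can be absorbed, and check the three summands are mutually $L^2$-orthogonal by the usual integration-by-parts arguments (valid on weighted spaces because the weight is on the same side for domain and target and the boundary terms at infinity vanish by the decay). The orthogonality of $d(iF^\Eg)$ to $d^*(L^{2,w}_{k+1}(i\Lambda^2_+))$ and to $\mathbb{H}^1_w$ again reduces to $\langle d f, d^*\omega\rangle = 0$ type identities once one checks the integration by parts is legitimate for the cutoff functions, which it is since $d\rho_e$ has compact support.

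Finally, to identify $\mathbb{H}^1_w \cong H^1(X,i\R)$ I would construct explicit maps in both directions. Given $\alpha \in \mathbb{H}^1_w$, it is a closed $1$-form, hence represents a de Rham class; this gives a linear map $\mathbb{H}^1_w \to H^1_{dR}(X,i\R)$. Injectivity: if $\alpha = df$ with $f$ smooth, then since $d^*\alpha = 0$ we get $\Delta_0 f = 0$; because $\alpha \in L^{2,w}$ with $w>0$ the function $f$ is (after subtracting the appropriate locally constant values at the ends, i.e. an element of $F^\Eg$) in $L^{2,w}$, and a weighted $L^2$ harmonic function with $w>0$ must vanish by the same vanishing input used for $\Delta_0$ — but here one must be careful: $\alpha$ being a coboundary in the full de Rham complex of $X$ means $f$ is globally defined and bounded, and the decomposition's $d(iF^\Eg)$ summand is precisely what one removes, so the genuinely new part is zero, giving injectivity on the complement. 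Surjectivity: any de Rham class on $X$ has a representative that is end-periodic and (by Hodge theory on the generalized tori $Y_e$, using $b_+(Y_e)=0$ and $b_1(Y_e)=1$) harmonic near infinity; a cutoff-and-correct argument — solve $\Delta_+$ for the self-dual error using Theorem~\ref{FredDelta}, which is solvable modulo $\mathbb{H}^2_+$, and note the $\mathbb{H}^2_+$ obstruction is killed for closed forms — produces a representative lying in $L^{2,w}_k(i\Lambda^1)$ and then projects onto its harmonic part. The main obstacle, and where I would spend the most care, is this last surjectivity/dimension-count step: controlling the interplay between the finite-dimensional correction space $F^\Eg$, the cokernel $\mathbb{H}^2_+$ of $\Delta_+$, and the topology of the ends so that the index bookkeeping comes out to exactly $\dim H^1(X,i\R)$, and in particular verifying that $H^1(X,i\R)$ is finite-dimensional under the stated hypotheses (which follows from $H_1(W_e,\Z)$ torsion via a Mayer–Vietoris argument on the end, but must be done explicitly).
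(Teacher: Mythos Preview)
Your outline is broadly correct in spirit but glosses over the one genuinely delicate point, namely how the finite-dimensional piece $d(iF^\Eg)$ enters the decomposition. You write the splitting as $L^{2,w}_k(i\Lambda^1)=\ker D\oplus\overline{\ima D^*}$ and then propose to ``refine'' the second summand by adding $d(iF^\Eg)$ with a heuristic about missing potentials. The paper proceeds differently, and the difference is not cosmetic. Because the decomposition is with respect to the \emph{$L^2$} inner product (not the $L^{2,w}$ one), the dual space of $L^{2,w}$ is $L^{2,-w}$, and the duality Lemma~\ref{duality} gives
\[
(\ker D_w)^{\perp_{L^{2,w}_k}}=\ima\big[D^*_{-w}:L^{2,-w}_{k+1}\to L^{2,-w}_k\big]\cap L^{2,w}_k,
\]
i.e.\ the relevant image is that of the formal adjoint acting on the \emph{negatively} weighted space, intersected back down to the positively weighted one. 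Identifying this intersection with $D^*$ applied to $L^{2,w}_{k+1}(i\Lambda^0)\oplus iF^\Eg\oplus L^{2,w}_{k+1}(i\Lambda^2_+)$ is a regularity statement (Lemma~\ref{lemma}): one first uses that $\Delta_+$ restricted to $(\mathbb{H}^2_+)^\perp$ is an isomorphism at both weights $\pm w$ to upgrade the $2$-form component, and then invokes Taubes's Lemma~5.2 in \cite{Taubes} to show that a function $\varphi\in L^{2,-w}_{k+1}$ with $d\varphi\in L^{2,w}_k$ must lie in $L^{2,w}_{k+1}\oplus iF^\Eg$. This is precisely where $F^\Eg$ appears, and it is not recoverable from your sketch without this weight-flip and the Taubes lemma.

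For the identification $\mathbb{H}^1_w\simeq H^1(X,i\R)$ your plan is close to the paper's (Proposition~\ref{H1new}): injectivity uses Taubes's Lemma~5.2 again to show a primitive lies in $L^{2,w}_{k+1}\oplus iF^\Eg$, and surjectivity uses $H^1(\End(X),\R)=0$ (from the torsion hypothesis on $H_1(W_e,\Z)$, via Mayer--Vietoris and the inductive-limit property of homology) to cut off to a compactly supported representative. Your proposed surjectivity argument via ``solve $\Delta_+$ for the self-dual error'' is more elaborate than needed and introduces an obstruction ($\mathbb{H}^2_+$) that the paper's cutoff argument avoids entirely.
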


Note that the first order operator 
$$D:=(d^*,d^+):L^{2}_k(i\Lambda^1)\to L^{2}_{k-1}(i\Lambda^0)\oplus L^2_{k-1}(i\Lambda^2_+) $$
although elliptic, is certainly non-Fredholm when $\Eg\ne\emptyset$.  

This Hodge decomposition theorem plays a crucial role in our arguments.  
Our second Fredholmness result concerns the Dirac operator. 
\begin{thh}\label{FredDirac} With the notations and under the assumptions above suppose that $b_+(Y_e)=0$, $H_1(W_e,\Z)$ is torsion,  $c_1(\det(Q_e))^2+b_2(Y_e)=0$, and $s_{g_e}\geq 0$ for any $e\in\Eg$. Fix a connection $A_0\in \A(\det(Q))$ which is end-periodic and ASD on the complement of a compact set. Then the Dirac operator 
$$\sD_{A_0}: L^{2}_k(\Sigma^+)\to L^{2}_{k-1}(\Sigma^-)$$
 is Fredholm.
\end{thh}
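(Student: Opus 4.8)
The plan is to invoke Taubes's end-periodic Fredholm criterion (\cite{Taubes}, Lemma 4.3) for the first-order elliptic operator $\sD_{A_0}$, exactly as in the proof of Theorem \ref{FredDelta}. By hypothesis $A_0$ is end-periodic and ASD on the complement of a compact set, so on each end $\End_e(X)$ the operator $\sD_{A_0}$ is the pullback under $p_e$ of an elliptic operator $\sD_e$ on the generalized torus $Y_e$. Taubes's criterion reduces Fredholmness of $\sD_{A_0}$ (on unweighted $L^2_k$, or on $L^{2,w}_k$ for small $|w|$) to showing that, for every end $e$, the associated ``characteristic'' family of operators on $Y_e$ twisted by the automorphy factor $\xi$ is invertible for all $\xi$ on the unit circle $S^1$. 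Concretely, one must show that for each $e\in\Eg$ and each $\xi\in S^1$, the twisted Dirac operator $\sD_{e,\xi}: L^2_k(\Sigma^+_e\otimes\C_\xi)\to L^2_{k-1}(\Sigma^-_e\otimes\C_\xi)$ on the closed manifold $Y_e$ is an isomorphism; equivalently, that both its kernel and cokernel vanish.

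First I would establish the vanishing of $\ker \sD_{e,\xi}$ and $\mathrm{coker}\,\sD_{e,\xi}\cong \ker \sD_{e,\xi}^*$ via a Weitzenböck argument. Since $\C_\xi$ for $\xi\in S^1$ carries a flat unitary connection, the twisted Dirac operator $\sD_{e,\xi}$ satisfies the Lichnerowicz–Weitzenböck formula $\sD_{e,\xi}^*\sD_{e,\xi} = \nabla^*\nabla + \tfrac{s_{g_e}}{4} + \tfrac12\Gamma(F^+_{A_0})$, where the curvature term comes from the connection on $\det(Q_e)$. Because $A_0$ is ASD at infinity, $F^+_{A_0}$ vanishes on each end, so this Clifford-curvature term is zero on $Y_e$; combined with the hypothesis $s_{g_e}\ge 0$, the operator $\nabla^*\nabla + \tfrac{s_{g_e}}{4}$ is non-negative, and a spinor in $\ker\sD_{e,\xi}$ must be $\nabla$-parallel and supported where $s_{g_e}=0$. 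The standard argument then forces such a parallel spinor to be identically zero unless it produces a contradiction with the index/topological data — and this is exactly where conditions \eqref{TopoEnds} and \eqref{TopoSpin} enter: $b_+(Y_e)=0$ together with $c_1(\det(Q_e))^2 + b_2(Y_e)=0$ pin down the index of $\sD_e$ (via the Atiyah–Singer formula $\mathrm{ind}_\C \sD_e = \tfrac18(c_1(\det Q_e)^2 - \sigma(Y_e))$ adapted to dimension 4, using $b_+(Y_e)=0\Rightarrow \sigma(Y_e)=-b_2(Y_e)$) and show it is incompatible with the existence of a nonzero parallel spinor, so the kernel and cokernel both vanish. The same reasoning applies verbatim to $\sD_{e,\xi}^*$ since $\C_\xi^\vee\cong\C_{\bar\xi}$ is again a flat unitary line bundle with $\bar\xi\in S^1$.

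Having verified that $\sD_{e,\xi}$ is invertible for all $e\in\Eg$ and all $\xi\in S^1$, I would then apply Taubes's criterion to conclude that $\sD_{A_0}:L^2_k(\Sigma^+)\to L^2_{k-1}(\Sigma^-)$ is Fredholm, and moreover that the same holds on weighted spaces $L^{2,w}_k$ for $w$ in a small interval around $0$ (the weight only perturbs the spectral parameter off $S^1$, staying within the invertible range). The key auxiliary inputs — ellipticity of $\sD_{A_0}$, the end-periodic structure making it a lift of $\sD_e$, and the identification of the characteristic operators — are all standard; the substantive content is the circle-of-parameters vanishing theorem of the previous paragraph.

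The main obstacle I anticipate is the uniformity of the vanishing in $\xi$ across the whole unit circle, rather than at $\xi=1$ alone. At $\xi=1$ the Weitzenböck argument is the textbook statement, but for general $\xi\in S^1$ one needs the flat line bundle $\C_\xi$ to genuinely carry a \emph{unitary} flat connection so that the Weitzenböck identity retains its positivity, and one must be careful that the topological conditions \eqref{TopoSpin} still constrain the twisted index — here the point is that twisting by a flat line bundle of rank one does not change the index of the Dirac operator (the Chern character of $\C_\xi$ is trivial), so $\mathrm{ind}_\C \sD_{e,\xi} = \mathrm{ind}_\C\sD_e$ for all $\xi$, and condition \eqref{TopoSpin} continues to do its job. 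A secondary point requiring care is that a nonzero $\nabla$-parallel section of $\Sigma^+_e\otimes\C_\xi$, if it existed, would give a parallel trivialization forcing holonomy constraints incompatible with $\xi\ne 1$ on the $\C_\xi$ factor combined with $b_1(Y_e)=1$; reconciling these holonomy considerations with the scalar-curvature hypothesis to rule out the parallel spinor for \emph{every} $\xi\in S^1$ simultaneously is the delicate step, and I would handle it by treating the locus $\{s_{g_e}=0\}$ explicitly and invoking the connectedness of $Y_e$.
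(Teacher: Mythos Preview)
Your strategy matches the paper's: apply Taubes's Fredholm criterion, observe that on each $Y_e$ the twisted operator is the Dirac operator for a modified \emph{unitary} $Spin^c$ connection $B_e^\lambda=A_e+2i\lambda\,d\tau$ (so $F_{B_e^\lambda}^+=F_{A_e}^+=0$), use the index hypothesis to reduce invertibility to injectivity, and then use Weitzenb\"ock with $s_{g_e}\ge 0$ to conclude that any kernel element is $\nabla_{\hat B_e^\lambda}$-parallel. Where you diverge is in the last step, ruling out a nonzero parallel spinor.

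Your proposed mechanism --- holonomy constraints on the $\C_\xi$ factor, plus analysis of the locus $\{s_{g_e}=0\}$ and connectedness --- does not close the argument. The $Spin^c$ holonomy on $\Sigma^+_e\otimes\C_\xi$ does not split as a product of the $\Sigma^+_e$ holonomy and $\xi$, so a fixed vector for the combined holonomy gives no direct contradiction with $\xi\ne 1$; and at $\xi=1$ your holonomy idea says nothing at all. Nor does connectedness of $Y_e$ together with the zero set of $s_{g_e}$ exclude a global parallel spinor when $s_{g_e}\equiv 0$ on an open set. The paper instead invokes a structural result (Byun--Lee--Park--Ryu \cite{BLPR}, Theorem 1): a closed oriented Riemannian 4-manifold admitting a nontrivial $\nabla$-parallel positive $Spin^c$ spinor is K\"ahler. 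Since $b_1$ of a K\"ahler manifold is even, this contradicts $b_1(Y_e)=1$ (a consequence of $H_1(W_e,\Z)$ torsion). This single step handles all $\xi\in S^1$ uniformly and is the missing ingredient in your outline.
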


The proof uses  Taubes's Fredholmness criterion again and the  Weitzenb\"ock formula on the generalized  tori $Y_e$. The condition $c_1(\det(Q_e))^2+b_2(Y_e)=0$ is needed to assure that the index of the induced Dirac operators on  the generalized tori $Y_e$ vanishes.\\

Using  these preparations we will prove the following fiberwise coercivity theorem:
\begin{thh} Suppose that the three conditions \ref{TopoEnds}, \ref{TopoSpin}, \ref{sg} above are verified and let $\epsilon>0$ satisfy the conditions in  Theorem \ref{FredDelta}, Theorem \ref{HodgeDec1}.  Fix $w\in (0,\epsilon)$, $k\geq 2$ and $A_0\in \A(\det(Q))$ which is end-periodic and ASD outside a compact set. Let 
 $(v_n, \phi_n)_n$ be a sequence in $d^*(L^2_{k+1,w}(i\Lambda^2_+))\times L^{2,w}_k(\Sigma^+)$ such that
$$\|SW(A_0+v_n, \phi_n)\|_{L^{2,w}_{k-1}}\leq C,
$$
for a positive constant $C>0$. Then
\begin{enumerate} 
\item   the sequence $(v_n,\phi_n)_n$ has a subsequence which is bounded  in $L^{2,w}_{k}$,
\item for every $\lambda\in (0,w)$ the sequence $(v_n,\phi_n)_n$  has a subsequence which is convergent in
$L^{2,\lambda}_{k-1}$.
\end{enumerate} 
\end{thh}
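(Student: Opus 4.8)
The plan is to follow the Kronheimer--Mrowka pattern: a Weitzenb\"ock a priori estimate to tame the nonlinearity, followed by elliptic bootstrapping, with compactness of $X$ replaced by the Fredholm theory of Theorems \ref{FredDelta} and \ref{FredDirac} on the positively weighted spaces and by the exponential decay built into those spaces. \textbf{Linear reduction.} Write $A_n:=A_0+v_n$. Since $A_0$ is ASD outside a compact set, $F_{A_0}^+$ is compactly supported, so the curvature equation $\Gamma(F_{A_n}^+)-(\phi_n\otimes\phi_n)_0=\nu_n$ together with $F_{A_n}^+=F_{A_0}^++d^+v_n$ gives $d^+v_n=\nu_n+\Gamma^{-1}((\phi_n\otimes\phi_n)_0)-F_{A_0}^+$, while the Dirac equation gives $\sD_{A_0}\phi_n=\mu_n-v_n\!\cdot\!\phi_n$ (Clifford multiplication by the $1$-form $v_n$), where $(\mu_n,\nu_n)=SW(A_n,\phi_n)$ has $L^{2,w}_{k-1}$-norm $\le C$. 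By Theorem \ref{FredDelta} the operator $\Delta_+=d^+\circ(d^+)^*$ is Fredholm on the weighted spaces; since $(d^+)^*$ annihilates $\ker\Delta_+=\mathbb{H}^2_+$, the map $d^+$ is injective on $\V:=d^*(L^2_{k+1,w}(i\Lambda^2_+))$, which by the Hodge decomposition (Theorem \ref{HodgeDec1}) is a closed subspace; hence $\|v_n\|_{L^{2,w}_k}\le C\|d^+v_n\|_{L^{2,w}_{k-1}}$. By Theorem \ref{FredDirac} (Fredholmness persists for $|w|$ small, shrinking $\epsilon$ if needed) one gets $\|\phi_n\|_{L^{2,w}_k}\le C\bigl(\|\sD_{A_0}\phi_n\|_{L^{2,w}_{k-1}}+\|\phi_n\|_{L^2(K)}+\|\Pi\phi_n\|\bigr)$, with $K$ a fixed compact set and $\Pi$ the $L^2$-projection onto the finite-dimensional $\ker\sD_{A_0}$. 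Altogether
$$\|(v_n,\phi_n)\|_{L^{2,w}_k}\ \le\ C\Bigl(1+\|v_n\!\cdot\!\phi_n\|_{L^{2,w}_{k-1}}+\|(\phi_n\otimes\phi_n)_0\|_{L^{2,w}_{k-1}}+\|\phi_n\|_{L^2(K)}+\|\Pi\phi_n\|\Bigr).$$

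\textbf{Weitzenb\"ock a priori bounds.} Apply the Weitzenb\"ock formula $\sD_{A_n}^2=\nabla_{A_n}^*\nabla_{A_n}+\tfrac{s}{4}+\mathfrak c(F_{A_n}^+)$ to $\phi_n$, substitute $\mathfrak c(F_{A_n}^+)=\Gamma(F_{A_n}^+)=(\phi_n\otimes\phi_n)_0+\nu_n$ (up to a fixed positive constant) and use $(\phi\otimes\phi)_0\!\cdot\!\phi=\tfrac12|\phi|^2\phi$; pairing with $\phi_n$ and integrating over $X$ (legitimate since $\phi_n,v_n$ decay exponentially) kills $\int_X\Delta|\phi_n|^2$ and yields, after discarding the non-negative scalar-curvature integral over the ends (here condition \ref{sg} is used), a quadratic inequality in $\|\phi_n\|_{L^4}$ of the form $c\|\phi_n\|_{L^4}^4\le C^2+C'\|\phi_n\|_{L^4}^2$, using $\|\mu_n\|_{L^2}\le\|\mu_n\|_{L^{2,w}_{k-1}}\le C$, $\int|\nu_n|\,|\phi_n|^2\le\|\nu_n\|_{L^2}\|\phi_n\|_{L^4}^2$, and that the compact-core curvature contribution is bounded by $\|\phi_n\|_{L^4}$. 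Hence $\|\phi_n\|_{L^4}\le C$ and then $\|\nabla_{A_n}\phi_n\|_{L^2}\le C$. Feeding $\|(\phi_n\otimes\phi_n)_0\|_{L^2}=\|\phi_n\|_{L^4}^2\le C$ into the $L^2$-elliptic estimate for $\Delta_+$ gives $\|v_n\|_{L^2_1}\le C$, whence $\|v_n\|_{L^4}\le C$, $\|\nabla_{A_0}\phi_n\|_{L^2}\le C$ and in particular $\|\phi_n\|_{L^2(K)}\le C$ --- all without any circularity.

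\textbf{$C^0$-bound and conclusion.} From the pointwise Weitzenb\"ock--Kato inequality, on the ends (where $s_g\ge0$ and $F_{A_0}^+=0$) the function $u_n:=|\phi_n|^2$ satisfies $\tfrac12\Delta u_n+\tfrac14u_n^2\le\tfrac12|\langle\nu_n\!\cdot\!\phi_n,\phi_n\rangle|+|\langle\sD_{A_n}\mu_n,\phi_n\rangle|$, whose right-hand side is controlled, by the previous step, in an $L^{n/2}$-type space; a Moser/De Giorgi iteration --- run using the periodicity of $g$ on the ends (or a cover by balls of fixed radius) and the fact that the exponential decay of $\phi_n$ makes suprema attained --- together with the standard estimate on the compact core gives $\|\phi_n\|_{L^\infty(X)}\le M$ uniformly in $n$. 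With this in hand, $\|\Pi\phi_n\|\le M\max_j\|\kappa_j\|_{L^1}\le C$ since the $L^{2,w}_k$-elements of $\ker\sD_{A_0}$ decay and are therefore $L^1$; and the quadratic terms $\|v_n\!\cdot\!\phi_n\|_{L^{2,w}_{k-1}}$, $\|(\phi_n\otimes\phi_n)_0\|_{L^{2,w}_{k-1}}$ are estimated by Sobolev multiplication and Gagliardo--Nirenberg interpolation between the $L^\infty/L^4/L^2_1$-bounds above and the top-order weighted norms $\|v_n\|_{L^{2,w}_k},\|\phi_n\|_{L^{2,w}_k}$, the latter entering with a small coefficient (hence absorbed) or only at lower order. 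Plugging back into the linear reduction gives $\|(v_n,\phi_n)\|_{L^{2,w}_k}\le C$ --- after, if necessary, passing to a subsequence to fix the finitely many quantities entering the Fredholm remainders --- which is assertion 1. Assertion 2 is then immediate: the embedding $L^{2,w}_k\hookrightarrow L^{2,\lambda}_{k-1}$ is compact for $0<\lambda<w$ (simultaneous gain of one derivative, by Rellich, and of exponential decay on the periodic ends), so a bounded subsequence has a sub-subsequence convergent in $L^{2,\lambda}_{k-1}$.

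\textbf{Main obstacle.} The crux is the uniform $C^0$-bound: the closed-manifold maximum-principle argument must be upgraded to the non-compact periodic ends, and because $\mu_n,\nu_n$ are only Sobolev-controlled one is forced to run an elliptic iteration compatible with the end-periodic geometry instead of a naive maximum principle. A secondary difficulty, sharpest when $k=2$, is the absorption of the quadratic terms in the final step, which relies essentially on the $C^0$-bound together with borderline multiplication inequalities in dimension four.
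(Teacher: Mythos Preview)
Your Weitzenb\"ock/energy step is essentially the paper's Lemma 4.3.2 and is correct: it yields uniform \emph{unweighted} bounds $\|\phi_n\|_{L^4},\|\nabla_{A_n}\phi_n\|_{L^2},\|v_n\|_{L^2_1}\le C$. The gap is in what follows.

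Even granting a $C^0$-bound $\|\phi_n\|_{L^\infty}\le M$, it does not close your bootstrap. For $k=2$ your own estimates give $\|(\phi_n\otimes\phi_n)_0\|_{L^{2,w}_1}\lesssim M\|\phi_n\|_{L^{2,w}_1}$ and $\|v_n\!\cdot\!\phi_n\|_{L^{2,w}_1}\lesssim M\|v_n\|_{L^{2,w}_1}+\|v_n\|_{L^{2,w}_1}\|\phi_n\|_{L^{2,w}_2}$; the coefficients $M$ and $\|v_n\|_{L^{2,w}_1}$ are not small, and the weighted norms on the right are exactly the unknowns. Interpolation does not help: at the bottom it leaves $\|\phi_n\|_{L^{2,w}}$, for which you have no control --- all your a priori bounds ($L^4$, $L^2_1$, $L^\infty$) are unweighted and carry no exponential decay. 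The phrase ``the latter entering with a small coefficient (hence absorbed)'' is precisely where the argument fails: no smallness has been produced. (Separately, your Moser iteration is borderline at $k=2$: $\sD_{A_n}^*\mu_n$ is only in $L^{2,w}=L^{n/2}$, the critical space, where iteration needs smallness.)

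The paper obtains the missing smallness by a different mechanism. After the same unweighted a priori bounds it upgrades weak $L^2_1$-convergence of a subsequence to \emph{strong} $L^2_1$-convergence: the hypothesis forces $\|SW(\gamma_n)\|_{L^2}$ to converge (compact embedding $L^{2,w}_{k-1}\hookrightarrow L^2$), hence by the energy identity a sum of non-negative norms converges to the corresponding sum for the weak limit; since each summand is weakly lower-semicontinuous, each converges individually, and Radon--Riesz turns norm-convergence plus weak convergence into strong convergence. Then the bootstrap is run on \emph{differences}: writing $D(\gamma_i-\gamma_j)=SW(\gamma_i)-SW(\gamma_j)+(\gamma_j-\gamma_i)\sharp(\gamma_i+\gamma_j-2\gamma_{i_\epsilon})+2(\gamma_j-\gamma_i)\sharp\gamma_{i_\epsilon}$, the Cauchy property in $L^2_1$ makes $\|\gamma_i+\gamma_j-2\gamma_{i_\epsilon}\|_{L^4}$ small, which is exactly the absorbable coefficient. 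This yields Cauchy in $L^{3,\lambda}_1$, then boundedness in $L^{2,w}_2$, then (for higher $k$) Cauchy in $L^{2,\lambda}_2$, and so on. Smallness comes from convergence, not from any $C^0$-bound.

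Your derivation of (2) from (1) via the compact embedding $L^{2,w}_k\hookrightarrow L^{2,\lambda}_{k-1}$ is fine once (1) is established; in the paper the two are obtained together in the course of the bootstrap.
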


The result is obtained in 4 steps using ideas from (\cite{KM}, section II) : weak $L^{2}_1$ convergence,  strong $L^{2}_1$-convergence, strong $L^{3,\lambda}_1$ convergence and bootstrapping.   The   "input" of this sequence of arguments is an $L^{2}_1$-boundedness theorem which will be obtained using an adapted version of the "energy identity" of \cite{KM}.

Note that, without the condition $s_{g_e}\geq 0$ our method does not work.  Moreover, without this condition this coercivity statement does not hold even on   manifolds with cylindrical ends (see   \cite{KM}, \cite{N}, \cite{F1}).

\begin{coro} If $(\psi,\chi)\in L^{2,w+\varepsilon}_{k+1}(\Sigma^-)\times L^{2,w+\varepsilon}_{k+1}(\Herm_0(\Sigma^+))$, then  the fiber
$$\big\{(v,\phi)\in d^*(L^2_{k+1,w}(i\Lambda^2_+)\times  L^{2,w}_k(\Sigma^+)|\ SW(A_0+v, \phi)=(\psi,\chi)\big\}
$$
is compact in $L^{2,w}_k$.
\end{coro}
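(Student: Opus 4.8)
The plan is to deduce this corollary directly from the fiberwise coercivity theorem by a standard closedness-plus-compactness argument in weighted Sobolev spaces. First I would fix the target pair $(\psi,\chi)\in L^{2,w+\varepsilon}_{k+1}(\Sigma^-)\times L^{2,w+\varepsilon}_{k+1}(\Herm_0(\Sigma^+))$ and let $(v_n,\phi_n)_n$ be any sequence in the fiber, so that $SW(A_0+v_n,\phi_n)=(\psi,\chi)$ for all $n$. Since $(\psi,\chi)$ is a single fixed element of $L^{2,w}_{k-1}$ (note $w+\varepsilon>w$ and $k+1>k-1$, so the inclusion $L^{2,w+\varepsilon}_{k+1}\hookrightarrow L^{2,w}_{k-1}$ holds and is in fact compact on compact pieces), we have a uniform bound $\|SW(A_0+v_n,\phi_n)\|_{L^{2,w}_{k-1}}\leq C$. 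The coercivity theorem then applies: after passing to a subsequence, $(v_n,\phi_n)_n$ is bounded in $L^{2,w}_k$, and after passing to a further subsequence it converges in $L^{2,\lambda}_{k-1}$ for every $\lambda\in(0,w)$ to some limit $(v_\infty,\phi_\infty)$.

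The next step is to upgrade this weak/lower-regularity convergence to strong convergence in $L^{2,w}_k$ and to check that the limit still lies in the fiber. For the latter, I would pass to the limit in the equation $SW(A_0+v_n,\phi_n)=(\psi,\chi)$: the Seiberg-Witten map is a polynomial differential operator of order $\leq 1$ (the Dirac part is linear of order one, the curvature part linear of order one in $v$, and the quadratic term $(\phi\otimes\phi)_0$ is continuous $L^{2,w}_k\times L^{2,w}_k\to L^{2,w}_{k-1}$ by the module/multiplication properties of Sobolev spaces in dimension $4$ with $k\geq 2$). Using the $L^{2,w}_k$-boundedness together with $L^{2,\lambda}_{k-1}$-convergence and interpolation, one gets that $SW(A_0+v_n,\phi_n)\to SW(A_0+v_\infty,\phi_\infty)$ in, say, $L^{2,\lambda}_{k-2}$, so the limit satisfies $SW(A_0+v_\infty,\phi_\infty)=(\psi,\chi)$; elliptic bootstrapping (the linearization $(d^+,\sD)$ being elliptic, together with the Fredholmness results of Theorems \ref{FredDelta} and \ref{FredDirac}) then shows $v_\infty\in d^*(L^2_{k+1,w}(i\Lambda^2_+))$ and $\phi_\infty\in L^{2,w}_k(\Sigma^+)$, i.e. $(v_\infty,\phi_\infty)$ is in the fiber.

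To get strong convergence in $L^{2,w}_k$ — which is what ``compact in $L^{2,w}_k$'' demands — I would run one more elliptic-estimate step. Write the linearized operator $L:=(d^+,\sD_{A_0})$, which is Fredholm $L^{2,w}_k\to L^{2,w}_{k-1}$ by the cited theorems, hence admits an elliptic estimate $\|(v,\phi)\|_{L^{2,w}_k}\leq c\big(\|L(v,\phi)\|_{L^{2,w}_{k-1}}+\|(v,\phi)\|_{L^{2,\lambda}_{k-1}}\big)$ modulo a finite-dimensional kernel (the extra low-order term on a weighted space replacing the usual compact piece; this is exactly where the weight $w>0$ and Taubes's criterion are used, since the unweighted $L$ is not Fredholm). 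Apply this to the difference $(v_n-v_m,\phi_n-\phi_m)$: the term $L(v_n-v_m,\phi_n-\phi_m)$ is controlled because $SW(A_0+v_n,\phi_n)-SW(A_0+v_m,\phi_m)=0$ and the difference $SW(A_0+\cdot)-L(\cdot)$ is the quadratic term plus lower-order pieces, which is Cauchy in $L^{2,w}_{k-1}$ thanks to the $L^{2,w}_k$-bound and $L^{2,\lambda}_{k-1}$-convergence (multiplication estimate: $\|\phi_n\otimes\phi_n-\phi_m\otimes\phi_m\|_{L^{2,w}_{k-1}}$ is bounded by a combination of the uniform $L^{2,w}_k$-norms and the $L^{2,\lambda}_{k-1}$-differences). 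Hence $(v_n-v_m,\phi_n-\phi_m)$ is Cauchy in $L^{2,w}_k$ modulo the finite-dimensional kernel of $L$, on which convergence is automatic from the $L^{2,\lambda}_{k-1}$-convergence; therefore $(v_n,\phi_n)\to(v_\infty,\phi_\infty)$ strongly in $L^{2,w}_k$, proving compactness.

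The main obstacle I expect is the strong-convergence upgrade in the critical weighted norm $L^{2,w}_k$ rather than merely in a weaker $L^{2,\lambda}_{k-1}$: one must be careful that the ``lower-order remainder'' $SW-L$ is genuinely controlled in $L^{2,w}_{k-1}$ by the available bounds, which requires the Sobolev multiplication estimates on the weighted spaces (and the assumption $k\geq 2$ so that $L^{2,w}_k$ is a Banach algebra up to the weight), and that the parametrix estimate for $L$ holds on the same weight $w$ — this is precisely what Theorems \ref{FredDelta} and \ref{FredDirac} are designed to supply. The role of $\varepsilon>0$ in the hypothesis on $(\psi,\chi)$ is a mild technical convenience ensuring the target lies comfortably inside $L^{2,w}_{k-1}$ with room to spare; it is not essential to the argument beyond making the inclusion into the coercivity theorem's hypothesis immediate.
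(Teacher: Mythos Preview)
Your argument is correct and matches what the paper implicitly has in mind. The paper states this corollary without proof; the intended justification is that in the bootstrapping estimates of the coercivity theorem (specifically inequalities of the type (\ref{desigualdade5}) in Steps 2 and 4), the term $\|SW(\gamma_i)-SW(\gamma_j)\|_{L^{2,w}_{k-1}}$ vanishes identically when all $\gamma_i$ lie in a single fiber, so those estimates upgrade from ``bounded'' to ``Cauchy'' in $L^{2,w}_k$. Your elliptic-estimate-plus-nonlinear-remainder argument is exactly this computation, rephrased: your operator $L=(d^+,\sD_{A_0})$ plays the role of the paper's $D$, and your control of the quadratic remainder via the multiplication $L^{2,w}_k\times L^{2,\lambda}_{k-1}\to L^{2,w}_{k-1}$ is precisely the device used in Step 4. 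Your remark that the extra $\varepsilon$ in the hypothesis on $(\psi,\chi)$ is inessential to this compactness step is also correct; the argument only uses that $(\psi,\chi)$ is a fixed element of $L^{2,w}_{k-1}$.
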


Fixing a compact set $\Pi\subset\mathbb{H}^1_w$, similar results can be obtained for sequences $(z_n,\phi_n)$ where $z_n=h_n+v_n$ with $v_n\in  d^*(L^2_{k+1,w}(i\Lambda^2_+))$ and $h_n\in\Pi$.  
In particular taking $(\psi,\chi)=(0,0)$ we obtain a compactness theorem for the $SW$ moduli space on $4$-manifolds with periodic ends satisfying our conditions.

We will end the thesis giving (for manifolds with periodic ends satisfying our conditions) the explicit construction  of  an  $S^1$-equivariant Seiberg-Witten map over a torus, which satisfies the  coercivity property needed in the construction of the cohomotopy invariants.

\chapter{4-manifolds with periodic ends}

In this chapter we are going to define $4$-manifolds with periodic ends and periodic structures on it, such as a metric or a $Spin^{c}(4)$-structure. We prove that, under certain conditions, the following two operators are Fredholm, the Laplacian $\Delta_{+}: L^{2}_{k+2}\ra L^{2}_{k}$ and the Dirac operator $\sD_{A}:L^{2}_{k+1}\ra L^{2}_{k}$. For the first the conditions are of a topological nature (see \ref{fredholmdelta}), for the second they concern both topological and geometric properties (see \ref{Dirac}). We prove also a Hodge type decomposition for the weighted Sobolev space $L^{2,w}_{k}(i\Lambda^1)$ (see \ref{Hodge}).

\section{Manifolds with periodic ends}\label{periodicend}

%
%
%
%
%
%

\vspace{1 pc}
 
\begin{de}[Taubes]\label{periodic}
An  oriented differentiable  $n-$dimensional manifold  with one periodic end is a connected $n$-manifold $X$ endowed with the following structure:
\begin{enumerate}
\item a smooth connected oriented open $n-$manifold $W$ with a compact set $C$ such that $W\setminus C$ has two connected  components $N_{+}$ and $N_{-}$,
\item a compact set $C_+\subset N_+$ such that
\begin{enumerate}
\item  $N_+\setminus C_+$ has two connected components $N_{++}$ and $N_{+-}$,
\item  $W\setminus C_{+}$ is the disjoint union of $N_{-}\cup C\cup N_{+-}$ with $N_{++}$. 
\end{enumerate}
\item  a compact set $C_{-}\subset N_{-}$ such that 
\begin{enumerate}
\item $N_{-}\setminus C_{-}$ is the the disjoint union of $N_{--}$ and $N_{-+}$ ,
\item  $W\setminus C_{-}$ is the disjoint union of $N_{--}$ and $N_{-+}\cup C\cup N_{+}$. 
\end{enumerate}
\item a diffeomorphism $i:N_{+}\ra N_{-}$ which is orientation preserving and takes $N_{++}$ to $N_{-+}$ and $N_{+-}$ and to $N_{--}$.
\item an open subset $K\subset X$  with a connected open set $N\subset K$ such that $K\setminus N$ is compact, 
\item a compact set  $C_0\subset N$  such that $N\setminus C_0$ is the disjoint union of two open sets $N_{0-}$, $N_{0+}$ and such that $K\setminus C_0$ has two components $(K\setminus N)\cup N_{0-}$, and $N_{0+}$,
\item  a diffeomorphism $i_{-}:N\ra N_{-}$ with $i_{-}(N_{0-})=N_{--}$ and $i_{-}(N_{0-})=N_{-+}$.
\item an orientation preserving diffeomorphism 
$$\Phi:X\ra K\cup_{\{i_{-}:N\ra N_{-}\}}W\cup_{\{i:N_{+}\ra N_{-}\}} W\cup_{\{i:N_{+}\ra N_{-}\}} W\cup\cdots$$

\end{enumerate}

\end{de}

\def\End{\textrm{End}}
\def\Aut{\textrm{Aut}}

We will denote by  $W_i\subset X$ the inverse image (via $\Phi$) of the $i$-th copy of $W$  in the union above (for $i\geq 0$), and by $\textrm{End}(X)$ the union $\cup_{i\geq 0} W_i$. \\

Given an end periodic manifold $X$ we construct the associated "generalized torus" $Y$ to be the {\it closed} -$n$-manifold   $Y:=W\big /\sim_i$, where $\sim_i$ is the equivalence relation induced by the diffeomorphism $i:N_{+}\ra N_{-}$.

Note that  the closure $\overline\End(X)$  has an open neighborhood that can be naturally embedded in a cyclic cover of $\widetilde{Y}\to Y$, where 
$$\widetilde{Y}:=\cdots\cup_{\{i:N_{+}\ra N_{-}\}}W\cup_{\{i:N_{+}\ra N_{-}\}} W\cup_{\{i:N_{+}\ra N_{-}\}} W_{\{i:N_{+}\ra N_{-}\}}\cup\cdots
$$
is obtained by gluing in the obvious way a family of copies of $W$ parametrized by $\Z$.

We point out that Taubes's definition of a manifold with periodic ends is very general. In particular it is {\it not} required that $N$ is a tubular neighborhood of a smoothly embedded hypersurface. Adopting this general framework is essential for the topological results obtained in \cite{Taubes} and  which concern the exotic $\mathbb{R}^4$'s.

\vspace{1 pc}

We will see in section \ref{TwistedCo} that the 1-cohomology of $W$ and $Y$ can be related using a Mayer-Vietoris section (in the \'etale topology) which reads:

$$0\to H^0(Y,\Z)\to H^0(W,\Z)\stackrel{0}{\longrightarrow} H^0(N_+,\Z)\to H^1(Y,\Z)\to H^1(W,\Z)\to H^1(N,\Z)\to\dots
$$
Let $\gamma\in H^{1}(Y,\Z)$ be the image of $1\in H^0(N_+,\Z)$ in $H^1(Y,\Z)$. It is easy to see that $\gamma$, regarded as morphism $\pi_1(Y,p)\to \Z$ (for a point $p\in Y$) corresponds precisely  to the monodromy representation of the cover   $\widetilde{Y}\to Y$. In other words the morphism $\pi_1(Y,p)\to \Z$ induced by $\gamma$ is surjective and the normal subgroup $H\subset \pi_1(Y,p)$ associated with this cover coincides with $\ker(\gamma)$.

 Let $\tau_{Y}:Y\ra S^1$  be a smooth map representing the homotopy class corresponding to $\gamma$ under the standard identification  $H^{1}(Y,\Z)\ra[(Y,p),(S^1,1)]$, (see \cite{spanier}, chapter 8, Lemma 10)). Since the pull-back of $\gamma$ to $W$ and $\widetilde{Y}$ vanishes, it follows that  $\tau_{Y}$ has a lift $\tau_{\widetilde{Y}}:\widetilde{Y}\to \R$ satisfying the identity
 $$\tau_{\widetilde{Y}} (T(y))=\tau_{\widetilde{Y}}(y)+1\ ,
 $$
 where $T$ stands for the positive generator of $\Aut_Y(\widetilde{Y})\simeq\Z$.  Fixing a point $q\in \tilde Y$ and imposing $\tau(q)=0$, this lift becomes uniquely determined. Choosing a closed form $\theta$ on $Y$ representing the class $\gamma$ in de Rham cohomology, one can obtain   $\tau_{\widetilde{Y}}$ directly as a primitive of the pull-back of $\theta$ to  $\widetilde{Y}$.

We have identified an  open neighborhood of $\overline{ \End}(X)\subset X$ with its image in $\widetilde{Y}$ via the natural map. Therefore $\tau_{\widetilde{Y}}$ induces a smooth function  on an open neighborhood of $\overline{\End}(X)$. Using a cut-off function which is identically 1 on $\overline{\End}(X)$ we obtain an $\R$-valued function $\tau$ on $X$ which agrees with $\tau_{\widetilde{Y}}$ on $\overline \End(X)$.

\begin{de} Let $X$ be a manifold with one periodic end. A Riemannian metric $g$ on $X$ will be called end-periodic if its restriction to $\End(X)$ coincides with the pull-back of a metric on $Y$. A Riemannian manifold with one periodic end is a manifold with one periodic end endowed with an end-periodic metric.
\end{de}

A slightly more general notion is obtained requiring only  that $g$ coincides with the pull-back of a metric on $Y$ on the complement of a compact set. This generalization is not important because, changing $K$ accordingly, we come to an end periodic metric in the sense of our definition.\\

\def\Eg{\mathfrak{E}}

In a similar way one can define a (Riemannian) manifold with $k$ periodic ends. Denoting by $\Eg$ the set of ends of such a manifold we obtain, for every $e\in \Eg$ associated object
$$\End_e(X),\ W_e,\ W_{e,i},\ Y_e, \widetilde{Y}_e,\ \tau_e: X\to\R
$$
as in the case of manifolds with one periodic end.  We also put 
$$\End(X):=\cup_{e\in\Eg}\End_e(X), \  \End_{e,k}(X):=\cup_{i\geq k} W_{e,i}\  \End_k(X):=\cup_{e\in\Eg} \End_{e,k}(X).$$

For the definition of the weighted Sobolev spaces in the next section we will need the function
\begin{equation}\label{tau}
\tau:=\sum_{e\in \Eg} \tau_e
\end{equation}
on $X$. This function is proper and the sub level sets $X^{\leq a}:=\tau^{-1}((-\infty,a])$ are compact for every $a\in\R$. 

We remark that our construction gives, for every end $e\in \Eg$, two fundamental systems of neighborhoods of $e$, namely:
$$\End_{e,k}(X),\ X^{>a}_e:=\{\tau_e\}^{-1}(a,\infty) \hbox{ for } a>0.
$$

Note that  $\End_{e_k}(X)$ can be easily described at a union of copies of $W_e$, but its closure  can be a very complicated subset of $X$. On the other hand, if $a$ is a regular value of $\tau_e$, $X^{>a}_e$ will be the interior of the manifold with boundary $X^{\geq a}_e$.

 \pagebreak 
\section{End-periodic differential operators}

This  section is based on the article of Taubes (\cite{Taubes} section 3).
Let $X$ be a manifold periodic ends, we say that a vector bundle $\pi:E\ra X$ is  end-periodic, if for each end $e\in\Eg$ the restriction of $\pi$ to $\End(X)_{e}$, has been identified with the pullback of a vector bundle $\pi_e:E_e{}\ra Y_{e}$. In a similar way, we say that a connection $\nabla: \mathscr{C}_{c}^{\infty}(X,E)\ra \mathscr{C}^{\infty}_{c}(X,\Lambda^{1}\otimes E)$ is end-periodic if on each end $e\in\Eg$ it coincides with the pullback of a connection $\nabla_e:\mathscr{C}^{\infty}_{c}(Y_e,E_e)\ra \mathscr{C}^{\infty}_{c}(X,\Lambda^{1}\otimes E_e)$ under the identification.

\begin{de}
Let $(X,g)$ be an oriented Riemannian manifold with periodic ends, and $p:E\ra X$ be an end-periodic Euclidean(Hermitian) vector bundle endowed with an end-periodic Euclidean(Hermitian) connection $\nabla$. For $w\in \R$, $k\in\Nat$ and $1<p<\infty$ we define the norm
\begin{equation}\label{Sobolev}
\|\varphi\|_{L^{p,w}_{k}(E)}=\sum_{l=0}^{k}\int_{X} |e^{w\tau}\nabla^{l} \varphi|^{p} \cdot d\mathrm{vol}_g,    \hbox{    \textrm{        $\forall$}$\varphi\in\mathscr{C}^{\infty}_{c}(E)$}
\end{equation}
where $\tau$ is as in (\ref{tau}).\ We will call the completion of $\mathscr{C}^{\infty}_{c}(E)$ with respect to $\|\cdot\|_{L^{p,w}_{k}(E)}$, the weighted Sobolev space of weight \textrm{w}, $L^{p,w}_{k}(X,g)(E,\nabla)$. 
\end{de}

Note that $\nabla^l$ is obtained by tensoring $\nabla$ and the underlying Levi-Civita connection on tensor powers of $\Lambda^1$,  in the appropriate manner.
We also remark that the above weighted Sobolev space are independent of the choices made in the definition of $\tau$.

Let $(X,g)$ be a Riemannian manifold with periodic ends and $\pi_E:E\ra X$ and $\pi_F:F\ra X$ be periodic vector bundles. A differential operator $\partial:\mathscr{C}^{\infty}_{c}(E)\ra \mathscr{C}^{\infty}_{c}(F)$ is end periodic, if for each end $e\in\Eg$ the restriction of $\partial$ to $\End(X)_e$ coincides with the pullback of a differential operator $\partial_{e}:\mathscr{C}^{\infty}_{c}(Y_e,E_e)\ra \mathscr{C}^{\infty}_{c}(Y_e,F_e)$.\\

From now on we suppose our end-periodic vector bundles to be Euclidean (resp. Hermitian) and endowed with a periodic Euclidean (resp. Hermitian) connection.\\
We say that the extension of an end-periodic elliptic partial differential operator of order $r$, $\partial:\mathscr{C}^{\infty}_{c}(X,E)\ra \mathscr{C}^{\infty}_{c}(X,F)$, to $L^{2,w}_{\bullet}$ is Fredholm, if the natural extensions $\partial: L^{2,w}_{k+r}(E)\ra L^{2,w}_{k}(F)$  are Fredholm for all $k\in\Nat$.

The following theorem of Taubes (\cite{Taubes} Lemma 4.3) gives a necessary and sufficient condition for the extension of an end-periodic elliptic partial differential operator to weighted Sobolev spaces to be Fredholm.

\begin{thh}\label{Taubesexact}
Let $(X,g)$ be an end-periodic Riemannian manifold. Suppose that $\partial:\mathscr{C}^{\infty}_{c}(X,E)\ra \mathscr{C}^{\infty}_{c}(X,F) $ is an end-periodic elliptic differential operator of order $r$, over $X$.  Then the natural extensions of $\partial$ to $L^{2,w}_{\bullet}$ are  Fredholm if and only if for all $e\in\Eg$, and all $\xi\in\C^\times$ with $|\xi|=e^{w/2}$, the associated (elliptic) complex
\begin{align}\label{cohogroups}
0\ra\mathscr{C}^{\infty}_{c}(Y_e,E_e)\xrightarrow{\partial_e(\xi)} \mathscr{C}^{\infty}_{c}(Y_e,F_e)\ra 0
\end{align}
has vanishing cohomology groups.
\end{thh}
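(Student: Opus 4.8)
The plan is to prove both implications by Floquet--Bloch (Fourier--Laplace) theory on the periodic ends: a parametrix--patching argument for the ``if'' direction and a Weyl sequence construction for the ``only if'' direction. Since $\partial$ is elliptic, over any relatively compact open set it admits an interior parametrix modulo smoothing operators, so the only obstruction to global Fredholmness is concentrated on the ends. For each $e\in\mathfrak{E}$ I would replace the half-end $\mathrm{End}_e(X)=\cup_{i\geq 0}W_{e,i}$ by the infinite cyclic cover $\widetilde{Y}_e$, on which $\partial$ restricts to a genuinely $\Z$-periodic elliptic operator $\partial_e$, and reduce the theorem to the following local statement: \emph{$\partial_e$, acting on the $e^{w\tau_e}$-weighted Sobolev spaces over $\widetilde{Y}_e$, is invertible if and only if the complex $(\ref{cohogroups})$ is exact for every $\xi$ with $|\xi|=e^{w/2}$.} Granting this, a global parametrix for $\partial$ is assembled as $\sum_e\chi_e P_e\psi_e+\chi_0 P_0\psi_0$ out of the end parametrices $P_e=\partial_e^{-1}$ (which exist exactly when the criterion holds) and an interior parametrix $P_0$, glued by cut-off functions; the error terms $\partial P-\mathrm{id}$ and $P\partial-\mathrm{id}$ are smoothing operators that are either supported over compact sets or have exponentially decaying kernels, hence compact on every $L^{2,w}_k$, so $\partial$ is Fredholm uniformly in $k$. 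Conversely, failure of the criterion at a single end will produce, by the construction below, a normalised sequence in $L^{2,w}_k$ whose image in $L^{2,w}_{k-r}$ tends to $0$ and which has no convergent subsequence --- incompatible with Fredholmness.

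The tool behind the local statement is the Fourier--Laplace transform. On $\widetilde{Y}_e$, conjugating $\partial_e$ by $e^{w\tau_e}$ and transforming in the period variable $\tau_e$ identifies the conjugated operator with the holomorphic family $\partial_e(\xi)\colon\mathscr{C}^{\infty}(Y_e,E_e)\to\mathscr{C}^{\infty}(Y_e,F_e)$, $\xi\in\C^\times$, the weight $w$ prescribing the contour $|\xi|=e^{w/2}$. Each $\partial_e(\xi)$ is elliptic on the \emph{compact} manifold $Y_e$, so by the analytic Fredholm theorem $\xi\mapsto\partial_e(\xi)^{-1}$ is meromorphic with values in the bounded operators $L^2_k(Y_e)\to L^2_{k-r}(Y_e)$; moreover $\mathrm{index}\,\partial_e(\xi)$ is independent of $\xi$, so invertibility can only occur when that index vanishes. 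This is precisely what exactness of $(\ref{cohogroups})$ forces, and for the Dirac operator of Theorem \ref{FredDirac} it is the reason for imposing $c_1(\det(Q_e))^2+b_2(Y_e)=0$: since $b_+(Y_e)=0$ gives $\sigma(Y_e)=-b_2(Y_e)$, one has $\mathrm{index}_\C\sD_{A_e}=\tfrac18(c_1(\det(Q_e))^2+b_2(Y_e))$. If $(\ref{cohogroups})$ is exact for every $\xi$ on the circle $|\xi|=e^{w/2}$, then by openness $\partial_e(\xi)^{-1}$ is holomorphic and uniformly bounded on an annulus about that circle, and its inverse Fourier--Laplace transform is a bounded operator $P_e$ on the weighted spaces with $\partial_e P_e=\mathrm{id}$ over $\widetilde{Y}_e$, which is the local statement.

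For the converse, suppose $(\ref{cohogroups})$ fails for some $\xi_0$ with $|\xi_0|=e^{w/2}$; after replacing $\partial$ by its formal adjoint if necessary, we may assume $0\neq u_0\in\ker\partial_e(\xi_0)$. Lift $u_0$ to the section $s_0$ of $E_e$ over $\widetilde{Y}_e$ that equals $\xi_0^{\,n}u_0$ on the $n$-th copy of $W_e$; then $\partial_e s_0=0$, and $|\xi_0|=e^{w/2}$ is exactly the borderline growth rate for which $s_0$ lies locally in $L^{2,w}_k$ while its weighted norm over the first $N$ copies grows without bound. Cutting $s_0$ off by a function $\chi_N$ supported on those copies and renormalising produces $\phi_N$ with $\|\phi_N\|_{L^{2,w}_k}=1$ and $\partial_e\phi_N=(\partial_e\chi_N)s_0$ supported only in the two transition windows where $\chi_N$ has derivatives, so $\|\partial\phi_N\|_{L^{2,w}_{k-r}}\to 0$ while the mass of $\phi_N$ escapes into the end. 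Such a Weyl sequence cannot exist for a Fredholm operator, so $\partial$ is not Fredholm.

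The conceptual core is standard Floquet theory, but two points need care. The first is to make the Fourier--Laplace transform and its inverse rigorous on the \emph{weighted} Sobolev spaces over a half-infinite end and to extract exponential decay of the parametrix kernels from the holomorphy of $\partial_e(\xi)^{-1}$ on an annulus --- this is exactly where the hypothesis ``exact for \emph{all} $\xi$ with $|\xi|=e^{w/2}$'', an open condition supplying a uniform margin, is essential. The second, in the converse, is to estimate the truncation error sharply: one must use the borderline rate so that the contributions from the two windows stay $O(1)$ while the bulk weighted norm of $\chi_N s_0$ grows, guaranteeing $\|\partial\phi_N\|\to 0$ after renormalisation; a crude bound would fail here. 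Once the one-end statement is in hand, the patching over the finitely many ends and the independence of the conclusion from $k$ are routine.
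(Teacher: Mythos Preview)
The paper does not give its own proof of this theorem: it is quoted verbatim from Taubes \cite{Taubes}, Lemma~4.3, and used as a black box. Your sketch --- Fourier--Laplace (Floquet) transform on the cyclic cover $\widetilde{Y}_e$, analytic Fredholm theory for the holomorphic family $\partial_e(\xi)$ on the compact $Y_e$, parametrix patching for the ``if'' direction, and a Weyl sequence built from a lifted kernel element for the ``only if'' direction --- is precisely the strategy of Taubes's original proof, so there is nothing to compare against within the thesis itself.

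Two small comments on the write-up. First, the paragraph on the Dirac index and the hypothesis $c_1(\det Q_e)^2+b_2(Y_e)=0$ is out of place here: that condition plays no role in proving the general Fredholm criterion, only in \emph{verifying} its hypothesis for the specific operator in Proposition~\ref{Dirac}, and should be excised from this proof. Second, in the Weyl-sequence step the lift of $u_0$ is more accurately $s_0=e^{\lambda_0\tau_e}\tilde u_0$ with $e^{\lambda_0}=\xi_0$ and $\tilde u_0$ the periodic pull-back of $u_0$, rather than ``$\xi_0^{\,n}u_0$ on the $n$-th copy'': since $\tau_e$ is not constant on $W_{e,n}$, this is what actually makes $\partial_e s_0=0$ hold pointwise and makes the weighted $L^2$-contribution of each segment exactly independent of $n$, which is the borderline behaviour your renormalisation argument needs.
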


Here $\partial_e(\xi)$ denotes the differential operator $e^{-\lambda\tau_e}\partial (e^{\lambda \tau_e}\cdot)$, where $e^\lambda=\xi$. It does not depends on $\tau_e$, but only on $d\tau_e$, which on $\End_e(X)$ coincides with the pullback of a closed 1-form on $Y_e$, $\gamma_e$.
We remark that the vanishing of the cohomology of the complex (\ref{cohogroups}), is equivalent to the condition that the extension of (\ref{cohogroups}) to suitable Sobolev spaces is an isomorphism.

Another important result of Taubes (\cite{Taubes} Theorem 3.1), gives conditions which assure that the extension of an end-periodic elliptic partial differential operator is Fredholm for all $w\in\R$, but a discrete set.

\begin{thh}\label{Taubesgeneric}
Let $(X,g)$ be an end-periodic Riemannian manifold. Suppose that $\partial:\mathscr{C}^{\infty}_{c}(X,E)\ra \mathscr{C}^{\infty}_{c}(X,F)$  is an end-periodic elliptic differential operator and that for each $e\in\Eg$,  the index of $\partial_e$ is trivial, and the map
\begin{align*}
\sigma_{\partial_e}([\gamma_e]):H^{0}(\partial_e)\ra H^{1}(\partial_e)
\end{align*}
is injective. Then for all $w\in\R$ but a discrete set, the extension of the operator $\partial$ to $L^{2,w}_{\bullet}$ is Fredholm.
\end{thh}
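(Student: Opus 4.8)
The plan is to derive this from Taubes's Fredholmness criterion (Theorem~\ref{Taubesexact}) combined with the analytic Fredholm theorem applied to the holomorphic operator families $\xi\mapsto\partial_e(\xi)$ over $\C^{\times}$. By Theorem~\ref{Taubesexact}, the extension of $\partial$ to $L^{2,w}_{\bullet}$ is Fredholm exactly when, for every $e\in\Eg$ and every $\xi\in\C^{\times}$ with $|\xi|=e^{w/2}$, the operator $\partial_e(\xi)$ on the \emph{compact} manifold $Y_e$ is invertible (equivalently, the complex \eqref{cohogroups} is exact). So it suffices to prove that for each fixed $e$ the spectral set
$$B_e:=\{\xi\in\C^{\times}\mid \partial_e(\xi)\ \text{is not invertible}\}$$
is a discrete subset of $\C^{\times}$, and moreover that $B_e\ne\C^{\times}$. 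Granting discreteness, the set of bad radii $R_e:=\{|\xi|\mid\xi\in B_e\}$ is discrete in $(0,\infty)$: exhausting $\C^{\times}$ by the compact closed annuli $\{1/n\le|\xi|\le n\}$, each of which meets the discrete set $B_e$ in finitely many points, one sees that no $r\in(0,\infty)$ is an accumulation point of $R_e$. Hence the set of weights for which $\partial$ fails to be Fredholm, which is $\bigcup_{e\in\Eg}\{2\log|\xi|\mid\xi\in B_e\}$, is a finite union of discrete subsets of $\R$, hence discrete.

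To set up the family, recall that on $\End_e(X)$ the form $d\tau_e$ is the pull-back of the closed $1$-form $\gamma_e$ on $Y_e$, so $\partial_e(\xi)=e^{-\lambda\tau_e}\partial_e(e^{\lambda\tau_e}\cdot)$ (with $e^{\lambda}=\xi$) is canonically identified with $\partial_e$ twisted by the flat line bundle $L_\xi$ on $Y_e$ whose holonomy is the representation $\pi_1(Y_e)\to\Z$, $n\mapsto\xi^n$; in particular it is single-valued in $\xi$ and depends holomorphically on $\xi\in\C^{\times}$. Twisting by a flat line bundle does not change the principal symbol, so every $\partial_e(\xi)$ is elliptic, hence Fredholm on the compact $Y_e$; and it does not change the index, since $\mathrm{ch}(L_\xi)=1$ in rational cohomology --- or, more simply, by homotopy invariance of the index along the connected base $\C^{\times}$ together with the hypothesis $\mathrm{index}\,\partial_e=0$. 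Thus $\xi\mapsto\partial_e(\xi)$ is a holomorphic family of index-zero Fredholm operators over the connected complex manifold $\C^{\times}$. By the analytic Fredholm theorem, either $\partial_e(\xi)$ is non-invertible for \emph{every} $\xi$, or $B_e$ is a discrete analytic subset of $\C^{\times}$: locally $B_e$ is the zero set of a holomorphic Fredholm determinant (a section of the determinant line bundle), which on the connected $\C^{\times}$ is either identically zero or has discrete zeros.

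It remains to exclude the first alternative, and this is where the hypothesis on $\sigma_{\partial_e}([\gamma_e])$ enters; I expect this to be the crux of the argument. If $\ker\partial_e=0$, then $\partial_e$ is already invertible (its index is $0$), so $1\notin B_e$. Otherwise I would run a Lyapunov--Schmidt reduction near $\xi=1$: with respect to orthogonal splittings of source and target into $\ker\partial_e\oplus(\ker\partial_e)^{\perp}$ and $\mathrm{im}\,\partial_e\oplus\mathrm{coker}\,\partial_e$, the operator $\partial_e(\xi)$ has diagonal block equal, to leading order near $\xi=1$, to the invertible restriction $\partial_e\colon(\ker\partial_e)^{\perp}\to\mathrm{im}\,\partial_e$, so invertibility of $\partial_e(\xi)$ for $\xi$ near $1$ is equivalent to invertibility of a finite square matrix $M(\xi)$ with $M(1)=0$ whose derivative at $\xi=1$ is, up to the identification $\xi=e^{\lambda}$, the composite
$$\ker\partial_e\hookrightarrow \text{source}\xrightarrow{\ \partial_e'(1)\ }\text{target}\twoheadrightarrow\mathrm{coker}\,\partial_e,$$
which is precisely $\sigma_{\partial_e}([\gamma_e])$. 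By hypothesis this composite is injective, and since $\dim\ker\partial_e=\dim\mathrm{coker}\,\partial_e$ (the index is $0$) it is bijective, so $\det M(\xi)$ vanishes to finite order at $\xi=1$ and $\partial_e(\xi)$ is invertible on a punctured neighbourhood of $\xi=1$. In either case $B_e\ne\C^{\times}$, which by the dichotomy above forces $B_e$ to be discrete, and the proof is complete.

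The steps I expect to be routine are the identification of $\partial_e(\xi)$ with a flat twist and the invariance of symbol and index. The real work is in the last paragraph: precisely matching the first-order variation of the twisted family with the indicial map $\sigma_{\partial_e}([\gamma_e])$ (including getting the right conventions/normalizations for higher-order $\partial$), and verifying that the diagonal block of the Lyapunov--Schmidt reduction is uniformly invertible for $\xi$ near $1$, which rests on the closed-range property of $\partial_e$ on the orthogonal complement of its kernel.
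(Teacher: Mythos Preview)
The paper does not give its own proof of this statement: it is quoted verbatim as Taubes's result (\cite{Taubes}, Theorem 3.1), with only the remark that for first-order $\partial_e$ the map $\sigma_{\partial_e}(\gamma_e)$ is the principal symbol evaluated on $\gamma_e$. So there is no in-paper argument to compare against.

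That said, your sketch is the standard argument and is essentially what Taubes does: view $\lambda\mapsto e^{-\lambda\tau_e}\partial_e(e^{\lambda\tau_e}\cdot)$ as a holomorphic (in fact polynomial in $\lambda$) family of elliptic, index-zero operators on the compact $Y_e$, invoke the analytic Fredholm alternative, and exclude the degenerate branch by a Lyapunov--Schmidt reduction at $\lambda=0$ whose linearization is precisely $\sigma_{\partial_e}([\gamma_e])\colon\ker\partial_e\to\mathrm{coker}\,\partial_e$. Your computation that $M(\lambda)=\lambda\,\sigma_{\partial_e}([\gamma_e])+O(\lambda^2)$ is correct, and injectivity plus $\mathrm{index}\,\partial_e=0$ indeed force $\det M$ to vanish to finite order, giving invertibility on a punctured neighbourhood of $\lambda=0$. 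Two small points worth tightening: (i) the family is naturally polynomial in $\lambda$ rather than in $\xi=e^{\lambda}$, and the passage from ``discrete in the $\lambda$-plane'' to ``discrete in $\C^{\times}$'' uses that the bad set is $2\pi i$-periodic in $\lambda$ (because conjugation by the well-defined function $e^{2\pi i\tau_e}$ on $Y_e$ intertwines $\partial_e(\lambda)$ and $\partial_e(\lambda+2\pi i)$); (ii) for higher-order $\partial_e$ the identification of the first $\lambda$-coefficient with Taubes's $\sigma_{\partial_e}([\gamma_e])$ is by definition (\cite{Taubes}, p.~373), so there is nothing further to check there beyond unwinding his notation.
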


For the definition of the map $\sigma_{\partial_e}$ see (\cite{Taubes}, p. 373) . In what follows, it will be enough to know that in the case of a first order differential operator ${\partial_e}$, $\sigma_{\partial_e}(\gamma)$ is the  evaluation of the symbol of $\partial_e$ applied to the 1-form $\gamma_e$.

\section{The twisted cohomology of $Y$}\label{TwistedCo}

Let $(X,g)$ be a Riemannian manifold with periodic ends. The objective of this section is to prove, that under certain topological assumptions, the operator $\Delta_+:L^{2}_{k+2}(\Lambda^2_+ X)\ra L^{2}_{k}(\Lambda^2_+ X)$ is Fredholm.
\vspace{1 pc}

Suppose for the moment that $X$ has only one end. 
As explained in section \ref{periodicend}, the generalized torus $Y$ is identified with the quotient $W/\sim_i$, where $\sim_i$ is the equivalence relation induced by the diffeomorphism $i:N_+\to N_-$.  The  canonical projection  $\pi:W\to Y$ is an \'etale cover of $Y$, and it can be used to compute the cohomology of a sheaf of Abelian groups $\mathcal{F}$ on $Y$.  The fiber product $W\times_YW$ decomposes as

$$W\times_Y W =\Delta\coprod N_+\coprod N_-,
$$
 where $\Delta \simeq W$ is identified with the diagonal of the fiber product,
$N_+$ is identified with the component $\{(w_+,i(w_+))|\
w\in N_+\}$ of the fiber product, and  $N_-$  is identified with the component $\{(w_-,i^{-1}(w_-))|\ w\in N_-\}$ of the fiber product. A 1-cocyle for the \v{C}ech cohomology  of the \'etale cover $\pi:W\to
Y$ is determined by its restriction to $N_+$.

The spectral sequence associated with this cover (see \cite{AM}, Corollary 8.15 p.107) has
$$E_\infty^{p,q}=E_2^{p,q}=\left\{\begin{array}{ccc}
\ker (u_q:H^q(W,\pi^*(\mathcal{F}))\to H^q(N_+,\pi^*(\mathcal{F}))) & \rm when & p=0\\ \\
H^q(N_+,\pi^*(\mathcal{F}))/u_q (H^q(W,\pi^*(\mathcal{F})))  & \rm when & p=1\\ \\
  0 & \rm when & p>1
\end{array}\right. ,
$$
where, for $k\in\mathbb{N}$ and $h\in H^k(W,\pi^*(\mathcal{F}))$ we put
$$u_k(h):={h}|_{N_+}- i^*({h}|_{N_-} ).
$$
In other words, $E_2^{p,q}$ is the p-th \v{C}ech cohomology group associated with the \'etale cover $\pi$ and the functor $(s:U\to Y)\mapsto H^q(U,s^*(\mathcal{F}))$. 
This yields a Mayer-Vietoris type long exact sequence
$$0\to H^0(Y,\mathcal{F})\to H^0(W,\pi^*(\mathcal{F}))\stackrel{u_0}{\longrightarrow} H^0(N_+,\pi^*(\mathcal{F}))\to H^1(Y,\mathcal{F})\to   $$
$$\to H^1(W,\pi^*(\mathcal{F}))\stackrel{u_1}{\longrightarrow} H^1(N_+,\pi^*(\mathcal{F}))\to H^2(Y, \mathcal{F})\to\dots\ .
$$
For a fixed twisting element $\xi\in\mathbb{C}^*$ we are interested  in the 1-cohomology of the locally constant sheaf $\mathbb{C}_\xi$ on $Y$ obtained by factorizing the constant sheaf $\mathbb{C}$ on $W$ by the $i$-covering isomorphism $(i,\xi^{-1}\mathrm{Id}_{\mathbb{C}})$. In this case we have obvious identifications
$$H^k(W,\pi^*(\mathbb{C}_\xi))=H^k(W, \mathbb{C}),$$
and, via these identifications, we have
$$ u_k(h)=h|_{N_+}-\xi i^*(h|_{N_-}).
$$
In particular, since $N_+$ and $W$ are connected, $u_0=(1-\xi)\mathrm{id}_\mathbb{C}:\mathbb{C}\to\mathbb{C}$, which shows that 
\begin{equation}\label{H0}
H^0(Y,\mathbb{C}_\xi)=\left\{\begin{array}{ccc}
\mathbb{C} &\rm when &\xi=1\\
0 &\rm when &\xi\ne 1
\end{array}\right. .
\end{equation}
Define now the morphisms $v_\pm:H^1(W,\mathbb{Z})\to H^1(N_+,\mathbb{Z})$ by
$$v_+(h):= h|_{N_+}\ ,\ v_-(h):= i^*(h|_{N_-}),
$$
and put 
$$K_\xi:=\ker\big((v_+\otimes\mathrm{id}_\mathbb{C})-\xi (v_-\otimes\mathrm{id}_\mathbb{C})\big)\subset  H^1(W,\mathbb{C}).
$$
For $\xi=1$ we obtain the exact sequence
\begin{equation}\label{H11}
0\to\mathbb{C}\to H^1(Y,\mathbb{C})\to K_1\to 0, 
\end{equation}
and for $\xi\ne 1$ we obtain the isomorphism
\begin{equation}\label{H12}
H^1(Y,\mathbb{C}_\xi)\stackrel{\simeq}{\longrightarrow} K_\xi. 
\end{equation}

Note now that $\mathbb{C}_\xi$ is the sheaf of parallel sections of a flat complex line bundle $(L^\xi, \nabla_\xi)$ on $Y$.  Therefore $H^*(Y,\mathbb{C}_\xi)$ can also be computed using the de Rham theorem for the flat linear connection $\nabla_\xi$. In other words, we can compute   $H^*(Y,\mathbb{C}_\xi)$ using the resolution 
$$0\longrightarrow \mathbb{C}_\xi\hookrightarrow \mathscr{E}^0_Y\stackrel{d_\xi}{\longrightarrow} \mathscr{E}^1_Y\stackrel{d_\xi}{\longrightarrow} \mathscr{E}^2_Y\stackrel{d_\xi}{\longrightarrow}\dots 
$$
of $\mathbb{C}_\xi$ by fine sheaves. Here $d_\xi$ stands for the de Rham operator associated with
$\nabla_\xi$, regarded as a sheaf morphism. We obtain canonical isomorphisms
$$H^k(Y,\mathbb{C}_\xi)=H^k(\mathscr{C}_\xi),
$$
where $\mathscr{C}_\xi$ is the Rham complex
$$0\longrightarrow A^0(Y,L^\xi)\stackrel{d_\xi}{\longrightarrow}A^1(Y,L^\xi)\stackrel{d_\xi}{\longrightarrow}A^2(Y,L^\xi)\stackrel{d_\xi}{\longrightarrow}\dots . \eqno{(\mathscr{C}_\xi^{\rm })}
$$
In particular we obtain an isomorphism
\begin{equation}\label{Rham} H^1(Y,\mathbb{C}_\xi)=\frac{\ker[d_\xi: A^1(Y,L^\xi)\rightarrow   A^2(Y,L^\xi) ]}{\textrm{Im }[d_\xi: A^0(Y,L^\xi)\ra A^1(Y,L^\xi)]}\ .
\end{equation}
Suppose now that $(X,g)$ is a Riemannian 4-manifold with periodic ends. For $\xi\in S^1$ consider the SD-elliptic complex %
$$0\longrightarrow  A^0(Y,L^\xi)\stackrel{d_\xi}{\longrightarrow }A^1(Y,L^\xi)\stackrel{d_\xi^+}{\longrightarrow }A^2_+(Y,L^\xi)\longrightarrow  0 \eqno{(\mathscr{C}_{\xi}^{+})}
$$
associated with the flat unitary connection  $\nabla_\xi$.
\begin{re} \label{d+} Suppose $\xi\in S^1$. Then 
$$\ker[d_\xi: A^1(Y,L^\xi)\rightarrow    A^2(Y,L^\xi) ]=\ker[d_\xi^+: A^1(Y,L^\xi)\rightarrow   A^2_+(Y,L^\xi)] \ ,
$$
in particular one has an obvious identification
$$H^1(\mathscr{C}_\xi)=H^1(\mathscr{C}_{\xi}^+).$$
\end{re}
\begin{proof}

The cohomology of the elliptic complex $(\mathscr{C}_\xi)$ can be computed using Hodge's theorem, i.e., identifying $H^k(\mathscr{C}_\xi)$  with the $k$-th harmonic space of  $(\mathscr{C}_\xi)$, $\mathbb{H}^k(Y,\mathbb{C}_\xi)$. 

 Suppose now that $\alpha\in \ker d_\xi^+$.  Then $d_\xi\alpha$ will be a $d_\xi$-exact anti-selfdual $L^\xi$-valued 2-form.   For $\xi\in S^1$, the flat connection $\nabla_\xi$ becomes a unitary connection, hence the adjoint $d_\xi^*$ can be computed using the usual formula involving $d_\xi$ and the Hodge star operator. Therefore, as in the non-twisted case, we conclude that  $d_\xi\alpha$ belongs to the second harmonic space of $(\mathscr{C}_\xi)$  and is  $d_\xi$-exact. By Hodge's theorem, this implies $d_\xi\alpha=0$.
\end{proof}

For $\xi\in S^1$ put now
\begin{align*}
\Delta_{+,\xi}&:= d_\xi^+ d_\xi^*: A^2_+(Y,L^\xi)\longrightarrow A^2_+(Y,L^\xi)\\
\mathbb{H}^2_{+,\xi}&:=\ker \Delta_{+,\xi}=\ker [d^*_\xi: A^2_+(Y,L^\xi)\to  A^1(Y,L^\xi)]
\end{align*}

\begin{prop}\label{dimb2} Let $(X,g)$ be a Riemannian 4-manifold with one periodic end. Assume that 
\begin{equation*}
K_\xi=\{0\} \ \hbox{        \textrm{                              }  $\forall$ $\xi\in S^1$}. \eqno{(C)}
\end{equation*}
Then $\dim \mathbb{H}^2_{+,\xi} $ is independent of $\xi\in S^1$.
\end{prop}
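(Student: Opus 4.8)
The statement should follow immediately from the constancy of the index of the family of SD-elliptic complexes $(\mathscr{C}_\xi^+)$ as $\xi$ ranges over the \emph{connected} parameter space $S^1$, once we combine it with the vanishing results already established in this section. Recall that, by Hodge theory for the elliptic complex $(\mathscr{C}_\xi^+)$, its $0$-th, $1$-st and $2$-nd cohomology are $H^0(Y,\C_\xi)$, $H^1(\mathscr{C}_\xi^+)$ and $\mathbb{H}^2_{+,\xi}$ respectively, so the Euler characteristic of $(\mathscr{C}_\xi^+)$ is exactly
$$ n_\xi := \dim H^0(Y,\C_\xi) - \dim H^1(\mathscr{C}_\xi^+) + \dim \mathbb{H}^2_{+,\xi}. $$

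\textbf{Step 1: the Euler characteristic is $\xi$-independent.} The flat unitary line bundles $(L^\xi,\nabla_\xi)$, and hence the operators $d_\xi$, $d_\xi^+$, $d_\xi^*$, depend continuously (indeed real-analytically) on $\xi\in S^1$. Rolling up $(\mathscr{C}_\xi^+)$ into a single elliptic operator, e.g. $d_\xi^*\oplus d_\xi^+:A^1(Y,L^\xi)\to A^0(Y,L^\xi)\oplus A^2_+(Y,L^\xi)$, whose index equals $-n_\xi$, we get a continuous family of elliptic operators over the connected base $S^1$; therefore its index, and so $n_\xi=:n$, is independent of $\xi$. (Alternatively one may invoke Atiyah--Singer: since $L^\xi$ is flat, $n=1-b_1(Y)+b_+(Y)$, but this is not needed.)

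\textbf{Step 2: evaluate the other two terms.} For $\xi\in S^1\setminus\{1\}$ one has $H^0(Y,\C_\xi)=0$ by (\ref{H0}); and by Remark \ref{d+} together with (\ref{Rham}) and (\ref{H12}), $H^1(\mathscr{C}_\xi^+)=H^1(\mathscr{C}_\xi)=H^1(Y,\C_\xi)\cong K_\xi$, which vanishes by hypothesis $(C)$. Hence $\dim\mathbb{H}^2_{+,\xi}=n$ for every $\xi\ne 1$. For $\xi=1$, connectedness of $Y$ gives $\dim H^0(Y,\C)=1$, and the exact sequence (\ref{H11}) together with $K_1=0$ gives $\dim H^1(\mathscr{C}_1^+)=\dim H^1(Y,\C)=1$; so $n=1-1+\dim\mathbb{H}^2_{+,1}$, i.e. $\dim\mathbb{H}^2_{+,1}=n$ as well. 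Combining the two cases, $\dim\mathbb{H}^2_{+,\xi}=n$ for all $\xi\in S^1$, which is the claim.

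\textbf{Main obstacle.} There is no serious difficulty here: the only point needing care is the constancy of the index of the rolled-up family in Step 1, which is the standard fact that the analytic index is a locally constant (continuous, integer-valued) function on a connected parameter space; everything else has essentially been recorded in the preceding discussion. It is worth noting that condition $(C)$ enters only to kill $K_\xi$ uniformly in $\xi$ — it is precisely what makes $H^1(\mathscr{C}_\xi^+)$ of \emph{constant} dimension ($0$ for $\xi\ne1$, $1$ for $\xi=1$), which together with the jump of $\dim H^0$ at $\xi=1$ exactly compensates so that $\dim\mathbb{H}^2_{+,\xi}$ does not jump.
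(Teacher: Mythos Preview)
Your proof is correct and follows essentially the same route as the paper: compute $H^0(\mathscr{C}_\xi^+)$ and $H^1(\mathscr{C}_\xi^+)$ via (\ref{H0}), (\ref{H11}), (\ref{H12}) and Remark~\ref{d+}, then use the $\xi$-independence of the index of $(\mathscr{C}_\xi^+)$ to conclude. You are simply more explicit than the paper about why the index is constant and about the case-by-case cancellation at $\xi=1$.
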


\begin{proof}
 Suppose that $(C)$ holds, then using (\ref{H0}), we obtain
$$H^0(\mathscr{C}_{\xi}^{+})=H^0(\mathscr{C}_{\xi})=H^0(Y,\mathbb{C}_\xi)\simeq\left\{\begin{array}{ccc}
\mathbb{C}&\rm when & \xi=1\ \ \\
0 &\rm when & \xi\ne 1\ .
\end{array}\right. .
$$  
On the other hand, it follows from Remark \ref{d+}, and the formulae (\ref{H11}), (\ref{H12}), that
$$H^1(\mathscr{C}_{\xi}^{+})=H^1(\mathscr{C}_{\xi})=H^1(Y,\mathbb{C}_\xi)\simeq\left\{\begin{array}{ccc}
\mathbb{C}&\rm when & \xi=1\ \ \\
0 &\rm when & \xi\ne 1\ .
\end{array}\right. .
$$
Since the index of the complex $(\mathscr{C}_{\xi}^+)$ is independent of $\xi$, the result follows. 
\end{proof}
Note that assuming condition $(C)$ holds for every end $e\in\Eg$, the above Proposition extends to case of Riemannian 4-manifolds with multiple periodic ends.

\begin{re} Condition $(C)$ holds automatically when $H^1(W,\R)=0$. Note that the weaker condition $b_1(Y)=1$ does not imply $(C)$. Indeed, when $b_1(Y)=1$ it follows $K_1=\{0\}$, but we have no control on $K_\xi$  for $\xi\in S^1\setminus\{1\}$. 
\end{re}

\begin{re} Suppose $H^1(N_+,\R)=0$ (which holds for instance when $N_+$ is a bicolar around an embedded locally flat rational 3-sphere). Then the following conditions are equivalent:
\begin{enumerate}
\item  $(C)$  holds,
\item $H_1(W,\R)=0$,
\item $b_1(Y)=1$
\end{enumerate}
\end{re}

%

Notice that the condition $H_{1}(W_e,\Z)$ is torsion implies that $b_{1}(W_e)=0$, \textit{if} $H_{1}(W_e,\Z)$ is finitely generated. 

\begin{thh}\label{fredholmdelta}
Let $(X,g)$ be a Riemannian 4-manifold with periodic ends such that for each end $e\in \Eg$ the condition $(C)$ holds and $b^{+}_{2}(Y_e)=0$. Then the operator $\Delta_{+}:L^{2}_{k+2}(i\Lambda^{2}_{+}X)\ra L^{2}_{k}(i\Lambda^{2}_{+}X)$ is Fredholm, for any $k\in\Nat$.
\end{thh}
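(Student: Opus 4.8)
The plan is to apply Taubes's Fredholmness criterion (Theorem \ref{Taubesexact}) to the end-periodic elliptic operator $\Delta_+$ with weight $w=0$, i.e.\ for twisting parameters $\xi\in S^1$. Concretely, for each end $e\in\Eg$ the induced operator on the generalized torus $Y_e$ is $\Delta_{+,\xi,e}=d_\xi^+ d_\xi^*$ acting on $A^2_+(Y_e,L^\xi)$, and Taubes's criterion requires that the associated elliptic complex (here just the single operator $\Delta_{+,\xi,e}$ viewed as a two-term complex) have vanishing cohomology for every $\xi\in S^1$. Since $\Delta_{+,\xi,e}$ is an elliptic self-adjoint operator on a closed manifold, this is equivalent to $\mathbb{H}^2_{+,\xi,e}=\ker\Delta_{+,\xi,e}=0$ for all $\xi\in S^1$. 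So the whole theorem reduces to showing $\mathbb{H}^2_{+,\xi,e}=0$ for every $\xi\in S^1$ and every end $e$.

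First I would fix an end $e$ and drop it from the notation. By hypothesis condition $(C)$ holds, i.e.\ $K_\xi=\{0\}$ for all $\xi\in S^1$ (this is where the topological hypothesis ``$H_1(W_e,\Z)$ torsion'' enters: it forces $H^1(W_e,\R)=0$, hence $K_\xi=0$ for all $\xi$; more precisely one uses the vanishing $H^1(Y_e,\C_\xi)=0$ for $\xi\neq 1$ established via the \'etale Mayer--Vietoris sequence). Proposition \ref{dimb2} then gives that $\dim\mathbb{H}^2_{+,\xi}$ is independent of $\xi\in S^1$. Therefore it suffices to evaluate this common dimension at the single value $\xi=1$. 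At $\xi=1$ the flat line bundle $L^1$ is trivial and $\Delta_{+,1}$ is the ordinary self-dual Laplacian on $Y$, whose kernel $\mathbb{H}^2_+(Y,\R)\otimes\C$ has complex dimension $b_+(Y_e)$. By the second hypothesis $b_+^2(Y_e)=0$, so $\mathbb{H}^2_{+,1}=0$, and hence by the independence statement $\mathbb{H}^2_{+,\xi}=0$ for all $\xi\in S^1$.

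Having checked the hypothesis of Theorem \ref{Taubesexact} at every $\xi$ with $|\xi|=e^{0/2}=1$ and every end, I conclude that the natural extensions $\Delta_+\colon L^{2}_{k+2}(i\Lambda^2_+)\to L^{2}_{k}(i\Lambda^2_+)$ are Fredholm for all $k\in\Nat$; this is exactly the assertion. (One should note that $\Delta_+$ is indeed end-periodic and elliptic of order $2$, since on each end it is the pullback of $\Delta_{+,e}$ on $Y_e$, and ellipticity of $d^+ d^*$ on self-dual two-forms is standard, so Taubes's machinery applies.) The small remaining bookkeeping is to observe that the $i\R$-coefficients play no role: complexifying, $\Delta_+$ on $i\Lambda^2_+$ is identified with $\Delta_+$ on the complexified bundle, and Fredholmness is unaffected.

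The main obstacle — already dispatched by the earlier sections — is the vanishing $H^1(Y_e,\C_\xi)=0$ for $\xi\in S^1\setminus\{1\}$, equivalently condition $(C)$, which is what feeds into Proposition \ref{dimb2}; granting that, the proof here is a short assembly: reduce to kernels on the tori via Taubes, use $\xi$-independence of $\dim\mathbb{H}^2_{+,\xi}$ to pass to $\xi=1$, and kill the $\xi=1$ term using $b_+(Y_e)=0$. No genuinely new analytic difficulty arises at this stage.
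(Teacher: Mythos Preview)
Your proof is correct and follows exactly the paper's own argument: apply Taubes's criterion (Theorem \ref{Taubesexact}) at $w=0$, reduce to showing $\mathbb{H}^2_{+,\xi,e}=0$ for all $\xi\in S^1$, invoke Proposition \ref{dimb2} (via condition $(C)$) for the $\xi$-independence of $\dim\mathbb{H}^2_{+,\xi}$, and then use $b^+_2(Y_e)=0$ at $\xi=1$. The only minor quibble is your parenthetical about ``$H_1(W_e,\Z)$ torsion'': the theorem as stated assumes condition $(C)$ directly, so that hypothesis is not needed here and your aside slightly muddies which assumption is doing the work.
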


\begin{proof}
This follows theorem \ref{Taubesexact} applied to operator $\Delta_+$, together with Proposition \ref{dimb2} and the fact that the hypothesis $b_{2}^{+}(Y_e)=0$, is satisfied for every $e\in\Eg$.
\end{proof}

\section{Hodge decomposition}\label{hodgedeco}

In this section we will prove a Hodge type decomposition for the space of $1$-forms in admissible Riemannian 4-manifolds with periodic ends.
%
%
%
%

For a subspace $A\subseteq L^{2,w}$ denote by $A^{\perp_{L^{2,-w}_k}}$ the annihilator (orthogonal complement) of $A$ in $L^{2,-w}_k$ with respect to the $L^2$  inner product.

\begin{lemma}\label{duality}
Let $(X,g)$ be a Riemannian manifold with periodic ends and $p:E\ra X$ and $q:F\ra X$ be periodic Euclidean(Hermitian) vector bundles over $X$. Suppose that
$P$ is a periodic differential operator of order $r$ that extends to a Fredholm map $P:L^{2,w}_{k+r}(E)\ra L^{2,w}_{k}(F)$. Then
\begin{align*}
\textrm{Im }[P:L^{2,w}_{k+2r}(E)\ra L^{2,w}_{k+r}(F)]= \ker[P^*:L^{2,-w}_{k+r}(E)\ra L^{2,-w}_{k}(F)]^{\perp_{L^{2,w}_{k+r}}}
\end{align*}
and thus
\begin{align*}
\mathrm{codim}[P:L^{2,w}_{k+2r}(E)\ra L^{2,w}_{k+r}(F)=\dim \ker[P^*:L^{2,-w}_{k+r}(E)\ra L^{2,-w}_{k}(F)]
\end{align*}
\end{lemma}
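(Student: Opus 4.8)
The plan is to prove the asserted equality of the image of $P$ with the annihilator of $\ker P^{*}$, from which the codimension formula is immediate (since for any subspace $A$ of a Hilbert space $\mathrm{codim}\, A^{\perp} = \dim A$ when $A$ is finite dimensional, and $\ker P^{*}$ is finite dimensional because $P^{*}$ is Fredholm). The main inputs are: (i) the formal adjoint relation $\langle P\varphi,\psi\rangle = \langle \varphi, P^{*}\psi\rangle$, valid for compactly supported sections and extending by density to the relevant weighted Sobolev pairings, where the crucial point is that the $L^{2}$ pairing puts $L^{2,w}$ in duality with $L^{2,-w}$; (ii) the fact that $P : L^{2,w}_{k+r}(E)\to L^{2,w}_{k}(F)$ is Fredholm, hence has closed range, and its adjoint (with respect to the $L^{2,w}$/$L^{2,-w}$ pairing) is (a suitable extension of) the formal adjoint $P^{*}$; (iii) elliptic regularity, to pass between the different Sobolev levels $k+2r$, $k+r$, $k$ appearing in the statement.

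First I would fix the pairing: the $L^{2}$ inner product $(\alpha,\beta)\mapsto \int_{X}\langle\alpha,\beta\rangle\, d\mathrm{vol}_{g}$ identifies $(L^{2,w}_{j})^{*}$ with $L^{2,-w}_{-j}$, but here one only needs the continuous nondegenerate pairing $L^{2,w}_{k+r}\times L^{2,-w}_{k+r}\to\R$ (in fact between any matching pair). For $\varphi\in\mathscr{C}^{\infty}_{c}(E)$ and $\psi\in\mathscr{C}^{\infty}_{c}(F)$ one has $\int_{X}\langle P\varphi,\psi\rangle = \int_{X}\langle\varphi,P^{*}\psi\rangle$ with no boundary terms, and both sides are continuous for the $L^{2,w}_{k+r}(E)\times L^{2,-w}_{k+r}(F)$, respectively $L^{2,w}_{k}(E)\times L^{2,-w}_{k}(F)$, norms; density of $\mathscr{C}^{\infty}_{c}$ gives the identity for all elements of these spaces. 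This shows at once that $\mathrm{Im}[P:L^{2,w}_{k+2r}(E)\to L^{2,w}_{k+r}(F)]$ is contained in $\{\psi : \langle\psi,\chi\rangle = 0\ \forall\, \chi\in\ker[P^{*}:L^{2,-w}_{k+r}(F)\to L^{2,-w}_{k}(E)]\}$, which is the asserted annihilator $\ker(P^{*})^{\perp_{L^{2,w}_{k+r}}}$. (Note the statement as typed has a small index slip — $P$ maps $E$ to $F$, so $P^{*}$ maps $F$ to $E$ — which I would silently correct.)

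For the reverse inclusion, the key step is that $P:L^{2,w}_{k+2r}(E)\to L^{2,w}_{k+r}(F)$ is Fredholm — this is exactly the content of "$P$ extends to a Fredholm map on $L^{2,w}_{\bullet}$" in the sense defined just before Theorem \ref{Taubesexact}, applied at level $k+r$ instead of $k$ — hence its range is closed. A closed subspace of a Hilbert space equals the annihilator of its annihilator, so it suffices to check that the annihilator (in $L^{2,-w}_{k+r}$, or equivalently, by elliptic regularity, in smooth sections) of $\mathrm{Im}[P:L^{2,w}_{k+2r}(E)\to L^{2,w}_{k+r}(F)]$ coincides with $\ker[P^{*}:L^{2,-w}_{k+r}(F)\to L^{2,-w}_{k}(E)]$. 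If $\chi$ annihilates the image, then $\int\langle P\varphi,\chi\rangle = 0$ for all $\varphi\in\mathscr{C}^{\infty}_{c}(E)\subset L^{2,w}_{k+2r}(E)$, i.e. $P^{*}\chi = 0$ in the distributional sense; since $P^{*}$ is elliptic of order $r$, elliptic regularity (interior, applied end-periodically — here one uses that the setup is uniformly elliptic with bounded geometry so that local elliptic estimates patch together) upgrades $\chi\in L^{2,-w}_{k+r}$ to a genuine element of $\ker[P^{*}:L^{2,-w}_{k+r}(F)\to L^{2,-w}_{k}(E)]$, indeed to a smooth section. Conversely any such $\chi$ annihilates the image by the adjoint identity from the previous paragraph. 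Combining closed range with the bipolar theorem gives the equality of sets, and taking dimensions of the finite-dimensional $\ker P^{*}$ yields the codimension formula.

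**The main obstacle** I anticipate is purely bookkeeping: making the duality between weighted spaces precise and checking that the formal adjoint $P^{*}$ of an end-periodic operator is again end-periodic elliptic of the same order and that its $L^{2,w}$–Fredholm extension is literally the Hilbert-space adjoint of $P$ under the $L^{2,w}/L^{2,-w}$ pairing (rather than some other extension) — this requires invoking uniqueness of the closed extension, which follows from the density of $\mathscr{C}^{\infty}_{c}$ together with the elliptic estimate. The elliptic-regularity step also deserves a word, since the manifold is non-compact; but because all the geometry is end-periodic (hence of bounded geometry), the standard interior elliptic estimates hold uniformly, so regularity and the identification of kernels on different Sobolev levels go through exactly as in the compact case. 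No new analytic input beyond Taubes's Fredholmness framework already recalled in the previous section is needed.
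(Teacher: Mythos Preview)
Your proposal is correct and follows essentially the same approach as the paper: identify the $L^2$-annihilator of the image with the distributional kernel of $P^*$, upgrade this via elliptic regularity to the smooth/$L^{2,-w}_{k+r}$ kernel (so that the kernel is the same at every Sobolev level), and then use closed range plus the double-annihilator theorem to get the reverse inclusion. The only organizational difference is that the paper first carries out the bipolar step at the base level $L^{2,w}\leftrightarrow L^{2,-w}$ (where the $L^2$ pairing is literally the Banach-space duality) and then lifts to level $k+r$ by regularity, whereas you work directly at level $k+r$ and absorb the regularity into the identification of annihilators---your parenthetical about the true dual being $L^{2,-w}_{-j}$ shows you see exactly where that bookkeeping matters.
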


\begin{proof}
Let $I$ be the image of $P:L^{2,w}_{r}\ra L^{2,w}$. Since $P$ is Fredholm, $I$ is a closed subspace of $L^{2,w}$ of finite codimension. 
The dual of $L^{2,w}$ can be identified with $L^{2,-w}$, the duality pairing corresponding to the $L^2$ inner product.

The $L^2$ orthogonal complement of $I$ in $L^{2,-w}$, $I^{\perp_{L^{2,-w}}}$ coincides with its annihilator in the dual of $L^{2,w}$, via the above identification.
Let now $v\in L^{2,-w}$ be an element of $I^{\perp_{L^{2,-w}}}$, then 
\begin{equation}\label{supp}
\langle P(x),v\rangle_{L^2}=0 \hbox{ for all  } x\in L^{2,w}_{r}
\end{equation}
This implies that $P^{*}v=0$ as a distribution;, and thus $v\in \mathscr{C}^{\infty}$. Applying standard regularity theorems (see \cite{LM} formula (2.4) p. 420,  and \cite{E} Lemma 1.30) we know that $\ker P^*\cap L^{2,-w}=\ker P^*\cap L^{2,-w}_{k}$ for every $k\in\Nat$. Let $K$ be this kernel and let $I_k\subset L^{2,w}_k$ be the image of $P:L^{2,w}_{k+d}\ra L^{2,w}_k$. We show that $I_k$ coincides with the $L^2$-orthogonal complement of $K$ in $L^{2,w}_{k}$, $K^{\perp_{L^{2,w}_{k}}}$.  If $u\in K^{\perp_{L^{2,w}_{k}}}$ then, $u\in K^{\perp_{L^{2,w}}}$ which, we know, is $I$, and also  $u\in L^{2,w}_{k}$. Thus $u=P(x)$ where $x\in L^{2,w}$. Using regularity we get, from $u=P(x)\in L^{2,w}_k$, we get $u\in L^{2,w}_{k+d}$, and so $u\in I_k$.

\end{proof}

\begin{coro}\label{h+=h-}
Suppose that $(X,g)$ is an Riemannian 4-manifold with periodic ends such that for every end $e\in\Eg$ condition $(C)$ holds and $b^{+}_{2}(Y_e)=0$ . Then there exists $\varepsilon>0$  such that for all $w\in(0,\varepsilon)$
\begin{align*}
\mathbb{H}_{-w}^{+}=\mathbb{H}^{+}_{w}=\mathbb{H}^{+}_{0}.
\end{align*}
\end{coro}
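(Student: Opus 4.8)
The plan is to combine three facts already at our disposal: the Fredholmness of $\Delta_+$ at weight $0$ together with its persistence for nearby weights, the formal self-adjointness of $\Delta_+=d^+(d^+)^*$ combined with the duality Lemma~\ref{duality}, and the obvious continuous inclusions between weighted $L^2$-spaces. Throughout, $\mathbb{H}^+_w$ denotes $\ker[\Delta_+\colon L^{2,w}_{k+2}(i\Lambda^2_+X)\to L^{2,w}_k(i\Lambda^2_+X)]$, which does not depend on $k$ by elliptic regularity.

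First I would fix the interval of weights. By Theorem~\ref{fredholmdelta}, $\Delta_+\colon L^2_{k+2}(i\Lambda^2_+X)\to L^2_k(i\Lambda^2_+X)$ is Fredholm; via Theorem~\ref{Taubesexact} this says that for every $e\in\Eg$ the twisted operator $\Delta_{+,\xi}$ on $Y_e$ is invertible for all $\xi\in S^1$ (it is formally self-adjoint of index $0$, so invertibility amounts to $\mathbb{H}^2_{+,\xi}=0$, which holds by Proposition~\ref{dimb2} and $b_2^+(Y_e)=0$). The hypotheses of Theorem~\ref{Taubesgeneric} are then met by $\partial=\Delta_+$: the induced operator $\Delta_{+,1}$ on the closed manifold $Y_e$ has index $0$, and the map $\sigma_{\Delta_{+,1}}([\gamma_e])\colon H^0(\Delta_{+,1})\to H^1(\Delta_{+,1})$ is automatically injective, since $H^0(\Delta_{+,1})=\ker\Delta_{+,1}=\mathbb{H}^2_{+,1}=0$. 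Hence the set of weights at which $\Delta_+$ fails to be Fredholm is discrete and avoids $0$, so there is $\varepsilon>0$ with $\Delta_+\colon L^{2,w}_{k+2}\to L^{2,w}_k$ Fredholm for all $w\in(-\varepsilon,\varepsilon)$ and all $k$. (Equivalently: $\xi\mapsto\Delta_{+,\xi}$ is a holomorphic family of index-zero elliptic operators, so its non-invertible locus is a proper analytic subset of $\C^\times$, hence discrete and disjoint from an annulus about $S^1$.)

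Next I would show the index is $0$ throughout $(-\varepsilon,\varepsilon)$. Conjugation by $e^{w\tau}$ identifies $\Delta_+\colon L^{2,w}_{k+2}\to L^{2,w}_k$ with $e^{w\tau}\Delta_+ e^{-w\tau}\colon L^2_{k+2}\to L^2_k$, which differs from $\Delta_+$ by a differential operator of order $\le 1$ whose coefficients are polynomials in $w$ with coefficients bounded together with all their derivatives, since $d\tau$ is bounded. Thus $w\mapsto e^{w\tau}\Delta_+ e^{-w\tau}$ is a norm-continuous family of Fredholm operators over the connected set $(-\varepsilon,\varepsilon)$, so its index is constant. Since $\Delta_+$ is formally self-adjoint, Lemma~\ref{duality} with $P=P^*=\Delta_+$ gives, for every $w$ in the interval, $\dim\mathrm{coker}[\Delta_+\colon L^{2,w}\to L^{2,w}]=\dim\ker[\Delta_+\colon L^{2,-w}\to L^{2,-w}]=\dim\mathbb{H}^+_{-w}$; at $w=0$ this reads $\dim\mathrm{coker}=\dim\mathbb{H}^+_0=\dim\ker$, whence the index equals $0$. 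Therefore $\dim\mathbb{H}^+_w-\dim\mathbb{H}^+_{-w}=0$, i.e.\ $\dim\mathbb{H}^+_w=\dim\mathbb{H}^+_{-w}$ for all $w\in(-\varepsilon,\varepsilon)$.

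Finally I would squeeze. The function $\tau$ is proper and bounded below, so for $w>0$ the factor $e^{w\tau}$ is bounded below by a positive constant and $e^{-w\tau}$ is bounded above; this gives continuous inclusions $L^{2,w}_{k+2}\hookrightarrow L^2_{k+2}\hookrightarrow L^{2,-w}_{k+2}$, which restrict, any weighted solution of $\Delta_+\varphi=0$ being smooth, to $\mathbb{H}^+_w\subseteq\mathbb{H}^+_0\subseteq\mathbb{H}^+_{-w}$. All three spaces are finite dimensional and the outer two have the same dimension, so the chain of inclusions collapses: $\mathbb{H}^+_w=\mathbb{H}^+_0=\mathbb{H}^+_{-w}$ for every $w\in(0,\varepsilon)$. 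The argument is essentially soft once Theorem~\ref{fredholmdelta}, Theorem~\ref{Taubesgeneric} and Lemma~\ref{duality} are in place; the only point demanding a little attention is the index bookkeeping above, in particular the fact that $\dim\ker$, $\dim\mathrm{coker}$ and the index do not depend on the Sobolev order $k$, which is part of the Fredholmness conventions adopted here.
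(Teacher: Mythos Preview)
Your proof is correct and follows essentially the same route as the paper: Fredholmness of $\Delta_+$ at weight $0$, persistence for nearby weights via conjugation by $e^{w\tau}$ with constant index zero, the duality Lemma~\ref{duality} to identify $\dim\mathrm{coker}$ at weight $w$ with $\dim\ker$ at weight $-w$, and then the squeeze using the obvious inclusions $\mathbb{H}^+_w\subseteq\mathbb{H}^+_0\subseteq\mathbb{H}^+_{-w}$. The only cosmetic difference is that you invoke Theorem~\ref{Taubesgeneric} to produce the interval $(-\varepsilon,\varepsilon)$, whereas the paper simply uses openness of the Fredholm condition under the norm-continuous perturbation $w\mapsto e^{w\tau}\Delta_+e^{-w\tau}$; your second paragraph in fact already contains this argument, so the appeal to Theorem~\ref{Taubesgeneric} is redundant (though not wrong).
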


\begin{proof}
From theorem \ref{fredholmdelta}, $\Delta_{+}:L^{2}_{k+2}(\Lambda^2_+)\ra L^{2}_{k}(\Lambda^2_+)$ is Fredholm, and thus it is also self-adjoint. In particular for all $w$ sufficiently close to zero the operator $P_w=e^{w\tau}\Delta_{+} (e^{-w\tau}\cdot)$ will be in a neighborhood of  $\Delta_{+}$ in $\mathscr{L}(L^{2}_{k+2}(\Lambda^2_+), L^{2}_{k}(\Lambda^2_+))$. It follows $P_w$ is also Fredholm and has the same index, zero. Now conjugating back via $e^{w\tau}$ we get that $\Delta_{+,w}=\Delta_{+}:L^{2,w}_{k+2}(\Lambda^2_+)\ra L^{2,w}_{k}(\Lambda^2_+)$ is Fredholm of index zero. 
 Thus for all $w$ in a certain neighborhood of the origin
\begin{align*}
0&=\dim\ker \Delta_{+, w}-\textrm{codim } \textrm{Im } \Delta_{+,w}\\
&=\dim\ker \Delta_{+,w}-\dim\ker \Delta_{+_{-w}}
\end{align*}
Since  for $w>0$ we have $\ker \Delta_{+,w}\subseteq \ker\Delta_{+_{0}}\subseteq \ker \Delta_{+_{-w}}$ we get  the equality.
\end{proof}

We shall denote by $\mathbb{H}^{2}_{+}$ the harmonic space $\mathbb{H}_{-w}^{+}=\mathbb{H}^{+}_{w}=\mathbb{H}^{+}_{0}$ for $w\in (-\varepsilon,\varepsilon)$.

\vspace{1 pc}

Let $(X,g)$ be a Riemannian 4-manifold with periodic ends. For each $e\in\Eg$, let $\rho_e$ be a (fixed) smooth function which equals zero outside $X_{e}^{\geq 0}$ and $1$ in $X_{e}^{\geq 1}$.  We shall denote by $F^{\Eg}$ the real vector space spanned by the set$\{\rho_e\}_{e\in\Eg}$. Equivalently one can use the alternative space $F^\Eg$ as in the introduction. Consider the following complexes
\begin{align*}
&L^{2,w}_{k}(i\Lambda^{1})\xrightarrow{d^*\oplus d^+}L^{2,w}_{k}(i\Lambda^{0}\oplus i\Lambda^{2}_{+})\xrightarrow{d+d^+}L^{2,w}_{k}(i\Lambda^{0}\oplus i\Lambda^{2}_{+})\\
&L^{2,w}_{k}(i\Lambda^{0}\oplus i\Lambda^{2}_{+})\oplus iF^{\Eg} \xrightarrow{d+d^*} L^{2,w}_{k}(i\Lambda^{1})\xrightarrow{d^*\oplus d^+}L^{2,w}_{k}(i\Lambda^{0}\oplus\Lambda^{2}_{+}).
\end{align*}

We define $D$ to be the operator $d^*\oplus d^+$. Its formal adjoint $d+d^{*}$ shall be denoted by $D^*$. When acting on the weighted Sobolev spaces $L^{2,w}_{k}$, we shall insert a lower $w$ on these operators, becoming, $D_{w}$ and $D^{*}_w$ respectively.

\begin{lemma}\label{laplaiso}
Let $(X,g)$ be a Riemannian 4-manifold with periodic ends such that for every $e\in\Eg$ condition $(C)$ holds and $b^{+}_{2}(Y_e)=0$. Then there exists $\varepsilon>0$ such that for any $w\in (-\varepsilon,\varepsilon)$ the map
\begin{align}\label{lapla2}
\Delta_{+}{}_{\big |\{\mathbb{H}^{2}_{+}\}^{\perp_{L^{2, w}_{k+1}}}}:\{\mathbb{H}^{2}_{+}\}{}^{\perp_{L^{2, w}_{k+1}}}\ra \{\mathbb{H}^{2}_{+}\}^{\perp_{L^{2, w}_{k-1}}}
\end{align}
is an isomorphism.
\end{lemma}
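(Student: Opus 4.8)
The plan is to deduce this isomorphism statement from the Fredholm property of $\Delta_+$ together with the fact (Corollary \ref{h+=h-}) that $\mathbb{H}^2_+$ is the common kernel of $\Delta_+$ on the weighted spaces $L^{2,w}_{k+1}$ for $w$ in a small interval $(-\varepsilon,\varepsilon)$. First I would recall from Theorem \ref{fredholmdelta} and the perturbation argument in Corollary \ref{h+=h-} that, shrinking $\varepsilon$ if necessary, $\Delta_+:L^{2,w}_{k+1}(i\Lambda^2_+)\to L^{2,w}_{k-1}(i\Lambda^2_+)$ is Fredholm of index zero for every $w\in(-\varepsilon,\varepsilon)$, with $\ker\Delta_{+,w}=\mathbb{H}^2_+$ and (by Lemma \ref{duality}) cokernel of $\Delta_{+,w}$ naturally identified with $\ker[\Delta_+:L^{2,-w}_{k-1}\to\cdots]=\mathbb{H}^2_+$ as well, via the $L^2$ pairing. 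Since $\Delta_+$ is formally self-adjoint, the image of $\Delta_{+,w}$ is exactly the $L^2$-orthogonal complement of $\mathbb{H}^2_+$ inside $L^{2,w}_{k-1}$, i.e. $\{\mathbb{H}^2_+\}^{\perp_{L^{2,w}_{k-1}}}$: this is precisely the content of Lemma \ref{duality} applied to $P=\Delta_+$, $r=2$, observing $P^*=P$ and that $\mathbb{H}^2_+$ consists of smooth forms independent of the weight.

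Next I would check that the restriction of $\Delta_+$ to the complement $\{\mathbb{H}^2_+\}^{\perp_{L^{2,w}_{k+1}}}$ indeed lands in $\{\mathbb{H}^2_+\}^{\perp_{L^{2,w}_{k-1}}}$. This is immediate: for any $\alpha\in L^{2,w}_{k+1}$ and any $h\in\mathbb{H}^2_+$ we have $\langle\Delta_+\alpha,h\rangle_{L^2}=\langle\alpha,\Delta_+h\rangle_{L^2}=0$ since $\Delta_+h=0$ and the integration by parts is justified by the weighted decay (the product $e^{w\tau}\cdot e^{-w\tau}$ trivializes and one may approximate by compactly supported forms). Hence the map \eqref{lapla2} is well-defined.

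For injectivity: if $\alpha\in\{\mathbb{H}^2_+\}^{\perp_{L^{2,w}_{k+1}}}$ and $\Delta_+\alpha=0$, then $\alpha\in\ker\Delta_{+,w}=\mathbb{H}^2_+$, but $\alpha$ is also $L^2$-orthogonal to $\mathbb{H}^2_+$, so $\alpha=0$. For surjectivity: given $\beta\in\{\mathbb{H}^2_+\}^{\perp_{L^{2,w}_{k-1}}}=\mathrm{Im}(\Delta_{+,w})$, write $\beta=\Delta_+\gamma$ with $\gamma\in L^{2,w}_{k+1}$; decompose $\gamma=\gamma_0+\gamma_1$ where $\gamma_0\in\mathbb{H}^2_+$ and $\gamma_1\in\{\mathbb{H}^2_+\}^{\perp_{L^{2,w}_{k+1}}}$ (this decomposition is available because $\mathbb{H}^2_+$ is a finite-dimensional subspace of smooth rapidly-decaying forms, hence a closed complemented subspace of $L^{2,w}_{k+1}$); then $\Delta_+\gamma_1=\Delta_+\gamma=\beta$, so $\beta$ is in the image of the restricted map. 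Finally, the open mapping theorem (or: a continuous bijection of Banach spaces) gives that the inverse is bounded, so \eqref{lapla2} is an isomorphism of Banach spaces.

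The only genuinely delicate point is making sure the weight range and the duality identification are consistent, and here I would lean entirely on the already-established Lemma \ref{duality} and Corollary \ref{h+=h-}: the former identifies $\mathrm{codim}$ of the image of $\Delta_{+,w}$ with $\dim\ker\Delta_{+,-w}$ and, because $\Delta_+$ is self-adjoint and $\mathbb{H}^2_+$ is weight-independent on $(-\varepsilon,\varepsilon)$, both equal $\dim\mathbb{H}^2_+$; the latter fixes the common $\varepsilon$. So the main obstacle is not in any one estimate but in bookkeeping the self-adjointness/duality picture cleanly — once $\mathrm{Im}(\Delta_{+,w})=\{\mathbb{H}^2_+\}^{\perp_{L^{2,w}_{k-1}}}$ and $\ker(\Delta_{+,w})=\mathbb{H}^2_+$ are in hand, the isomorphism statement is a formal consequence.
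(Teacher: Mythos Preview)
Your proposal is correct and follows essentially the same route as the paper: identify $\mathrm{Im}(\Delta_{+,w})$ with $\{\mathbb{H}^2_+\}^{\perp_{L^{2,w}_{k-1}}}$ via Lemma \ref{duality} and the weight-independence of the kernel from Corollary \ref{h+=h-}, then observe that the restriction to $\{\mathbb{H}^2_+\}^{\perp_{L^{2,w}_{k+1}}}$ is injective by construction and surjective by the image identification. Your write-up is simply more explicit (well-definedness, the decomposition $\gamma=\gamma_0+\gamma_1$, the open mapping theorem) where the paper compresses these into a single sentence.
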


\begin{proof}
Recall that by the duality Lemma \ref{duality}
\begin{equation}
\begin{aligned}
\ima [\Delta_{+}:L^{2, w}_{k+1}(i\Lambda^{2}_{+})\ra L^{2, w}_{k-1}(i\Lambda^{2}_{+})]&=\ker [\Delta_{+}:L^{2,- w}_{k-1}(i\Lambda^{2}_{+})\ra L^{2,- w}_{k-3}(i\Lambda^{2}_{+})]^{\perp L^{2,w}_{k-1}}\\
\label{lapla3}&=\{\mathbb{H}^{+}_{-w}\}^{\perp L^{2,w}_{k-1}}=\{\mathbb{H}^{2}_{+}\}^{\perp L^{2,w}_{k-1}}.
\end{aligned}
\end{equation}
Therefore, the above map, (\ref{lapla2}), which is by construction is injective, it is also surjective.
\end{proof}

\begin{lemma}\label{lemma}
Let $(X,g)$ be a Riemannian 4-manifold with periodic ends such that for every $e\in\Eg$ condition $(C)$ holds and $b^{+}_{2}(Y_e)=0$. Suppose that $(\varphi,\alpha) \in L^{2,-w}_{k+1}(i\Lambda^0\oplus i\Lambda^{2}_{+})$ is such that $d\varphi+d^*\alpha\in L^{2,w}_{k}(i\Lambda^{1})$.
Then $\alpha\in L^{2,w}_{k+1}(i\Lambda^{2}_{+})$ and $\varphi\in L^{2,w}_{k+1}(i\Lambda^{0})\oplus iF^\Eg$.
\end{lemma}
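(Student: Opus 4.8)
\noindent\emph{Plan of proof.} Set $\beta:=d\varphi+d^*\alpha\in L^{2,w}_k(i\Lambda^1)$, and assume $0<w<\varepsilon$ with $\varepsilon$ as in Theorem \ref{fredholmdelta}, Corollary \ref{h+=h-} and Lemma \ref{laplaiso}. I would treat the two components separately: the improvement of the $2$-form part $\alpha$ is a formal consequence of the Fredholm theory already established for $\Delta_+$, whereas the improvement of the $0$-form part $\varphi$ is the genuinely new point, and is where the space $iF^{\Eg}$ enters.

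For $\alpha$: applying $d^+$ to $\beta$, and using $d^+d=0$ together with $\Delta_+=d^+d^*$, gives $\Delta_+\alpha=d^+\beta\in L^{2,w}_{k-1}(i\Lambda^2_+)$. By Corollary \ref{h+=h-} one has $\mathbb{H}^2_+=\mathbb{H}^{+}_{w}\subset L^{2,w}_{k+1}(i\Lambda^2_+)$, so for $h\in\mathbb{H}^2_+$ both $h$ and $\beta$ lie in $L^2$, the integration by parts $\langle d^+\beta,h\rangle=\langle\beta,d^*h\rangle$ is legitimate, and $d^*h=0$; hence $\langle\Delta_+\alpha,h\rangle=0$, i.e. $\Delta_+\alpha\in\{\mathbb{H}^2_+\}^{\perp_{L^{2,w}_{k-1}}}$, which by (the proof of) Lemma \ref{laplaiso} and the duality Lemma \ref{duality} is exactly the image of $\Delta_+\colon L^{2,w}_{k+1}(i\Lambda^2_+)\to L^{2,w}_{k-1}(i\Lambda^2_+)$. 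Thus $\Delta_+\alpha_0=\Delta_+\alpha$ for some $\alpha_0\in L^{2,w}_{k+1}(i\Lambda^2_+)$; then $\alpha-\alpha_0\in L^{2,-w}_{k+1}(i\Lambda^2_+)$ is annihilated by $\Delta_+$, so it lies in $\mathbb{H}^{+}_{-w}=\mathbb{H}^2_+\subset L^{2,w}_{k+1}(i\Lambda^2_+)$, and therefore $\alpha\in L^{2,w}_{k+1}(i\Lambda^2_+)$. Consequently $d^*\alpha\in L^{2,w}_k(i\Lambda^1)$, hence $d\varphi=\beta-d^*\alpha\in L^{2,w}_k(i\Lambda^1)$, and the lemma reduces to the scalar statement: if $\varphi\in L^{2,-w}_{k+1}(i\Lambda^0)$ with $d\varphi\in L^{2,w}_k(i\Lambda^1)$, then $\varphi\in L^{2,w}_{k+1}(i\Lambda^0)\oplus iF^{\Eg}$.

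This scalar step is where I expect the real work to be. It cannot come from abstract Hodge theory, because $(d^*,d^+)$ is not Fredholm when $\Eg\neq\emptyset$ and $\ker[\Delta\colon L^{2,-w}_{k+1}(i\Lambda^0)\to\cdots]$ contains linearly growing harmonic functions (the primitives $\tau_e$ are harmonic on the ends), so one cannot simply replay the previous argument: the hypothesis $d\varphi\in L^{2,w}_k$ must be used directly. My plan is to exploit the periodic geometry end by end. Fix $e\in\Eg$; since $\tau=i+O(1)$ on $W_{e,i}$, finiteness of $\|d\varphi\|_{L^{2,w}_k}$ forces $\sum_i e^{2wi}\|d\varphi\|^2_{L^2_k(W_{e,i})}<\infty$. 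Because all pieces $W_{e,i}$, and their consecutive overlaps, are isometric to fixed models, interior Poincar\'e and Sobolev estimates hold with uniform constants: writing $m_i$ for the mean of $\varphi$ over $W_{e,i}$ one gets $\|\varphi-m_i\|_{L^2_{k+1}(W_{e,i})}\le C\|d\varphi\|_{L^2_k(\widehat W_{e,i})}$ and $|m_{i+1}-m_i|\le C\|d\varphi\|_{L^2(\widehat W_{e,i})}$, where $\widehat W_{e,i}$ is the union of three consecutive pieces and $C$ is uniform. Since $(e^{wi}\|d\varphi\|_{L^2_k(\widehat W_{e,i})})_i\in\ell^2$, the means $m_i$ converge to some $c_e\in i\R$, and a discrete Hardy/Young convolution estimate gives $(e^{wi}|m_i-c_e|)_i\in\ell^2$; summing the two estimates over $i$ (the derivatives of order $\ge 1$ of $\varphi-c_e$ being controlled directly by $\|d\varphi\|_{L^{2,w}_k}$) yields $\varphi-c_e\in L^{2,w}_{k+1}(\mathrm{End}_{e,N}(X))$ for $N$ large. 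Performing this on every end and patching with the cut-offs $\rho_e$ — which equal $1$ far out in $e$ and vanish near the other ends — produces $\varphi-\sum_{e}c_e\rho_e\in L^{2,w}_{k+1}(i\Lambda^0)$, i.e. $\varphi\in L^{2,w}_{k+1}(i\Lambda^0)\oplus iF^{\Eg}$.

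Alternatively, this scalar step can be organised through Fredholm theory: Theorem \ref{Taubesexact} and the vanishing $H^0(Y_e,\mathbb{C}_\xi)=0$ for $\xi\in S^1\setminus\{1\}$ show that $\Delta\colon L^{2,w}_{k+1}(i\Lambda^0)\to L^{2,w}_{k-1}(i\Lambda^0)$ is Fredholm for small $w>0$, its cokernel is spanned by the slowly growing harmonic functions, and using the fluxes of $\Delta\rho_e$ at the ends together with $\int_X\Delta\varphi=0$ one writes $\Delta\varphi=\Delta\varphi_0+\sum_e c_e\,\Delta\rho_e$ with $\varphi_0\in L^{2,w}_{k+1}$; then $\varphi-\varphi_0-\sum_e c_e\rho_e$ is harmonic with differential in $L^{2,w}_k$, hence constant by integration by parts, and one concludes as before.
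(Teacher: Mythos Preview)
Your proof is correct and follows essentially the same strategy as the paper's: apply $d^+$ to obtain $\Delta_+\alpha\in L^{2,w}_{k-1}$, use the Fredholm isomorphisms of Lemma~\ref{laplaiso} at both weights $\pm w$ (together with $\mathbb{H}^+_{-w}=\mathbb{H}^+_{w}$) to upgrade $\alpha$ to $L^{2,w}_{k+1}$, and then reduce to the scalar statement $d\varphi\in L^{2,w}_k\Rightarrow\varphi\in L^{2,w}_{k+1}(i\Lambda^0)\oplus iF^{\Eg}$. The only notable difference is in this last step: the paper simply invokes Taubes's Lemma~5.2 from \cite{Taubes} as a black box, whereas you spell out a direct proof via uniform Poincar\'e estimates on the periodic pieces $W_{e,i}$, convergence of the means $m_i\to c_e$, and a discrete Hardy/Young convolution bound to show $(e^{wi}|m_i-c_e|)_i\in\ell^2$. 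Your argument is precisely the content of Taubes's lemma, so the approaches agree; your version has the advantage of being self-contained, and your integration-by-parts justification of $\Delta_+\alpha\perp\mathbb{H}^2_+$ (via $\langle d^+\beta,h\rangle=\langle\beta,d^*h\rangle=0$ with $\beta,h\in L^2$) is slightly more explicit than the paper's, which leaves this orthogonality implicit in the diagram chase.
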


\vspace{1 pc}

Since the opposite inclusion is obvious,  the lemma states that 
\begin{equation}\label{avanthodge}
\begin{aligned}
&\big\{D^*_{-w}\big( L^{2,-w}_{k+1}(i\Lambda^0\oplus i\Lambda^{2}_{+})\big)\}\cap \big\{ L^{2,w}_{k}(i\Lambda^{1})\}\\
&=D^*_w\big\{\big(L^{2,w}_{k+1}(i\Lambda^{0})\oplus iF^\Eg\big)\oplus L^{2,w}_{k+1}(i\Lambda^{2}_{+})\big\}
\end{aligned}
\end{equation} 
\vspace{1 pc}

\begin{proof}

 Since $d\varphi+d^*\alpha\in L^{2,w}_{k}(i\Lambda^{1})$, it follows that $d^{+}d^{*}\alpha=\Delta_{+}\alpha\in L^{2,w}_{k-1}(i\Lambda^{2}_{+})$.
As proven above the maps 
\begin{align*}
\Delta_{+}{}_{\big|\{\mathbb{H}^{2}_{+}\}^{\perp_{L^{2, w}_{k+1}}}}:\{\mathbb{H}^{2}_{+}\}{}^{\perp_{L^{2, w}_{k+1}}}\ra \{\mathbb{H}^{2}_{+}\}^{\perp_{L^{2, w}_{k-1}}}
\end{align*}
are isomorphisms. Let $w>0$. Therefore in the following commutative diagram of Banach spaces
\begin{displaymath}
      \xymatrix{
      \{\mathbb{H}^{2}_{+}\}^{\perp_{L^{2,-w}_{k+1}}} \ar[r]^{ \Delta_{+}}& \{\mathbb{H}^{2}_{+}\}^{\perp_{L^{2,-w}_{k-1}}} \\
       \{\mathbb{H}^{2}_{+}\}^{\perp_{L^{2,+w}_{k+1}}} \ar@{^{(}->}[u] \ar[r]^{ \Delta_{+}} & \{\mathbb{H}^{2}_{+}\}^{\perp_{L^{2,+w}_{k-1}}}\ar@{^{(}->}[u]} \end{displaymath}
the horizontal arrows are isomorphisms (and the vertical arrows are continuous injections).
Let $\alpha\in L^{2,-w}_{k+1}(i\Lambda^{2}_{+})$ be such that $\Delta_{+}\alpha\in L^{2,w}_{k-1}(i\Lambda^{2}_{+})$.
Using the decomposition $\alpha=\alpha_{h}+\alpha^{\perp}$ where $\alpha_{h}\in \mathbb{H}^{2}_{+}$
and $\alpha^{\perp}\in\{\mathbb{H}^{2}_{+}\}^{\perp_{L^{2,-w}_{k+1}}}$, it follows, using the hypothesis, that $\Delta_{+}\alpha^{\perp}=\Delta_{+}\alpha\in\{\mathbb{H}^{2}_{+}\}^{\perp_{L^{2,+w}_{k-1}}}$.
Since the horizontal arrows are isomorphisms there is a $\beta\in \{\mathbb{H}^{2}_{+}\}^{\perp_{L^{2,+w}_{k+1}}}$ such that $\Delta_{+}\beta=\Delta_{+}\alpha^{\perp}$.
The upper arrows being injective we have necessarily, $\beta=\alpha^{\perp}$. Hence $\alpha=\alpha_{h}+\beta\in L^{2,w}_{k+1}(i\Lambda^{2}_{+}).$
Now from $d\varphi+d^*\alpha\in L^{2,w}_{k+1}(i\Lambda^{1})$ and $d^*\alpha\in L^{2,w}_{k+1}(i\Lambda^{1})$ it follows that $d\varphi\in L^{2,w}_{k+1}(\Lambda^{1})$. Applying Taubes Lemma  5.2 in \cite{Taubes} to each end $e\in\Eg$, it follows that $\varphi\in L^{2,w}_{k+1}(i\Lambda^{0})\oplus(\sum_{e\in\Eg} i\rho_{e}\R)=L^{2,w}_{k+1}(i\Lambda^{0})\oplus iF^\Eg$.
\end{proof}

\begin{coro}\label{Hodge}
Let $(X,g)$ be a Riemannian 4-manifold with periodic ends such that for every $e\in\Eg$ condition $(C)$ holds and $b^{+}_{2}(Y_e)=0$ . Suppose $w>0$ is a small weight for which $D_w$ is a Fredholm operator. Then
\begin{align*}
\{\ker D_w\}^{\perp_{ L^{2,w}_{k}}}&=\ima[ D^{*}_{-w}:L^{2,-w}_{k+1}(i\Lambda^{0}\oplus i\Lambda^{2}_{+})\ra L^{2,-w}_{k}(i\Lambda^{1})]\cap L^{2,w}_{k}(i\Lambda^{1})\\
&=\ima[ d+d^{*}:L^{2,-w}_{k+1}(i\Lambda^{0}\oplus i\Lambda^{2}_{+})\ra L^{2,-w}_{k}(i\Lambda^{1})]\cap L^{2,w}_{k}(i\Lambda^{1}).
\end{align*}
In particular we have $L^2$-orthogonal direct sum decompositions of topological vector spaces
\begin{align*}
L^{2,w}_{k}(i\Lambda^{1})&=\ker D_w\oplus \{\ker D_w\}^{\perp_{ L^{2,w}_{k}}}\\
&=\ker D_w\oplus\bigg\{ \ima[ d+d^{*}:L^{2,-w}_{k+1}(i\Lambda^{0}\oplus i\Lambda^{2}_{+})\ra L^{2,-w}_{k}(i\Lambda^{1})]\cap L^{2,w}_{k}(i\Lambda^{1})\bigg\}.
\end{align*}
and
\begin{align*}
L^{2,w}_{k}(i\Lambda^{1})&=\mathbb{H}^{1}_{w}\oplus
 \ima [d:L^{2,w}_{k+1}(i\Lambda^{0})\oplus i F^{\Eg}
\to L^{2,w}_{k}(i\Lambda^{1})]\oplus \\
&\oplus\ima [d^{*}:L^{2,w}_{k+1}(i\Lambda^{2}_{+})\ra L^{2,w}_{k}(i\Lambda^{1})].
\end{align*}
as topological vector spaces.

\end{coro}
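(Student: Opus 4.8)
The plan is to assemble Corollary~\ref{Hodge} from three ingredients already in hand: the $L^2$-duality Lemma~\ref{duality}, the Laplace isomorphism Lemma~\ref{laplaiso}, and the regularity identity (\ref{avanthodge}) proved in Lemma~\ref{lemma}. The genuinely new content is only bookkeeping with weights and Sobolev orders, together with the verification that the sums appearing are \emph{topological} direct sums. Throughout I use that $\ker D_w$ is precisely the finite-dimensional space $\mathbb{H}^1_w$ (this is how $\mathbb{H}^1_w$ was defined), that for $w>0$ there is a continuous inclusion $L^{2,w}_k\hookrightarrow L^{2,-w}_k$, and that elements of $\ker D_w$ are smooth and exponentially decaying.

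First I would record that, $D_w$ being Fredholm, its formal $L^2$-adjoint $D^{*}_{-w}=(d+d^{*}):L^{2,-w}_{k+1}(i\Lambda^0\oplus i\Lambda^{2}_{+})\ra L^{2,-w}_k(i\Lambda^1)$ is Fredholm as well: the elliptic complexes attached to $D^{*}$ on the generalized tori are the $L^2$-duals of those attached to $D$, so Theorem~\ref{Taubesexact} applies (alternatively one repeats the small-weight perturbation argument of Corollary~\ref{h+=h-}). Applying Lemma~\ref{duality} to $P=D^{*}$ (weight $-w$, order $1$) and using $(D^{*})^{*}=D$ gives $\ima[D^{*}_{-w}:L^{2,-w}_{k+1}(i\Lambda^0\oplus i\Lambda^{2}_{+})\ra L^{2,-w}_k(i\Lambda^1)]=\{\ker D_w\}^{\perp_{L^{2,-w}_k}}$. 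Intersecting with $L^{2,w}_k(i\Lambda^1)$, and observing that for an element of $L^{2,w}_k$ the condition of being $L^2$-orthogonal to $\ker D_w$ is the same read inside $L^{2,-w}_k$ or inside $L^{2,w}_k$, yields the first displayed identity of the corollary; the second equality there is just $D^{*}=d+d^{*}$.

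The decomposition $L^{2,w}_k(i\Lambda^1)=\ker D_w\oplus\{\ker D_w\}^{\perp_{L^{2,w}_k}}$ is then immediate, since $\ker D_w$ is finite-dimensional and the $L^2$-orthogonal projection onto it is a continuous finite-rank operator on $L^{2,w}_k$; substituting the identity just obtained gives its second form. For the refined decomposition I would feed in (\ref{avanthodge}): by Lemma~\ref{lemma} the space $\ima[D^{*}_{-w}]\cap L^{2,w}_k(i\Lambda^1)$ equals $D^{*}_w\big((L^{2,w}_{k+1}(i\Lambda^0)\oplus iF^{\Eg})\oplus L^{2,w}_{k+1}(i\Lambda^{2}_{+})\big)$, i.e. $\ima[d:L^{2,w}_{k+1}(i\Lambda^0)\oplus iF^{\Eg}\ra L^{2,w}_k(i\Lambda^1)]+\ima[d^{*}:L^{2,w}_{k+1}(i\Lambda^{2}_{+})\ra L^{2,w}_k(i\Lambda^1)]$; combined with $\ker D_w=\mathbb{H}^1_w$ this is the asserted decomposition at the level of vector spaces. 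That the three summands meet trivially is seen thus: both $\ima d$ and $\ima d^{*}$ lie in $\{\ker D_w\}^{\perp_{L^{2,w}_k}}$ (integrate by parts against $h\in\ker D_w$, using $dh=d^{*}h=0$, the boundary terms vanishing by decay and $d\varphi$ being compactly supported for the $iF^{\Eg}$-component), so each meets $\mathbb{H}^1_w$ only in $\{0\}$; and if $d\varphi=d^{*}\alpha$ then this $1$-form is closed and coclosed, hence in $\ker D_w=\mathbb{H}^1_w$, while also lying in $\{\ker D_w\}^{\perp}$, so it vanishes.

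It remains to upgrade these to topological direct sums. The summand $\mathbb{H}^1_w$ is closed with continuous projection (finite rank), hence so is $\{\ker D_w\}^{\perp_{L^{2,w}_k}}$; inside the latter, a continuous projection onto $\ima d^{*}$ along $\ima d$ is furnished by $R:=d^{*}\circ(\Delta_{+})^{-1}\circ d^{+}$, with $(\Delta_{+})^{-1}$ the inverse from Lemma~\ref{laplaiso} (note that $d^{+}\gamma$ is $L^2$-orthogonal to $\mathbb{H}^{2}_{+}$ for any $1$-form $\gamma$, since $\langle d^{+}\gamma,\omega\rangle=\langle\gamma,d^{*}\omega\rangle=0$): $R$ is bounded, equals the identity on $\ima d^{*}$, and annihilates $\ima d$ because $d^{+}d=0$, so $\ima d$ and $\ima d^{*}$ are closed and complemented. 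The principal obstacle in this whole program lies not in the corollary but in Lemma~\ref{lemma}, which absorbs the hard regularity input; within the corollary the only delicate points are the weight and Sobolev bookkeeping in the duality step and the Fredholmness of $D^{*}_{-w}$, after which the argument is purely formal.
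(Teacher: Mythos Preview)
Your argument is correct and follows essentially the same route as the paper: apply Lemma~\ref{duality} with $P=D^{*}$ at weight $-w$ to identify $\ima D^{*}_{-w}$ with $\{\ker D_w\}^{\perp_{L^{2,-w}_k}}$, intersect with $L^{2,w}_k$, use finite-dimensionality of $\ker D_w=\mathbb{H}^1_w$ for the first splitting, and then invoke (\ref{avanthodge}) for the refined one. You are in fact more careful than the paper in one place: you explicitly justify that $D^{*}_{-w}$ is Fredholm (needed to apply Lemma~\ref{duality}), which the paper uses silently.

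The only genuine divergence is in the closedness argument for $\ima d$ and $\ima d^{*}$. The paper observes that, on the closed subspace $S=\ima d+\ima d^{*}$, the two summands are precisely $\ker d\vert_S$ and $\ker d^{*}\vert_S$, hence closed as kernels of bounded operators. You instead build an explicit bounded projection $R=d^{*}\circ(\Delta_+)^{-1}\circ d^{+}$ using Lemma~\ref{laplaiso}. Both are valid; the paper's version is shorter, while yours has the virtue of making the splitting completely explicit and incidentally yields Corollary~\ref{isomor} for free.
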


\begin{proof}
Since $w$ is positive we have $\ker D_{w}\subseteq L^{2,-w}_{k}(i\Lambda^{1})$.  Therefore
$$\{\ker D_w\}^{\perp_{ L^{2,w}_{k}}}=\bigg\{\{\ker D_w\}^{\perp_{ L^{2,-w}_{k}}}\bigg\}\cap L^{2,w}_{k}(i\Lambda^1).
$$
By Lemma \ref{duality} we have
$$\{\ker D_w\}^{\perp_{ L^{2,-w}_{k}}}=\ima[ D^{*}_{-w}:L^{2,-w}_{k+1}(i\Lambda^{0}\oplus i\Lambda^{2}_{+})\ra L^{2,-w}_{k}(i\Lambda^{1})],
$$
hence the first equality follows.  For the second equality note  $\ker D_w$ is finitely dimensional,  and since $w>0$ the $L^2$ inner product on $L^{2,w}_{k}$ is continuous. This gives an $L^2$-orthogonal topological direct sum decomposition  
$$L^{2,w}_{k}(i\Lambda^{1})=\ker D_w\oplus \{\ker D_w\}^{\perp_{ L^{2,w}_{k}}},
$$
which proves the second equality. Using (\ref{avanthodge}) we get that 
\begin{align*}
&\{\ker D_w\}^{\perp_{ L^{2,-w}_{k}}}\cap L^{2,w}_{k}(i\Lambda^1)\\
&=\ima[ D^{*}_{-w}:L^{2,-w}_{k+1}(i\Lambda^{0}\oplus i\Lambda^{2}_{+})\ra L^{2,-w}_{k}(i\Lambda^{1})]\cap L^{2,w}_{k}(i\Lambda^1)\\
&=D^*_w\big\{\big(L^{2,w}_{k+1}(i\Lambda^{0})\oplus iF^\Eg\big)\oplus L^{2,w}_{k+1}(i\Lambda^{2}_{+})\big\}\\
&=d\big(L^{2,w}_{k+1}(i\Lambda^{0})\oplus iF^\Eg\big)+ d^*\big( L^{2,w}_{k+1}(i\Lambda^{2}_{+})\big).
\end{align*}
We know that the latter sum, say $S$, is closed. Now applying the operators $d$ and $d^*$, we see that the sum is direct, the first summand coincides $\ker d_{|S}$ and the second summand with  $\ker d^*_{|S}$. Therefore the two summands are closed, which completes the proof.

\end{proof}

\begin{coro}\label{isomor}
Let $(X,g)$ be a Riemannian manifold with periodic ends such that for every $e\in\Eg$ condition $(C)$ holds and $b^{+}_{2}(Y_e)=0$. Suppose that $w$ is a small positive weight for which the operator $D_w$ is Fredholm. Then the following map is an isomorphism of Banach spaces

\begin{align*}
d^{*}: {\mathbb{H}^{2}_{+} }^{\perp_{L^{2,w}_{k+1} (i\Lambda^{2}_{+})}}\ra \mathrm{Im }[d^{*}:L^{2,w}_{k+1}(i\Lambda^{2}_{+})\ra L^{2,w}_{k}(i\Lambda^{1})].
\end{align*}
\end{coro}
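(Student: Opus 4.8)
The plan is to show that the restriction of $d^{*}$ to $\{\mathbb{H}^{2}_{+}\}^{\perp_{L^{2,w}_{k+1}}}$ is bounded below: a bounded linear map between Banach spaces which is bounded below is injective with closed range, and a bounded bijection between Banach spaces is automatically a topological isomorphism (open mapping theorem), which is precisely what the statement asks.

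First I would record an elementary inclusion and a splitting. Since $\mathbb{H}^{2}_{+}=\mathbb{H}^{2}_{+,0}$ and, on the \emph{unweighted} space, $\langle \Delta_{+}\alpha,\alpha\rangle_{L^{2}}=\|d^{*}\alpha\|_{L^{2}}^{2}$ (because $\Delta_{+}=d^{+}d^{*}$ and the formal $L^{2}$-adjoint of $d^{+}$ on self-dual forms is $d^{*}$), every element of $\ker\Delta_{+}$ lies in $\ker d^{*}$; hence $\mathbb{H}^{2}_{+}\subseteq\ker[d^{*}\colon L^{2,w}_{k+1}(i\Lambda^{2}_{+})\to L^{2,w}_{k}(i\Lambda^{1})]$. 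Moreover, because $\mathbb{H}^{2}_{+}$ is finite dimensional (it is $\ker$ of the Fredholm operator $\Delta_{+}$) and, $w>0$, the $L^{2}$-inner product is continuous on $L^{2,w}_{k+1}(i\Lambda^{2}_{+})$, one has a topological $L^{2}$-orthogonal direct sum $L^{2,w}_{k+1}(i\Lambda^{2}_{+})=\mathbb{H}^{2}_{+}\oplus\{\mathbb{H}^{2}_{+}\}^{\perp_{L^{2,w}_{k+1}}}$, and $\{\mathbb{H}^{2}_{+}\}^{\perp_{L^{2,w}_{k+1}}}$ is a closed subspace of a Hilbert space, hence Banach. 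Combining these two facts, $d^{*}$ carries $\{\mathbb{H}^{2}_{+}\}^{\perp_{L^{2,w}_{k+1}}}$ onto the same subspace $\mathrm{Im}[d^{*}\colon L^{2,w}_{k+1}(i\Lambda^{2}_{+})\to L^{2,w}_{k}(i\Lambda^{1})]$ as $d^{*}$ on the full space.

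The heart of the matter is the lower bound. For $\alpha\in\{\mathbb{H}^{2}_{+}\}^{\perp_{L^{2,w}_{k+1}}}$ I would estimate
\[
\|\alpha\|_{L^{2,w}_{k+1}}\ \le\ C_{1}\,\|\Delta_{+}\alpha\|_{L^{2,w}_{k-1}}\ =\ C_{1}\,\|d^{+}(d^{*}\alpha)\|_{L^{2,w}_{k-1}}\ \le\ C_{1}C_{2}\,\|d^{*}\alpha\|_{L^{2,w}_{k}},
\]
where the first inequality uses that $\Delta_{+}\colon\{\mathbb{H}^{2}_{+}\}^{\perp_{L^{2,w}_{k+1}}}\to\{\mathbb{H}^{2}_{+}\}^{\perp_{L^{2,w}_{k-1}}}$ is an isomorphism of Banach spaces by Lemma \ref{laplaiso} (so its inverse is bounded, with norm $C_{1}$), and the last uses that the first-order differential operator $d^{+}\colon L^{2,w}_{k}(i\Lambda^{1})\to L^{2,w}_{k-1}(i\Lambda^{2}_{+})$ is bounded on weighted Sobolev spaces, with norm $C_{2}$. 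This shows $d^{*}$ is bounded below on $\{\mathbb{H}^{2}_{+}\}^{\perp_{L^{2,w}_{k+1}}}$, so it is injective there and its image is closed, hence a Banach space.

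Putting it all together, $d^{*}$ restricts to a bounded, bounded-below linear bijection from the Banach space $\{\mathbb{H}^{2}_{+}\}^{\perp_{L^{2,w}_{k+1}}}$ onto the Banach space $\mathrm{Im}[d^{*}\colon L^{2,w}_{k+1}(i\Lambda^{2}_{+})\to L^{2,w}_{k}(i\Lambda^{1})]$; such a map is a topological isomorphism, equivalently by the open mapping theorem, which is the asserted statement. The only non-formal ingredient is Lemma \ref{laplaiso}, which is already available; so I do not expect a genuine obstacle here — the real work was done in proving the Fredholmness of $\Delta_{+}$ and the isomorphism of Lemma \ref{laplaiso}, and the present corollary is a short functional-analytic consequence.
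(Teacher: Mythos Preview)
Your argument is correct, and it takes a slightly different (and in some ways cleaner) route than the paper. The paper's proof simply quotes Corollary~\ref{Hodge} to conclude that $\mathrm{Im}[d^{*}:L^{2,w}_{k+1}(i\Lambda^{2}_{+})\to L^{2,w}_{k}(i\Lambda^{1})]$ is closed, then asserts that $\ker d^{*}=\mathbb{H}^{2}_{+}$ and invokes the open mapping theorem. You instead bypass the Hodge decomposition entirely and go straight back to Lemma~\ref{laplaiso}, factoring $\Delta_{+}=d^{+}\circ d^{*}$ to obtain the explicit lower bound $\|\alpha\|_{L^{2,w}_{k+1}}\le C_{1}C_{2}\|d^{*}\alpha\|_{L^{2,w}_{k}}$ on $\{\mathbb{H}^{2}_{+}\}^{\perp}$. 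This simultaneously gives injectivity \emph{and} closed range, so you never need to cite Corollary~\ref{Hodge}. Your route is more self-contained (it only uses Lemma~\ref{laplaiso} and elementary functional analysis), while the paper's is shorter because the heavy lifting was already absorbed into Corollary~\ref{Hodge}; in effect the paper treats this corollary as a trivial consequence of the Hodge decomposition, whereas you prove it independently of that decomposition.
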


\begin{proof}
By the previous Lemma \ref{Hodge} we know that the right hand side is closed. It is also easy to see that $\ker[d^{*}:L^{2,w}_{k+1}(i\Lambda^{2}_{+})\ra L^{2,w}_{k}(i\Lambda^{1})]$ equals $\mathbb{H}^{2}_{+}$.

\end{proof}


We prove now that weights as in the hypothesis of Corollaries \ref{Hodge} and \ref{isomor}  do exist.  To do this we use Taubes result, Theorem \ref{Taubesgeneric}.
\begin{lemma}
Suppose that $(X,g)$ is a Riemannian manifold with periodic ends such that for every $e\in\Eg$ condition $(C)$ holds and $b^{+}_{2}(Y_e)=0$.  Then the operator $D_{w}=d^{+}\oplus d^{*}$ is Fredholm for all but a discrete subset of $w\in\R$.
\end{lemma}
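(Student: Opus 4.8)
The plan is to apply Taubes's Theorem \ref{Taubesgeneric} to the end-periodic elliptic operator $D = d^* \oplus d^+ : \mathscr{C}^\infty_c(i\Lambda^1) \to \mathscr{C}^\infty_c(i\Lambda^0 \oplus i\Lambda^2_+)$, whose restriction to each end $e$ is the operator $D_e := d^*_{\xi} \oplus d^+_{\xi}$ associated with the SD-elliptic complex $(\mathscr{C}^+_{\xi})$ on the generalized torus $Y_e$ (at $\xi = 1$, i.e.\ with the untwisted connection). Theorem \ref{Taubesgeneric} requires two things for each $e \in \Eg$: (i) the index of $D_e$ is zero, and (ii) the symbol map $\sigma_{D_e}([\gamma_e]) : H^0(D_e) \to H^1(D_e)$ is injective. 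Verifying these two conditions for every end is the entire content of the proof; the conclusion then follows verbatim from Taubes.

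For condition (i), observe that $D_e$ is, up to the usual reshuffling, the operator of the elliptic complex $(\mathscr{C}^+_1)$ on the closed $4$-manifold $Y_e$; more precisely $H^0(D_e) = \ker[d : A^0(Y_e) \to A^1(Y_e)]$, which is $\mathbb{R}$ (or $i\mathbb{R}$ in our imaginary-coefficient normalization) since $Y_e$ is connected, and $H^1(D_e) = \ker[(d^*, d^+) : A^1(Y_e) \to A^0(Y_e) \oplus A^2_+(Y_e)]$, which by Hodge theory and Remark \ref{d+} is $\mathbb{H}^1(Y_e, \mathbb{C}) \cap (\text{real part})$; the cokernel of $D_e$ is $\mathbb{H}^2_{+,1}(Y_e)$. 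Thus $\mathrm{index}(D_e) = \dim H^0(D_e) - \dim H^1(D_e) + \dim \mathrm{coker}(D_e) = 1 - b_1(Y_e) + b^+_2(Y_e)$. Under condition $(C)$ we have $b_1(Y_e) = 1$ (from \eqref{H11} with $K_1 = \{0\}$), and by hypothesis $b^+_2(Y_e) = 0$, so the index is $1 - 1 + 0 = 0$.

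For condition (ii), I would first note that $d\tau_e$ restricted to $\End_e(X)$ is the pullback of the closed $1$-form $\gamma_e$ representing the class $\gamma \in H^1(Y_e)$, which is nowhere zero (it is the differential of a submersion-like coordinate along the periodic direction, or more precisely its cohomology class is the monodromy of the cyclic cover, and one may choose the metric so $\gamma_e$ has no zeros — in fact nonvanishing of $[\gamma_e]$ already forces the symbol argument to work pointwise). Since $D_e$ is first order, $\sigma_{D_e}(\gamma_e)$ is just Clifford-type multiplication: it sends a $0$-form $f$ (a section of $i\Lambda^0$) to $(\iota_{\gamma_e^\sharp} \cdot \text{(for }d^*\text{ part)}, \gamma_e \wedge_+ f \text{ (for }d^+\text{ part)})$ — concretely $\sigma(\gamma_e)(f) = (0, (\gamma_e f)^+)$ since the $d^*$-symbol applied to a $0$-form vanishes. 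The map $H^0(D_e) \to H^1(D_e)$ is induced by: take a harmonic $0$-form $f$ (a constant), form $\sigma(\gamma_e)(f) = (\gamma_e \cdot f)^+$, which is the self-dual projection of the (constant multiple of the) nonzero form $\gamma_e$; one must then check its harmonic projection into $\mathbb{H}^1(Y_e)$ is nonzero. Equivalently — and this is the cleanest route — the composite $H^0 \to H^1$ in Taubes's setup can be identified with multiplication by the cohomology class $[\gamma_e] \in H^1(Y_e)$ acting $H^0(Y_e, \mathbb{R}) \to H^1(Y_e, \mathbb{R})$, which is injective precisely because $[\gamma_e] \neq 0$, and $[\gamma_e] \neq 0$ holds since $\gamma_e$ is the (nonzero) monodromy class of the cyclic cover $\widetilde{Y}_e \to Y_e$. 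I expect this identification of the symbol-induced map with cup product by $[\gamma_e]$ to be the main obstacle, as it requires tracing through Taubes's definition of $\sigma_{\partial_e}$ (from \cite{Taubes}, p.\ 373) carefully; the rest is bookkeeping with the already-established computations of $H^0(\mathscr{C}^+_\xi)$ and $H^1(\mathscr{C}^+_\xi)$.

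Once (i) and (ii) are checked for every $e \in \Eg$, Theorem \ref{Taubesgeneric} applies directly and yields that the extension $D_w = d^+ \oplus d^*$ to $L^{2,w}_\bullet$ is Fredholm for all $w \in \mathbb{R}$ outside a discrete set, which is exactly the claim. In particular this discrete exceptional set avoids a punctured neighborhood of $0$, so one may combine with Corollary \ref{h+=h-} to fix the $\varepsilon > 0$ needed in the Hodge decomposition corollaries.
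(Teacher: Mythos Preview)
Your overall strategy---apply Theorem \ref{Taubesgeneric} after checking index zero and injectivity of the symbol map for each end---is exactly the paper's, but you have misidentified the cohomology groups $H^0(\partial_e)$ and $H^1(\partial_e)$ in Taubes's criterion. The operator $\partial_e$ here is the single first-order operator $D_e = d^*\oplus d^+:A^1(Y_e)\to A^0(Y_e)\oplus A^2_+(Y_e)$, and in the two-term complex (\ref{cohogroups}) one has $H^0(D_e)=\ker D_e$ and $H^1(D_e)=\mathrm{coker}\,D_e$. Thus $H^0(D_e)$ consists of \emph{harmonic 1-forms} on $Y_e$ (dimension $b_1(Y_e)=1$), not constants, and $H^1(D_e)\simeq \mathbb{H}^0(Y_e)\oplus\mathbb{H}^2_+(Y_e)$ (dimension $1+b^+_2(Y_e)=1$). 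Your index computation accidentally lands on $0$ because the Euler characteristic of the three-term SD complex and the index of $D_e$ differ only by a sign, and both vanish here.

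The real damage is in the injectivity step. The symbol of $D_e$ at $\gamma_e$, applied to a harmonic 1-form $\xi$, is $\sigma(\gamma_e)(\xi)=[\gamma_e\wedge\xi]^+\oplus[\gamma_e^\sharp\lrcorner\,\xi]$, landing in $\mathbb{H}^2_+\oplus\mathbb{H}^0$. Your ``cup product by $[\gamma_e]$ on $H^0(Y_e)\to H^1(Y_e)$'' description does not match this map, and your argument that $[\gamma_e]\neq 0$ suffices is not the right check. The paper's proof uses \emph{both} components: by Taubes \cite[Lemma 3.2]{Taubes}, the vanishing of $[\gamma_e\wedge\xi]^+$ forces $\xi$ to be a real multiple of $\gamma_e$; then the vanishing of $[\gamma_e^\sharp\lrcorner\,\xi]$ forces that multiple to be zero. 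You need to redo the injectivity argument with the correct domain (harmonic 1-forms) and the correct two-component symbol.
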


\begin{proof}
We use Taubes's theorem for each $e\in\Eg$. If $e\in\Eg$, then from the hypothesis of $X$ it follows, $b^{0}(Y_e)-b^{1}(Y_e)+b^{2}_{+}(Y_e)=0$. Hence the index of the operator $(d^{+},d^{*}):A^{1}\ra A^2_+\oplus A^0$ on $Y_e$,  is zero. 
The action of $\gamma_e$ on $H^{1}(Y_e)$ is given by $\sigma(\gamma_e)(\xi)=[\gamma_e\wedge\xi]^{+}\oplus [\gamma_e^{*}\lrcorner \xi]$.
To check that it is injective we use the proof of Taubes (\cite{Taubes}, Lemma 3.2) that shows that a $[\xi]$ for which $[\gamma_e\wedge\xi]^{+}$ vanishes has to be a real multiple of $[\gamma_e]$. Now the vanishing of $[\gamma_e^{*}\lrcorner \xi]$ implies that $[\xi]$ has to be trivial. Hence we are in the conditions of Taubes' theorem and the result follows.
\end{proof}

An alternative proof is obtained using the computations made in the proof of Proposition \ref{dimb2} for $\xi\in S^1\setminus\{1\}$.
%
%

\section{Dirac operator}\label{diracsection}

Let $(X,g,\tau)$ be an  Riemannian 4-manifold endowed with a periodic $Spin^{c}(4)$ structure $\tau$.

We start by recalling the following classical 
\begin{thh}[Weitzenb\"ock]\label{weitzenbock}
Let $(X,g,\tau)$ be a Riemannian $Spin^c(4)$, 4-manifold. Suppose that $A\in \A(\det Q)$. Then for $\varphi\in\mathscr{C}_{c}^{\infty}(\Sigma^{+})$ we have
\begin{align*}
\sD_{A}\sD_{A}\varphi=\nabla_{A}^{*}\nabla_{A}\varphi+\frac{s_g}{2}\varphi+\rho(F_A^+)\varphi.
\end{align*}
\end{thh}

\begin{lemma}\label{twist}
Let $(X,g,\tau)$ be an oriented, Riemannian, $\textrm{Spin}^{c}(4)$, 4-manifold and $A\in \A(\det Q)$. Suppose a smooth map $f\in\mathscr{C}^{\infty}(X,\C)$ is given. Then $e^{-f}\sD_{A}(e^{f}\varphi)$ is given by
\begin{align*}
\sD_{A}\varphi+\Gamma(df)\cdot\varphi 
\end{align*}
for any $\varphi\in\mathscr{C}^{\infty}(X,\Sigma^{+})$.
\end{lemma}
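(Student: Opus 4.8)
The statement is the Dirac-operator analogue of the elementary conjugation identity $e^{-f}d(e^f\cdot) = d + df\wedge\cdot$, and the natural approach is a direct local computation using the definition of the Dirac operator as the composition of Clifford multiplication with the twisted covariant derivative. Recall that $\sD_A\varphi = \rho\circ\nabla_A\varphi$, where $\nabla_A$ is the spinor connection on $\Sigma^+$ determined by the Levi-Civita connection together with $A$, and $\rho$ is Clifford multiplication; here I write $\rho$ for the same bundle map that the paper denotes $\Gamma$ on $i\Lambda^2_+$, extended in the usual way to act on all forms. First I would fix a point $x\in X$ and a local orthonormal coframe $(e^1,\dots,e^4)$, so that $\sD_A\psi = \sum_i \rho(e^i)\nabla_{A,e_i}\psi$ for any section $\psi$.

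The key step is then the Leibniz rule: since $\nabla_A$ is a connection and $f$ is a scalar function, $\nabla_{A,e_i}(e^f\varphi) = e^f\big((e_i f)\varphi + \nabla_{A,e_i}\varphi\big)$. Applying Clifford multiplication and summing over $i$ gives
\begin{align*}
\sD_A(e^f\varphi) &= \sum_i \rho(e^i)\,e^f\big((e_i f)\varphi + \nabla_{A,e_i}\varphi\big)\\
&= e^f\Big(\sum_i (e_i f)\,\rho(e^i)\varphi + \sum_i \rho(e^i)\nabla_{A,e_i}\varphi\Big)\\
&= e^f\big(\rho(df)\cdot\varphi + \sD_A\varphi\big),
\end{align*}
where in the last line I used $df = \sum_i (e_i f)\,e^i$ and linearity of $\rho$. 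Multiplying by $e^{-f}$ yields $e^{-f}\sD_A(e^f\varphi) = \sD_A\varphi + \rho(df)\cdot\varphi$, which is the claim once we identify $\rho(df)$ with the operator the statement writes as $\Gamma(df)$; this is just a matter of fixing notation for Clifford multiplication by a $1$-form (a priori complex-valued, since $f$ is $\C$-valued, but the formula is $\C$-bilinear so no extra care is needed). Note that the curvature term from the Weitzenböck formula plays no role here — this identity is first order, and only the derivation property of $\nabla_A$ is used.

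There is essentially no hard part: the only thing to be careful about is that $f$ is complex-valued, so one should make sure Clifford multiplication and the connection are extended $\C$-linearly to the complexified bundles, and that $e^f$ is understood as the (nowhere-vanishing) complex exponential, so that $e^{-f}$ makes sense and the conjugation is well defined. One should also note that the identity is independent of the choice of local frame, as is clear since the final expression $\sD_A\varphi + \Gamma(df)\cdot\varphi$ is manifestly frame-independent; alternatively one may observe that both sides are first-order differential operators in $\varphi$ agreeing on symbols and zeroth-order parts. This is the technical ingredient that will later be combined with Theorem \ref{weitzenbock} (the Weitzenböck formula) and Taubes's Fredholmness criterion, Theorem \ref{Taubesexact}, to prove Theorem \ref{FredDirac}, by conjugating $\sD_{A_0}$ by $e^{w\tau}$ and controlling the resulting perturbation $\Gamma(w\,d\tau)\cdot$ on each generalized torus $Y_e$.
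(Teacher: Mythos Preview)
Your proof is correct and follows essentially the same route as the paper: a local orthonormal frame, the Leibniz rule $\nabla_{A,e_i}(e^f\varphi)=e^f((e_if)\varphi+\nabla_{A,e_i}\varphi)$, and the identification $\sum_i(e_if)\,\Gamma(e^i)=\Gamma(df)$. The paper's argument is the same two-line computation, just written with $\Gamma$ throughout rather than $\rho$.
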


\begin{proof}
Let $U$ be a sufficiently small open set of $X$, and $(e^i)$ be a local orthonormal frame on $U$. Then on $U$
\begin{align*}
&e^{-f}\sD_{A}(e^{f}\varphi)=e^{-f}\sum\Gamma(e^i)\cdot\nabla^{A}_{e_i}(e^{f}\varphi)\\
&=\sum\Gamma(e^i)\nabla^{A}_{e_i}\varphi+\sum\Gamma(e^i)\cdot df(e_i)\varphi=\sD_{A}\varphi+\Gamma(df)\varphi.
\end{align*}
\end{proof}

\begin{prop}\label{Dirac}
Let  $(X,g,\tau)$ be a Riemannian, $\mathrm{Spin}^{c}(4)$ 4-manifold with periodic ends such that  for each end $e\in\Eg$, $b_{1}(Y_e)=1$,  $s_{g_e}\geq 0$ and \linebreak
$\langle c_{1}(\det(Q_e))^2,L_e\rangle -\sigma(Y_e)=0$. Let $A\in\A(\det Q)$  be a connection whose restriction to each end $e\in\Eg$ coincides with the pullback of a connection $A_e\in\A(\det(Q_e))$ verifying $F_{A_e}^{+}=0$. Then the operator 
$\sD_{A}:\mathscr{C}_{c}^{\infty}(\Sigma^+)\ra\mathscr{C}_{c}^{\infty}(\Sigma^-)$ extends to a Fredholm operator on 
$\sD_{A}:L^2_{k+1}(\Sigma^+)\ra  L^2_{k}(\Sigma^-)$.
\end{prop}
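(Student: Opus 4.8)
The plan is to apply Taubes's Fredholmness criterion (Theorem \ref{Taubesexact}) to the end-periodic elliptic operator $\sD_A$. Since $\sD_A$ is end-periodic and elliptic, it suffices to show that for every end $e\in\Eg$ and every $\xi\in\C^\times$ with $|\xi|=1$ (taking $w=0$), the twisted Dirac complex
$$0\to \mathscr{C}^{\infty}(Y_e,\Sigma^+_e\otimes L^\xi)\xrightarrow{\sD_{A_e}(\xi)} \mathscr{C}^{\infty}(Y_e,\Sigma^-_e\otimes L^\xi)\to 0$$
has vanishing cohomology, i.e. that $\sD_{A_e}(\xi)$ is an isomorphism. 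By Lemma \ref{twist}, $\sD_{A_e}(\xi)$ is the Dirac operator on $Y_e$ coupled with the flat unitary line bundle $(L^\xi,\nabla_\xi)$; equivalently it is the Dirac operator associated with the $\mathrm{Spin}^c$ structure $\tau_e$ twisted by $L^\xi$, whose determinant connection is $A_e\otimes(\nabla_\xi)^2$, still having vanishing self-dual curvature part since $\nabla_\xi$ is flat.

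First I would verify that the index of $\sD_{A_e}(\xi)$ vanishes. The complex index of the Dirac operator on the closed 3-manifold $Y_e$ coupled to a flat bundle is zero because the index does not change under the (continuous) deformation $\xi\rightsquigarrow 1$ and the untwisted Dirac operator on an odd-dimensional manifold is self-adjoint (or: the relevant index vanishes by the conditions $c_1(\det Q_e)^2+b_2(Y_e)=0$ combined with $b_+(Y_e)=0$, which forces the Atiyah-Singer index of the associated Dirac operator to vanish — this is where condition \ref{TopoSpin} enters). Hence it is enough to prove that $\sD_{A_e}(\xi)$ is injective; surjectivity then follows automatically.

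For injectivity, I would use the Weitzenböck formula (Theorem \ref{weitzenbock}) on $Y_e$ applied to the twisted Dirac operator: for $\varphi$ in the kernel,
$$0=\langle \sD^2\varphi,\varphi\rangle=\|\nabla\varphi\|^2+\tfrac12\int_{Y_e} s_{g_e}|\varphi|^2+\langle\rho(F^+)\varphi,\varphi\rangle.$$
Here the curvature term $F^+$ is built from $F_{A_e}^+=0$ and from the curvature of $\nabla_\xi$, which vanishes because $\nabla_\xi$ is flat; so the curvature term drops out entirely. With $s_{g_e}\ge 0$ (condition \ref{sg}), all three terms on the right are non-negative, forcing $\nabla\varphi=0$ and $s_{g_e}|\varphi|^2\equiv 0$. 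A parallel spinor has constant pointwise norm, so either $\varphi\equiv 0$, or $\varphi$ is nowhere-zero parallel and $s_{g_e}\equiv 0$. In the latter case a nowhere-vanishing parallel section trivializes $\Sigma^+_e\otimes L^\xi$ with its induced connection as a flat bundle, which would force the monodromy of $L^\xi$ (tensored against that of $\Sigma^+_e$) to be trivial; I would argue that this is incompatible with $\xi\in S^1\setminus\{1\}$ by choosing $\xi$ generically, or handle the finitely many exceptional $\xi$ and $\xi=1$ directly — for $\xi=1$ one either invokes the genericity statement of Theorem \ref{Taubesgeneric} (the symbol-injectivity hypothesis $\sigma_{\sD_{A_e}}([\gamma_e])$ being the Clifford action of $\gamma_e$, which is an isomorphism since $\gamma_e\neq 0$) to conclude Fredholmness for a dense set of weights near $0$, or argues that in the parallel case the Dirac operator on $Y_e$ still has no kernel by a refined argument.

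The main obstacle is precisely this last point: ruling out (or correctly accounting for) nontrivial parallel spinors in the scalar-flat borderline case, and the fact that the vanishing theorem via Weitzenböck gives injectivity only for $\xi$ outside a finite bad set, whereas Taubes's exact criterion (Theorem \ref{Taubesexact}) nominally requires it for \emph{all} $\xi$ on the circle $|\xi|=1$. I expect the resolution to be: combine the Weitzenböck argument (which handles all but finitely many $\xi\in S^1$) with Theorem \ref{Taubesgeneric}, whose hypotheses — vanishing index on each $Y_e$ and injectivity of the symbol map $\sigma_{\sD_{A_e}}(\gamma_e)$ (Clifford multiplication by the nonzero $1$-form $\gamma_e$, manifestly injective) — are satisfied, yielding Fredholmness of $\sD_A$ on $L^{2,w}_\bullet$ for all $w$ outside a discrete set; continuity/deformation of the index in $w$ together with the Weitzenböck vanishing at the non-exceptional $\xi$ then pins down Fredholmness at $w=0$ itself. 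I would organize the write-up so that condition \ref{TopoSpin} is used exactly for index vanishing, condition \ref{TopoEnds} (giving $b_1(Y_e)=1$, hence a well-behaved $\gamma_e$ and the needed control over $H^1(Y_e,\C_\xi)$) for applying Taubes's machinery, and condition \ref{sg} for the Weitzenböck positivity.
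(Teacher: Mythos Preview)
Your overall strategy---Taubes's criterion, index zero on each $Y_e$, then Weitzenb\"ock to kill the kernel---matches the paper exactly. Two points need fixing.

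First, a slip: $Y_e$ is a closed \emph{4-manifold}, not a 3-manifold. The index of $\sD_{A_e}(\xi)$ vanishes not because of odd-dimensionality but by Atiyah--Singer on $Y_e$: the index is $\tfrac14(\langle c_1(\det Q_e)^2,[Y_e]\rangle-\sigma(Y_e))$, which is zero by hypothesis (this is precisely what the condition $c_1(\det Q_e)^2+b_2(Y_e)=0$ together with $b_+(Y_e)=0$ buys you). You eventually say this, but the 3-manifold remark should be deleted.

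Second, and this is the real gap: your handling of the parallel-spinor case is where you lose the thread. The monodromy/genericity argument you sketch does not work---there is no reason the finitely many ``bad'' $\xi$ should avoid $\xi=1$, and falling back on Theorem~\ref{Taubesgeneric} only gives Fredholmness for $w$ outside a discrete set, not at $w=0$ itself, so you would not prove the proposition as stated. The paper's resolution is a single clean step you are missing: if a closed $\mathrm{Spin}^c$ 4-manifold carries a nontrivial parallel positive spinor, then it is K\"ahler (this is the content of \cite{BLPR}, Theorem~1). A K\"ahler manifold has even first Betti number, so $b_1(Y_e)$ would be even, contradicting the hypothesis $b_1(Y_e)=1$. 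This rules out parallel spinors for \emph{every} $\xi\in S^1$ simultaneously, with no exceptional set and no appeal to Theorem~\ref{Taubesgeneric}. Once you insert this fact, your argument is complete and coincides with the paper's.
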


\begin{proof}
We use Taubes' Lemma (\ref{Taubesexact}) for $w=0$. Let $\xi\in S^1$, and write it as $e^{i\lambda}$. To prove that the hypothesis of Taubes' criterion is satisfied we have to prove that for every end $e\in\Eg$, the conjugated operator $e^{-i\lambda\tau}\sD_{A_e}e^{i\lambda\tau}$  on $Y_e$ is an isomorphism (between the suitable Sobolev completions). According to Lemma \ref{twist} $e^{-i\lambda\tau}\sD_{A_e}e^{i\lambda\tau}= \sD_{A_{e}}+i\lambda\Gamma(d\tau)$. Since $\lambda\in \R$ and $d\tau$ is real, one can write $\sD_{A_{e}}+i\lambda\Gamma(d\tau)=\sD_{B^\lambda_e}$, where $B_e^\lambda \in\A(\det(Q_e))$ is the \textit{unitary} connection given by $B_e^{\lambda}=A+2i\lambda d\tau$.

On the other hand the index of $\sD_{A_{e}}+i\lambda\Gamma(d\tau)\varphi$ equals the index $\textrm{ind  }\sD_{A_{e}}=\frac{1}{4}(\langle c_{1}(L_e)^2,Y_e\rangle -\sigma(Y_e))$ which by hypothesis, is zero. Therefore to check that for all $\xi\in S^1$ the operators $e^{-i\lambda\tau}\sD_{A_e}e^{i\lambda\tau}$ are isomorphisms it is sufficient to check that their kernels are trivial. 
Note that $F_{B_e^\lambda}^+=F_{A_e}^+=0$. Thus Weitzenb\"ock formula (\ref{weitzenbock}) becomes
\begin{align*}
\sD_{B_e^\lambda}\sD_{B_e^\lambda}\varphi=\nabla_{B_e^\lambda}^{*}\nabla_{B_e^z}\varphi+\frac{s_{g_e}}{2}\varphi.
\end{align*}
Taking $L^2$ inner product with $\varphi$, it follows that
\begin{align*}
\|\sD_{B_e^\lambda}\varphi\|^2_{L^2}=\|\nabla_{A_e}\varphi\|^2_{L^2}+\int_{X} \frac{s_{g_e}}{2}|\varphi|^2.
\end{align*}
Since $s_{g_e}\geq 0$, the condition $\sD_{B_e^z}\varphi=0$ implies that  $\nabla_{\hat{A_e}}\varphi=0$. But if $\varphi$ was a nontrivial  parallel spinor, then $Y_e$ would be K\"ahler (see \cite{BLPR}, Theorem 1). In particular $b_1(Y_e)= 0$, which contradicts our assumptions. So $\varphi$ is trivial, and the result follows.
\end{proof}

\begin{re}
The same statement holds if one replaces the condition $b_{1}(Y_e)=1$ by the condition ``$Y_e$ does not admit any K\"abler structure".
\end{re}

\begin{coro}
There is $\varepsilon>0$ such that in the conditions of Proposition \ref{Dirac} the operator $\sD_{A}:L^{2,w}_{k+1}(\Sigma^+)\ra L^{2,w}_{k}(\Sigma^-)$ is Fredholm for every $w\in(-\varepsilon,\varepsilon)$.
\end{coro}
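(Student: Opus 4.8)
The plan is to derive this directly from Taubes's Fredholmness criterion, Theorem \ref{Taubesexact}, using the observation that its hypothesis has essentially already been verified at $w=0$ inside the proof of Proposition \ref{Dirac}; what remains is to check that the relevant condition is \emph{stable} under small perturbations of $w$. Note first that, by elliptic regularity on the closed tori $Y_e$, the condition appearing in Theorem \ref{Taubesexact} (vanishing of the cohomology of the complex \eqref{cohogroups}) is independent of $k$, so a single $\varepsilon$ will serve for all $k$ at once, as required by the definition of a Fredholm extension.

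Fix $e\in\Eg$. By Lemma \ref{twist}, for $\xi=e^{\mu}\in\C^{\times}$ the operator $\partial_e(\xi)$ of Theorem \ref{Taubesexact} is $\sD_{A_e}+\mu\,\Gamma(d\tau_e)$ (independent of the branch of $\mu$ up to a unitary gauge transformation, so the cohomology of \eqref{cohogroups} is a well-defined function of $\xi$). Parametrising the circle $\{|\xi|=e^{w/2}\}$ by $\xi=e^{w/2+i\lambda}$, $\lambda\in\R$, the assignment $(w,\lambda)\mapsto \sD_{A_e}+(\tfrac{w}{2}+i\lambda)\Gamma(d\tau_e)$ is a real-analytic, in particular continuous, family in $\mathscr{L}\big(L^{2}_{k+1}(\Sigma^{+}_e),L^{2}_{k}(\Sigma^{-}_e)\big)$, because $\Gamma(d\tau_e)$ is a bounded operator with coefficients smooth on the compact manifold $Y_e$. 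Since $\Gamma(d\tau_e)$ has order $0$, each $\partial_e(\xi)$ is elliptic of the same index as $\sD_{A_e}$, namely $\tfrac14\big(\langle c_1(\det Q_e)^2,Y_e\rangle-\sigma(Y_e)\big)=0$ by hypothesis; hence $\partial_e(\xi)$ is an isomorphism if and only if it is injective, if and only if \eqref{cohogroups} is exact. Let $S_e\subset\C^{\times}$ be the set of $\xi$ for which \eqref{cohogroups} is \emph{not} exact. Since the set of isomorphisms between two Banach spaces is open in $\mathscr{L}$, the set $S_e$ is closed in $\C^{\times}$, and by the computation in the proof of Proposition \ref{Dirac} it is disjoint from $S^1$.

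Because $S^1$ is compact and $S_e$ is closed in $\C^{\times}$ and disjoint from $S^1$, we have $\delta_e:=\mathrm{dist}(S^1,S_e)>0$, so the open annulus $\{\xi\in\C^{\times}:\big|\,|\xi|-1\big|<\delta_e\}$ misses $S_e$. Choose $\varepsilon_e>0$ so that $|e^{w/2}-1|<\delta_e$ whenever $|w|<\varepsilon_e$, and set $\varepsilon:=\min_{e\in\Eg}\varepsilon_e>0$ (a finite minimum). Then for every $w\in(-\varepsilon,\varepsilon)$, every $e\in\Eg$ and every $\xi$ with $|\xi|=e^{w/2}$, the complex \eqref{cohogroups} is exact, so Theorem \ref{Taubesexact} applies and $\sD_{A}:L^{2,w}_{k+1}(\Sigma^{+})\to L^{2,w}_{k}(\Sigma^{-})$ is Fredholm for all $k$.

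I expect no genuine obstacle here: the argument is routine once one reduces to the closed tori, the only point to watch being the uniformity of the spectral ``gap'' in $\xi$, which is automatic because it is tested over the compact circle $S^1$ and the finite set $\Eg$. (An even more direct variant avoids Taubes's criterion altogether: multiplication by $e^{w\tau}$ is a topological isomorphism $L^{2,w}_{j}\to L^{2}_{j}$, and conjugating by it identifies $\sD_{A}:L^{2,w}_{k+1}\to L^{2,w}_{k}$ with $\sD_{A}-w\,\Gamma(d\tau):L^{2}_{k+1}\to L^{2}_{k}$, a norm-continuous family passing through the Fredholm operator $\sD_{A}$ of Proposition \ref{Dirac} at $w=0$; openness of the Fredholm locus then yields, for each fixed $k$, an $\varepsilon_k>0$ that works, and one appeals to Theorem \ref{Taubesexact} to see that these merge into a single $\varepsilon$.)
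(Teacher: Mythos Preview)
Your argument is correct. The paper states this corollary without proof, treating it as immediate from Proposition \ref{Dirac}; your write-up fills in the details carefully and in particular handles the uniformity in $k$ cleanly via Taubes's criterion.

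For comparison, the ``direct variant'' you sketch at the end is precisely the mechanism the paper uses in the analogous situation for $\Delta_{+}$ (see the proof of Corollary \ref{h+=h-}): conjugate by $e^{w\tau}$ to view $\sD_{A}$ on $L^{2,w}_{\bullet}$ as the perturbed operator $\sD_{A}-w\,\Gamma(d\tau)$ on $L^{2}_{\bullet}$, then invoke openness of the Fredholm locus. Your primary approach via the closedness of the ``bad'' set $S_{e}\subset\C^{\times}$ and compactness of $S^{1}$ is slightly more elaborate but has the advantage of yielding a single $\varepsilon$ for all $k$ directly, without the extra appeal to Theorem \ref{Taubesexact} that the conjugation route needs at the end. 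Either route is fine here.
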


\chapter{The harmonic space $\mathbb{H}^1_w$ and the gauge group}

\section{The topological interpretation of $\mathbb{H}^1_w$}

Let $(X,g)$ is an Riemannian 4-manifold with periodic ends such that for every end $e\in\Eg$ condition $(C)$ holds and $b^{+}_{2}(Y_e)=0$ .
Then our Hodge decomposition Theorem, \ref{Hodge}, gives for any sufficiently small positive weight $w$
\begin{equation}\label{hodge1}
L^{2,w}_k(i\Lambda^1)= \mathbb{H}^1_w\oplus \textrm{Im }[d:L^{2,w}_{k+1}(i\Lambda^0)\oplus i F^\Eg \ra L^{2,w}_{k}(i\Lambda^1)]\oplus$$
$$\oplus\textrm{Im }[ d^*:L^{2,w}_{k+1}(i\Lambda^2_+)\ra L^{2,w}_{k}(i\Lambda^1)].
\end{equation}

This allows us to prove 

\begin{prop} \label{H1new} Let $(X,g)$ be a Riemannian 4-manifold with periodic ends such that for every end $e\in\Eg$ condition $(C)$ holds and $b^{+}_{2}(Y_e)=0$. Then the following holds
\begin{enumerate}
\item $ \mathbb{H}^1_w\subset \ker [d:L^{2,w}_k(i\Lambda^1)\ra L^{2,w}_{k-1}(i\Lambda^2)]$,
\item The inclusion $ \mathbb{H}^1_w\hookrightarrow \ker [d:L^{2,w}_k(i\Lambda^1)\to L^{2,w}_{k-1}(i\Lambda^2)]$ induces  isomorphisms
$$ \mathbb{H}^1_w\stackrel{\simeq}{\to}  \frac{\ker [d:L^{2,w}_k(i\Lambda^1)\rightarrow L^{2,w}_{k-1}(i\Lambda^2)]}{ \textrm{Im }[d:L^{2,w}_{k+1}(i\Lambda^0)\oplus i F^{\Eg}\ra L^{2,w}_{k}(i\Lambda^1)  ]}.
$$
\item The natural map 
$$c: \frac{\ker [d:L^{2,w}_k(i\Lambda^1)\to L^{2,w}_{k-1}(i\Lambda^2)]}{ \textrm{Im }[d:L^{2,w}_{k+1}(i\Lambda^0)\oplus iF^{\Eg}\ra L^{2,w}_{k}(i\Lambda^1)]}\to H^1_{\rm dR} (X,i\mathbb{R})
$$
is injective.
\item If for every $e\in\Eg$, the group $H_{1}(W_e,\Z)$ is torsion,  we have equalities 
$$\mathrm{Im}(c)=\mathrm{Im}[H^1_c(X,i\mathbb{R})\to H^1(X,i\mathbb{R})]=H^1(X,i\mathbb{R}),$$
hence natural isomorphisms
$$ \mathbb{H}^1_w\stackrel{\simeq}{\longrightarrow}\mathrm{Im }[H^1_c(X,i\mathbb{R})\to H^1(X,i\mathbb{R})]=H^1(X,i\mathbb{R})\ .
$$
\end{enumerate}
\end{prop}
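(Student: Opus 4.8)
The four assertions are proved in order, each building on the previous. For (1), I would take $\alpha\in\mathbb{H}^1_w$, so by definition $d^*\alpha=0$ and $d^+\alpha=0$; the point is to upgrade $d^+\alpha=0$ to $d\alpha=0$. This is the exact analogue of Remark \ref{d+}: write $d\alpha=d^+\alpha+d^-\alpha=d^-\alpha$, and observe that $d^-\alpha$ is anti-self-dual while $dd^-\alpha=-dd^+\alpha\cdot$(sign)$\,$... more carefully, apply $d$ to $d\alpha$ to see $d\alpha$ is closed, and use that an anti-self-dual $2$-form which is both closed and coclosed-complementable pairs trivially with itself after integrating $|d^-\alpha|^2$ against $d\alpha\wedge\alpha$ — but on a non-compact manifold I must be careful with boundary terms. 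The clean route is: since $w>0$, $\alpha$ decays, and integration by parts over the compact sublevel sets $X^{\leq a}$ with $a\to\infty$ kills the boundary contributions because $\alpha\in L^{2,w}_k$ and $d^*\alpha=0$; then $\int_X d\alpha\wedge\alpha=\int_X d^-\alpha\wedge\alpha=\pm\|d^-\alpha\|^2_{L^2}$ while also $\int_X d\alpha\wedge\alpha=\int_X d(\alpha\wedge\alpha)\mp\int_X\alpha\wedge d\alpha$ forces this integral to vanish, hence $d^-\alpha=0$ and $d\alpha=0$. So (1) follows from the self-duality trick plus a decay-justified integration by parts.

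For (2), given (1), the inclusion $\mathbb{H}^1_w\hookrightarrow\ker[d]$ is defined, and I would use the Hodge decomposition (\ref{hodge1}): any $\beta\in\ker[d:L^{2,w}_k(i\Lambda^1)\to L^{2,w}_{k-1}(i\Lambda^2)]$ decomposes as $\beta=h+d\xi+d^*\omega$ with $h\in\mathbb{H}^1_w$, $\xi\in L^{2,w}_{k+1}(i\Lambda^0)\oplus iF^\Eg$, $\omega\in L^{2,w}_{k+1}(i\Lambda^2_+)$. Applying $d$ and using $d\beta=0$, $dh=0$ (by (1)), $dd\xi=0$ gives $dd^*\omega=0$; taking the $L^2$ pairing with $\omega$ (again legitimate since $w>0$, the weighted space sits inside $L^2$, and the boundary terms vanish by decay) yields $d^*\omega=0$, so $\beta=h+d\xi$. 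This shows $\ker[d]=\mathbb{H}^1_w\oplus d(L^{2,w}_{k+1}(i\Lambda^0)\oplus iF^\Eg)$, which is precisely the claimed isomorphism onto the quotient. The only subtlety is checking $dd^*\omega=0\Rightarrow d^*\omega=0$, i.e. that there are no $L^{2,w}$ harmonic self-dual forms in the relevant range that are $d^*$-exact; this is the same Hodge-theoretic cancellation as in the proof of Lemma \ref{lemma}.

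For (3), I would show the natural map $c$ from the "$L^{2,w}$-de Rham" quotient to ordinary de Rham cohomology $H^1_{dR}(X,i\R)$ is injective: if $[\beta]\in\ker c$ then $\beta=d f$ for some \emph{smooth} $i\R$-valued function $f$ on $X$ (not a priori in any weighted space). Since $\beta\in L^{2,w}_k$ with $w>0$, $df$ decays on each end, so $f$ is bounded on each end and converges to a locally constant limit; after subtracting the element of $iF^\Eg$ matching these end-limits, one gets $f'=f-\sum_e\theta_e\rho_e$ with $df'=\beta-\,$(something in $d(iF^\Eg)$) and $f'$ genuinely decaying, hence $f'\in L^{2,w}_{k+1}(i\Lambda^0)$ by an elliptic-regularity / Taubes-type estimate (this is exactly the mechanism of Taubes Lemma 5.2 invoked in Lemma \ref{lemma}). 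Therefore $\beta$ lies in $\mathrm{Im}[d:L^{2,w}_{k+1}(i\Lambda^0)\oplus iF^\Eg\to\cdots]$, i.e. $[\beta]=0$ in the quotient, proving injectivity.

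For (4), with the extra hypothesis that $H_1(W_e,\Z)$ is torsion (equivalently $b_1(Y_e)=1$ with the cover being "the" $\Z$-cover), I want $\mathrm{Im}(c)=\mathrm{Im}[H^1_c(X,i\R)\to H^1(X,i\R)]=H^1(X,i\R)$. The middle equality (that the compact-support-to-ordinary map is surjective) is a topological fact about manifolds whose ends are half-open cyclic covers of generalized tori with $b_1=1$: a closed $1$-form representing a class in $H^1(X,i\R)$, restricted to an end $\End_e(X)$, pulls back from $\widetilde{Y}_e$, where $H^1(\widetilde Y_e,\R)=0$ because $H^1(W_e,\R)=0$ forces (via the Mayer–Vietoris sequence from Section \ref{TwistedCo} with $\xi=1$, i.e. the ordinary cover) the cohomology of the infinite cyclic cover to vanish in degree $1$; hence the form is exact on each end and can be modified by an exact form to have compact support, giving surjectivity of $H^1_c\to H^1$, and also $H^1(X,i\R)$ is finite-dimensional. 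For the inclusion $\mathrm{Im}[H^1_c(X,i\R)\to H^1(X,i\R)]\subseteq\mathrm{Im}(c)$: a compactly supported closed $1$-form is in every $L^{2,w}_k$ and is $d$-closed, so it defines a class in the source quotient mapping to its ordinary class — combined with injectivity from (3) and a dimension count (both spaces inject into the finite-dimensional $H^1(X,i\R)$ and the composite lands on the full image of $H^1_c$), one gets the isomorphism $\mathbb{H}^1_w\xrightarrow{\simeq}H^1(X,i\R)$.

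\textbf{Main obstacle.} I expect the hardest point to be part (3)/(4): controlling the primitive $f$ of a weighted-$L^2$ exact form on the periodic ends and showing the decay modulo the finite-dimensional space $iF^\Eg$ — this is where the structure of the ends (half-cyclic covers of the generalized tori, the vanishing $H^1(\widetilde Y_e,\R)=0$ coming from condition $(C)$ at $\xi=1$, and the Taubes weighted elliptic estimate) all has to be assembled carefully, exactly as in Taubes Lemma 5.2 and the proof of Lemma \ref{lemma}. Parts (1) and (2) are routine Hodge-theoretic arguments once the boundary terms are dispatched using $w>0$.
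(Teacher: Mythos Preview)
Your proposal is correct and follows essentially the same route as the paper. For (1) you reprove the appendix lemma $\ker d^+=\ker d$ via the self-duality/integration-by-parts trick (the paper simply cites it); for (2) you use the Hodge decomposition and kill the $d^*\omega$ piece (the paper does this implicitly by taking $d^+$ and using $\Delta_+\omega=0\Rightarrow d^*\omega=0$, while you pair with $\omega$ --- both work); for (3) you invoke Taubes's Lemma 5.2 exactly as the paper does; and for (4) your cut-off argument via $H^1(\End_e(X),\R)=0$ matches the paper's iterated Mayer--Vietoris plus inductive-limit computation of $H_1(\End_e(X),\Z)$, though note that the relevant vanishing is for $\End_e(X)$ itself (the paper gets it from homology + universal coefficients) rather than from the twisted-cohomology machinery of Section~\ref{TwistedCo}, and the final ``dimension count'' you mention is unnecessary since the two inclusions you already have give equality directly.
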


\begin{proof} (1) For the first statement, recall the equality $\ker d^+=\ker d$ proved in the Appendix \ref{kerd}.
\\ \\
(2)  Applying $d$ to a  1-form $\alpha\in L^{2,w}_k(i\Lambda^1)$ decomposed according to (\ref{hodge1}) we obtain a direct sum decomposition
$$ \ker [d:L^{2,w}_k(i\Lambda^1)\to L^{2,w}_{k-1}(i\Lambda^2)]=\mathbb{H}^1_w\oplus\textrm{Im }[d:L^{2,w}_{k+1}(i\Lambda^0)\oplus  iF^{\Eg} \ra L^{2,w}_{k}(i\Lambda^1)],
$$
which proves the claim.
\\ \\
(3)  Let $\alpha\in L^{2,w}_k(i\Lambda^1)$ be a closed form such that $ [\alpha]_{\textrm{dR}}=0$.  Here we use implicitly the fact that the de Rham cohomology can be computed using Sobolev forms (or even distributions), hence a closed form in $L^{2,w}_k(i\Lambda^1)$ defines a de Rham cohomology class.

Therefore, there exists  a distribution $\phi\in \mathcal{D}'(X)$ on $X$ such that $d\phi=\alpha$. Using standard regularity theorems, we see that $\phi\in L^2_{k+1,loc}(X,i\mathbb{R})$.  By Lemma 5.2 p. 381 in \cite{Taubes}, it follows that $\phi\in L^{2,w}_{k+1}(i\Lambda^0)\oplus  iF^{\Eg}$, which proves the injectivity of $c$.
\\ \\
(4) Note first that that the inclusion 
$\mathrm{Im }[H^1_c(X,i\mathbb{R})\to H^1(X,i\mathbb{R})]\subset \mathrm{Im }(c) $ follows easily using de Rham theorem for cohomology with compact supports. This inclusion does not need the condition on $H_{1}(W_e,\Z)$, which is only needed for the opposite inclusion, as we will see below.

Since $H_{1}(W_e,\Z)$ is torsion,  we see by an iterated application of Mayer-Vietoris Theorem, for every end $e\in\Eg$ one has 
$$H_1(\cup_{0\leq i\leq k} W_{e_i},\Z) \hbox{ is torsion}
$$
Since homology commutes with inductive limits (see Theorem 4.1.5 p. 162 \cite{spanier}),  we obtain 
$$H_1(\End_e(X),\Z) \hbox{ is torsion}$$
 for every $e\in\Eg$. Using the universal coefficients theorem (see \cite{spanier} Theorem 3 p. 243) it follows $H^1(\End(X),\mathbb{R})=0$. Let $\alpha\in L^{2,w}_k(i\Lambda^1)$ be a closed form on $X$. Since  $H^1(\End(X),\mathbb{R})=0$, there exists   $\psi\in L^{2}_{k+1,loc}(\End(X))$ such that 
 $$d\psi=\alpha|_{\End(X)}.$$
   Let $\chi$ be a smooth cut-off function which is 1 on $\End_2(X)$ and 0 on $X\setminus \End_1(X)$ (see section \ref{periodicend} for notations).  Then  
$$c([\alpha])=[\alpha]_{\mathrm{dR}}=[\alpha -d (\chi \psi)]_{\mathrm{dR}},
$$
hence $c([\alpha])$ is represented by a form with compact support.

Finally, the equality  $\textrm{Im }[H^1_c(X,i\mathbb{R})\to H^1(X,i\mathbb{R})]=H^1(X,i\mathbb{R})$ can be proved in a similar way, using the same cut-off procedure applied this time to smooth closed 1-forms on $X$. 
\end{proof}

\begin{re}
The condition $H_1(W_e,\Z)$ is torsion is stronger than condition $(C)$ for the end $e\in\Eg$. It is an interesting question to give an explicit topological interpretation of $\mathbb{H}^{1}_{w}$ without this condition on the group $H_1(W_e,\Z)$.
\end{re}

\section{The gauge group}\label{gaugegroup}

Suppose $(X,g)$ is a manifold with periodic ends.
Consider for each end $e\in\Eg$, $\rho_{e}$ to be a smooth function on $X$ as in section \ref{hodgedeco}, which is zero outside $X_{e}^{\geq 0}$ and $1$ in $X_{e}^{\geq 1}$.


\vspace{1pc}

\begin{de}
Let $(X,g)$ be a Riemannian, $Spin^c(4)$, 4-manifold with periodic ends. Let $(\G_{w,k},\cdot)$ be the set
\begin{align*}
\G_{w,k}=\bigg\{&\theta \in L^{2}_{k+1,\mathrm{loc}}(X,\C): |\theta|=1 \textrm{ a.e}. \\
&\textrm{ such that  } \theta^{-1}d\theta \in L^{2,w}_{k}(\Lambda^{1}X)\bigg\} 
\end{align*}
endowed with the pointwise multiplication and inversion, which gives $(\G_{w,k},\cdot)$ the structure of an Abelian group.
\end{de}

Define now the morphism 
$$p:\mathscr{G}_{w,k+1}\to H^1(X,2\pi i\mathbb{ Z})
$$
by 
$$p(\theta):= [\theta^{-1} d\theta]_{\rm dR}\ .
$$
Using Cauchy formula it is easy to see that the right hand term defines an element of the image of $H^1(X,2\pi i\mathbb{Z})$ in $ H^1(X,i\mathbb{R})_{\mathrm{dR}}$. Using the definition of the gauge group $\mathscr{G}_{w,k+1}$ we obtain
\begin{equation}\label{kerp}
\ker p=\exp[ L^{2,w}_{k+1}(i\Lambda^0)\oplus F^{\Eg}],
\end{equation}
Indeed, supposing $\theta^{-1}d\theta=d\alpha$ for a smooth function $\alpha$, we get using Lemma 5.2 of \cite{Taubes} that $\alpha=\bar{\alpha}+\sum_{e\in\Eg}\rho_{e}\alpha_e$, for $\bar{\alpha}\in L^{2,w}_{k+1}$ and suitable constants $\alpha_e\in i\R$. A simple argument shows that $\theta$ is a (constant) multiple of $e^{\alpha}$.
Thus $\ker p$ coincides with the connected component of the gauge group $\mathscr{G}_{w,k+1}$. Under the condition $H_1(W_e,\Z)$ is torsion for every end $e\in\Eg$ (without any other condition on $X$) we can prove that $p$ is surjective. More precisely

\begin{prop} Let $(X,g)$ be a Riemannian 4-manifold with periodic ends, such that for every $e\in\Eg$ the group $H_1(W_e,\Z)$ is torsion.
\begin{enumerate}
\item One has a  short exact sequence
$$1\longrightarrow \exp[ L^{2,w}_{k+1}(i\Lambda^0)\oplus i F^{\Eg}]\hookrightarrow \mathscr{G}_{w,k+1}\stackrel{p}{\longrightarrow } 
H^1(X,2\pi i\mathbb{Z})\longrightarrow 0,$$
\item Putting
$$G:=\{\theta\in\mathscr{G}_{w,k+1}|\ \theta^{-1} d\theta\in \mathbb{H}^1_w\},$$
the restriction $p_G$ of $p$ to $G$ gives the short exact sequence
$$1\to S^1\to G\stackrel{p_G}{\longrightarrow } 
H^1(X, 2\pi i\mathbb{Z})\to 0.
$$

\end{enumerate}
\end{prop}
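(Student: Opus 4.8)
The plan is to prove part (1) first --- where the only real content is the surjectivity of $p$ --- and then deduce part (2) from it. For part (1) the identity $\ker p=\exp[L^{2,w}_{k+1}(i\Lambda^{0})\oplus iF^{\Eg}]$ is precisely (\ref{kerp}), so it remains to show that $p$ is onto. Given a class $c\in H^{1}(X,2\pi i\Z)$, I would pick a smooth representative $f\colon X\to S^{1}$ under the identification $[X,S^{1}]\simeq H^{1}(X,\Z)$. Its restriction $f|_{\End(X)}$ defines a class in $H^{1}(\End(X),\Z)$, and this group vanishes: from $H_{1}(W_{e},\Z)$ torsion one gets, by the iterated Mayer--Vietoris argument of the proof of Proposition~\ref{H1new}(4) together with the commutation of homology with direct limits, that $H_{1}(\End_{e}(X),\Z)$ is torsion for each $e\in\Eg$; since $\End(X)$ has finitely many components, each with free $H_{0}$ and torsion $H_{1}$, the universal coefficient theorem gives $H^{1}(\End(X),\Z)=0$. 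Hence $f|_{\End(X)}$ is null-homotopic (on each component). Using a smooth function $\lambda\colon X\to[0,1]$ which is $0$ on $X\setminus\End_{1}(X)$ and $1$ on $\End_{2}(X)$, and a smooth null-homotopy $H$ of $f|_{\End(X)}$ with $H(\cdot,0)=f|_{\End(X)}$ and $H(\cdot,1)$ locally constant, I would set $\tilde f:=f$ on $X\setminus\End(X)$ and $\tilde f(x):=H(x,\lambda(x))$ on $\End(X)$. This $\tilde f$ is smooth, homotopic to $f$ by construction, and locally constant on $\End_{2}(X)$; hence $\tilde f^{-1}d\tilde f$ is a smooth $1$-form vanishing on $\End_{2}(X)$, i.e.\ compactly supported (the same mechanism as with the compactly supported representatives in the proof of Proposition~\ref{H1new}(4)), so it lies in $L^{2,w}_{k+1}(i\Lambda^{1})$. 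Thus $\tilde f\in\mathscr{G}_{w,k+1}$ and $p(\tilde f)=[\tilde f^{-1}d\tilde f]_{\mathrm{dR}}=c$, which together with (\ref{kerp}) gives the short exact sequence of (1).

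For part (2) I would first observe that $G$ is a subgroup of $\mathscr{G}_{w,k+1}$, since for $S^{1}$-valued maps $(\theta_{1}\theta_{2})^{-1}d(\theta_{1}\theta_{2})=\theta_{1}^{-1}d\theta_{1}+\theta_{2}^{-1}d\theta_{2}$ and $\mathbb{H}^{1}_{w}$ is a linear subspace. Constant maps lie in $G$ and are killed by $p_{G}$, so $S^{1}\subseteq\ker p_{G}$. Conversely, if $\theta\in G$ satisfies $p_{G}(\theta)=0$, then $\theta^{-1}d\theta\in\mathbb{H}^{1}_{w}$ is a closed $1$-form with trivial de Rham class; since the inclusion $\mathbb{H}^{1}_{w}\hookrightarrow H^{1}_{\mathrm{dR}}(X,i\R)$ is injective (Proposition~\ref{H1new}(2)--(3)) this forces $\theta^{-1}d\theta=0$, so $\theta$ is constant, i.e.\ $\theta\in S^{1}$; hence $\ker p_{G}=S^{1}$. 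For surjectivity of $p_{G}$, given $c\in H^{1}(X,2\pi i\Z)$ I would take $\theta_{0}\in\mathscr{G}_{w,k+1}$ with $p(\theta_{0})=c$ from part (1); the closed form $\theta_{0}^{-1}d\theta_{0}\in L^{2,w}_{k}(i\Lambda^{1})$ decomposes, by the Hodge decomposition (\ref{hodge1}) (equivalently Proposition~\ref{H1new}(2)), as $\theta_{0}^{-1}d\theta_{0}=h+d\alpha$ with $h\in\mathbb{H}^{1}_{w}$ and $\alpha\in L^{2,w}_{k+1}(i\Lambda^{0})\oplus iF^{\Eg}$. Since $e^{-\alpha}\in\ker p\subset\mathscr{G}_{w,k+1}$ by (\ref{kerp}) and $\mathscr{G}_{w,k+1}$ is a group, $\theta:=\theta_{0}e^{-\alpha}$ lies in $\mathscr{G}_{w,k+1}$, satisfies $\theta^{-1}d\theta=\theta_{0}^{-1}d\theta_{0}-d\alpha=h\in\mathbb{H}^{1}_{w}$ (so $\theta\in G$), and $p_{G}(\theta)=p(\theta_{0})+p(e^{-\alpha})=c$.

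I expect the main obstacle to be the surjectivity of $p$ in part (1): producing, for a prescribed integral class, a gauge transformation whose logarithmic derivative genuinely lies in the weighted space $L^{2,w}_{k+1}$ rather than merely being closed. This is exactly where the hypothesis ``$H_{1}(W_{e},\Z)$ torsion'' enters, through the vanishing $H^{1}(\End(X),\Z)=0$, which lets the representative be chosen locally constant near infinity. Once part (1) is in hand, part (2) is essentially formal, resting only on the injectivity $\mathbb{H}^{1}_{w}\hookrightarrow H^{1}_{\mathrm{dR}}(X,i\R)$ and the Hodge decomposition already established, the one routine point being that correcting $\theta_{0}$ by $e^{-\alpha}$ keeps us inside $G\subset\mathscr{G}_{w,k+1}$, which is immediate.
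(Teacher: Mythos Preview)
Your proposal is correct and follows essentially the same strategy as the paper. The only cosmetic difference is in how you produce the representative with compactly supported logarithmic derivative in part~(1): you first pick an arbitrary smooth $f\colon X\to S^{1}$ and then homotope it to be locally constant on $\End_{2}(X)$ using a null-homotopy on $\End(X)$, whereas the paper passes through relative cohomology $H^{1}(X,X^{\geq t};\Z)$ and the representability of this group by maps of pairs $(X,X^{\geq t})\to(S^{1},1)$ to obtain directly a map constant~$1$ on $X^{\geq t}$, which it then smooths. Both routes rest on the same input, $H^{1}(\End(X),\Z)=0$, and are equivalent. Your argument for part~(2) matches the paper's almost verbatim; your use of the injectivity $\mathbb{H}^{1}_{w}\hookrightarrow H^{1}_{\mathrm{dR}}(X,i\R)$ to identify $\ker p_{G}$ is a slight repackaging of the paper's use of the Hodge decomposition to the same end.
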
 
\begin{proof} (1)   By (\ref{kerp})  we have $\ker(p)= \exp[ L^{2,w}_{k+1}(i\Lambda^0)\oplus i F^\Eg]$, hence it suffices to prove that $p$ is surjective.

Let $a\in H^1(X,2\pi i\mathbb{Z})$.  Let now $t\in (0,\infty)$ sufficiently large {\it regular value} of $\tau$ such that $X^{\geq t}\subset \End(X)$ (see section \ref{periodicend}).  Using again an iterated application of Mayer-Vietoris exact sequence, the functoriality of the homology functor with respect to inductive limits and the universal coefficients theorem as in  the proof of  Proposition \ref{H1new}, we obtain  $H^1(\End(X),\Z)=0$ . It follows that $a|_{X^{\geq t}}=0$. Therefore $a$ can be written as $2\pi i u(b)$, where 
$$u:H^1(X,X^{\geq t},\Z)\to H^1(X,\mathbb{Z})
$$
is the  natural map appearing in the cohomology long exact sequence associated with the pair $(X,X^{\geq t})$. The point is that this pair is a relative CW complex in the sense of \cite{spanier}. Since $S^1$ is a $K(\Z,1)$-space it follows (by Theorem 10 p. 428 \cite{spanier})  that there exists a   map  of pairs 
$$k:(X,X^{\geq t})\to (S^1,1)$$
such that $k^*(\{S^1\})=b$. This map can be regarded as a a continuous map $k:X\to S^1$ which is constant 1 on $X^{\geq t}$.  We can find a smooth approximation $f$ of $k$ which is constant 1 on $X^{\geq t+\epsilon}$ and is homotopically equivalent to $k$. We have obviously $f\in \mathscr{G}_{w,k+1}$ and  $a=p(f)$.

(2) Let $f\in G$ with $p(f)=0$. Using the first exact sequence, it follows that $f$ can be written as
$$f=\exp(\phi),
$$
where $\phi\in  L^{2,w}_{k+1}(i\Lambda^0)\oplus iF^\Eg$. But $f\in G$ implies
$$d\phi \in  \mathbb{H}^1_{w},
$$ 
which, by our Hodge decomposition Theorem (\ref{Hodge}) gives $d\phi=0$, i.e. $\phi\in i\mathbb{R}$, which gives $f\in S^1$.  It remains to prove that the restriction $p_G:=p|_G$ is still surjects onto $H^1(X ,2\pi i\mathbb{Z})$.  For a class  $a\in H^1(X,2\pi i\mathbb{Z})$ we obtain as above a smooth map $f:X\to S^1$ which is constant 1 on  $X^{\geq t+\epsilon}$ and such that $\frac{1}{2\pi i}a=f^*(\{S^1\})$. To complete the proof it suffices to find
$$\phi\in  L^{2,w}_{k+1}(i\Lambda^0)\oplus i F^\Eg
$$
such that $\exp(-\phi) f\in G$. This condition is equivalent to
$$ -d\phi+ \theta^{-1} d\theta\in \mathbb{H}^1.
$$
But $ \theta^{-1} d\theta \in \ker [d^+:L^{2,w}_{k}(i\Lambda^1)\to L^{2,w}_{k-1}(i\Lambda^2_+)]$, hence according to our Hodge decomposition theorem, it belongs to the sum
$$d( L^{2,w}_{k+1}(i\Lambda^{0})\oplus i F^{\Eg})\oplus \mathbb{H}^1 .
$$
 \end{proof}

The following lemma will play an important role in the construction of the Seiberg-Witten map as a map between Hilbert bundles over a torus (see section \ref{intro2}).
\def\Tors{\mathrm{Tors}}

\begin{lemma}\label{fingen}
Suppose that $H_1(W_e,\Z)$ is a torsion group for every $e\in\Eg$. Then  for a sufficiently large regular value of $\tau$, $a\in [0,\infty)$, the composition
$$H_1(X^{\leq a},\Z)\to H_1(X,\Z)\to H_1(X,\Z)/\Tors
$$
is surjective. In particular $H_1(X,\Z)/\Tors$ is finitely generated
\end{lemma}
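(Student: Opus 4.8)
\medskip
\noindent\emph{Proof strategy.}
The plan is to reduce everything to one geometric fact: that for a large regular value $a$ each level set $\tau_e^{-1}(a)$ is connected. Granting this for a moment, the final clause comes for free, since $X^{\leq a}$ is a compact manifold, so $H_1(X^{\leq a};\Z)$ is finitely generated and hence so is any quotient of it. Moreover the surjectivity of $H_1(X^{\leq a};\Z)\to H_1(X;\Z)/\Tors$ is equivalent to that of $H_1(X^{\leq a};\Q)\to H_1(X;\Q)$ (apply $-\otimes\Q$, which is exact and annihilates torsion), so it is this rational statement that I would prove.

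So fix a regular value $a\in(0,\infty)$ of $\tau$ so large that $X^{\geq a}\subset\End(X)$; then $X^{\geq a}=\bigsqcup_{e\in\Eg}X^{\geq a}_e$ with $X^{\geq a}_e=\{\tau_e\geq a\}\subset\End_e(X)$, which under the identification of a neighbourhood of $\overline{\End_e(X)}$ with part of $\widetilde{Y}_e$ becomes $\{\tau_{\widetilde{Y}_e}\geq a\}$. I would first record two auxiliary facts: (i) each $X^{\geq a}_e$ is connected and $H_1(X^{\geq a}_e;\Q)=0$ — indeed for $a$ large and regular $X^{\geq a}_e$ is homotopy equivalent, by collapsing a product collar, to a tail $\bigcup_{i\geq k}W_{e,i}$, and the iterated Mayer--Vietoris argument of the proof of Proposition \ref{H1new}(4) (gluing the connected pieces $W_{e,i}$, with $H_1(W_{e,i};\Q)=0$, along the connected sets $N_+$) together with the commutation of rational homology with direct limits gives $H_1(\bigcup_{i\geq k}W_{e,i};\Q)=0$; (ii) granting the connectedness of the $\tau_e^{-1}(a)$, also $X^{\leq a}$ is connected, since removing from the connected $X$ the connected sets $\overline{X^{>a}_e}$ along their connected boundaries $\tau_e^{-1}(a)$ cannot disconnect it.

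Now, $a$ being regular, excision (after retracting a collar) identifies $H_1(X,X^{\leq a};\Q)$ with $H_1(X^{\geq a},\tau^{-1}(a);\Q)=\bigoplus_{e\in\Eg}H_1(X^{\geq a}_e,\tau_e^{-1}(a);\Q)$; and since $H_0(X^{\leq a};\Q)\to H_0(X;\Q)$ is an isomorphism, the long exact sequence of the pair $(X,X^{\leq a})$ gives $\mathrm{coker}\big(H_1(X^{\leq a};\Q)\to H_1(X;\Q)\big)\cong H_1(X,X^{\leq a};\Q)$, while the long exact sequence of each pair $(X^{\geq a}_e,\tau_e^{-1}(a))$, using $H_1(X^{\geq a}_e;\Q)=0$ and $X^{\geq a}_e$ connected, gives $H_1(X^{\geq a}_e,\tau_e^{-1}(a);\Q)\cong\widetilde{H}_0(\tau_e^{-1}(a);\Q)$. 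Thus $H_1(X^{\leq a};\Q)\to H_1(X;\Q)$ is onto precisely when every $\tau_e^{-1}(a)$ is connected. To prove this I would apply Mayer--Vietoris to $\widetilde{Y}_e=\widetilde{Y}_e^{\geq a}\cup\widetilde{Y}_e^{\leq a}$: both halves are connected (each is a union of the connected pieces $W_{e,i}$ meeting consecutively, together with a collar), so in reduced rational homology the sequence yields a surjection $H_1(\widetilde{Y}_e;\Q)\to\widetilde{H}_0(\tau_{\widetilde{Y}_e}^{-1}(a);\Q)$; but $H_1(\widetilde{Y}_e;\Q)=0$, by the same iterated Mayer--Vietoris argument applied to the two-sided union $\widetilde{Y}_e=\bigcup_{i\in\Z}W_{e,i}$ and the direct limit over $\bigcup_{|i|\leq m}W_{e,i}$. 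Hence $\widetilde{H}_0(\tau_{\widetilde{Y}_e}^{-1}(a);\Q)=0$, the level sets are connected, and the lemma follows.

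The genuine obstacle is this last point. A disconnected regular level set $\tau_e^{-1}(a)$ would inject honest non-torsion classes into $H_1(X)$ — loops running out to infinity and back along different ``sheets'' of the end — that no compact $X^{\leq a}$ can hit, so without a hypothesis like ``$H_1(W_e,\Z)$ torsion'' the statement is simply false; that hypothesis is used here, via $H_1(\widetilde{Y}_e;\Q)=0$, exactly to force the level sets connected. The remaining ingredients — the excision and collar-retraction identifications, and the connectedness of $X^{\leq a}$ and of the tails $\bigcup_{i\geq k}W_{e,i}$ — are routine.
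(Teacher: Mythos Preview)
Your strategy is genuinely different from the paper's, and it has a real gap at exactly the point you flag as ``the genuine obstacle''. You reduce everything to the connectedness of the regular level set $\tau_e^{-1}(a)$, and for this you need both halves $\widetilde Y_e^{\,\leq a}$ and $\widetilde Y_e^{\,\geq a}$ to be connected. Your justification --- ``each is a union of the connected pieces $W_{e,i}$ meeting consecutively, together with a collar'' --- is not correct in Taubes' generality. The function $\tau_{\widetilde Y_e}$ is a primitive of the pull-back of an arbitrary closed $1$-form $\theta$ on $Y_e$ representing $\gamma_e$; nothing prevents $\theta$ from having zeros, so $\tau_{\widetilde Y_e}$ may have critical points, and the sublevel (or superlevel) set can acquire compact ``island'' components around local minima (or maxima). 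The same objection undermines your claim that $X_e^{\geq a}$ is homotopy equivalent, ``by collapsing a product collar'', to a tail $\bigcup_{i\geq k}W_{e,i}$, and hence your computation $H_1(X_e^{\geq a};\Q)=0$. Your Mayer--Vietoris identity does give $r=p+q-1$ (with $r,p,q$ the numbers of components of the level set and the two halves), so it recovers that the incidence graph is a tree; but it does \emph{not} force $p=q=1$, and I do not see how to extract that from $H_1(W_e;\Q)=0$ alone.

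The paper sidesteps all of this by never touching the level sets of $\tau$. It uses instead the Mayer--Vietoris sequence for the structural cover $X=K\cup\End(X)$ with intersection $N=\coprod_e N_e$; here $N_e$ is connected \emph{by definition} (it is diffeomorphic to the connected open set $N_-\subset W_e$), so $H_0(N)\to H_0(K)\oplus H_0(\End(X))$ is injective and $H_1(K;\Z)\oplus H_1(\End(X);\Z)\to H_1(X;\Z)$ is onto. Since $H_1(\End(X);\Z)$ is torsion (iterated Mayer--Vietoris plus direct limits, as you also use), $H_1(K;\Z)\to H_1(X;\Z)/\Tors$ is already onto, and one only needs $K\subset X^{\leq a}$ for $a$ large. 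The moral: replacing your pair $(X,X^{\leq a})$ by the cover $(K,\End(X))$ trades the delicate level-set topology for the given combinatorics of the periodic-end structure.
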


\begin{proof}
As we have seen in the proof of Lemma 3.2.1 (4), using successively the Mayer-Vietoris exact sequence and the inductive limit property of the homology functor (see Theorem 4.1.5 p. 162 \cite{spanier}), we see that  the homology group $H_1(\End^e(X),\Z)$ is a torsion group for every $e\in\Eg$. The Mayer-Vietoris exact sequence for the pair
$(K,\End(X))$ contains the segment:
$$H_1(K,\Z)\oplus H_1(\End(X),\Z) \to H_1(X,\Z)\to H_0(N)\to H_0(K,\Z)\oplus H_0(\End(X),\Z),
$$ 
where $N:=\coprod_{e\in\Eg} N_e$. Note now, that any connected component of $N$ is contained in a connected component of $\End(X)$, hence the the right hand morphism is injective. This shows that the morphism

$$H_1(K,\Z)\oplus H_1(\End(X),\Z) \stackrel{a}{\to} H_1(X,\Z)
$$
given by $a(x,y)=(i_K)_*(x)+ (i_{\End(X)})_*(y)$ is surjective. Since
$H_1(\End(X),\Z)$ is a torsion group, it follows that the composition 
$$H_1(K,\Z)\stackrel{(i_K)_*}{\longrightarrow} H_1(X,\Z)\to  H_1(X,\Z)/\Tors
$$ 
is surjective.
Let now $a\in [0,\infty)$ be a sufficiently large regular value of $\tau$ such that $K\subset X^{\leq a}$. It suffices to note that the image of $H_1(K,\Z)$ in $H_1(X,\Z)$ is contained in the image of $H_1(X^{\leq a},\Z)$ in $H_1(X,\Z)$.

\end{proof}
\vspace{1mm}
{\ }
\begin{coro}\label{tensor}  Suppose that $H_1(W_e,\Z)$ is a torsion group for every $e\in\Eg$. Then
$$H^1(X,\Z)=\mathrm{Hom}(H_1(X,\Z)/\mathrm{Tors},\Z),\ H^1(X,\R)=\mathrm{Hom}(H_1(X,\Z)/\mathrm{Tors},\R),
$$
in particular, since $H_1(X,\Z)/\mathrm{Tors}$ is finitely generated by Lemma 3.2.3, it follows that $H^1(X,\R)=H^1(X,\Z)\otimes\R$.
\end{coro}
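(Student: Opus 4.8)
The plan is to obtain all three identities as formal consequences of the universal coefficients theorem, the only non-formal input being the finite generation furnished by Lemma \ref{fingen}. First I would apply the universal coefficients short exact sequence in degree one,
$$0\ra\mathrm{Ext}^1_\Z(H_0(X,\Z),G)\ra H^1(X,G)\ra\mathrm{Hom}_\Z(H_1(X,\Z),G)\ra 0,$$
valid for any abelian group $G$. Since $X$ is connected, $H_0(X,\Z)\cong\Z$ is free, so the $\mathrm{Ext}$ term vanishes and the sequence collapses to a natural isomorphism $H^1(X,G)\cong\mathrm{Hom}_\Z(H_1(X,\Z),G)$; I will use this for $G=\Z$ and $G=\R$.

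Next I would exploit the splitting of the torsion extension. By Lemma \ref{fingen} the group $H_1(X,\Z)/\Tors$ is finitely generated, and being torsion-free it is therefore free abelian of some finite rank $r$; consequently the short exact sequence $0\ra\Tors\ra H_1(X,\Z)\ra H_1(X,\Z)/\Tors\ra 0$ splits, giving $H_1(X,\Z)\cong\Tors\oplus(H_1(X,\Z)/\Tors)$ --- note that $\Tors$ itself may well be infinitely generated, but this is harmless. Applying $\mathrm{Hom}_\Z(-,G)$ yields $\mathrm{Hom}_\Z(H_1(X,\Z),G)\cong\mathrm{Hom}_\Z(\Tors,G)\oplus\mathrm{Hom}_\Z(H_1(X,\Z)/\Tors,G)$, and $\mathrm{Hom}_\Z(\Tors,G)=0$ as soon as $G$ is torsion-free: if $x$ has finite order $n$ then $n\,\phi(x)=\phi(nx)=0$ forces $\phi(x)=0$. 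Taking $G=\Z$ and $G=\R$ then gives the first two displayed equalities.

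Finally, for the tensor identity I would identify $H_1(X,\Z)/\Tors$ with $\Z^r$, so that $H^1(X,\Z)\cong\mathrm{Hom}_\Z(\Z^r,\Z)\cong\Z^r$ and $H^1(X,\R)\cong\mathrm{Hom}_\Z(\Z^r,\R)\cong\R^r$; under these identifications the canonical map $H^1(X,\Z)\otimes_\Z\R\ra H^1(X,\R)$ becomes the obvious isomorphism $\Z^r\otimes_\Z\R\xrightarrow{\ \simeq\ }\R^r$. I do not expect a genuine obstacle here: the argument is pure homological algebra once Lemma \ref{fingen} is available, and the only point deserving care is that in the periodic-ends setting $H_1(X,\Z)$ (hence $\Tors$) need not be finitely generated, so one must invoke freeness of the quotient $H_1(X,\Z)/\Tors$ rather than of $H_1(X,\Z)$ itself in order to split off the torsion.
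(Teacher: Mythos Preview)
Your proposal is correct and follows essentially the same approach as the paper: invoke the universal coefficients theorem (using that $H_0(X,\Z)$ is free to kill the $\mathrm{Ext}$ term) to get $H^1(X,G)\cong\mathrm{Hom}(H_1(X,\Z),G)$, then pass to the quotient by torsion since $G=\Z,\R$ are torsion-free, and finally use finite generation of $H_1(X,\Z)/\Tors$ for the tensor identity.

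One small remark: your splitting argument for the first two identities is slightly more than needed. The paper simply writes $\mathrm{Hom}(H_1(X,\Z),\Z)=\mathrm{Hom}(H_1(X,\Z)/\Tors,\Z)$ directly, which holds for any abelian group $A$ and torsion-free target $G$ by the universal property of the quotient (every $\phi:A\to G$ kills $\Tors(A)$, exactly as you observe), without invoking freeness of $A/\Tors$ or any splitting. Thus the finite generation from Lemma \ref{fingen} is only genuinely required for the final step $H^1(X,\R)=H^1(X,\Z)\otimes\R$, not for the two displayed $\mathrm{Hom}$ identities. Your route through the splitting is of course valid, just marginally less economical.
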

\begin{proof}
By the universal coefficients theorem one has
$$H^1(X,\Z)=\mathrm{Hom}(H_1(X,\Z),\Z)=\mathrm{Hom}(H_1(X,\Z)/\mathrm{Tors},\Z)
$$
and similarly for $H^1(X,\R)$. 
\end{proof}

Note that the universal coefficients Theorem relating cohomology to homology (see Theorem 5.5.3 p. 243 \cite{spanier}) does not assume $H_*(X,\Z)$ to be finitely generated. On the other hand the universal coefficients Theorem relating cohomology  to cohomology  (see Theorem 5.5.10 p. 246 \cite{spanier}) does need the assumption "$H_*(X,\Z)$ is finitely generated".

\chapter{Coercivity and compactness}

\section{Energy identities}
In this first section we let $X$ be a Riemannian 4-manifold with bounded geometry, i.e. complete and such that the infima of the injectivity radius is positive and all the derivatives of the Riemann tensor curvature are bounded in $\mathscr{C}^{k}$ for all $k$. This is true for a manifold with periodic ends and ensures density of compact support forms in the usual Sobolev spaces, as explained in \cite{A}.

\begin{thh}
Let $(X,g)$ be a Riemannian manifold with bounded geometry, then $\mathscr{C}^{\infty}_{c}$ is dense in $L^{p}_{k}$, and the Sobolev injection theorem hold.
\end{thh}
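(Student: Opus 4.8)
This is a standard fact about manifolds of bounded geometry, and the plan is to reduce both assertions to their Euclidean counterparts via the uniform coordinate charts furnished by the two hypotheses. The first step is to assemble the geometric preliminaries: from the lower bound on the injectivity radius and the bounds on the covariant derivatives of the curvature one produces, by a well-known construction, a countable cover of $X$ by geodesic balls $B_i=B(x_i,\delta)$, with $\delta$ smaller than the infimum of the injectivity radius, which is \emph{uniformly locally finite} --- there is an integer $N$ such that each point of $X$ lies in at most $N$ of the balls $B_i$. It comes with a subordinate smooth partition of unity $(\varphi_i)_i$ whose covariant derivatives of every order are bounded by constants independent of $i$. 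Moreover, in the normal coordinates centred at $x_i$ the metric coefficients $g_{ab}$ and all their partial derivatives up to any prescribed order are bounded, and $(g_{ab})$ is uniformly equivalent to the Euclidean metric, all bounds being independent of $i$. One also needs a smooth proper exhaustion $r\colon X\to[0,\infty)$ with $|\nabla^j r|$ bounded for every $j$, obtained by smoothing the distance to a fixed point.

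For the density statement I would proceed in two stages. First, with $\chi_R:=\psi(r/R)$ for a fixed $\psi\in\mathscr{C}^{\infty}_{c}(\R)$ equal to $1$ near the origin, one has $|\nabla^j\chi_R|\le C_j R^{-j}$; for $u\in L^p_k$ the Leibniz rule writes $\nabla^j(\chi_R u)$ as $\chi_R\nabla^j u$ plus a finite sum of terms each bounded in $L^p$-norm by $C R^{-1}\|u\|_{L^p_k}$, so $\chi_R u\to u$ in $L^p_k$ by dominated convergence. This reduces matters to approximating a compactly supported $u\in L^p_k$; for that one writes $u=\sum_i\varphi_i u$ (a finite sum), transports each $\varphi_i u$ to $\R^n$ through the normal coordinates, mollifies in $\R^n$, and pulls back, keeping the supports inside a fixed neighbourhood of $\mathrm{supp}\,u$.

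For the Sobolev embeddings, the uniform equivalence of $(g_{ab})$ with the flat metric and the uniform control of its derivatives transfer the Euclidean Sobolev inequalities on $B(0,\delta)$ to each $B_i$ with a constant independent of $i$ --- e.g.\ $\|u\|_{L^q(B_i)}\le C\|u\|_{L^p_k(B_i)}$ when $kp<n$ and $q=np/(n-kp)$, and analogously in the borderline and $kp>n$ ranges. Since $q\ge p$, summation over $i$ yields
$$\|u\|_{L^q(X)}^p\le\Big(\sum_i\|u\|_{L^q(B_i)}^q\Big)^{p/q}\le\sum_i\|u\|_{L^q(B_i)}^p\le C\sum_i\|u\|_{L^p_k(B_i)}^p\le CN\,\|u\|_{L^p_k(X)}^p,$$
the last inequality being the uniform local finiteness of the cover; the $\mathscr{C}^m$-type embeddings for $kp>n$ are obtained in the same manner, the global $\mathscr{C}^m$-norm being the supremum of the local ones. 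I expect the only genuinely non-formal part to be the package of geometric preliminaries of the first paragraph --- the uniformly locally finite cover, the partition of unity with uniform derivative bounds, and the uniform control of the metric in normal coordinates; this is precisely what bounded geometry provides and is carried out in detail in \cite{A}, after which both assertions follow by an assembly of elementary Euclidean facts through locally finite summation.
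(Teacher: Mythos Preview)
Your sketch is correct and follows the standard route for manifolds of bounded geometry: uniform normal-coordinate charts with uniformly locally finite cover, cutoff by a proper exhaustion with bounded derivatives followed by local mollification for density, and patching of the Euclidean Sobolev inequalities via the subadditivity estimate $(\sum a_i)^{p/q}\le\sum a_i^{p/q}$ for $p\le q$ together with the multiplicity bound $N$. The paper does not give its own argument here at all --- it simply refers to Aubin \cite{A}, sections 2.7 and 2.21 --- so your proposal is in effect an expanded version of the cited proof rather than an alternative to it.
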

\begin{proof}
See Aubin's \cite{A},  2.7 and 2.21.
\end{proof}
We will refer to this theorem as Aubin-Sobolev.
As in $\cite{KM}$ let $(X,g,\tau)$ be a compact, oriented, Riemannian, $\mathrm{Spin}^{c}(4)$, 4-manifold with boundary. For a smooth pair $(A,\phi)\in\A(\det Q)\ti\mathscr{C}^{\infty}(\Sigma^+)$, one can define the Seiberg-Witten map 
\begin{align*}
SW(A,\varphi)=(\sD_A\varphi,\frac{1}{2}\rho(F_A^{+})-(\varphi\otimes\bar{\varphi})_{0})
\end{align*}
and the following topological energy and analytical energy integrals

\begin{align*}
\mathscr{E}^{\mathrm{top}}(A,\varphi)=\frac{1}{4}\int_{X}F_A\wedge F_A-\int_{\partial X}\langle \varphi,\sD_B\varphi \rangle+\int_{\partial X} \frac{H}{2}|\varphi|^2
\end{align*}

\begin{align*}
\mathscr{E}^{\mathrm{an}}(A,\varphi)=\frac{1}{4}\int_{X} |F_{A}|^2+\int_{X}|\nabla_{\hat{A}}\varphi|^2+\frac{1}{4}\int_{X}(|\varphi|^2+\frac{s_g}{2})^2-\int_{X}\frac{s_{g}^2}{16}
\end{align*}
where $B$ is the boundary $\textrm{Spin}^{c}$ connection and $H$ the mean curvature of the boundary. These verify the identity

\begin{align*}
\E^{\mathtt{an}}(A,\varphi)-\E^{\mathtt{top}}(A,\varphi)=\|SW(A,\varphi)\|^{2}_{L^{2}(X)},
\end{align*}
which plays an important role in the results obtained in \cite{KM}.

We will need a modified version of this identity which applies to pairs on non compact manifolds. 

Let  $(X,g,\tau)$ be an oriented Riemannian, $\textrm{Spin}^{c}(4)$, 4-manifold which we suppose connected.  Suppose $(A,\psi)\in\A(\det Q)\ti\mathscr{C}^{\infty}_{c}(\Sigma^+)$ is a pair with $F_{A}^{+}\in L^2$. Then $SW(A,\varphi)$ also belongs to $L^2$. Using the same arguments as in \cite{KM} (based on two applications of Stokes theorem)  we obtain the identity
\begin{align*}
\|SW(A,\varphi)\|^{2}_{L^2}=\frac{1}{2}\int_{X}|F_A^+|^2+\int_{X}|\nabla_{\hat{A}}\varphi|^2+\frac{1}{4}\int_{X}(|\varphi|^4+s_g|\varphi|^2).
\end{align*}
 note that for this identity to hold we don't need to assume that the integrals appearing in the definitions of $\E^{\mathrm{an}}$ and $\E^{\mathrm{top}}$ converge.
 
  If $A$ is connection that can be written as $A_0+v$ where $v$ is a smooth compactly supported imaginary 1-form, we have, recalling that for $\alpha\in i\Lambda^{k}$ we have $|\alpha|^{2}=-\alpha\wedge\ast\alpha$, 
\begin{align*}
-2|F_A^{+}|^2&=2F_A^+\wedge F_{A}^{+}\\
&= 2F_{A_0}\wedge F_{A_0}^{+}+2dv\wedge F_{A_0}^{+}+2F_{A_0}\wedge d^{+}v+dv\wedge dv-|dv|^{2}\\
&= 2F_{A_0}^{+}\wedge F_{A_0}^{+}+2dv\wedge\ast F^{+}_{A_0}+2F_{A_0}\wedge \ast d^{+}v+dv\wedge dv-|dv|^{2}\\
&= 2F_{A_0}^{+}\wedge F_{A_0}^{+}-2(dv, F^{+}_{A_0})-2(F_{A_0}, d^{+}v)+dv\wedge dv-|dv|^{2}\\
&= 2F_{A_0}^{+}\wedge F_{A_0}^{+}-4(dv, F^{+}_{A_0})+dv\wedge dv-|dv|^{2}\\
\end{align*}
pointwise. Thus the term$\frac{1}{2}\int_{X}|F_A^{+}|^2$ can be replaced by $\frac{1}{4}\int_{X}|dv|^{2}+4(dv, F^{+}_{A_0})-\frac{1}{2}\int_{X} F_{A_0}^{+}\wedge F_{A_0}^{+}$, where on the right all terms under the integral are compactly supported.

\begin{re}
Suppose $(X,g,\tau)$ is a Riemannian $\mathrm{Spin}^{c}(4)$, 4-manifold. Consider a connection $A_0\in\A(\det(Q))$ which is ASD at infinity,  let $A=A_0+v$ where $v\in \mathscr{C}_{c}(i\Lambda^{1})$, $\varphi\in\mathscr{C}_{c}(\Sigma^{+})$. Then putting
\begin{align*}
\widetilde{\E}^{\mathtt{an}}(A,\varphi)&:=\frac{1}{4}\int_{X}|dv|^{2}+4(F_{A_0}^{+},dv)+\int_{X}|\nabla_{\hat{A}} \varphi|^2+\frac{1}{4}\int_{X}|\varphi|^4+s_{g}|\varphi|^2,\\
\widetilde{\E}_{A_0}&=\frac{1}{2}\int_{X} F_{A_0}^{+}\wedge F_{A_0}^{+},
\end{align*}
we have
\begin{equation}\label{eq:newenergy} 
\begin{aligned} 
\widetilde{\E}^{\mathtt{an}}(A,\varphi)-\widetilde{\E}_{A_0}=\|SW(A,\varphi)\|^{2}_{L^{2}}.
\end{aligned}
\end{equation}
\end{re}
Our idea is to use this identity in a similar way as the energy identity of Kronheimer-Mrowka.

We now prove that  both these expressions and the identity (\ref{eq:newenergy}), extend to suitable spaces of Sobolev sections, provided that the underlying Riemannian manifold $(X,g)$ has bounded geometry.

\begin{prop}
\label{extension}
Let  $(X,g,\tau)$ be a  Riemannian 4-manifold with bounded geometry endowed with a $Spin^c(4)$ structure. Let $A_0\in\A(\det Q)$ be a smooth connection which is ASD outside a compact set. Then the above energy $\widetilde{\E}^{\mathtt{an}}$  extends naturally to  $[A_{0}+L^{2}_{1}(i\Lambda^{1})]\ti L^{2}_{1}(\Sigma^{+})$ as a continuous map.
\end{prop}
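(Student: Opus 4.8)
The plan is to check that each summand appearing in the definition of $\widetilde{\E}^{\mathtt{an}}$ is separately continuous on $[A_0 + L^2_1(i\Lambda^1)]\ti L^2_1(\Sigma^+)$, using the Sobolev embeddings $L^2_1\hookrightarrow L^4$ on a $4$-manifold and $L^2_1\hookrightarrow L^6$ (in dimension $4$, $L^2_1\hookrightarrow L^q$ for $q\le 4$), all of which are available here because $(X,g)$ has bounded geometry so the Aubin--Sobolev theorem applies and $\mathscr{C}^\infty_c$ is dense. First I would treat the purely curvature-type terms: $\int_X |dv|^2$ is manifestly continuous (even a bounded quadratic form) in $v\in L^2_1(i\Lambda^1)$, and the cross term $\int_X (F_{A_0}^+, dv)$ is continuous because $F_{A_0}^+\in L^2$ — this is exactly the hypothesis that $A_0$ is ASD outside a compact set, so $F_{A_0}^+$ has compact support and in particular lies in $L^2$ — and then Cauchy--Schwarz bounds the pairing by $\|F_{A_0}^+\|_{L^2}\|dv\|_{L^2}$, giving continuity in $v$. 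The constant $\widetilde{\E}_{A_0} = \tfrac12\int_X F_{A_0}^+\wedge F_{A_0}^+$ is finite for the same reason and contributes nothing to continuity.

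Next I would handle the spinorial terms. For $\int_X |\nabla_{\hat A}\varphi|^2$ one writes $\hat A = \hat A_0 + \tfrac12 v$ (or the appropriate convention), so $\nabla_{\hat A}\varphi = \nabla_{\hat A_0}\varphi + \tfrac12 v\otimes\varphi$; the first piece is controlled by $\|\varphi\|_{L^2_1}$ together with boundedness of $A_0$ and of the geometry, and the zeroth-order correction $v\otimes\varphi$ is estimated in $L^2$ by $\|v\|_{L^4}\|\varphi\|_{L^4}\lesssim \|v\|_{L^2_1}\|\varphi\|_{L^2_1}$ via Hölder and Sobolev. Expanding the square and using bilinearity, continuity of this term as a function of $(v,\varphi)$ follows from continuity of each factor together with the product estimates. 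The term $\int_X |\varphi|^4$ is continuous on $L^4$, hence on $L^2_1$, because $\varphi\mapsto \|\varphi\|_{L^4}^4$ is continuous (polynomial composed with a continuous norm). Finally $\int_X s_g |\varphi|^2$ is continuous in $\varphi\in L^2$ (hence in $L^2_1$) once one knows $s_g$ is bounded, which again follows from bounded geometry.

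Having shown each summand extends continuously, I would assemble them: $\widetilde{\E}^{\mathtt{an}}$ is a finite sum of continuous functions, hence continuous, and since $\mathscr{C}^\infty_c$-pairs are dense in $[A_0+L^2_1(i\Lambda^1)]\ti L^2_1(\Sigma^+)$ by the density statement from bounded geometry, the extension is the unique continuous one agreeing with the smooth formula. The identity $\widetilde{\E}^{\mathtt{an}}(A,\varphi) - \widetilde{\E}_{A_0} = \|SW(A,\varphi)\|^2_{L^2}$, already verified for compactly supported smooth pairs, then extends by the same density/continuity argument once one also checks that $(A,\varphi)\mapsto \|SW(A,\varphi)\|_{L^2}^2$ is continuous on the same space — which is a parallel computation: $\sD_A\varphi = \sD_{A_0}\varphi + \Gamma(v)\cdot\varphi$ is continuous into $L^2$ by the same $L^4\times L^4$ estimate, and the curvature part $\tfrac12\rho(F_A^+) - (\varphi\otimes\bar\varphi)_0 = \tfrac12\rho(F_{A_0}^+) + \tfrac12\rho(d^+v) - (\varphi\otimes\bar\varphi)_0$ is continuous into $L^2$ because $d^+v\in L^2$ depends continuously on $v\in L^2_1$, $F_{A_0}^+\in L^2$ is fixed, and $\varphi\otimes\bar\varphi$ lies in $L^2$ with $\|\varphi\otimes\bar\varphi\|_{L^2}=\|\varphi\|_{L^4}^2$.

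The main obstacle I anticipate is not any single estimate but the bookkeeping around the connection $A_0$: one must be careful that $F_{A_0}^+$ and $s_g$ genuinely lie in the spaces needed ($L^2$ and $L^\infty$ respectively), which is where the hypotheses ``ASD outside a compact set'' and ``bounded geometry'' are doing essential work, and one must track the precise normalization relating the connection $A$ on $\det Q$ to the induced $\mathrm{Spin}^c$ connection $\hat A$ so that the expansions $\nabla_{\hat A}\varphi = \nabla_{\hat A_0}\varphi + (\text{zeroth order in }v)\cdot\varphi$ and $\sD_A\varphi = \sD_{A_0}\varphi + \Gamma(v)\cdot\varphi$ (cf. Lemma~\ref{twist}) carry the correct constants. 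Once these are pinned down, every term reduces to an instance of Hölder's inequality combined with the Sobolev embedding $L^2_1\hookrightarrow L^4$ on the $4$-manifold $X$.
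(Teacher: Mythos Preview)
Your proposal is correct and follows essentially the same approach as the paper: expand $\widetilde{\E}^{\mathtt{an}}$ term by term and show each summand extends continuously to $L^2_1$ using H\"older's inequality together with the Sobolev embedding $L^2_1\hookrightarrow L^4$ valid on manifolds with bounded geometry. The paper illustrates the method with the trilinear cross term $\int_X \mathrm{Re}\langle \nabla_{A_0}\varphi,\gamma(v)\cdot\psi\rangle$ and leaves the remaining terms to the reader; you have written these out in somewhat more detail and also sketched the content of the subsequent Proposition~\ref{sw} and Corollary~\ref{energyid}, but the underlying argument is identical.
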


\begin{proof}
It suffices to note that $\widetilde{\E}^{\mathrm{an}}$ can be expanded as a sum of terms which can be extended continuously to $L^2_1$ using standard Sobolev embedding theorems on manifolds with bounded geometry. For instance the expansion of $\int_{X}|\nabla_{\hat{A}}\varphi|^2$ contains the term $\int_{X}\mathrm{Re}\langle \nabla_{A_0}\varphi,\gamma(v)\cdot\varphi\rangle$. On the other hand for the trilinear map $(v,\varphi,\psi)\ra \int_{X}\mathrm{Re}\langle \nabla_{A_0}\varphi,\gamma(v)\cdot\psi\rangle$ verifies the estimate 
\begin{align*}
| \int_{X}\mathrm{Re}\langle \nabla_{A_0}\varphi,\gamma(v)\cdot\psi\rangle|&\leq  \big\{\int_{X}|\nabla_{A_0}|^2\big\}^{1/2}\big\{\int_{X}|\gamma(v)\cdot\psi|^2\big\}^{1/2}\\
&\leq C\|\varphi\|_{L^2_1}\|v\|_{L^4}\|\psi\|_{L^4}\leq C'\|\varphi\|_{L^2_1}\|v\|_{L^2_1}\|\psi\|_{L^2_1},
\end{align*}
which shows that it extends continuously to $L^{2}_{1}\ti L^{2}_{1}\ti L^{2}_{1}$. Here we have used the bounded embedding $L^2_1\hookrightarrow L^4$ which, according to \cite{A} is valid on manifolds with bounded geometry. The other terms can be handled in a similar way.
\end{proof}

\begin{prop}\label{sw}
Suppose we are in the conditions of the Proposition \ref{extension}. Then the map $[A_{0}+\mathscr{C}_{c}(i\Lambda^{1})]\ti \mathscr{C}_{c}(\Sigma^{+})\ra\R$ given by $\|SW(A,\varphi)\|^{2}_{L^2}$, naturally extends to $[A_{0}+L^{2}_{1}(i\Lambda^{1})]\ti L^{2}_{1}(\Sigma^{+})$ as a continuous map.
\end{prop}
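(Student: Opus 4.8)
The plan is to read the statement off the energy identity (\ref{eq:newenergy}) together with Proposition \ref{extension}. On the dense subspace $[A_0+\mathscr{C}_{c}(i\Lambda^1)]\times\mathscr{C}_{c}(\Sigma^+)$ one has
$$\|SW(A,\varphi)\|^2_{L^2}=\widetilde{\E}^{\mathtt{an}}(A,\varphi)-\widetilde{\E}_{A_0},$$
and $\widetilde{\E}_{A_0}=\tfrac12\int_X F_{A_0}^+\wedge F_{A_0}^+$ is a finite constant: since $A_0$ is ASD outside a compact set, $F_{A_0}^+$ is a smooth compactly supported form, so the integral converges and is independent of $(v,\varphi)$. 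By Proposition \ref{extension} the right-hand side extends to a continuous function on $[A_0+L^{2}_{1}(i\Lambda^1)]\times L^{2}_{1}(\Sigma^+)$, and since $(X,g)$ has bounded geometry the Aubin-Sobolev theorem gives density of $\mathscr{C}_{c}$ in $L^{2}_{1}$. Hence this continuous extension is the unique continuous extension of $(v,\varphi)\mapsto\|SW(A_0+v,\varphi)\|^2_{L^2}$, which is exactly the assertion.

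For completeness one checks that this extension coincides with the naive interpretation of $\|SW\|^2_{L^2}$ for a pair $(v,\varphi)$ of class $L^{2}_{1}$. Writing $A=A_0+v$ one has $\sD_A\varphi=\sD_{A_0}\varphi+\tfrac12\rho(v)\cdot\varphi$ and $F_A^+=F_{A_0}^++d^+v$, so
$$SW(A,\varphi)=\big(\sD_{A_0}\varphi+\tfrac12\rho(v)\cdot\varphi,\ \tfrac12\rho(F_{A_0}^++d^+v)-(\varphi\otimes\bar\varphi)_0\big).$$
Using the bounded embedding $L^{2}_{1}\hookrightarrow L^4$ (valid on manifolds with bounded geometry, see \cite{A}), each summand lies in $L^2$: the terms $\sD_{A_0}\varphi$ and $d^+v$ because $\varphi,v\in L^{2}_{1}$ and $F_{A_0}^+$ has compact support, while $\rho(v)\cdot\varphi$ and $(\varphi\otimes\bar\varphi)_0$ because they are continuous bilinear expressions $L^4\times L^4\to L^2$. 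Thus $SW$ itself extends to a continuous $L^2$-valued map on $[A_0+L^{2}_{1}]\times L^{2}_{1}$, and composing with the continuous map $\|\cdot\|^2_{L^2}$ recovers the function produced above.

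There is no serious obstacle here: everything of substance is already contained in Proposition \ref{extension}, and the present statement is essentially a corollary of it and of the algebraic identity (\ref{eq:newenergy}). The only points that require attention are the borderline quartic term $\int_X|\varphi|^4$ and the mixed terms involving both $v$ and $\varphi$, which is precisely where the bounded-geometry hypothesis intervenes — both to supply the Sobolev embedding $L^{2}_{1}\hookrightarrow L^4$ and to guarantee the density of compactly supported sections that legitimizes defining the extension by continuity.
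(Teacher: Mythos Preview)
Your proposal is correct. Your primary argument is genuinely different from the paper's: you deduce the continuous extension of $\|SW\|^2_{L^2}$ from the energy identity (\ref{eq:newenergy}) on compactly supported pairs together with the already-established continuous extension of $\widetilde{\E}^{\mathtt{an}}$ (Proposition \ref{extension}), whereas the paper proves Proposition \ref{sw} directly by expanding $\|SW(A,\varphi)\|^2_{L^2}=\|\sD_{\hat{A}}\varphi\|^2_{L^2}+\|F_A^+-(\varphi\otimes\bar\varphi)_0\|^2_{L^2}$ term by term, using $L^2_1\hookrightarrow L^4$ exactly as in your ``for completeness'' paragraph. Your route is more economical and makes clear that Proposition \ref{sw} is essentially a corollary of Proposition \ref{extension}; the paper's direct route has the minor advantage of establishing that $SW$ itself (not just its squared norm) extends continuously as an $L^2$-valued map, which you recover anyway in your second paragraph. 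Either way, the paper's Corollary \ref{energyid} then follows immediately.
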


\begin{proof}
Remember that  $\|SW(A,\varphi)\|^{2}_{L^2}$ is given by
\begin{align}\label{swittennew}
\|\sD_{\hat{A}}\varphi\|^2_{L^2}+\|F^{+}_{A}-(\varphi\otimes\bar{\varphi})_{0}\|^2_{L^2}.
\end{align}
Now if $A=A+u$ then $\sD_{\hat{A}_0+u}\varphi=\sD_{\hat{A}_0}\varphi+\frac{1}{2}\Gamma(u)\varphi$. Since $\sD_{\hat{A_0}}\varphi$ is a contraction of $\nabla_{\hat{A_0}}\varphi$, it follows that also that $\|\sD_{\hat{A_0}}\varphi\|_{L^2}$ is bounded, up to a constant, by $\|\nabla_{\hat{A_0}}\varphi\|_{L^2}$. The expression $\Gamma(u)\varphi$ from the inclusion $L^{2}_{1}\hookrightarrow L^{4}$,  that the first term in $(\ref{swittennew})$ is a polynomial expression in $(u,\varphi)$ which is continuous with respect to the $L^{2}_{1}$.  The same argument applies to $F^{+}_{A}-(\varphi\otimes\bar{\varphi})_{0}=F_{A_0}+d^{+}v-(\varphi\otimes\bar{\varphi})_{0}$. The result follows from density smoothly compact supported sections as above.
\end{proof}

\begin{coro}[Energy Identity]\label{energyid}
In the conditions of the previous propositions the following equality holds
\begin{align*}
\widetilde{\E}^{\mathtt{an}}(A,\varphi)-\widetilde{\E}_{A_0}=\|SW(A,\varphi)\|^{2}_{L^2} 
\end{align*}o
for  all $(A,\varphi)\in[A_{0}+L^{2}_{1}(i\Lambda^{1})]\ti L^{2}_{1}(\Sigma^{+})$.
\end{coro}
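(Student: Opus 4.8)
The statement to be proved is the Energy Identity
$$\widetilde{\E}^{\mathtt{an}}(A,\varphi)-\widetilde{\E}_{A_0}=\|SW(A,\varphi)\|^{2}_{L^2}$$
for all $(A,\varphi)\in[A_{0}+L^{2}_{1}(i\Lambda^{1})]\ti L^{2}_{1}(\Sigma^{+})$, having already established this identity on the dense subset $[A_{0}+\mathscr{C}_c(i\Lambda^1)]\ti\mathscr{C}_c(\Sigma^+)$ and the continuity of both sides on the $L^2_1$-space. The plan is simply a density/continuity argument, so the proof should be short.

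\begin{proof}
Fix $(A,\varphi)=(A_0+v,\varphi)$ with $v\in L^{2}_{1}(i\Lambda^{1})$ and $\varphi\in L^{2}_{1}(\Sigma^{+})$. Since $(X,g)$ has bounded geometry, by the Aubin--Sobolev density theorem the space $\mathscr{C}^{\infty}_{c}(i\Lambda^{1})\ti\mathscr{C}^{\infty}_{c}(\Sigma^{+})$ is dense in $L^{2}_{1}(i\Lambda^{1})\ti L^{2}_{1}(\Sigma^{+})$; choose a sequence $(v_n,\varphi_n)_n$ of smooth compactly supported pairs converging to $(v,\varphi)$ in $L^{2}_{1}\ti L^{2}_{1}$, and set $A_n:=A_0+v_n$. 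On each pair $(A_n,\varphi_n)$ the identity
$$\widetilde{\E}^{\mathtt{an}}(A_n,\varphi_n)-\widetilde{\E}_{A_0}=\|SW(A_n,\varphi_n)\|^{2}_{L^2}$$
holds by the Remark preceding Proposition~\ref{extension} (the elementary Stokes-theorem computation on compactly supported data). By Proposition~\ref{extension} the left-hand side, regarded as a function of $(v,\varphi)\in L^{2}_{1}\ti L^{2}_{1}$, is continuous, hence $\widetilde{\E}^{\mathtt{an}}(A_n,\varphi_n)\to\widetilde{\E}^{\mathtt{an}}(A,\varphi)$; the constant $\widetilde{\E}_{A_0}$ is unchanged. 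By Proposition~\ref{sw} the map $(v,\varphi)\mapsto\|SW(A_0+v,\varphi)\|^{2}_{L^2}$ is continuous on the same space, hence $\|SW(A_n,\varphi_n)\|^{2}_{L^2}\to\|SW(A,\varphi)\|^{2}_{L^2}$. Passing to the limit in the displayed identity yields the asserted equality for $(A,\varphi)$. Since $(v,\varphi)$ was arbitrary, the identity holds on all of $[A_{0}+L^{2}_{1}(i\Lambda^{1})]\ti L^{2}_{1}(\Sigma^{+})$.
\end{proof}

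There is essentially no obstacle here: the work has already been done in Propositions~\ref{extension} and \ref{sw}, which supply the two continuity statements, and in the preceding Remark, which supplies the identity on the smooth compactly supported dense subset. The only point requiring a word of care is that the approximating connections $A_n=A_0+v_n$ must differ from $A_0$ by a \emph{compactly supported} form so that the Stokes-theorem manipulations in the Remark are legitimate — and this is exactly what the Aubin--Sobolev density theorem provides on a manifold with bounded geometry.
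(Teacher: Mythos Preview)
Your proof is correct and follows precisely the same approach as the paper: the identity is known for smooth compactly supported pairs, both sides extend continuously to $L^{2}_{1}$ by Propositions~\ref{extension} and~\ref{sw}, and density of $\mathscr{C}^{\infty}_{c}$ in $L^{2}_{1}$ on a manifold with bounded geometry finishes the argument. The paper's own proof is the one-line remark ``It suffices to note that the result holds for smooth compactly supported sections,'' which your version spells out in full.
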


\begin{proof}
It suffices to note that the result holds for smooth compactly supported sections.
\end{proof}

\section{The perturbed Seiberg-Witten map}

Fix a form  $\eta\in L^{2}_{k}(i\Lambda^{2}_{+})$, and consider the following perturbation of the Seiberg-Witten map
\begin{align*}
SW_{\eta}(v,\varphi)=(\sD_{A}\varphi, F_{A}^{+}+\eta-(\varphi\otimes\bar{\varphi})_{0}).
\end{align*}
Supposing  again that $(X,g,\tau)$ is an oriented, Riemannian, $Spin^{c}(4),$ 4-manifold  with bounded geometry, and that $A_0\in\A(\det Q)$ is a smooth connection which is $ASD$ outside a compact set, put
\begin{align*}
\widetilde{\E}_{\eta}^{\mathtt{an}}(A,\varphi)&:=\frac{1}{4}\int_{X}|dv|^{2}+2|\eta|^{2}+4(F_{A_0}^{+},dv)+4(F_{A_0}^{+},\eta)+4(\eta,dv)+\\
+&\int_{X}|\nabla_{\hat{A}} \varphi|^2+\frac{1}{4}\int_{X}|\varphi|^4+s_{g}|\varphi|^2-\frac{1}{2}\int_{X}\langle\varphi,\rho_{X}(\eta)\varphi\rangle,\\
\widetilde{\E}_{A_0, \eta} &=\frac{1}{2}\int_{X} F_{A_0}^{+}\wedge F_{A_0}^{+}.
\end{align*}
As before, these are  $\R$-valued continuous maps on $[A_0+L^{2}_{1}(i\Lambda^{1})]\ti L^{2}_{1}(\Sigma^{+})$. We have the following energy equality
\begin{align}\label{energyidpert}
\widetilde{\E}_{\eta}^{\mathtt{an}}(A,\varphi)-\widetilde{\E}_{A_0,\eta}=\|SW_{\eta}(v,\varphi)\|_{L^2}.
\end{align}
Note that, as before, the $\eta$-perturbed Seiberg-Witten map and energies are continuous with respect to $\eta$ in the $L^{2}_{1}(i\Lambda^{2}_{+})$ topology.

\section{A weak convergence lemma}\label{wconv}

From now on  we will suppose that $(X,g,\tau)$ is a Riemannian, $Spin^c(4)$, 4-manifold with periodic ends such that for every $e\in\Eg$, the following conditions hold

\begin{enumerate}
\item Condition $(C)$ stated in Proposition \ref{dimb2},
\item $ b^{2}_{+}(Y_e)=0$;
\item $s_{g_e}\geq 0$
\item $\langle c_{1}(L_e)^2,Y_e\rangle-\sigma(Y_e)=0$, and that
\end{enumerate}

Let $A_0\in\A(\det Q)$  is  a smooth connection whose restriction to each end $e\in\Eg$ coincides with the pullback of a connection $A_e\in\A(\det(Q_e))$ verifying $F_{A_e}^{+}=0$

We recall from sections \ref{hodgedeco} and \ref{diracsection} that, under these assumptions, the following holds:

\begin{enumerate}

\item There exists $\varepsilon>0$ such that 

\begin{enumerate} 
\item \label{orthog} For any $w\in(-\epsilon,\epsilon)$ the operator $\Delta_{+}:L^{2,w}_{k+3}(i\Lambda^{2}_{+})\ra L^{2,w}_{k+1}(i\Lambda^{2}_{+})$ is Fredholm, its kernel $\mathbb{H}^2_+$ is independent of $w$ and its image is the $L^2$-orthogonal complement to $\mathbb{H}^{2}_{+}$ in  $L^{2,w}_{k+1}(i\Lambda^{2}_{+})$,

\item For any $w\in(0,\epsilon)$ the operator $d^{*}:L^{2,w}_{k+1}(i\Lambda^{2}_{+})\ra L^{2,w}_{k}(i\Lambda^{1})$ has closed image, and the restriction of $d^{*}:L^{2,w}_{k+1}(i\Lambda^{2}_{+})\ra L^{2,w}_{k}(i\Lambda^{1})$ to the $L^2$-orthogonal complement of  $\mathbb{H}^{2}_{+}$ is an isomorphism onto this  image.

\end{enumerate}

\item There exists $\varepsilon>0$ such that the operator $\sD_{A_0}:L^{2,w}_{k}(\Sigma^+)\ra L^{2,w}_{k-1}(\Sigma^-)$ is Fredholm for all $w\in (-\epsilon,\epsilon)$.

\end{enumerate}

\begin{de}
A 4-tuple  $(X,g,\tau,A_0)$ where $(X,g,\tau)$ is a  $Spin^c(4)$ Riemannian 4-manifold with periodic ends, and $A_0\in\A(\det Q)$  satisfying the above conditions will be called \textit{admissible}.
\end{de}

In this section $(X,g,\tau,A_0)$ will always denote an admissible 4-tuple. We will consider pairs $(A,\varphi)$ where  $A$ has the form $A=A_0+v$ where $v\in \textrm{Im }[d^{*}:L^{2,w}_{k+2}(\Lambda^{2}_{+})\ra L^{2,w}_{k+1}(\Lambda^{1})]$, and where $\varphi\in L^{2,w}_{k}$. Recall from section \ref{hodgedeco} that for sufficiently small $w>0$ we have:  
\begin{re}\label{beta}
For any $v\in\textrm{Im }[d^{*}:L^{2,w}_{k+2}(\Lambda^{2}_{+})\ra L^{2,w}_{k+1}(\Lambda^{1})]$ there is a unique  $\beta\in {\mathbb{H}^{2}_{+}}^{\perp_{L^{2}}{L^{2,w}_{k+1}(\Lambda^{2}_{+})}}=\textrm{Im }[\Delta_{+}:L^{2,w}_{k+3}(i\Lambda^{2}_{+})\ra L^{2,w}_{k+1}(i\Lambda^{2}_{+})]$ with $d^{*}\beta=v$. 
\end{re}
This remark will allows us to make a variable change $v=d^*\beta$, which play an important role in our estimates. 

The goal of this chapter is to prove the following:

\begin{thh}
Let $(X,g,\tau,A_0)$ be an admissible 4-tuple.
Suppose that $w>0$ is sufficiently small and $k\geq 2$.
If
$(v_{n},\varphi_{n})_{n\in\mathbb{N}}$ is a sequence in  $\textrm{Im }[d^{*}:L^{2,w}_{k+1}(\Lambda^{2}_{+})\ra L^{2,w}_{k}(\Lambda^{1})]\ti L^{2,w}_{k}(\Sigma^{+})$
such that there exists $C>0$ with
\begin{equation}\label{bound}
\begin{aligned}
\|SW(v_{n},\varphi_{n})\|_{L^{2,w}_{k-1}}\leq C \ \forall n\in \mathbb{N}\ .
\end{aligned}
\end{equation}
There exists a subsequence of $ (v_{n},\varphi_{n})_{n\in\mathbb{N}}$  which is
\begin{enumerate}
\item   bounded in $L^{2,w}_{k}$,
\item    strongly convergent  in $L^{2,\bar{w}}_{k-1}$ for any  $0<\bar w<w$.\end{enumerate}
\end{thh}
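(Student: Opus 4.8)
The strategy, following \cite{KM} Section II, is to run a four-step bootstrapping argument using the energy identity (\ref{eq:newenergy}) as the fundamental input. First I would extract from the $L^{2,w}_{k-1}$-bound (\ref{bound}) an $L^{2}_{1}$-bound; then upgrade weak $L^{2}_{1}$-convergence to strong $L^{2}_{1}$-convergence; then obtain strong $L^{3,\bar w}_{1}$-convergence; and finally bootstrap to the desired $L^{2,w}_{k}$-boundedness and $L^{2,\bar w}_{k-1}$-convergence.

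\textbf{Step 1 ($L^2_1$-boundedness).} Apply the energy identity (\ref{eq:newenergy}): since $w>0$ the $L^{2,w}_{k-1}$-bound on $SW$ dominates (up to the compactly supported correction terms, which are controlled by $F_{A_0}^+$ and vanish outside a compact set) the quantity $\|SW(A_0+v_n,\varphi_n)\|_{L^2}^2$, hence
$$
\tfrac14\int_X|dv_n|^2 + \int_X|\nabla_{\hat A}\varphi_n|^2 + \tfrac14\int_X\big(|\varphi_n|^4 + s_g|\varphi_n|^2\big) \le \widetilde{\E}_{A_0} + \|SW(v_n,\varphi_n)\|_{L^2}^2 \le C'.
$$
Because $s_g = s_{g_e}\ge 0$ on each end and $s_g$ is bounded below everywhere, the term $\tfrac14\int|\varphi_n|^4 + \tfrac14\int s_g|\varphi_n|^2$ yields an $L^4$-bound on $\varphi_n$ (absorbing the contribution of the compact region where $s_g$ might be negative via the $L^4$-term and Young's inequality). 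Combined with the $\int|\nabla_{\hat A}\varphi_n|^2$ bound and the gauge-fixing $v_n = d^*\beta_n$ from Remark \ref{beta}, one uses $\|dv_n\|_{L^2}$ plus closedness/Fredholmness of $\Delta_+$ on $L^{2,w}_3\to L^{2,w}_1$ and of $d^*$ on the orthogonal complement of $\mathbb H^2_+$ (item \ref{orthog}) to recover a full $L^{2,w}_1$-bound on $v_n$, hence $L^2_1$-boundedness of $(v_n,\varphi_n)$.

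\textbf{Steps 2--4 (convergence and bootstrapping).} Pass to a weakly $L^2_1$-convergent subsequence; by Rellich on sublevel sets $X^{\le a}$ (compact) together with the exponential weight $e^{w\tau}$ controlling the tails, the embedding $L^{2,w}_1\hookrightarrow L^{2,\bar w}_{\mathrm{loc}}$ and the decay gap $\bar w<w$ give strong $L^{2,\bar w}$-convergence of $\varphi_n$; differencing the Seiberg-Witten equations for $\varphi_n$ and $\varphi_m$, using the quadratic structure and the $L^4$-bound, upgrades this to strong $L^{2,\bar w}_1$-convergence of the spinors, and similarly for $v_n$ via $d^*\beta_n$ and the isomorphism property of $\Delta_+$. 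The passage $L^{2,\bar w}_1 \rightsquigarrow L^{3,\bar w}_1$ uses the Sobolev embedding on the 4-manifold (valid by bounded geometry, via Aubin-Sobolev). Finally, rewriting $SW$ as an elliptic system $(\sD_{A_0}\varphi_n, d^+v_n) = (\text{known bound}) - (\text{quadratic terms in }\varphi_n) - (\text{terms involving }v_n\cdot\varphi_n)$, where the quadratic terms are controlled in successively higher Sobolev norms, one bootstraps: each gain of a derivative on the right feeds back through elliptic estimates for $\sD_{A_0}$ (Fredholm on $L^{2,w}_k$) and $\Delta_+$ (Fredholm on $L^{2,w}_{k+1}$) to gain a derivative on $(v_n,\varphi_n)$, reaching $L^{2,w}_k$-boundedness and $L^{2,\bar w}_{k-1}$-convergence after finitely many steps (this is where $k\ge 2$ enters, to make the multiplication $L^{2,w}_{k-1}\times L^{2,w}_{k-1}\to L^{2,w}_{k-1}$ and the bootstrap self-sustaining).

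\textbf{Main obstacle.} The delicate point is Step 1: the non-negativity $s_{g_e}\ge 0$ on the ends is what makes the term $\tfrac14\int(|\varphi_n|^4 + s_g|\varphi_n|^2)$ effectively coercive in $\varphi_n$ on the non-compact part, so that the energy identity delivers a genuine a priori bound rather than merely a bound modulo a non-compact error. Without it the argument collapses (as noted, coercivity fails even for cylindrical ends). The second delicacy is bookkeeping the exponential weights consistently through the bootstrap — in particular ensuring that quadratic multiplication maps like $L^{2,w}_k\times L^{2,w}_k \to L^{2,w}_k$ behave as in the unweighted case, which holds because $e^{2w\tau}\le C\,e^{w\tau}$ fails but the correct identity $|e^{w\tau}(fg)| = |e^{w\tau}f|\cdot|g|$ lets one trade one weighted and one unweighted factor, the unweighted one being controlled via Sobolev embedding on bounded-geometry manifolds.
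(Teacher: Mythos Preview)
Your overall four-step architecture matches the paper's, and Step 1 is essentially Lemma \ref{wlemma}. But there is a real gap at Step 2, and it hides exactly the non-trivial idea the proof needs.

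The energy identity in Step 1 only delivers an \emph{unweighted} $L^2_1$-bound on $(v_n,\varphi_n)$: what it controls is $\|dv_n\|_{L^2}$, $\|\nabla_{\hat A_n}\varphi_n\|_{L^2}$ and $\|\varphi_n\|_{L^4}$, and the Fredholmness of $\Delta_+$ and $\sD_{A_0}$ on \emph{unweighted} spaces converts this into $\|(v_n,\varphi_n)\|_{L^2_1}\le C$. Your phrase ``recover a full $L^{2,w}_1$-bound on $v_n$'' is not justified---nothing in the energy identity sees the weight. Consequently your Step 2 mechanism (Rellich on compacta plus the gap $\bar w<w$ to control tails) cannot run: tail control would require precisely the weighted bound you do not yet have, and a mere $L^2_1$-bound gives no decay on the ends. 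The paper closes this gap with a Radon--Riesz argument that you have not mentioned. The compact embedding $L^{2,w}_{k-1}\hookrightarrow L^2$ applied to $SW(v_n,\varphi_n)$ (this is where $k\ge 2$ first enters), together with $L^1_{\mathrm{loc}}$-identification of the limit, gives strong $L^2$-convergence of $SW(v_n,\varphi_n)$; the energy identity then forces $\widetilde{\E}^{\mathrm{an}}(v_n,\varphi_n)\to\widetilde{\E}^{\mathrm{an}}(v_\infty,\varphi_\infty)$. Writing $\widetilde{\E}^{\mathrm{an}}$ as a sum of five non-negative terms (splitting $X^{\le 0}$ versus $X^{>0}$ and using $s_g\ge 0$ on the ends), weak lower semicontinuity gives $\liminf\ge$ limit for each term separately; since the sum converges, all inequalities are equalities, and Radon--Riesz (weak convergence plus norm convergence $\Rightarrow$ strong convergence in $L^p$) yields strong $L^2_1$-convergence. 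Only \emph{after} this is established does the differencing bootstrap you sketch in Steps 3--4 run: the quadratic term $(\gamma_i-\gamma_j)\sharp(\gamma_i+\gamma_j-2\gamma_{i_\epsilon})$ is small only because $\|\gamma_i+\gamma_j-2\gamma_{i_\epsilon}\|_{L^2_1}$ is small, and that needs strong, not weak, $L^2_1$-convergence as input.
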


\vspace{0.5 pc}
The proof uses ideas from  \cite{KM}  which will be adapted  for the new framework of manifolds with periodic ends.  
\vspace{1.5 pc}

We start with the following 
\begin{lemma}\label{wlemma}
Under the assumption, and with the notations above, let  $ (v_{n},\varphi_{n}) _{n\in\mathbb{N}}$ be a sequence in $\textrm{Im }[d^{*}:L^{2,w}_{k+1}(\Lambda^{2}_{+})\ra L^{2,w}_{k}(\Lambda^{1})]\ti L^{2,w}_{k}(\Sigma^{+})$, such that
\begin{align*}
\|SW(v_{n},\varphi_{n})\|_{L^{2,w}_{k-1}}\leq C.
\end{align*}
for a constant $C>0$. Then the sequence of quadruples $(v_{n},\varphi_{n},\nabla_{\hat{A}_{n}}\varphi_n,|\varphi_n|^2)_{n\in\mathbb{N},}$ is bounded in $L^2_{1}\ti L^2_1\ti L^2\ti L^2$, and thus it admits a subsequence which is  convergent weakly in $L^{2}_{1}\ti L^2_1\ti L^2\ti L^2$.
\end{lemma}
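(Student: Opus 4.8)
The plan is to deduce the $L^2_1$-bounds directly from the energy identity (\ref{eq:newenergy}) (in its $\eta=0$ form, Corollary \ref{energyid}). First I would perform the change of variable $v_n=d^*\beta_n$ with $\beta_n\in{\mathbb{H}^2_+}^{\perp}$ as in Remark \ref{beta}, which makes $dv_n=d d^*\beta_n$ and lets me control $\|dv_n\|_{L^2}$; note $\|SW(v_n,\varphi_n)\|_{L^{2,w}_{k-1}}\leq C$ a fortiori gives $\|SW(v_n,\varphi_n)\|_{L^2}\leq C'$, since $w>0$ makes $L^{2,w}_{k-1}\hookrightarrow L^2$ continuous. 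Feeding this into the energy identity
$$
\widetilde{\E}^{\mathtt{an}}(A_0+v_n,\varphi_n)-\widetilde{\E}_{A_0}=\|SW(v_n,\varphi_n)\|_{L^2}^2\leq (C')^2
$$
and recalling
$$
\widetilde{\E}^{\mathtt{an}}(A_0+v_n,\varphi_n)=\tfrac14\int_X|dv_n|^2+4(F^+_{A_0},dv_n)+\int_X|\nabla_{\hat A_n}\varphi_n|^2+\tfrac14\int_X|\varphi_n|^4+s_g|\varphi_n|^2,
$$
I obtain, after absorbing the cross term $4(F^+_{A_0},dv_n)$ by Cauchy--Schwarz and Young (here $F^+_{A_0}\in L^2$ because $A_0$ is ASD outside a compact set and smooth inside), a bound of the shape
$$
\tfrac18\|dv_n\|^2_{L^2}+\|\nabla_{\hat A_n}\varphi_n\|^2_{L^2}+\tfrac14\|\varphi_n\|^4_{L^4}+\tfrac14\int_X s_g|\varphi_n|^2\leq \text{const}.
$$

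The term $\int_X s_g|\varphi_n|^2$ is \emph{not} obviously non-negative on all of $X$, only on the ends (by condition \ref{sg}: $s_{g_e}\geq 0$), so I would split $X=X^{\leq a}\cup\End(X)$ for a suitable regular value $a$; on the compact piece $X^{\leq a}$ the scalar curvature is bounded below, so $\int_{X^{\leq a}}s_g|\varphi_n|^2\geq -K\|\varphi_n\|^2_{L^2(X^{\leq a})}$, and this in turn is controlled by $\|\varphi_n\|_{L^4(X^{\leq a})}^2$ times $\mathrm{vol}(X^{\leq a})^{1/2}$, hence absorbed into the $\tfrac14\|\varphi_n\|_{L^4}^4$ term via Young's inequality. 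On $\End(X)$ the curvature term only helps. This yields uniform bounds on $\|dv_n\|_{L^2}$, $\|\nabla_{\hat A_n}\varphi_n\|_{L^2}$ and $\|\varphi_n\|_{L^4}$. To upgrade $\|dv_n\|_{L^2}$ to a full $\|v_n\|_{L^2_1}$ bound I would invoke Corollary \ref{isomor} (or the closed-range statement \ref{beta}): since $v_n\in\mathrm{Im}[d^*:L^{2,w}_{k+2}\to L^{2,w}_{k+1}]$ and $d^*$ restricted to ${\mathbb{H}^2_+}^\perp$ is an isomorphism onto its (closed) image, the elliptic estimate for the operator $(d,d^*)$ on $i\Lambda^1$ together with $d^*v_n=0$ (as $v_n$ is in the image of $d^*$, $dv_n$ determines $v_n$ up to the finite-dimensional harmonic space, which can be normalized away) gives $\|v_n\|_{L^2_1}\leq \text{const}\cdot\|dv_n\|_{L^2}$. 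For $\varphi_n$, the Weitzenböck formula (Theorem \ref{weitzenbock}) combined with $F^+_{A_n}=F^+_{A_0}+d^+v_n$ bounded in $L^2$ and $\|\varphi_n\|_{L^4}$ bounded gives a bound on $\|\nabla_{\hat A_n}\varphi_n\|_{L^2}$, hence via Kato and the Aubin--Sobolev embedding on bounded geometry, $\|\varphi_n\|_{L^2_1}$ is bounded. Finally $\|\nabla_{\hat A_n}\varphi_n\|_{L^2}$ being bounded and $A_n=A_0+v_n$ with $v_n$ bounded in $L^2_1\hookrightarrow L^4$ shows $\|\nabla_{\hat A_n}\varphi_n\|_{L^2}$ and $\||\varphi_n|^2\|_{L^2}=\|\varphi_n\|_{L^4}^2$ are the desired bounds in the $L^2$ slots.

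Once the quadruple $(v_n,\varphi_n,\nabla_{\hat A_n}\varphi_n,|\varphi_n|^2)$ is bounded in $L^2_1\times L^2_1\times L^2\times L^2$, weak sequential compactness of bounded sets in Hilbert spaces (the spaces here are reflexive) gives a subsequence converging weakly in $L^2_1\times L^2_1\times L^2\times L^2$; this is the conclusion of Lemma \ref{wlemma}.

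The main obstacle I anticipate is \emph{not} the weak compactness — that is automatic — but rather the careful bookkeeping in deriving the $L^2_1$ bound on $v_n$ and $\varphi_n$ from the $L^2$ bound $\|dv_n\|_{L^2}\leq\text{const}$, i.e. making rigorous the claim that the image of $d^*$ has closed range and that the normalization to ${\mathbb{H}^2_+}^\perp$ (Remark \ref{beta}) is genuinely available here, and the handling of the scalar-curvature term on the compact core where it can be negative. The non-compactness of $X$ means one cannot simply cite the standard compact Seiberg--Witten $C^0$-bound; instead the positivity of $s_{g_e}$ on the ends is what rescues the argument, exactly as in the cylindrical-end case of \cite{KM}, \cite{N}. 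A secondary subtlety is that $\widetilde{\E}^{\mathtt{an}}$ and the identity (\ref{eq:newenergy}) were only established (Propositions \ref{extension}, \ref{sw}, Corollary \ref{energyid}) for pairs $(A_0+v,\varphi)$ with $v\in L^2_1(i\Lambda^1)$ and $\varphi\in L^2_1(\Sigma^+)$, so I must first remark that our $(v_n,\varphi_n)\in L^{2,w}_k\times L^{2,w}_k$ with $w>0$, $k\geq 2$, lie in $L^2_1\times L^2_1$ (again using $L^{2,w}_k\hookrightarrow L^2_1$ for $w>0$), so the identity applies.
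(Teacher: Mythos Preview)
Your overall strategy matches the paper's proof: pass to the unweighted $L^2$ bound on $SW$, plug into the energy identity, absorb the $(F^+_{A_0},dv_n)$ cross term by Young, and split $X$ into a compact core (where $s_g$ may be negative) and the ends (where $s_{g_e}\geq 0$). The paper completes the square on the core as $\tfrac14(|\varphi_n|^2+s_g/2)^2-s_g^2/16$ rather than using Young on $\|\varphi_n\|_{L^2}\leq \mathrm{vol}^{1/2}\|\varphi_n\|_{L^4}$, but either device works and yields bounds on $\|dv_n\|_{L^2}$, $\|\nabla_{\hat A_n}\varphi_n\|_{L^2}$, and $\|\varphi_n\|_{L^4}$.

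There are, however, two points where your argument is not quite closed and where the paper proceeds differently.

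\textbf{Bounding $\|v_n\|_{L^2_1}$.} You invoke an ``elliptic estimate for the operator $(d,d^*)$ on $i\Lambda^1$'' on unweighted $L^2$. That operator is not shown to be Fredholm on unweighted spaces in this setting; in fact the paper emphasizes that $D=(d^*,d^+)$ is \emph{not} Fredholm on $L^2$ when $\Eg\neq\emptyset$, and Corollary~\ref{isomor} is stated only for the weighted spaces. The mechanism that actually works (and which your citation of Remark~\ref{beta} points to) is the Fredholmness of $\Delta_+:L^2_2\to L^2$ from Theorem~\ref{fredholmdelta}: since $\beta_n\perp\ker\Delta_+$ and $\|\Delta_+\beta_n\|_{L^2}=\|d^+d^*\beta_n\|_{L^2}=\|d^+v_n\|_{L^2}\leq\|dv_n\|_{L^2}$, the Fredholm estimate gives $\|\beta_n\|_{L^2_2}\leq C$, whence $\|v_n\|_{L^2_1}=\|d^*\beta_n\|_{L^2_1}\leq C$.

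\textbf{Bounding $\|\varphi_n\|_{L^2_1}$.} Your route ``Kato plus Aubin--Sobolev'' only controls $\|\,d|\varphi_n|\,\|_{L^2}$; it does not yield $\|\varphi_n\|_{L^2}$, and on an infinite-volume manifold the $L^4$ bound does not supply it. Knowing $\|\nabla_{\hat A_n}\varphi_n\|_{L^2}$ and $\|v_n\|_{L^4}$ gives $\|\nabla_{\hat A_0}\varphi_n\|_{L^2}$, but the missing $\|\varphi_n\|_{L^2}$ still blocks $\|\varphi_n\|_{L^2_1}$. The paper instead uses the unweighted Fredholmness of $\sD_{A_0}$ (Proposition~\ref{Dirac}): from $\sD_{A_0}\varphi_n=\sD_{A_n}\varphi_n-\tfrac12\Gamma(v_n)\varphi_n$ one bounds $\|\sD_{A_0}\varphi_n\|_{L^2}$ by $\|\sD_{A_n}\varphi_n\|_{L^2}+\|v_n\|_{L^4}\|\varphi_n\|_{L^4}$, and the Fredholm estimate then gives $\|\varphi_n\|_{L^2_1}\leq C(\|\sD_{A_0}\varphi_n\|_{L^2}+\|\varphi_n\|_{L^4})$, the lower-order term being controlled because kernel elements of $\sD_{A_0}$ decay and pair against $L^4$.

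Once these two steps are repaired along the paper's lines, the rest of your argument (including the weak-compactness conclusion) is fine.
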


\begin{proof}
Since  $w>0$ and $k\geq 1$ it follows from the weighted Sobolev embedding Theorem for manifolds with periodic ends   that  the inclusion $L^{2,w}_{k-1}\ra L^{2}$ is continuous.  Therefore the bound $(\ref{bound})$ implies an $L^2$-bound on  $(SW(v_{n},\varphi_{n}))_{n\in\Nat}$.

Provided $k\geq 1$ we have the energy identity
\begin{align*}
\widetilde{\E}^{\mathtt{an}}(A_{n},\varphi_{n})=\widetilde{\E}_{A_0}+\|SW(A_{n},\varphi_{n})\|^{2}_{L^2}
\end{align*}
where 
\begin{align*}
\widetilde{\E}^{\mathtt{an}}(A_{n},\varphi_{n})&=\frac{1}{4}\int_{X}|dv_n|^{2}-4(dv_n,F_{A_0}^{+})+\int_{X}|\nabla_{\hat{A}_n} \varphi_{n}|^2+\frac{1}{4}\int_{X}|\varphi_{n}|^4+s_{g}|\varphi_n|^2.
\end{align*}
From the bound $(\ref{bound})$ it follows that $\widetilde{\E}^{\mathtt{an}}(A_{n},\varphi_{n})$ is bounded too. The sum of the latter two terms will be written 
as $\frac{1}{4}(|\varphi_n|^{2}+s_{g}/2)^{2}-\frac{s_{g}^{2}}{16}$ on $X^{\leq 0}$ and as it stands on $X^{\geq 0}$. Therefore

\begin{align*}
\E^{\mathtt{an}}(A_{n},\varphi_{n})&=\frac{1}{4}\int_{X}|dv_n|^{2}+ \int_{X}|\nabla_{\hat{A}_{n}}\varphi_{n}|^2+\frac{1}{4}\int_{X^{\leq 0}} (|\varphi_n|^{2}+\frac{s_{g}}{2})^{2}-\int_{X^{\leq 0}}\frac{s_{g}^{2}}{16}\\
&+\int_{X^{\leq 0}}(-dv_{n},F_{A_0}^{+})+\frac{1}{4}\int_{X^{\geq 0}}|\varphi_{n}|^{4}+s_{g}|\varphi_{n}|^{2}\\
&\geq  \frac{1}{4}\int_{X}|dv_n|^{2}+\int_X |\nabla_{\hat{A}_{n}}\varphi_{n}|^2+\frac{1}{4}\int_{X^{\leq 0}} (|\varphi_n|^{2}+\frac{s_{g}}{2})^{2}-\int_{X^{\leq 0}}\frac{s_{g}^{2}}{16}\\
&-\int_{X}|(-dv_{n},F_{A_0}^{+})|+\frac{1}{4}\int_{X^{\geq 0}}|\varphi_{n}|^{4}+s_{g}|\varphi_{n}|^{2}.
\end{align*}

Considering $\epsilon>0$ but less than $\frac{1}{4}$ and using the inequality $|\langle a,b\rangle|\leq \varepsilon|a|^2+ C_{\varepsilon}|b|^2$,  we get

\begin{align*}
\E^{\mathtt{an}}(A_{n},\varphi_{n})&\geq \frac{1}{4}\int_{X}|dv_n|^{2}+ \int_{X}|\nabla_{\hat{A}_{n}}\varphi_{n}|^2+\frac{1}{4}\int_{X^{\leq 0}} (|\varphi_n|^{2}+\frac{s_{g}}{2})^{2}-\int_{X^{\leq 0}}\frac{s_{g}^{2}}{16}\\
&-\int_{X^{\leq 0}}(\varepsilon|dv_{n}|^2+C_{\varepsilon}|F_{A_0}^{+}|^{2})+\frac{1}{4}\int_{X^{\geq 0}}|\varphi_{n}|^{4}+s_{g}|\varphi_{n}|^{2}\\
\geq &(\frac{1}{4}-\varepsilon)\int_{X}|dv_n|^{2}+ \int_{X}|\nabla_{\hat{A}_{n}}\varphi_{n}|^2+\frac{1}{4}\int_{X^{\leq 0}} (|\varphi_n|^{2}+\frac{s_{g}}{2})^{2}-\int_{X^{\leq 0}}\frac{s_{g}^{2}}{16}\\
&-\int_{X}C_{\varepsilon}|F_{A_0}^{+}|^{2}+\frac{1}{4}\int_{X^{\geq 0}}|\varphi_{n}|^{4}+s_{g}|\varphi_{n}|^{2}.
\end{align*}

 The integrals of $s_{g}^{2}/16$ and $|F_{A_0}^{+}|^{2}$ are independent of $n$.  Since the scalar curvature is nonnegative on $X^{\geq 0}$, all other terms are non negative. Thus the bound on $\widetilde{\E}^{\mathtt{an}}$ allows us to get bounds on  $\|dv_n\|^{2}_{L^{2}(X)}$, $\|\nabla_{\hat{A}_n} \varphi_{n}\|^{2}_{L^2(X)}$,  $\int_{X^{\leq 0}} (|\varphi_n|^{2}+\frac{s_{g}}{2})^{2}$  and $\int_{X^{\geq 0}}|\varphi_{n}|^{4}$.
From  the latter two it follows using 
\begin{align*}
\||\varphi_n|^{2}\|_{L^2(X^{\leq 0})}\leq \||\varphi_n|^{2}+s_{g}/2)\|_{L^2(X^{\leq 0})}+\|s_{g}/2\|_{L^2(X^{\leq 0})},
\end{align*}
 that $\|\varphi\|_{L^4(X)}$ is bounded too.

Note that the bound on $\|dv_{n}\|^{2}_{L^2(X)}$ does not automatically control $\|v_n\|^{2}_{L^{2}(X)}$. To bound  $\|v_n\|^{2}_{L^{2}(X)}$ we will use  Remark \ref{beta} and the Fredholmness  $\Delta_{+}: L^{2}_{2}(\Lambda^{2}_+)\ra L^{2}(\Lambda^{2}_+) $  given by Proposition \ref{fredholmdelta}. Hence from
\begin{align*}
C\geq\|dv_{n}\|_{L^2}\geq\|d^{+}v_{n}\|_{L^2}&=\|d^{+}d^{*}\beta_n\|_{L^2}=\|\Delta_{+} \beta_n\|_{L^2}
\end{align*}
%
we get 
\begin{align}\label{beta2}
\|\beta_n\|_{L^{2}_{2}(\Lambda^{2})}\leq \|\Delta_{+}\beta_{n}\|_{L^{2}(\Lambda^{2})}\|\leq C,
\end{align}
because $\beta_{n}\perp \ker_{L^{2}_{2}} \Delta_{+}$, by Remark $\ref{beta}$.
From the $L^2_2$-boundedness of $\beta_{n}$ we get the $L^{2}_{1}$-boundedness of $v_n=d^*\beta_n$.
\vspace{ 1 pc}

To complete the proof it remains to bound $(\varphi_{n})_n$ in $L^{2}_{1}$. 

Using  the $L^2$-bound on $SW(u_n,\varphi_n)$ we get an $L^2$-bound on its first component $\sD_{\hat{A}_n}\varphi_{n}$. 
The $L^{4}$-bound on $\varphi_n$ and the inclusion of $L^{2}_{1}\hookrightarrow L^{4}$  allow us to get the estimate
\begin{align*}
\|\frac{1}{2}\Gamma(v_{n})\varphi_{n}\|_{L^2}&\leq\|v_{n}\|_{L^4}\|\varphi_{n}\|_{L^4}\leq \|v_{n}\|_{L^{1}_{2}}\|\varphi_n\|_{L^4}.
\end{align*}
Thus we get an $L^2$-bound on
\begin{align*}
\sD_{\hat{A}_{0}}\varphi_{n}&=\sD_{\hat{A}_{n}}\varphi_{n}-\frac{1}{2}\Gamma(u_{n})\varphi_{n}.
\end{align*}
Now since  conditions (\ref{Dirac}) hold, the  operator $\sD_{A_{0}}$ is Fredholm and thus
\begin{align*}
\|\varphi_{n}\|_{L^{2}_{1}}\leq \|\sD_{A_0}\varphi_{n}\|_{L^2}+\|\varphi_{n}\|_{L^4}.
\end{align*}
It follows that $\varphi_{n}$ is bounded in $L^{2}_{1}$. This implies that $\varphi$ is bounded in $L^4$, hence $|\varphi|^2$ is bounded in $L^2$. In conclusion, the sequence 
$(v_n, \varphi_n,\nabla_{\hat{A}_{n}}\varphi_{n},|\varphi_n|^2)_{n}$ is bounded in $L^2_{1}\ti L^2_1\ti L^2\ti L^2$, hence it admits a weakly converging subsequence. 

\end{proof}

We note that the previous result (\ref{wlemma}) also holds for the $\eta$-perturbed Seiberg-Witten map.  The proof follows the same arguments, using Young's inequality to deal with some extra terms containing $\eta$.

\section{A $L^{1}_{loc}$ convergence lemma}

We know that $(v_n,\varphi_n,\nabla_{\hat{A}_{n}}\varphi_{n},|\varphi_n|^2)_n$ admits a subsequence  which converges weakly in $L^2_{1}\ti L^2_1\ti L^2\ti L^2$. We will denote by $(v_\infty,\varphi_{\infty},\lambda_{\infty},\mu_\infty)$ the weak $L^2_{1}\ti L^2_1\ti L^2\ti L^2$-limit of this subsequence. In order to save on notations we will still denote the obtained subsequence by  $(v_n,\varphi_n,\nabla_{\hat{A}_{n}}\varphi_{n},|\varphi_n|^2)_n$.

\begin{lemma}
Let $(v_n,\varphi_n$) be the above subsequence. Then 
\begin{equation}\label{L1loc}
\begin{aligned}
 &SW(v_n,\varphi_n)\ra SW(v_\infty,\varphi_\infty) \hbox{, }\nabla_{A_n}\varphi_n\ra \nabla_{A_\infty}\varphi_\infty \hbox{, }  |\varphi_n|^2\ra |\varphi_\infty|^2
\end{aligned}
\end{equation}
in $L^{1}_{loc}$.
\end{lemma}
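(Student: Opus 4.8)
The plan is to reduce every assertion to the Rellich--Kondrachov theorem on the compact sublevel sets $X^{\leq a}=\tau^{-1}((-\infty,a])$, on which (bounded geometry, Aubin--Sobolev) the inclusion $L^{2}_{1}\hookrightarrow L^{p}$ is compact for every $p<4$. The guiding principle is that the \emph{undifferentiated} fields $v_{n},\varphi_{n}$ converge strongly in $L^{p}_{loc}$ for $p<4$, whereas the \emph{differentiated} fields converge only weakly and their limits must be recognised by testing against smooth compactly supported sections. Concretely, I would first pass to a further subsequence and, by Rellich together with a diagonal extraction over an exhaustion $X=\bigcup_{j}X^{\leq a_{j}}$, arrange that $v_{n}\to v_{\infty}$ and $\varphi_{n}\to\varphi_{\infty}$ strongly in $L^{p}(X^{\leq a})$ for every $a$ and every $p\in[1,4)$, and almost everywhere. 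The claim for $|\varphi_{n}|^{2}$ is then immediate, since $\big\||\varphi_{n}|^{2}-|\varphi_{\infty}|^{2}\big\|_{L^{1}(X^{\leq a})}\leq\|\varphi_{n}-\varphi_{\infty}\|_{L^{2}(X^{\leq a})}\,\|\varphi_{n}+\varphi_{\infty}\|_{L^{2}(X^{\leq a})}\to 0$; this also identifies $\mu_{\infty}=|\varphi_{\infty}|^{2}$.

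Next I would decompose $SW(v_{n},\varphi_{n})$ and $\nabla_{\hat{A}_{n}}\varphi_{n}$ into their linear and quadratic parts. The quadratic parts are $(\varphi_{n}\otimes\bar\varphi_{n})_{0}$ and $\tfrac12\Gamma(v_{n})\varphi_{n}$ (respectively $\tfrac12\,v_{n}\otimes\varphi_{n}$); writing $\varphi_{n}\otimes\bar\varphi_{n}-\varphi_{\infty}\otimes\bar\varphi_{\infty}=(\varphi_{n}-\varphi_{\infty})\otimes\bar\varphi_{n}+\varphi_{\infty}\otimes(\bar\varphi_{n}-\bar\varphi_{\infty})$ and likewise $v_{n}\varphi_{n}-v_{\infty}\varphi_{\infty}=(v_{n}-v_{\infty})\varphi_{n}+v_{\infty}(\varphi_{n}-\varphi_{\infty})$, H\"older's inequality together with the strong $L^{2}_{loc}$-convergence and local $L^{4}$-boundedness established above shows that these quadratic parts converge strongly in $L^{1}_{loc}$, in fact in $L^{r}_{loc}$ for some $r>1$. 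The linear parts $\sD_{\hat{A}_{0}}\varphi_{n}$, $d^{+}v_{n}$ and $\nabla_{\hat{A}_{0}}\varphi_{n}$ converge in $\mathcal D'(X)$ to $\sD_{\hat{A}_{0}}\varphi_{\infty}$, $d^{+}v_{\infty}$ and $\nabla_{\hat{A}_{0}}\varphi_{\infty}$: one pairs with $\psi\in\mathscr{C}^{\infty}_{c}$, transfers the operator onto $\psi$ by Stokes' theorem, and uses the weak $L^{2}$-convergence of $v_{n}$ and $\varphi_{n}$. Adding up, $SW(v_{n},\varphi_{n})\to SW(v_{\infty},\varphi_{\infty})$ and $\nabla_{\hat{A}_{n}}\varphi_{n}\to\nabla_{\hat{A}_{\infty}}\varphi_{\infty}$ in $\mathcal D'(X)$; comparing with the weak $L^{2}$-limit from Lemma~\ref{wlemma} this in particular yields $\lambda_{\infty}=\nabla_{\hat{A}_{\infty}}\varphi_{\infty}$.

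It remains to upgrade the convergence of $SW(v_{n},\varphi_{n})$ from $\mathcal D'$ to $L^{1}_{loc}$. For this I would invoke the standing hypothesis $\|SW(v_{n},\varphi_{n})\|_{L^{2,w}_{k-1}}\leq C$ with $k\geq 2$: it makes $(SW(v_{n},\varphi_{n}))_{n}$ bounded in $L^{2}_{1}$ over each $X^{\leq a}$, so by Rellich a further subsequence converges strongly in $L^{2}_{loc}\subset L^{1}_{loc}$, and its limit must coincide with the distributional limit $SW(v_{\infty},\varphi_{\infty})$ already identified; since that limit does not depend on the subsequence, the whole sequence converges to $SW(v_{\infty},\varphi_{\infty})$ in $L^{1}_{loc}$. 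For $\nabla_{\hat{A}_{n}}\varphi_{n}$ the part $\nabla_{\hat{A}_{0}}\varphi_{n}$ is only bounded in $L^{2}$, so there one retains the identification of the limit from the previous paragraph (equivalently, $\nabla_{\hat{A}_{n}}\varphi_{n}$ differs from $\nabla_{\hat{A}_{0}}\varphi_{n}\rightharpoonup\nabla_{\hat{A}_{0}}\varphi_{\infty}$ by the lower-order term $\tfrac12\,v_{n}\otimes\varphi_{n}\to\tfrac12\,v_{\infty}\otimes\varphi_{\infty}$, which does converge in $L^{1}_{loc}$).

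The hard part will not be conceptual but organisational: making the Rellich/diagonal extraction and the integration-by-parts identification uniform along the exhaustion of the \emph{non-compact}, end-periodic manifold $X$, and watching the exponents — one has room only strictly below the critical exponent $4$, so the bilinear terms land in some $L^{r}_{loc}$ with $r>1$, which sits comfortably inside $L^{1}_{loc}$; everything else is a routine application of H\"older's inequality and Stokes' theorem.
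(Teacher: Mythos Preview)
Your proposal is correct and follows essentially the same route as the paper: restrict to a compact piece, use Rellich--Kondrachov to turn the weak $L^{2}_{1}$-convergence of $(v_{n},\varphi_{n})$ into strong $L^{p}_{loc}$-convergence for $p<4$, split $SW$ and $\nabla_{\hat A_{n}}\varphi_{n}$ into a linear and a quadratic part, handle the quadratic terms by H\"older, and observe that the linear terms converge at least weakly (distributionally). The paper works on a single relatively compact $U$ rather than running a diagonal extraction over an exhaustion, and it does not invoke the $L^{2,w}_{k-1}$-bound on $SW$ here to upgrade to strong local convergence---that upgrade is precisely what the paper does at the start of the next section. Your remark that the linear part $\nabla_{\hat A_{0}}\varphi_{n}$ is only known to converge weakly, not strongly in $L^{1}_{loc}$, is well taken: the paper's ``same method'' phrase glosses over this, but since the lemma is used only to identify the weak $L^{2}$-limit $\lambda_{\infty}$ (the Corollary immediately following), distributional convergence suffices and nothing is lost.
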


\begin{proof}
Let $U$ be an open subset of $X$  with smooth boundary and compact closure and the restriction of $(v_n,\varphi_n)$ to $U$ by $(v_n,\varphi_n)|_U$. By Lemma (\ref{wlemma})  $(v_n,\varphi_n)$ is bounded in $L^2_1$ and converges weakly in $L^2_1$ to $(v_\infty,\varphi_{\infty})$. Thus $(v_n,\varphi_n)|_U$ is  bounded in $L^2_1(U)$  and  converges weakly in $L^{2}_1(U)$ to $(v_\infty,\varphi_\infty)|_U$ in $L^{2}_{1}(U)$.
Using Rellich-Kondrashov for compact manifolds with boundary (see \cite{A} 2.25), then, passing to a subsequence if necessary, $(v_n,\varphi_n)|_U$ converges strongly in $L^{p}(U)$, for $1\leq p <4$.

Note that, as in (\ref{sw}), the Seiberg-Witten map can be written as \begin{align*}
SW(v,\varphi)=C_{A_0}+D(v,\varphi)+B(v,\varphi).
\end{align*}
Here $C_{A_0}$ is a constant term, $D$ a is continuous linear map $L^{2}_{1}(U)\ra L^2(U)$ $B:L^2_{1}(U)\ra L^{2}(U)$ is a quadratic polynomial, continuous with respect to the  $L^2_{1}(U)$ topology.
We want to prove that
\begin{align*}
SW(v_n,\varphi_n)\ra SW(v_\infty,\varphi_\infty)
\end{align*}
in $L^{1}_{loc}(U)$. Since $D$ is linear and continuous, it is $L^2_1$-weakly continuous. Thus we only need to focus on the quadratic part.  $B(v,\varphi)$ has two components $\Gamma(v)\varphi$ and $(\varphi\otimes\bar{\varphi})_{0}$. We first prove that  $\Gamma(v_n)\varphi_n\ra \Gamma(v_\infty)\varphi_\infty$ in $L^{1}_{loc}$. Using the decomposition
\begin{align*}
\Gamma(v_n)\varphi_n-\Gamma(v_\infty)\varphi_\infty=\Gamma(v_n)(\varphi_n-\varphi_\infty)+\Gamma(v_n-v_\infty)\varphi_\infty,
\end{align*}
it follows, using the $L^{4}(U)$-bounds on $v_n$ and $\varphi_\infty$, together with the  $L^{4-\varepsilon}(U)$ strong convergence of $(v_n,\varphi_n)|_U$ that $\Gamma(v_n)\varphi_n\ra \Gamma(v_\infty)\varphi_\infty$ for $L^{2-\varepsilon'}(U)$. A similar argument holds for both the term $(\varphi_n\otimes\bar{\varphi}_n)_{0}$. Therefore the $L^1_{loc}$ limit of $SW(v_n,\varphi_n)$ is $SW(v_\infty,\varphi_\infty)$. To prove that $\nabla_{A_n}\varphi_n$ converges in $L^1_{loc}$ to $\nabla_{A_\infty}\varphi_\infty$ we use the development $\nabla_{A_n}\varphi_n=\nabla_{A_0}\varphi_n+\frac{1}{2}\Gamma(v_n)\varphi_{n}$ and the same method.  For proving that, it suffices to recall that $|\varphi_n|^2\ra |\varphi_\infty|^2$ in $L^1_{loc}$, note that $\varphi_n\ra \varphi_\infty$ strongly in $L^{4-\epsilon}(U)$.  Thus $|\varphi_n|\ra |\varphi_\infty|$ converges strongly in $L^{4-\epsilon}(U)$.
  
%
\end{proof}
Note that the above results extends to $SW_{\eta}$ too, using an analogous decomposition of $SW_{\eta}$ as a sum of a constant plus a linear plus a quadratic term.

\begin{coro}
Let $(\lambda_{\infty},\mu_\infty)$ be the weak $L^2\ti L^2$-limit of $(\nabla_{\hat{A}_{n}}\varphi_{n},|\varphi_n|^2)_n$. Then $(\lambda_{\infty},\mu_\infty)=(\nabla_{A_\infty}\varphi_\infty,|\varphi_\infty|^2)$.
\end{coro}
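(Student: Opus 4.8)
The plan is to invoke uniqueness of limits in the space of distributions, together with the $L^1_{\mathrm{loc}}$-convergence supplied by the previous lemma. First I would record that, along the subsequence fixed there, the previous lemma gives $\nabla_{\hat{A}_{n}}\varphi_{n}\ra \nabla_{A_\infty}\varphi_\infty$ and $|\varphi_n|^2\ra|\varphi_\infty|^2$ in $L^1_{\mathrm{loc}}(X)$. In particular both convergences hold in $\mathcal{D}'(X)$: for any test form $\phi\in\mathscr{C}^{\infty}_{c}(X)$ the pairing $\langle\,\cdot\,,\phi\rangle$ passes to the limit, since on the compact support of $\phi$ one has honest $L^1$-convergence and the pairing is continuous for the $L^1$-norm on that compact set.

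On the other hand, by Lemma \ref{wlemma} (and the definition of $(\lambda_\infty,\mu_\infty)$) the same subsequence converges weakly in $L^2(X)$ to $\lambda_\infty$, respectively $\mu_\infty$. Restricting to any relatively compact open $U\subset X$ with smooth boundary, weak $L^2(X)$-convergence entails weak $L^2(U)$-convergence; testing against $\phi\in\mathscr{C}^{\infty}_{c}(U)\subset L^2(U)$ shows $\nabla_{\hat{A}_{n}}\varphi_{n}\ra\lambda_\infty$ and $|\varphi_n|^2\ra\mu_\infty$ in $\mathcal{D}'(U)$ as well. Since a sequence of distributions on $U$ has at most one limit, we get $\lambda_\infty|_U=\nabla_{A_\infty}\varphi_\infty|_U$ and $\mu_\infty|_U=|\varphi_\infty|^2|_U$ as distributions; as all four objects are represented by $L^2$-functions, these equalities hold almost everywhere on $U$.

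Finally, exhausting $X$ by such open sets $U$ (a manifold with periodic ends admits such an exhaustion, e.g. via the sub-level sets $X^{\leq a}$ of $\tau$), the almost-everywhere equalities on each $U$ patch together to $\lambda_\infty=\nabla_{A_\infty}\varphi_\infty$ and $\mu_\infty=|\varphi_\infty|^2$ a.e.\ on $X$, hence as elements of $L^2$. This is the assertion.

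There is no genuine obstacle here; the only point deserving a word of care is that the weak limit $(\lambda_\infty,\mu_\infty)$ is taken in the \emph{global} space $L^2\times L^2$ while the previous lemma delivers only \emph{local} convergence of $(\nabla_{\hat{A}_{n}}\varphi_{n},|\varphi_n|^2)$ — but identifying two $L^2$-functions is a local matter, so working over an exhausting family of relatively compact opens, as above, is enough.
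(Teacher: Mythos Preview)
Your argument is correct and is precisely the standard reasoning the paper has in mind: the corollary is stated without proof because the preceding lemma gives $L^1_{\mathrm{loc}}$-convergence to $(\nabla_{A_\infty}\varphi_\infty,|\varphi_\infty|^2)$, while weak $L^2$-convergence to $(\lambda_\infty,\mu_\infty)$ is already known, and both imply distributional convergence, forcing the limits to agree. Your write-up simply makes explicit the uniqueness-of-distributional-limits step and the exhaustion by relatively compact opens, which the paper leaves to the reader.
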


\section{A strong convergence lemma}

In this section we will prove that the weakly converging subsequence obtained in the Section (\ref{wconv}) has a strongly convergent subsequence.
 We will make use of a  fundamental result in Functional Analysis. 

Recall first that, for a  Banach $E$ and and  a  sequence $(x_n)_{n}$ in $E$ which is weakly convergent to an element $x\in E$ we have 
\begin{equation}\label{liminf}
\liminf \|x_n\| \geq \|x\|. 
\end{equation}
If now  $E$ is a Hilbert space and $(x_n)_{n}$ is a  sequence in $E$ which is weakly convergent to an element $x\in E$, then the strong convergence, $x_n\ra x$, is equivalent to the norm convergence,  $\|x_n\|\ra\|x\|$. A fundamental result of Radon-Riesz states that the same property holds for the $L^p$-spaces (see \cite{radon} Corollary 5.2.19, p. 454):

\begin{lemma}\label{radonriesz}
Let $\mu$ be a positive measure on a $\sigma$-algebra $\Sigma$ of subsets of a set $\Omega$ and  $1<p<\infty$.
Let  $(x_n)_{n}$ is  a sequence in $L^p(\Omega,\Sigma,\mu)$ which converges weakly to $x\in  L^p(\Omega,\Sigma,\mu)$ and $\|x_n\|_{L^p}\to  \|x\|_{L^p}$, then $(x_n)_{n}$ converges strongly to $x$ in  $L^p(\Omega,\Sigma,\mu)$ 
\end{lemma}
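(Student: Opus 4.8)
The plan is to derive the lemma from the \emph{uniform convexity} of the spaces $L^{p}(\Omega,\Sigma,\mu)$ for $1<p<\infty$, which is the classical content of Clarkson's inequalities. Recall that a Banach space $E$ is uniformly convex if for every $\varepsilon>0$ there is $\delta=\delta(\varepsilon)>0$ such that $\|f\|=\|g\|=1$ and $\|f-g\|\geq\varepsilon$ force $\|\tfrac{1}{2}(f+g)\|\leq 1-\delta$. For $p\geq 2$ this is immediate from Clarkson's first inequality $\|\tfrac{1}{2}(f+g)\|_{L^{p}}^{p}+\|\tfrac{1}{2}(f-g)\|_{L^{p}}^{p}\leq\tfrac{1}{2}\bigl(\|f\|_{L^{p}}^{p}+\|g\|_{L^{p}}^{p}\bigr)$; for $1<p<2$ one uses instead the second Clarkson inequality (involving the conjugate exponent $q=p/(p-1)$) or Hanner's inequalities. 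I would record this as a preliminary lemma and then reduce everything to it.

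Granting uniform convexity, the rest is soft. First dispose of the trivial case: if $x=0$, then $\|x_{n}\|_{L^{p}}\to 0$ is precisely strong convergence $x_{n}\to 0$. So assume $x\neq 0$ and set, for all $n$ large enough that $x_{n}\neq 0$,
\[
y_{n}:=\frac{x_{n}}{\|x_{n}\|_{L^{p}}},\qquad y:=\frac{x}{\|x\|_{L^{p}}}.
\]
Since $\|x_{n}\|_{L^{p}}\to\|x\|_{L^{p}}>0$ and $x_{n}\rightharpoonup x$ weakly, it follows that $y_{n}\rightharpoonup y$ weakly, while $\|y_{n}\|_{L^{p}}=\|y\|_{L^{p}}=1$.

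Now $\tfrac{1}{2}(y_{n}+y)\rightharpoonup y$ weakly, so by the weak lower semicontinuity of the norm (inequality (\ref{liminf})) one has $\liminf_{n}\|\tfrac{1}{2}(y_{n}+y)\|_{L^{p}}\geq\|y\|_{L^{p}}=1$; on the other hand the triangle inequality gives $\|\tfrac{1}{2}(y_{n}+y)\|_{L^{p}}\leq 1$. Hence $\|\tfrac{1}{2}(y_{n}+y)\|_{L^{p}}\to 1$. If $y_{n}$ did not converge strongly to $y$, there would be $\varepsilon>0$ and a subsequence with $\|y_{n_{k}}-y\|_{L^{p}}\geq\varepsilon$ for all $k$; uniform convexity would then yield $\|\tfrac{1}{2}(y_{n_{k}}+y)\|_{L^{p}}\leq 1-\delta(\varepsilon)$ for all $k$, contradicting the limit just obtained. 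Therefore $y_{n}\to y$ strongly, and finally $x_{n}=\|x_{n}\|_{L^{p}}\,y_{n}\to\|x\|_{L^{p}}\,y=x$ strongly in $L^{p}(\Omega,\Sigma,\mu)$.

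The only genuine work lies in the preliminary step, namely establishing Clarkson's inequalities, and in particular the range $1<p<2$, which is appreciably more delicate than the case $p\geq 2$. One might hope to bypass uniform convexity via a Brezis--Lieb type identity ($\|x_{n}\|_{L^{p}}^{p}-\|x_{n}-x\|_{L^{p}}^{p}\to\|x\|_{L^{p}}^{p}$, which together with the hypothesis would force $\|x_{n}-x\|_{L^{p}}\to 0$), but this requires pointwise a.e.\ convergence of a subsequence, which need not be available for a merely weakly convergent sequence; so I would stick with the uniform-convexity argument above.
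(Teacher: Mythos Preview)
Your argument is correct and is precisely the classical Radon--Riesz/uniform-convexity proof. The paper does not give its own proof of this lemma; it simply invokes the result by citing Megginson's \emph{An Introduction to Banach Space Theory}, Corollary 5.2.19, where exactly this uniform-convexity argument (the Radon--Riesz property for uniformly convex spaces, combined with Clarkson's inequalities for $L^{p}$) is carried out. So there is nothing to compare: you have filled in what the paper outsources to a reference, and your discussion of the alternative Brezis--Lieb route and its pitfall is accurate.
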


We are now ready to prove that

\begin{lemma}\label{slemma}
The above weakly converging subsequence $(v_n,\varphi_n)_n$ has a subsequence converging strongly in $L^{2}_{1}$ to $(v_\infty,\varphi_\infty)$.
\end{lemma}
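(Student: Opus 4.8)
The plan is to upgrade the weak convergence of Lemma \ref{wlemma} to strong convergence in $L^2_1$ by a ``norm convergence'' argument based on the energy identity, using the Radon-Riesz property (Lemma \ref{radonriesz}). The key observation is that the energy $\widetilde{\E}^{\mathtt{an}}(A_n,\varphi_n)$ is, up to the fixed constant $\widetilde{\E}_{A_0}$, equal to $\|SW(v_n,\varphi_n)\|^2_{L^2}$, and each of the several non-negative pieces of $\widetilde{\E}^{\mathtt{an}}$ (namely $\tfrac14\int|dv_n|^2$, $\int|\nabla_{\hat A_n}\varphi_n|^2$, $\tfrac14\int_{X^{\leq 0}}(|\varphi_n|^2+s_g/2)^2$, $\tfrac14\int_{X^{\geq 0}}|\varphi_n|^4$, and the cross term with $F^+_{A_0}$) is individually a weakly-lower-semicontinuous functional of the weakly convergent sequence $(v_n,\varphi_n,\nabla_{\hat A_n}\varphi_n,|\varphi_n|^2)$; by the previous Corollary the weak limit of the last two entries is $(\nabla_{A_\infty}\varphi_\infty,|\varphi_\infty|^2)$. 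First I would show that along a further subsequence $\|SW(v_n,\varphi_n)\|_{L^2}$ converges (it is a bounded sequence of reals), and that by the $L^1_{loc}$-convergence established in the previous lemma together with weak $L^2$-convergence, $SW(v_n,\varphi_n)\rightharpoonup SW(v_\infty,\varphi_\infty)$ weakly in $L^2$, so by \eqref{liminf} the limit of the norms dominates $\|SW(v_\infty,\varphi_\infty)\|_{L^2}$.

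Next I would run the lower-semicontinuity inequality for each summand of $\widetilde{\E}^{\mathtt{an}}$ simultaneously. Adding the inequalities $\liminf \tfrac14\|dv_n\|^2_{L^2}\geq \tfrac14\|dv_\infty\|^2_{L^2}$, $\liminf\|\nabla_{\hat A_n}\varphi_n\|^2_{L^2}\geq\|\nabla_{A_\infty}\varphi_\infty\|^2_{L^2}$, and so on (the cross term $-4(dv_n,F^+_{A_0})$ is linear in $v_n$ hence weakly continuous, causing no trouble), and comparing with $\lim\|SW(v_n,\varphi_n)\|^2_{L^2}=\widetilde{\E}^{\mathtt{an}}(A_\infty,\varphi_\infty)-\widetilde{\E}_{A_0}\leq \liminf\widetilde{\E}^{\mathtt{an}}(A_n,\varphi_n)$, one finds that all the intermediate inequalities must in fact be equalities. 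In particular $\|dv_n\|_{L^2}\to\|dv_\infty\|_{L^2}$, $\|\nabla_{\hat A_n}\varphi_n\|_{L^2}\to\|\nabla_{A_\infty}\varphi_\infty\|_{L^2}$, and $\||\varphi_n|\|^2_{L^4(X^{\geq 0})}$ together with the shifted quantity on $X^{\leq 0}$ converge to their limiting values. Applying Lemma \ref{radonriesz} (with $p=2$) to $dv_n$ in $L^2(i\Lambda^2)$ then gives $dv_n\to dv_\infty$ strongly in $L^2$; since $v_n=d^*\beta_n$ with $\beta_n\perp\mathbb{H}^2_+$ and $\Delta_+$ restricted to the orthogonal complement is an isomorphism (Remark \ref{beta}), strong $L^2$-convergence of $d^+v_n=\Delta_+\beta_n$ forces strong $L^2_2$-convergence of $\beta_n$, hence strong $L^2_1$-convergence of $v_n$. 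Similarly $\nabla_{\hat A_n}\varphi_n\to\nabla_{A_\infty}\varphi_\infty$ strongly in $L^2$ by Radon-Riesz (one must first pass through $\nabla_{A_0}\varphi_n=\nabla_{\hat A_n}\varphi_n-\tfrac12\Gamma(v_n)\varphi_n$ and use that $\Gamma(v_n)\varphi_n\to\Gamma(v_\infty)\varphi_\infty$ strongly in $L^2$, which follows from the strong $L^2_1$, hence $L^4$, convergence of $v_n$ and the $L^4$ bounds on $\varphi_n$ via Rellich-Kondrashov locally combined with the weighted decay — more directly from strong $L^4_{loc}$-convergence plus a uniform tail estimate). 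Combining strong convergence of $\nabla_{A_0}\varphi_n$ in $L^2$ with the strong $L^4$ (equivalently $L^4_{loc}$ plus decay) convergence of $\varphi_n$ gives strong $L^2_1$-convergence of $\varphi_n$ to $\varphi_\infty$.

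I expect the main obstacle to be the bookkeeping around the ends: on a manifold with periodic ends the plain $L^2$ energy integrals only ``see'' the weight-zero space, and the weakly convergent subsequence of Lemma \ref{wlemma} lives in $L^2_1$ without a weight, so Rellich-Kondrashov only gives strong convergence on precompact sets $U\subset X$. The norm-convergence argument above does deliver strong convergence in the \emph{global} $L^2_1$-norm for $dv_n$ and $\nabla_{A_0}\varphi_n$ directly, because those are exactly the quantities whose $L^2$-norms are pinned by the energy identity; the delicate point is controlling $\|v_n\|_{L^2}$ and $\|\varphi_n\|_{L^2}$ (as opposed to $\|dv_n\|$, $\|\nabla\varphi_n\|$) globally. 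For $v_n$ this is handled cleanly by the Fredholm isomorphism $\Delta_+:(\mathbb{H}^2_+)^{\perp}\to(\mathbb{H}^2_+)^{\perp}$ which bounds $\|\beta_n\|_{L^2_2}$ by $\|\Delta_+\beta_n\|_{L^2}$, as in \eqref{beta2}, so strong convergence of $d^+v_n$ upgrades to strong convergence of $\beta_n$ in $L^2_2$ and of $v_n$ in $L^2_1$. For $\varphi_n$ one similarly uses that $\sD_{A_0}$ is Fredholm on $L^2_1\to L^2$: from the elliptic estimate $\|\varphi_n-\varphi_m\|_{L^2_1}\leq C(\|\sD_{A_0}(\varphi_n-\varphi_m)\|_{L^2}+\|\varphi_n-\varphi_m\|_{L^2})$ and a compactness argument for the finite-dimensional kernel, strong $L^2$-convergence of $\sD_{A_0}\varphi_n$ plus strong $L^2_{loc}$-convergence suffices to extract a Cauchy subsequence in $L^2_1$. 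Writing this last step carefully — making sure the ``error'' term $\|\varphi_n-\varphi_m\|_{L^2}$ genuinely goes to zero along a subsequence, which uses the weak $L^2_1$-convergence together with the strong convergence of the principal part — is where I would be most careful.
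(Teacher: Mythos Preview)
Your overall strategy matches the paper's, but there is a genuine gap in the ``squeeze'' step. You only establish
\[
\lim_n\|SW(v_n,\varphi_n)\|^2_{L^2}\ \geq\ \|SW(v_\infty,\varphi_\infty)\|^2_{L^2}\ =\ \widetilde{\E}^{\mathtt{an}}(A_\infty,\varphi_\infty)-\widetilde{\E}_{A_0}
\]
via weak lower semicontinuity of the norm, yet you then \emph{assert} equality and try to squeeze against the weak lower semicontinuity of the individual energy pieces. But by the energy identity $\widetilde{\E}^{\mathtt{an}}(A_n,\varphi_n)-\widetilde{\E}_{A_0}=\|SW(v_n,\varphi_n)\|^2_{L^2}$, the inequality $\liminf\widetilde{\E}^{\mathtt{an}}(A_n,\varphi_n)\geq\widetilde{\E}^{\mathtt{an}}(A_\infty,\varphi_\infty)$ coming from lower semicontinuity of the pieces is \emph{the same} inequality as the one above. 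There is no opposing inequality, hence no squeeze, and you cannot conclude that the individual norms $\|dv_n\|_{L^2}$, $\|\nabla_{\hat A_n}\varphi_n\|_{L^2}$, $\|\varphi_n\|_{L^4(X^{\geq 0})}$, etc.\ converge to their limiting values.

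The missing ingredient is that the hypothesis bounds $SW(v_n,\varphi_n)$ in $L^{2,w}_{k-1}$ with $w>0$ and $k\geq 2$, and the embedding $L^{2,w}_{k-1}\hookrightarrow L^2$ is \emph{compact} on a manifold with periodic ends. Passing to a subsequence therefore gives $SW(v_n,\varphi_n)\to SW(v_\infty,\varphi_\infty)$ \emph{strongly} in $L^2$ (the limit is identified via the $L^1_{loc}$ convergence you already have). Now the energy identity yields $\widetilde{\E}^{\mathtt{an}}(A_n,\varphi_n)\to\widetilde{\E}^{\mathtt{an}}(A_\infty,\varphi_\infty)$ exactly, and the squeeze against the five weakly lower semicontinuous pieces goes through. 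After this fix the remainder of your argument is essentially the paper's; note in particular that the Radon--Riesz step on the energy pieces gives $\varphi_n\to\varphi_\infty$ strongly in $L^4(X)$ directly (via $|\varphi_n|^2+s_g/2$ on $X^{\leq 0}$ and $\varphi_n$ in $L^4$ on $X^{\geq 0}$), which lets you replace the awkward $\|\varphi_n-\varphi_m\|_{L^2}$ term in your final elliptic estimate by $\|\varphi_n-\varphi_\infty\|_{L^4}$ and close the argument cleanly.
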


\begin{proof}
Since  $w>0$ and $k\geq 2$, the inclusion of weighted Sobolev spaces,  $L^{2,w}_{k-1}\hookrightarrow L^2$ is compact. Using the $L^{2,w}_{k-1}$-bound on $SW(v_n,\varphi_n)$  (see (\ref{bound})) it follows that, up to a subsequence, $SW(v_n,\varphi_n)$ converges strongly in $L^2$ to a limit, $(\Phi,\omega)\in L^2$. On the other hand, we know from the previous section that  $SW(v_n,\varphi_n)$ converges in $L^1_{loc}$ to $SW(v_\infty,\varphi_\infty)$. Thus $(\Phi,\omega)=SW(v_\infty,\varphi_\infty)$ (because they coincide as distributions) and $SW(v_n,\varphi_n)$ converges strongly to $SW(v_\infty,\varphi_\infty)$ in $L^2$.

Using  the energy identity (\ref{energyid}) it follows that
\begin{align*}
\widetilde{\E}^{\mathtt{an}}(v_{n},\varphi_{n})-\widetilde{\E}_{A_0}=\|SW(v_{n},\varphi_{n})\|^{2}_{L^2}\ra\|SW(v_{\infty},\varphi_{\infty})\|^{2}_{L^2}=\widetilde{\E}^{\mathtt{an}}(v_{\infty},\varphi_{\infty})-\widetilde{\E}_{A_0},
\end{align*}
and thus, 
\begin{align}\label{lim}
\lim_{n\to\infty} \widetilde{\E}^{\mathtt{an}}(v_{n},\varphi_{n})=\widetilde{\E}^{\mathtt{an}}(v_{\infty},\varphi_{\infty}).
\end{align}
 From this we prove strong $L^2_1$-convergence of $(v_n,\varphi_n)$. To do this rewrite the defining formula for  $\widetilde{\E}^{\mathtt{an}}(v_{n}\varphi_{n})$ as
\begin{equation}\label{int}
\begin{aligned}
&\frac{1}{4}\|dv_{n}\|^{2}_{L^{2}(X)}+\frac{1}{4}\|\varphi_n\|^{4}_{L^{4}(X^{>0})}+\int_{X^{>0}}s_{g}|\varphi_n|^2+\|\nabla_{\hat{A}_n} \varphi_{n}\|^{2}_{L^{2}(X)}\\
&+\frac{1}{4}\||\varphi_{n}|^2+s_{g}/2\|^{2}_{L^{2}(X^{\leq 0})}=\widetilde{\E}^{\mathtt{an}}(v_{n},\varphi_{n})+\langle dv_n,F_{A_0}^{+}\rangle+\int_{X^{\leq 0}}\frac{s^{2}_{g}}{16}
\end{aligned}
\end{equation}
%
Using (\ref{lim}) and the weak $L^2_1$-convergence of $(v_{n},\varphi_{n})$ it follows that the right hand side of (\ref{int}) converges to 
\begin{equation}
\begin{aligned}
&\widetilde{\E}^{\mathtt{an}}(v_{\infty},\varphi_{\infty})+\langle dv_\infty,F_{A_0}^{+}\rangle+\int_{X^{\leq 0}}\frac{s^{2}_{g}}{16}=\frac{1}{4}\|dv_{\infty}\|^{2}_{L^{2}(X)}+\frac{1}{4}\|\varphi_\infty\|^{4}_{L^{4}(X^{>0})}\\
&+\int_{X^{>0}}s_{g}|\varphi_\infty|^2+\|\nabla_{\hat{A}_\infty} \varphi_{\infty}\|^{2}_{L^{2}(X)}
+\frac{1}{4}\||\varphi_{\infty}|^2+s_{g}/2\|^{2}_{L^{2}(X^{\leq 0})}.
\end{aligned}
\end{equation}

Therefore

\begin{equation}\label{limnorm}
\begin{aligned}
\lim_{n\to\infty} \bigg(\frac{1}{4}\|dv_{n}\|^{2}_{L^{2}(X)}+\frac{1}{4}\|\varphi_n\|^{4}_{L^{4}(X^{>0})}+\int_{X^{>0}}s_{g}|\varphi_n|^2+\|\nabla_{\hat{A}_n} \varphi_{n}\|^{2}_{L^{2}(X)}\\
+\frac{1}{4}\||\varphi_{n}|^2+s_{g}/2\|^{2}_{L^{2}(X^{\leq 0})}\bigg)=\frac{1}{4}\|dv_{\infty}\|^{2}_{L^{2}(X)}+\frac{1}{4}\|\varphi_\infty\|^{4}_{L^{4}(X^{>0})},\\
+\int_{X^{>0}}s_{g}|\varphi_\infty|^2+\|\nabla_{\hat{A}_\infty} \varphi_{\infty}\|^{2}_{L^{2}(X)}
+\frac{1}{4}\||\varphi_{\infty}|^2+s_{g}/2\|^{2}_{L^{2}(X^{\leq 0})}.
\end{aligned}
\end{equation}

Recalling that we assumed $s_g\geq 0$ on $X^{>0}$ and given the bounds of Lemma (\ref{wlemma}), it follows that the sequence formed by the five terms on the left belongs to a compact parallelepiped in $\R_{\geq 0}^5$. Hence we can find a subsequence $(n_k)_k$ of $\Nat$ such that the five sequences 
\begin{equation}
\begin{aligned}
 \big(\|dv_{n_k}\|^{2}_{L^{2}(X)})_k \big)\ ,\ \big( \frac{1}{4}\|\varphi_{n_k}\|^{4}_{L^{4}(X^{>0})}\big)_k,
 \big(\|\sqrt{s_{g}}\varphi_{n_k}\|^2_{L^2(X^{>0})}\big)_k\\ 
 \big(\|\nabla_{\hat{A}_{n_k}} \varphi_{n_k}\|^{2}_{L^{2}(X)} \big)_k, \big(\frac{1}{4}\||\varphi_{n_k}|^2+s_{g}/2\|^{2}_{L^{2}(X^{\leq 0})}\big)_k
\end{aligned}
\end{equation}
converge simultaneously.

On the other hand, using (\ref{liminf}) and the weak convergences given by Lemma (\ref{wlemma}), we obtain 
\begin{equation}
\begin{aligned}
&\lim \big(\|dv_{n_k}\|^{2}_{L^{2}(X)})_k \big)\geq \|dv_{\infty}\|^{2}_{L^{2}(X)},\ \lim\big( \frac{1}{4}\|\varphi_{n_k}\|^{4}_{L^{4}(X^{>0})}\big)_k\geq  \frac{1}{4}\|\varphi_{\infty}\|^{4}_{L^{4}(X^{>0})},\\
&\lim\big(\|\sqrt{s_{g}}\varphi_{n_k}\|^2_{L^2(X^{>0})}\big)_k\geq \|\sqrt{s_{g}}\varphi_{\infty}\|^2_{L^2(X^{>0})},\\
&\lim \big(\|\nabla_{\hat{A}_{n_k}} \varphi_{n_k}\|^{2}_{L^{2}(X)} \big)_k\geq\|\nabla_{\hat{A}_{\infty}} \varphi_{\infty}\|^{2}_{L^{2}(X)},\\
 &\lim\big(\frac{1}{4}\||\varphi_{n_k}|^2+s_{g}/2\|^{2}_{L^{2}(X^{\leq 0})}\big)_k\geq \frac{1}{4}\||\varphi_{\infty}|^2+s_{g}/2\|^{2}_{L^{2}(X^{\leq 0})}.
\end{aligned}
\end{equation}
Combining with (\ref{limnorm}), we see that all the inequalities above are equalities. Therefore by the Radon-Riesz theorem (\ref{radonriesz}) we obtain strong convergences, 
\begin{equation}
\begin{aligned}
&dv_{n_k}\ra_{L^{2}(X)} dv_\infty , \ \varphi_n\ra_{L^{4}(X^{>0})}\varphi_\infty ,\ \nabla_{\hat{A}_n} \varphi_{n},
\ra_{L^{2}(X)}\nabla_{\hat{A}_\infty} \varphi_{\infty}, \\
& |\varphi_{n_k}|^2+s_{g}/2\ra_{L^{2}(X^{\leq 0})}|\varphi_{\infty}|^2+s_{g}/2
\end{aligned}
\end{equation}
Using Remark \ref{beta} and the $L^2$-convergence of $dv_{n_k}\ra dv_\infty $, it follows that 
\begin{align*}
\|\Delta_{+}( \beta_{n_k}-\beta_{\infty})\|_{L^2}&=\|d^{+}d^{*}(\beta_{n_k}-\beta_{\infty})\|_{L^2}=\|d^{+}(v_{n_k}-v_{\infty})\|_{L^2}\ra 0.
\end{align*}
This together with the estimate
\begin{align*}
\|\beta_{n_k}-\beta_{\infty}\|_{L^{2}_{2}}&\leq \|\Delta_{+}(\beta_{n_k}-\beta_{\infty})\|_{L^{2}}+\|\pi_{\ker \Delta_{+}}(\beta_{n_k}-\beta_{\infty})\|_{L^2}
\end{align*}
proves as in (\ref{beta2}), shows that $v_n\ra v_\infty$ strongly in $L^{2}_{1}$.  

Boundedness of  $s_{g}$ in $X^{\leq 0}$, and  the  convergence of $|\varphi_{n_k}|^2+s_{g}/2\ra|\varphi_{\infty}|^2+s_{g}/2$ in $L^{2}(X^{\leq 0})$, imply that $|\varphi_{n_k}|^2\ra |\varphi_{\infty}|^2$ in $L^{2}(X^{\leq 0})$. This with the $L^4(X^{\geq 0})$-convergence of $\varphi_{n_k}\ra\varphi_{\infty}$ , proves that $\varphi_{n_k}\ra \varphi_{\infty}$ in $L^{4}(X)$. 

From the $L^2$-convergence of $\nabla_{\hat{A}_{n_k}} \varphi_{n}\ra\nabla_{\hat{A}_\infty} \varphi_{\infty}$,  it follows that
\begin{align*}
\nabla_{\hat{A}_0}\varphi_{n_k}+\frac{1}{2}\Gamma(u_{n_k})\varphi_{n_k}\ra\nabla_{\hat{A}_0}\varphi_\infty+\frac{1}{2}\Gamma(u_\infty)\varphi_\infty
\end{align*}
in $L^2$. Therefore to prove that $\nabla_{\hat{A}_0}\varphi_{n_k}\ra\nabla_{\hat{A}_0}\varphi_\infty$ in $L^2$ it is enough to prove that $\Gamma(v_{n_k})\varphi_{n_k}\ra\Gamma(v_\infty)\varphi_\infty$ in $L^2$. This follows from the strong convergence of $_{n_k}\ra v_\infty$ in $L^{2}_1$ together with Aubin-Sobolev $L^{2}_{1}\hookrightarrow L^{4}$, and the strong convergence $\varphi_{n_k}\ra\varphi_\infty$ in $L^4$. Since that $\sD_{\hat{A}_0}$ is Fredholm and  is a contraction of $\nabla_{\hat{A}_0}$ we have
\begin{align*}
\|\varphi_{n_k}-\varphi_\infty \|_{L^{2}_{1}}&\leq C\|\sD_{\hat{A}_0}(\varphi_{n_k}-\varphi_\infty)\|_{L^{2}}+ C\|\varphi_{n_k}-\varphi_\infty\|_{L^4}\\
&\leq C\|\nabla_{\hat{A}_0}(\varphi_{n_k}-\varphi_\infty)\|_{L^{2}}+C\|\varphi_n-\varphi_\infty\|_{L^4}
\end{align*}
And thus $\varphi_{n_k}\ra \varphi_\infty$ in $L^{2}_{1}$.
\end{proof}

\section{Coercivity}

We arrive to the main theorem of this section 

\begin{thh}\label{coerc}
Let $(X,g,\tau,A_0)$ be an admissible 4-tuple.
Suppose that $w>0$ and $k\geq 2$.
Let $(v_{n},\varphi_{n})_{n\in\mathbb{N}}$ be a sequence in  
$$\textrm{Im }[d^{*}:L^{2,w}_{k+1}(\Lambda^{2}_{+})\ra L^{2,w}_{k}(\Lambda^{1})]\ti L^{2,w}_{k}(\Sigma^{+})$$
such that there exists $C>0$ with
\begin{equation}\label{bound}
\begin{aligned}
\|SW(v_{n},\varphi_{n})\|_{L^{2,w}_{k-1}}\leq C \ \forall n\in \mathbb{N}\ .
\end{aligned}
\end{equation}

\begin{enumerate}
\item   There exists a subsequence of $ (v_{n},\varphi_{n})_{n\in\mathbb{N}}$  which is bounded in $L^{2,w}_{k}$,
\item    For any $\lambda\in (0, w)$ there exists a subsequence of $ (v_{n},\varphi_{n})_{n\in\mathbb{N}}$  which is strongly convergent  in $L^{2,\lambda}_{k-1}$. \end{enumerate}
\end{thh}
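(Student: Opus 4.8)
The plan is to bootstrap from the $L^2_1$-strong convergence established in Lemma~\ref{slemma} up to $L^{2,w}_k$-boundedness and $L^{2,\lambda}_{k-1}$-strong convergence, using the mapping properties of the Dirac operator $\sD_{A_0}$ and the Laplacian $\Delta_+$ on weighted Sobolev spaces, exactly in the spirit of the elliptic bootstrapping in (\cite{KM}, section II), but carried out on weighted spaces adapted to the periodic ends.

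\textbf{Step 1: Upgrade from $L^2_1$ to $L^{3,\lambda}_1$.} Starting from the subsequence $(v_n,\varphi_n)$ which is bounded in $L^{2,w}_k$ and strongly $L^2_1$-convergent (Lemmas~\ref{wlemma}, \ref{slemma}), I would first record that by the Sobolev embedding $L^2_1\hookrightarrow L^q$ for $q<\infty$ in dimension $4$ (interior version), together with the $L^2$-strong convergence of $\nabla_{\hat A_n}\varphi_n$ and $|\varphi_n|^2$, the nonlinear terms $(\varphi_n\otimes\bar\varphi_n)_0$ and $\Gamma(v_n)\varphi_n$ behave well. The point is that $\sD_{A_0}\varphi_n = \sD_{\hat A_n}\varphi_n - \tfrac12\Gamma(v_n)\varphi_n$, and the first term is the first component of $SW(v_n,\varphi_n)$, which is bounded in $L^{2,w}_{k-1}$. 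Since $\sD_{A_0}:L^{3,\lambda}_1(\Sigma^+)\to L^{3,\lambda}(\Sigma^-)$ is still Fredholm for $\lambda\in(0,w)$ small (the Fredholm property of end-periodic elliptic operators is stable under such small perturbations of the weight, cf.\ Theorem~\ref{Taubesexact} and its $L^p$ analogue), one gets an a priori estimate $\|\varphi_n\|_{L^{3,\lambda}_1}\leq C(\|\sD_{A_0}\varphi_n\|_{L^{3,\lambda}} + \|\varphi_n\|_{L^3})$; similarly $v_n = d^*\beta_n$ with $\Delta_+\beta_n = d^+v_n$ controlled in $L^{3,\lambda}$ via $F^+_{A_n} = F^+_{A_0} + d^+v_n$ and the second component of $SW$, giving $\|\beta_n\|_{L^{3,\lambda}_2}$ and hence $\|v_n\|_{L^{3,\lambda}_1}$ bounded. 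The quadratic terms on the right are controlled because $L^2_1\hookrightarrow L^6$ in dimension $4$ makes $\varphi_n\otimes\bar\varphi_n$ bounded in $L^3$ and $\Gamma(v_n)\varphi_n$ bounded in $L^3$ by Hölder from $v_n,\varphi_n\in L^6$. The weight behaves correctly because $\tau\geq 0$ on the ends and the weight drop from $w$ to $\lambda$ absorbs the fact that $v_n$ is only bounded (not convergent) in the full weighted norm; this is where passing to $\lambda<w$ is essential.

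\textbf{Step 2: Inductive bootstrapping to $L^{2,w}_k$-boundedness and $L^{2,\lambda}_{k-1}$-convergence.} With $L^{3,\lambda}_1$-control in hand, I would iterate: the product estimates $L^{3}_j\cdot L^3_j\hookrightarrow L^3_{j}$ type (using that multiplication is continuous $L^p_j\times L^p_j\to L^p_j$ once $p\cdot j > 4$, and below that via chained Sobolev embeddings) let one feed the nonlinear terms of $SW$ back into the Fredholm estimates for $\sD_{A_0}$ and $\Delta_+$, raising the Sobolev order by one at each stage, until reaching $L^{2,w}_k$. More carefully, since the hypothesis bounds $SW(v_n,\varphi_n)$ in $L^{2,w}_{k-1}$ and we want $(v_n,\varphi_n)$ bounded in $L^{2,w}_k$, at each step $j\to j+1$ one uses $\|\varphi_n\|_{L^{2,w}_{j+1}}\leq C(\|SW\text{-component}\|_{L^{2,w}_j} + \|\Gamma(v_n)\varphi_n\|_{L^{2,w}_j} + \|\varphi_n\|_{L^2})$ and the analogous estimate for $v_n$ via $\Delta_+$, where the product term $\Gamma(v_n)\varphi_n$ and $(\varphi_n\otimes\bar\varphi_n)_0$ are bounded in $L^{2,w}_j$ by the Sobolev multiplication theorem once we already know $(v_n,\varphi_n)$ bounded in $L^{2,w}_j$ (one must check the weight adds: $e^{w\tau}\cdot e^{w\tau}$ versus $e^{w\tau}$ is fine because on the support of everything we can bound by the larger quantity, or because products of weighted spaces land in the same-weight space since $\tau\geq 0$). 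This proves (1). For (2), I would run the same iteration for the \emph{differences}: strong $L^{2,\lambda}_1$-convergence (from Lemma~\ref{slemma}, with the weight drop) plus boundedness in $L^{2,w}_k$ with $\lambda<w$ gives, by interpolation/compact embedding $L^{2,w}_k\hookrightarrow L^{2,\lambda}_{k-1}$ (compact because the weight is strictly smaller — this is the weighted analogue of Rellich and is where $\lambda<w$ is again crucial), strong convergence in $L^{2,\lambda}_{k-1}$ directly; alternatively bootstrap the difference estimates stage by stage using that $SW(v_n,\varphi_n)\to SW(v_\infty,\varphi_\infty)$ strongly in $L^{2,\lambda}_{k-1}$ (from the $L^{2,w}_{k-1}$-boundedness, the compact embedding into $L^{2,\lambda}_{k-1}$, and identification of the limit as in Lemma~\ref{slemma}).

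\textbf{Main obstacle.} The delicate point is not the interior elliptic estimates — those are standard — but keeping track of the \emph{weights} throughout the bootstrap: one must ensure at each stage that (a) the relevant weighted Fredholm estimate for $\sD_{A_0}$ or $\Delta_+$ is available at the chosen weight and Sobolev order (only weights in a punctured neighbourhood of $0$, or of the fixed $w$, are admissible — discreteness of the bad set, Theorem~\ref{Taubesgeneric}, and the robustness under small perturbations used in Corollary~\ref{h+=h-}), and (b) the Sobolev multiplication estimates for the quadratic nonlinearities respect the weighting, which works precisely because $\tau\geq 0$ near the ends so that $|e^{w\tau}(fg)| = |e^{w\tau}f|\cdot|g| \leq |e^{w\tau}f|\cdot|e^{w\tau}g|$ pointwise on the ends. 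The reason one is forced to drop from $w$ to an arbitrary $\lambda\in(0,w)$ for the \emph{convergence} statement — rather than getting convergence at weight $w$ itself — is exactly that the sequence $v_n=d^*\beta_n$ is only bounded, not convergent, in the top weighted norm (there is no a priori control forcing the ``mass at infinity'' to localize), and the compact embedding $L^{2,w}_k\hookrightarrow L^{2,\lambda}_{k-1}$ for $\lambda<w$ is what converts boundedness into strong convergence after the weight is relaxed.
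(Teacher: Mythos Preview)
Your outline has the right overall shape (bootstrap from Lemma~\ref{slemma} via elliptic estimates and product rules), but Step~1 rests on a false Sobolev embedding and this propagates through the whole argument. In dimension $4$ one has only $L^2_1\hookrightarrow L^4$; the claimed embeddings $L^2_1\hookrightarrow L^q$ for all $q<\infty$ and $L^2_1\hookrightarrow L^6$ are simply not available. Consequently, from $(v_n,\varphi_n)$ bounded in $L^2_1$ you can only place the quadratic terms $\Gamma(v_n)\varphi_n$ and $(\varphi_n\otimes\bar\varphi_n)_0$ in $L^2$, \emph{not} in $L^{3,\lambda}$. Thus your direct estimate $\|\varphi_n\|_{L^{3,\lambda}_1}\leq C(\|\sD_{A_0}\varphi_n\|_{L^{3,\lambda}}+\dots)$ cannot be closed: the right-hand side contains $\|\Gamma(v_n)\varphi_n\|_{L^{3,\lambda}}$, which you have no way to bound using only $L^2_1$ information. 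The same obstruction blocks the control of $d^+v_n$ in $L^{3,\lambda}$ via $F^+_{A_n}=F^+_{A_0}+d^+v_n$, since that again involves $(\varphi_n\otimes\bar\varphi_n)_0$.

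The missing idea is to work with \emph{differences} rather than individual terms, and to exploit the $L^2_1$-Cauchy property already in hand. Writing $SW(\gamma)=C_{A_0}+D\gamma+\gamma\sharp\gamma$ with $D=(d^+,\sD_{A_0})$ Fredholm, the paper decomposes
\[
D(\gamma_i-\gamma_j)=SW(\gamma_i)-SW(\gamma_j)+(\gamma_j-\gamma_i)\sharp(\gamma_j+\gamma_i-2\gamma_{i_\epsilon})+(\gamma_j-\gamma_i)\sharp\,2\gamma_{i_\epsilon},
\]
where $\gamma_{i_\epsilon}$ is a fixed term of the sequence chosen so that $\|\gamma_j+\gamma_i-2\gamma_{i_\epsilon}\|_{L^2_1}\leq 2\epsilon$ for $i,j\geq i_\epsilon$. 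Now the multiplication $L^{3,\lambda}_1\times L^4\to L^{3,\lambda}$ (valid because $L^{3,\lambda}_1\hookrightarrow L^{12,\lambda}$ and $1/12+1/4=1/3$) gives
\[
\|(\gamma_j-\gamma_i)\sharp(\gamma_j+\gamma_i-2\gamma_{i_\epsilon})\|_{L^{3,\lambda}}\leq C\,\|\gamma_j-\gamma_i\|_{L^{3,\lambda}_1}\cdot 2\epsilon,
\]
which for $\epsilon$ small is \emph{absorbed} into the left-hand side of the Fredholm estimate. The remaining term $(\gamma_j-\gamma_i)\sharp 2\gamma_{i_\epsilon}$ is harmless because $\gamma_{i_\epsilon}\in L^{2,w}_k\hookrightarrow L^{3,\lambda}_1$ is now a fixed element. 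This absorption trick is precisely what replaces the unavailable product bound, and it must be repeated at each stage of the bootstrap (with the appropriate multiplication, e.g.\ $L^{2,w}_2\times L^{3,\lambda}_1\to L^{2,w}_1$, see Proposition~\ref{multi}). Your remark about the weight drop $w\to\lambda$ being needed for compactness is correct, but it does not by itself produce the missing product control at the low-regularity start of the iteration.
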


Note:  In the  following proof all irrelevant constants will be denoted by $C$.

\begin{proof} {\ }\\
{\it Step 1.}    For any $\lambda\in (0, w)$ the sequence  $ (v_{n},\varphi_{n})_{n\in\mathbb{N}}$ has a subsequence which converges strongly in $L^{3,\lambda}_1$.
\\
\\
We start noting that $SW(v,\varphi)$ can be written  as
\begin{align}\label{SW}
SW(v,\varphi)=C_{A_0}+D(v,\varphi)+(v,\varphi)\sharp(v,\varphi).
\end{align}
where $C_{A_0}$ is a constant,   $D=(d^{+},D_{A_{0}})$ is a Fredholm operator  (for sufficiently small weights $w$) and $\sharp$ is  a continuous symmetric bilinear operator. 

According to  (\ref{slemma}) the sequence $(v_{n},\varphi_{n})_n$ has a subsequence $(v_{n_i},\varphi_{n_i})_i$ which is strongly convergent  in $L^2_1$. Put $(\gamma_{i})_i:=(v_{n_i},\varphi_{n_i})_i$ to save on notations, and let   $i_0\in\Nat$. It follows from (\ref{SW}) that
\begin{equation}\label{Fredholm}
\begin{aligned}
D(\gamma_{i}-\gamma_{j})&=SW(\gamma_{i})-SW(\gamma_{j})+\gamma_{j}\sharp\gamma_{j}-\gamma_{i}\sharp\gamma_{i}=SW(\gamma_{i})-SW(\gamma_{j})\\
&+(\gamma_{j}-\gamma_{i})\sharp(\gamma_{j}+\gamma_{i}-2\gamma_{i_{{0}}})+(\gamma_{j}-\gamma_{i})\sharp 2\gamma_{i_{0}}.
\end{aligned}
\end{equation}
Let $\lambda \in(0,w)$. We prove that $(\gamma_i)_i$ has a converging subsequence in $L^{3,\lambda}_1$. Given   $\epsilon>0$ choose $i_{\epsilon}\in\Nat$ such that for all $i\geq i_{\epsilon}$ the inequality $\|\gamma_{i}-\gamma_{j}\|_{L^{2}_{1}}\leq \epsilon$ holds.
Since $D$ is Fredholm  (for sufficiently small weights) there is a constant $C>0$, such that 
\begin{align}\label{ineqcoerciv1}
\|\gamma_{i}-\gamma_{j}\|_{L^{3,\lambda}_{1}}\leq C(\|D(\gamma_{i}-\gamma_{j})\|_{L^{3,\lambda}}+\|\gamma_{i}-\gamma_{j}\|_{L^2_1}).
\end{align}
Hence, using (\ref{Fredholm}), we have
\begin{equation}\label{ineqcoerciv2}
\begin{aligned}
\|\gamma_{i}-\gamma_{j}\|_{L^{3,\lambda}_{1}}&\leq C(\|D(\gamma_{i}-\gamma_{j})\|_{L^{3,\lambda}}+\|\gamma_{i}-\gamma_{j}\|_{L^2_1})\\
&\leq C\|SW(\gamma_{i})-SW(\gamma_{j})\|_{L^{3,\lambda}}+C\|(\gamma_{j}-\gamma_{i})\sharp(\gamma_{j}+\gamma_{i}-2\gamma_{i_{\epsilon}})\|_{L^{3}}+\\
&  +C\|(\gamma_{j}-\gamma_{i})\sharp 2\gamma_{i_{\epsilon}}\|_{L^{3,\lambda}}+C\|\gamma_{i}-\gamma_{j}\|_{L^2_1}
\end{aligned}
\end{equation}
for suitable positive constants $C$ (independent of the sequence).
Using the continuous Sobolev multiplication $L^{3,\lambda}_{1}\ti L^{4}\ra L^{3,\lambda}$ (which follows from the Sobolev embedding $L^{3,\lambda}_{1}\to L^{12,\lambda}$), together with the Aubin-Sobolev continuous inclusion $L^{2}_{1}\hookrightarrow L^{4}$, it follows that for $i$, $j\geq i_\epsilon$ one has

\begin{equation}\label{ineq1}
\begin{aligned}
\|(\gamma_{j}-\gamma_{i})\sharp(\gamma_{j}+\gamma_{i}-2\gamma_{i_\epsilon})\|_{L^{3,\lambda}}&\leq C\|\gamma_{j}-\gamma_{i}\|_{L^{3,\lambda}_{1}}\|\gamma_{j}+\gamma_{i}-2\gamma_{i_\epsilon}\|_{L^{4}}\\
&\leq C\|\gamma_{j}-\gamma_{i}\|_{L^{3,\lambda}_{1}}\|\gamma_{j}+\gamma_{i}-2\gamma_{i_\epsilon}\|_{L^{2}_{1}}\\
&\leq 2\epsilon C\|\gamma_{j}-\gamma_{i}\|_{L^{3,\lambda}_{1}},
\end{aligned}
\end{equation}
 and 
\begin{equation}\label{ineq2}
\begin{aligned}
\|(\gamma_{j}-\gamma_{i})\sharp 2\gamma_{i_{\epsilon}}\|_{L^{3,\lambda}}&\leq C\|\gamma_{i_\epsilon}\|_{L^{3,\lambda}_{1}}\|\gamma_{i}-\gamma_{j}\|_{L^{4}}\\
&\leq C\|\gamma_{i_\epsilon}\|_{L^{3,\lambda}_{1}}\|\gamma_{i}-\gamma_{j}\|_{L^{2}_{1}}\\
\end{aligned}
\end{equation}
(because  $\gamma_{i_\epsilon}\in L^{2,w}_{k}\hookrightarrow L^{3,\lambda}_{1}$, for $k\geq 2$). Using (\ref{ineqcoerciv2}), we have 

\begin{equation}\label{ineqcoerciv3}
\begin{aligned}
\|\gamma_{i}-\gamma_{j}\|_{L^{3,\lambda}_{1}}&\leq C\|SW(\gamma_{i})-SW(\gamma_{j})\|_{L^{3,\lambda}}+2\epsilon C\|\gamma_{i}-\gamma_{j}\|_{L^{3,\lambda}_{1}}\\
&+ C\|\gamma_{i}-\gamma_{j}\|_{L^{2}_{1}} \|\gamma_{i_\epsilon}\|_{L^{3,\lambda}_{1}}+C\|\gamma_{i}-\gamma_{j}\|_{L^{2}_{1}}.
\end{aligned}
\end{equation}
for all $i$, $j\geq i_\epsilon$, where $C$ is a constant which depends only the Riemannian manifold $(X,g)$, in particular independent of $\epsilon$.  Now choose $\epsilon$ such that  $2\epsilon C=\frac{1}{2}$ and move the second right hand term in (\ref{ineqcoerciv3}) to the left hand side. We get the estimate
\begin{equation}\label{ineqcoerciv4}
\begin{aligned}
\frac{1}{2}\|\gamma_{i}-\gamma_{j}\|_{L^{3,\lambda}_1}&\leq C\|SW(\gamma_{i})-SW(\gamma_{j})\|_{L^{3,\lambda}}+ C\|\gamma_{i}-\gamma_{j}\|_{L^{2}_{1}}.\|\gamma_{i_\epsilon}\|_{L^{3,\lambda}_{1}}\\
&+C\|\gamma_{i}-\gamma_{j}\|_{L^{2}_{1}}.
\end{aligned}
\end{equation}
for any $i,j\geq i_\epsilon$. Now the $L^{2,w}_{k-1}$-bound of $SW(\gamma_{l})$, together with the compact embedding (for $w>\lambda$ and $k\geq 2$),  $L^{2,w}_{k-1}\hookrightarrow L^{3,\lambda}$, imply that $(\gamma_i)_i$ has a subsequence, still denoted in the same way, such that $SW(\gamma_i)$ converges strongly in $L^{3,\lambda}$. Then (\ref{ineqcoerciv4}) shows that $(\gamma_i)_i$ is a Cauchy sequence in $L^{3,\lambda}_1$, hence it is convergent.

{\ }\\
{\it Step 2.}  We prove that the subsequence $(\gamma_i)_i$ obtained in Step 1 is bounded in $L^{2,w}_2$.\\

We follow the same arguments as before. Since $D$ is Fredholm  (for sufficiently small weights), there is a constant $C>0$ (independent of the sequence), such that 
\begin{align*}
\|\gamma_{i}-\gamma_{j}\|_{L^{2,w}_{2}}\leq C(\|D(\gamma_{i}-\gamma_{j})\|_{L^{2,w}_{1}}+\|\gamma_{i}-\gamma_{j}\|_{L^{3,\lambda}_1}).
\end{align*}%
Given $\epsilon$ choose $k_{\epsilon}\in\Nat$ such that for all $i,j\geq k_{\epsilon}$ the inequality $\|\gamma_{i}-\gamma_{j}\|_{L^{3,\lambda}_{1}}\leq \epsilon$ holds.
Then, using  (\ref{Fredholm}), we have for all $i,j\geq k_\epsilon$
\begin{equation}\label{desigualdade2}
\begin{aligned}
\|\gamma_{i}-\gamma_{j}\|_{L^{2,w}_{2}}&\leq C(\|D(\gamma_{i}-\gamma_{j})\|_{L^{2,w}_{1}}+\|\gamma_{i}-\gamma_{j}\|_{L^{3,\lambda}_1})\\
&\leq C\|SW(\gamma_{i})-SW(\gamma_{j})\|_{L^{2,w}_{1}}+C\|(\gamma_{j}-\gamma_{i})\sharp(\gamma_{j}+\gamma_{i}-2\gamma_{k_\epsilon})\|_{L^{2,w}_{1}}+\\
&  +C\|(\gamma_{j}-\gamma_{i})\sharp 2\gamma_{k_\epsilon}\|_{L^{2,w}_{1}}+C\|\gamma_{i}-\gamma_{j}\|_{L^{3,\lambda}_1}.
\end{aligned}
\end{equation}
The continuous Sobolev multiplication $L^{2,w}_{2}\ti L^{3,\lambda}_{1}\ra L^{2,w}_{1}$ (which follows from both embeddings $L^{2,w}_{2}\ra L^{4,w}_{1}$ and $L^{3,\lambda}_{1}\ra L^{12,\lambda}$) implies that, for all $i,j\geq k_\epsilon$
\begin{align*}
\|(\gamma_{j}-\gamma_{i})\sharp(\gamma_{j}+\gamma_{i}-2\gamma_{k_\epsilon})\|_{L^{2,w}_{1}}&\leq C\|\gamma_{j}-\gamma_{i}\|_{L^{2,w}_{2}}\|\gamma_{j}+\gamma_{i}-2\gamma_{k_\epsilon}\|_{L^{3,\lambda}_{1}}\\
&\leq 2\epsilon C\|\gamma_{j}-\gamma_{i}\|_{L^{2,w}_{2}},
\end{align*}
and
\begin{align*}
\|(\gamma_{j}-\gamma_{i})\sharp 2\gamma_{k_\epsilon}\|_{L^{2,w}_{1}}&\leq C\|\gamma_{k_\epsilon}\|_{L^{2,w}_{2}}\|\gamma_{i}-\gamma_{j}\|_{L^{3,\lambda}_{1}}
\end{align*}
(because $\gamma_{k_\epsilon}\in L^{2,w}_{k}\hookrightarrow L^{2,\lambda}_{2}$, for $k\geq 2$). Using (\ref{desigualdade2}), we have
\begin{equation}\label{desigualdade4}
\begin{aligned}
\|\gamma_{i}-\gamma_{j}\|_{L^{2,w}_{2}}&\leq C\|SW(\gamma_{i})-SW(\gamma_{j})\|_{L^{2,w}_{1}}+2\epsilon C\|\gamma_{i}-\gamma_{j}\|_{L^{2,w}_{2}}\\
&+C\|\gamma_{i}-\gamma_{j}\|_{L^{3,\lambda}_{1}}\|\gamma_{k_\epsilon}\|_{L^{2,w}_{1}}+C\|\gamma_{i}-\gamma_{j}\|_{L^{3,\lambda}_{1}}
\end{aligned}
\end{equation}
where $C$ is a constant which depends only the Riemannian manifold $(X,g)$, in particular independent of $\epsilon$.  Now, as before, choose $\epsilon$ such that  $2\epsilon C=\frac{1}{2}$ and move the second right hand term in (\ref{desigualdade4}) to the left hand side. We get the estimate
\begin{equation}\label{desigualdade5}
\begin{aligned}
\frac{1}{2}\|\gamma_{i}-\gamma_{j}\|_{L^{2,w}_{2}}&\leq C\|SW(\gamma_{i})-SW(\gamma_{j})\|_{L^{2,w}_{1}}\\
&+ C\|\gamma_{k_\epsilon}\|_{L^{2,w}_{1}}\|\gamma_{i}-\gamma_{j}\|_{L^{3,\lambda}_{1}}+C\|\gamma_{i}-\gamma_{j}\|_{L^{3,\lambda}_{1}}
\end{aligned}
\end{equation}
for all $i,j\geq k_\epsilon$. Since $\|SW(\gamma_{j})\|_{L^{2,w}_{1}}$ is bounded and $(\gamma_i)_i$ is Cauchy in $L^{3,\lambda}_{1}$, it follows that $(\gamma_i)_i$ is bounded in $L^{2,w}_{2}$.

{\ }\\
{\it Step 3.}  Supposing $k\geq 3$,  we prove that the subsequence obtained in Step 1, $(\gamma_i)_i$,  has itself a subsequence converging strongly in $L^{2,\lambda}_2$.  \\

We follow the same arguments. Since $D$ is Fredholm, there is a constant $C>0$ (independent of the sequence), such that 
\begin{align*}
\|\gamma_{i}-\gamma_{j}\|_{L^{2,\lambda}_{2}}\leq C(\|D(\gamma_{i}-\gamma_{j})\|_{L^{2,\lambda}_{1}}+\|\gamma_{i}-\gamma_{j}\|_{L^{3,\lambda}_1}).
\end{align*}%
Given $\epsilon$ choose $l_{\epsilon}\in\Nat$ such that for all $i,j\geq l_{\epsilon}$ the inequality $\|\gamma_{i}-\gamma_{j}\|_{L^{3,\lambda}_{1}}\leq \epsilon$ holds.
Then, using  (\ref{Fredholm}), we have for all $i,j\geq l_\epsilon$
\begin{equation}\label{desigualdade3}
\begin{aligned}
\|\gamma_{i}-\gamma_{j}\|_{L^{2,\lambda}_{2}}&\leq C(\|D(\gamma_{i}-\gamma_{j})\|_{L^{2,\lambda}_{1}}+\|\gamma_{i}-\gamma_{j}\|_{L^2_1})\\
&\leq C\|SW(\gamma_{i})-SW(\gamma_{j})\|_{L^{2,\lambda}_{1}}+C\|(\gamma_{j}-\gamma_{i})\sharp(\gamma_{j}+\gamma_{i}-2\gamma_{l_\epsilon})\|_{L^{2,\lambda}_{1}}+\\
&  +C\|(\gamma_{j}-\gamma_{i})\sharp 2\gamma_{l_\epsilon}\|_{L^{2,\lambda}_{1}}+C\|\gamma_{i}-\gamma_{j}\|_{L^{3,\lambda}_1}.
\end{aligned}
\end{equation}
The continuous Sobolev multiplication $L^{2,\lambda}_{2}\ti L^{3,\lambda}_{1}\ra L^{2,\lambda}_{1}$ (which follows from both embeddings $L^{2,\lambda}_{2}\ra L^{4,\lambda}_{1}$ and $L^{3,\lambda}_{1}\ra L^{12,\lambda}$) implies that 
\begin{align*}
\|(\gamma_{j}-\gamma_{i})\sharp(\gamma_{j}+\gamma_{i}-2\gamma_{l_\epsilon})\|_{L^{2,\lambda}_{1}}&\leq C\|\gamma_{j}-\gamma_{i}\|_{L^{2,\lambda}_{2}}\|\gamma_{j}+\gamma_{i}-2\gamma_{l_\epsilon}\|_{L^{3,\lambda}_{1}}\\
&\leq 2\epsilon C\|\gamma_{j}-\gamma_{i}\|_{L^{2,\lambda}_{2}},
\end{align*}
and also
\begin{align*}
\|(\gamma_{j}-\gamma_{i})\sharp 2\gamma_{l_\epsilon}\|_{L^{2,\lambda}_{1}}&\leq C\|\gamma_{l_\epsilon}\|_{L^{2,\lambda}_{2}}\|\gamma_{i}-\gamma_{j}\|_{L^{3,\lambda}_{1}}\\
\end{align*}
(because $\gamma_{l_\epsilon}\in L^{2,w}_{k}\hookrightarrow L^{2,\lambda}_{2}$, for $k\geq 2$. Using (\ref{desigualdade3}), we have
\begin{equation}\label{ineqcoerciv5}
\begin{aligned}
\|\gamma_{i}-\gamma_{j}\|_{L^{2,\lambda}_{2}}&\leq C\|SW(\gamma_{i})-SW(\gamma_{j})\|_{L^{2,\lambda}_{1}}+2\epsilon C\|\gamma_{i}-\gamma_{j}\|_{L^{2,\lambda}_{2}}\\
&+C\|\gamma_{i}-\gamma_{j}\|_{L^{3,\lambda}_{1}}\|\gamma_{l_\epsilon}\|_{L^{2,\lambda}_{1}}+C\|\gamma_{i}-\gamma_{j}\|_{L^{3,\lambda}_{1}}
\end{aligned}
\end{equation}
where $C$ is a constant which depends only the Riemannian manifold $(X,g)$, in particular is independent of $\epsilon$.  Now, as before, choose $\epsilon$ such that  $2\epsilon C=\frac{1}{2}$ and move the second right hand term in (\ref{ineqcoerciv5}) to the left hand side. We get the estimate 
\begin{equation}\label{ineqcoerciv6}
\begin{aligned}
\frac{1}{2}\|\gamma_{i}-\gamma_{j}\|_{L^{2,\lambda}_{2}}&\leq C\|SW(\gamma_{i})-SW(\gamma_{j})\|_{L^{2,\lambda}_{1}}\\
&+ \epsilon C\|\gamma_{l_\epsilon}\|_{L^{2,\lambda}_{2}}+C\|\gamma_{i}-\gamma_{j}\|_{L^{3,\lambda}_{1}} ,
\end{aligned}
\end{equation}
Now the $L^{2,w}_{k-1}$-bound of $SW(\gamma_{l})$, together with the compact inclusion (for $k\geq 3$) $L^{2,w}_{k-1}\hookrightarrow L^{2,\lambda}_{1}$, imply that $(\gamma_i)_i$ has a subsequence, still denoted the same way, such that $SW(\gamma_i)$ converges strongly in $L^{2,\lambda}_{1}$. Then from (\ref{ineqcoerciv6}), it follows that $(\gamma_i)_i$ is a Cauchy sequence in $L^{2,\lambda}_{2}$, hence it is convergent in $L^{2,\lambda}_2$.\\

{\it Step 4.} 
Let  $(\gamma_{i})_i$ be a sequence converging strongly in $L^{2,\lambda}_{k-1}$. 
Suppose that $\|SW(\gamma_i)\|_{L^{2,w}_{k}}\leq C$ for all $i\in\Nat$. Then $(\gamma_i)_i$  is bounded in  $L^{2,w}_{k}$.\\

The proof of this statement follows the same lines as Step 2, i.e., the decomposition (\ref{Fredholm}),  Fredholmness of $D$, the Sobolev multiplication $L^{2,w}_{k}\ti L^{2,\lambda}_{k-1}\ra L^{2,w}_{k-1}$ but in the final step, it uses only the  $L^{2,w}_{k}$-bound on $SW(\gamma_i)$, instead of a compact inclusion argument. \\

{\ }\\
{\it Step 5.}
Let  $(\gamma_{i})_i$ be a sequence converging strongly in $L^{2,\lambda}_{s-1}$. Suppose that \linebreak $\|SW(\gamma_i)\|_{L^{2,w}_{s}}\leq C$ for all $i\in\Nat$. Then $(\gamma_i)_i$ has a subsequence converging strongly in $L^{2,\lambda}_{s}$.\\

This statement is proven using the same arguments as Step 3, namely, the decomposition (\ref{Fredholm}), Fredholmness of $D$, the continuous Sobolev multiplication $L^{2,\lambda}_{s}\ti L^{2,\lambda}_{s-1}\ra L^{2,\lambda}_{s-1}$  and, finally, the  $L^{2,w}_{s}$-bound on $SW(\gamma_i)$ together with the compact inclusion $L^{2,w}_{s}\ra L^{2,\lambda}_{s-1}$. \\ 

Now we come back to the proof of Theorem \ref{coerc}: 

1. For $k=2$ the claim is proved in Step 2.  $k\geq 3$ we use recursively Step 5, and then Step 4.

2. For $k=2$ the claim follows from Step 1, taking into account that $L^{3,\lambda}_{1}$ is continuously embedded in $L^{2,\lambda}_{1}$. 
For $k=3$ this is Step 3, and for $k\geq 3$ we use recursively Step 5. 
\end{proof}

\begin{re}
A similar proof to the one before holds for the perturbed Seiberg-Witten equation.
\end{re}
\vspace{ 1 pc}

\begin{coro}
Suppose we are in the conditions of Theorem \ref{coerc}. 

If $(\psi,\chi)\in L^{2,w+\varepsilon}_{k+1}(\Sigma^-)\times L^{2,w+\varepsilon}_{k+1}(\Herm_0(\Sigma^+))$, then  the fiber
$$\big\{(v,\phi)\in d^*(L^{2,w}_{k+1}(i\Lambda^2_+))\times  L^{2,w}_k(\Sigma^+)|\ SW(A_0+v, \phi)=(\psi,\chi)\big\}
$$
is compact in $L^{2,w}_k$.
\end{coro}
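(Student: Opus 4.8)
The plan is to deduce this corollary from Theorem \ref{coerc} in the standard way: a closed and (sequentially) precompact subset of a Hilbert space is compact, so it suffices to show that the fiber $\mathcal{F}:=\{(v,\phi)\in d^*(L^{2,w}_{k+1}(i\Lambda^2_+))\times L^{2,w}_k(\Sigma^+)\mid SW(A_0+v,\phi)=(\psi,\chi)\}$ is closed in $L^{2,w}_k$ and that every sequence in $\mathcal{F}$ has an $L^{2,w}_k$-convergent subsequence whose limit again lies in $\mathcal{F}$.

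First I would establish sequential precompactness. Let $(v_n,\phi_n)_n$ be a sequence in $\mathcal{F}$. Since $SW(A_0+v_n,\phi_n)=(\psi,\chi)$ is a fixed element, in particular $\|SW(v_n,\phi_n)\|_{L^{2,w}_{k-1}}\leq C$ uniformly (here using that $L^{2,w+\varepsilon}_{k+1}\hookrightarrow L^{2,w}_{k-1}$, or simply that the sequence is constant), so the hypothesis \eqref{bound} of Theorem \ref{coerc} holds. Part (1) of Theorem \ref{coerc} then gives a subsequence bounded in $L^{2,w}_k$. To upgrade this boundedness to strong $L^{2,w}_k$-convergence, I would run the bootstrapping argument of Steps 2--5 of the proof of Theorem \ref{coerc} once more, but now with the \emph{exact equation} $SW(A_0+v_n,\phi_n)=(\psi,\chi)$ in place of a mere bound: the key point is that $SW(\gamma_i)-SW(\gamma_j)=0$ for all $i,j$, so in every inequality of the form \eqref{ineqcoerciv4}, \eqref{desigualdade5}, \eqref{ineqcoerciv6} the term $\|SW(\gamma_i)-SW(\gamma_j)\|$ drops out entirely. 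Combined with the strong $L^{3,\lambda}_1$-convergence already available from Step 1 and the regularity $(\psi,\chi)\in L^{2,w+\varepsilon}_{k+1}$ (which gives, via the Fredholm estimate for $D=(d^+,\sD_{A_0})$ and elliptic regularity, that the fiber actually sits in $L^{2,w}_{k+1}$, hence the sequence is bounded there too), the decomposition \eqref{Fredholm} and the Sobolev multiplication estimates force $(\gamma_i)_i$ to be Cauchy in $L^{2,w}_k$. Thus a subsequence converges strongly in $L^{2,w}_k$ to some $(v_\infty,\phi_\infty)$.

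It then remains to check that the limit lies in $\mathcal{F}$, i.e.\ that $\mathcal{F}$ is closed. The space $d^*(L^{2,w}_{k+1}(i\Lambda^2_+))$ is a closed subspace of $L^{2,w}_k(i\Lambda^1)$ by Corollary \ref{Hodge}, so $v_\infty$ stays in it. And $SW$ is continuous on $[A_0+L^{2,w}_k(i\Lambda^1)]\times L^{2,w}_k(\Sigma^+)$ into $L^{2,w}_{k-1}$ — this follows from the polynomial structure $SW=C_{A_0}+D(v,\phi)+(v,\phi)\sharp(v,\phi)$ together with the Sobolev multiplication $L^{2,w}_k\times L^{2,w}_k\to L^{2,w}_{k-1}$ valid for $k\geq 2$ on a manifold with periodic ends (bounded geometry), exactly as in Proposition \ref{sw} — so $SW(A_0+v_\infty,\phi_\infty)=\lim_n SW(A_0+v_n,\phi_n)=(\psi,\chi)$. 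Hence $(v_\infty,\phi_\infty)\in\mathcal{F}$, and $\mathcal{F}$ is both closed and sequentially compact, therefore compact.

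The main obstacle I expect is not the closedness (routine continuity of $SW$) but the promotion of the $L^{2,w}_k$-\emph{bound} from part (1) of Theorem \ref{coerc} to genuine $L^{2,w}_k$-\emph{convergence} of a subsequence: part (2) of the theorem only yields strong convergence in $L^{2,\lambda}_{k-1}$ for $\lambda<w$, which is one derivative and a worse weight short of what is claimed here. Closing this gap is exactly where one must exploit that the right-hand side $(\psi,\chi)$ is \emph{fixed} and lies in the better space $L^{2,w+\varepsilon}_{k+1}$: the vanishing of $SW(\gamma_i)-SW(\gamma_j)$ removes the only term in the Step 2--5 estimates that was merely bounded rather than small, and the extra weight $\varepsilon$ supplies the compact inclusion $L^{2,w+\varepsilon}_{\bullet}\hookrightarrow L^{2,w}_{\bullet-1}$ needed to pass from the $L^{2,\lambda}$-Cauchy property back up to $L^{2,w}_k$. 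Making this last bootstrap rigorous — in particular keeping the weights straight at each stage — is the delicate point.
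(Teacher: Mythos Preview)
The paper states this corollary without proof, so there is nothing to compare against; your task was effectively to supply the omitted argument, and your overall strategy is the right one: closedness of the fiber (via continuity of $SW$ and closedness of $d^*(L^{2,w}_{k+1}(i\Lambda^2_+))$, Corollary~\ref{Hodge}) plus sequential compactness in $L^{2,w}_k$ obtained by rerunning the Step~2/Step~4 estimates with the crucial simplification $SW(\gamma_i)-SW(\gamma_j)=0$.

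Two clarifications will sharpen the write-up. First, the parenthetical about elliptic regularity placing the fiber in $L^{2,w}_{k+1}$ is an unnecessary detour: even if true, boundedness in $L^{2,w}_{k+1}$ does \emph{not} give compactness in $L^{2,w}_k$ (same weight), and the bootstrap $\gamma\sharp\gamma\in L^{2,w}_{k}$ is delicate at the borderline $k=2$. You do not need this. Second, and relatedly, the extra hypothesis $(\psi,\chi)\in L^{2,w+\varepsilon}_{k+1}$ plays no role in your argument: once Theorem~\ref{coerc}(2) (together with Step~1 for $k=2$) hands you a subsequence Cauchy in $L^{3,\lambda}_1$ (case $k=2$) or in $L^{2,\lambda}_{k-1}$ (case $k\geq 3$), the Step~2 respectively Step~4 inequality with the $SW$--difference term set to zero reads
\[
\tfrac{1}{2}\|\gamma_i-\gamma_j\|_{L^{2,w}_k}\ \leq\ C\big(\|\gamma_{i_\epsilon}\|_{L^{2,w}_{k-1}}+1\big)\,\|\gamma_i-\gamma_j\|_{*}\,,
\]
where $\|\cdot\|_*$ is the $L^{3,\lambda}_1$ or $L^{2,\lambda}_{k-1}$ norm already controlled. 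This is Cauchy in $L^{2,w}_k$ outright; no compact inclusion of the target and no weight $\varepsilon$ is invoked. So the ``delicate point'' you flag in your last paragraph in fact evaporates once you observe that the only term in the Step~2--5 estimates that was merely bounded (rather than small) is exactly the one that vanishes here.
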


In the next section we will see that (adding to admissibility the technical condition $H_1(W_e,\Z)$ is torsion for any end $e$) the Seiberg-Witten map induces a map between Hilbert bundles over the torus $B:=iH^1(X,\R)/H^1(X,2\pi i \Z)$. Theorem \ref{coerc} gives the fiberwise coercivity of this map. The  global coercivity is given by the following simple generalization of Theorem \ref{coerc}, which can be proved following step by step the same  arguments:

\begin{thh}\label{coerc-new}
Let $(X,g,\tau,A_0)$ be an admissible 4-tuple.
Suppose that $w>0$ and $k\geq 2$.
Fix a compact set $\Pi\subset\mathbb{H}^1_w$

Let $(v_{n},h_n,\varphi_{n})_{n\in\mathbb{N}}$ be a sequence in  
$$\textrm{Im }[d^{*}:L^{2,w}_{k+1}(\Lambda^{2}_{+})\ra L^{2,w}_{k}(\Lambda^{1})]\ti\Pi\ti L^{2,w}_{k}(\Sigma^{+})$$
such that there exists $C>0$ with
\begin{equation}\label{bound}
\begin{aligned}
\|SW(v_{n}+h_n,\varphi_{n})\|_{L^{2,w}_{k-1}}\leq C \ \forall n\in \mathbb{N}\ .
\end{aligned}
\end{equation}

\begin{enumerate}
\item   There exists a subsequence of $ (v_{n},\varphi_{n})_{n\in\mathbb{N}}$  which is bounded in $L^{2,w}_{k}$,
\item    For any $\lambda\in (0, w)$ there exists a subsequence of $ (v_{n},\varphi_{n})_{n\in\mathbb{N}}$  which is strongly convergent  in $L^{2,\lambda}_{k-1}$. \end{enumerate}
\end{thh}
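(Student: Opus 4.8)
The plan is to repeat the proof of Theorem \ref{coerc} almost verbatim, the only genuinely new ingredient being the behaviour of the harmonic components $h_n$. Since $\mathbb{H}^1_w$ is finite dimensional (it is identified with $H^1(X,i\R)$ in Theorem \ref{HodgeDec1} and Proposition \ref{H1new}), all the weighted Sobolev norms restrict to equivalent norms on it; in particular $\Pi$ is bounded in every $L^{p,\mu}_j$, its elements are smooth (elliptic regularity for $(d^*,d)$), and, passing to a subsequence, I may assume $h_n\to h_\infty$ in $L^{2,w}_k$, hence in every weaker norm. The key point is that every $h\in\mathbb{H}^1_w$ is \emph{closed}, so that $F_{A_0+v_n+h_n}=F_{A_0}+dv_n$: the curvature, and therefore the entire self-dual part of $SW$ and of the energies, is unaffected by $h_n$. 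In particular $A_0+v_n+h_n$ is still ASD outside the fixed compact set, the pointwise expansion of $|F^+_{A_0+v+h}|^2$ in terms of $dv$ and $F_{A_0}^+$ is literally the same as for $v$ alone, and so the energy identity (Corollary \ref{energyid}) and Propositions \ref{extension}, \ref{sw} apply to these pairs without modification.

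Next I would rerun the weak-convergence Lemma \ref{wlemma}. The bounds on $\|dv_n\|_{L^2}$, on $\beta_n$ with $d^*\beta_n=v_n$, and on $\int_{X^{\leq 0}}(|\varphi_n|^2+s_g/2)^2$ and $\int_{X^{\geq 0}}|\varphi_n|^4$ only involve $dv_n$ and $F_{A_0}^+$, hence are unchanged. The sole difference is in the $L^2_1$-bound on $\varphi_n$: now $\sD_{\hat A_0}\varphi_n=\sD_{\hat A_n}\varphi_n-\tfrac12\Gamma(v_n+h_n)\varphi_n$, and the extra term is controlled by $\|h_n\|_{L^4}\|\varphi_n\|_{L^4}\le C_\Pi\|\varphi_n\|_{L^4}$, which is bounded; Fredholmness of $\sD_{A_0}$ then gives the bound as before. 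The $L^1_{loc}$-convergence lemma and the strong $L^2_1$-convergence Lemma \ref{slemma} carry over the same way, via the decomposition $SW(v_n+h_n,\varphi_n)=SW(v_n,\varphi_n)+(\tfrac12\Gamma(h_n)\varphi_n,0)$: the only new term is bilinear in $(h_n,\varphi_n)$, hence weakly continuous and strongly convergent on the relevant Sobolev spaces because $h_n$ converges. In the energy, $\nabla_{\hat A_n}\varphi_n$ differs from $\nabla_{\widehat{A_0+h_\infty}}\varphi_n$ by $\tfrac12\Gamma(h_n-h_\infty)\varphi_n\to 0$, so the weak $L^2$-limit of $\nabla_{\hat A_n}\varphi_n$ is still $\nabla_{\hat A_\infty}\varphi_\infty$, and the Radon--Riesz argument applied to $dv_n$, $\nabla_{\hat A_n}\varphi_n$, $\varphi_n|_{X^{>0}}$ and $(|\varphi_n|^2+s_g/2)|_{X^{\leq 0}}$ is unchanged.

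For the bootstrapping Steps 1--5 I would keep the Fredholm operator $D=(d^+,\sD_{A_0})$ and absorb $\tfrac12\Gamma(h)\varphi$ into the nonlinear remainder. Writing $\gamma_i=(v_i,h_i,\varphi_i)$, the new contribution to $SW(\gamma_i)-SW(\gamma_j)$ is $\tfrac12\Gamma(h_i)(\varphi_i-\varphi_j)+\tfrac12\Gamma(h_i-h_j)\varphi_j$; using the very Sobolev multiplications already invoked in Steps 1--5 together with the uniform bound $\|h_i\|_{L^{2,w}_k}\le C_\Pi$ and the convergence $h_i\to h_\infty$, both pieces are estimated by $C_\Pi\|\varphi_i-\varphi_j\|_{(\cdot)}+C\|h_i-h_j\|_{(\cdot)}\|\varphi_j\|_{(\cdot)}$, which is exactly the shape the ``$\epsilon$ plus lower-order term'' scheme behind (\ref{ineqcoerciv4}), (\ref{desigualdade5}), (\ref{ineqcoerciv6}) tolerates. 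Hence each of Steps 1--5 yields a Cauchy (resp.\ bounded) sequence as before, and the two conclusions follow exactly as in the proof of Theorem \ref{coerc}.

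I do not expect a real obstacle here: everything works because $h_n$ is closed (so it disappears from the curvature and from the energy identity) and lives in a fixed compact subset of a finite-dimensional space (so it is uniformly controlled in all norms and subconvergent). The only spot requiring a little care is the energy/Radon--Riesz step, where one must check that the harmonic corrections to $\nabla_{\hat A_n}\varphi_n$ are negligible in the limit; this is immediate from $h_n\to h_\infty$ and continuity of the relevant Clifford-multiplication maps on the Sobolev spaces in play.
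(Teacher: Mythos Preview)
Your proposal is correct and follows exactly the approach the paper intends: the paper itself offers no detailed proof of Theorem~\ref{coerc-new}, merely stating that it ``can be proved following step by step the same arguments'' as Theorem~\ref{coerc}. You have correctly identified the two facts that make this routine---that $h_n\in\mathbb{H}^1_w$ is closed (so $dh_n=0$ and the curvature/energy computations are literally unchanged) and that $\Pi$ sits in a finite-dimensional space (so $h_n$ is uniformly bounded in all norms and subconvergent)---and your tracking of where the extra $\tfrac12\Gamma(h_n)\varphi_n$ terms enter is accurate.
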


The compactness of the Seiberg-Witten moduli space follows from

\begin{coro}
Suppose we are in the conditions of Theorem \ref{coerc-new}. 

If $(\psi,\chi)\in L^{2,w+\varepsilon}_{k+1}(\Sigma^-)\times L^{2,w+\varepsilon}_{k+1}(\Herm_0(\Sigma^+))$, then  the fiber
$$\big\{(v,h,\phi)\in d^*(L^{2,w}_{k+1}(i\Lambda^2_+))\times \Pi \times  L^{2,w}_k(\Sigma^+)|\ SW(A_0+v, \phi)=(\psi,\chi)\big\}
$$
is compact in $L^{2,w}_k$.
\end{coro}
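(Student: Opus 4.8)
The plan is to deduce this corollary directly from Theorem~\ref{coerc-new} together with the energy identity and a standard closedness argument, exactly paralleling the corollary stated right after Theorem~\ref{coerc}. First I would observe that any $(v,h,\phi)$ in the fiber satisfies $SW(A_0+v,\phi)=(\psi,\chi)$, and since $(\psi,\chi)\in L^{2,w+\varepsilon}_{k+1}\subset L^{2,w}_{k-1}$ we have a uniform bound $\|SW(A_0+v,\phi)\|_{L^{2,w}_{k-1}}\leq C$ on the whole fiber. Given a sequence $(v_n,h_n,\phi_n)_n$ in the fiber, Theorem~\ref{coerc-new}(1) produces a subsequence bounded in $L^{2,w}_k$, and since $\Pi$ is compact we may also assume $h_n\to h_\infty$ in $\Pi\subset\mathbb{H}^1_w$.

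Next I would upgrade convergence to the full $L^{2,w}_k$ norm. Theorem~\ref{coerc-new}(2) gives, for each $\lambda\in(0,w)$, a subsequence converging strongly in $L^{2,\lambda}_{k-1}$; call the limit $(v_\infty,h_\infty,\phi_\infty)$. The hypothesis $(\psi,\chi)\in L^{2,w+\varepsilon}_{k+1}$ is exactly what lets one bootstrap: the relation $SW(A_0+v_n,\phi_n)=(\psi,\chi)$, the Fredholm decomposition $SW=C_{A_0}+D(\cdot)+(\cdot)\sharp(\cdot)$ used throughout Section~9, and the multiplication estimates show that $(v_n,h_n,\phi_n)$ is in fact bounded in $L^{2,w+\varepsilon'}_{k+1}$ for some $\varepsilon'\in(0,\varepsilon)$ (one feeds the $L^{2,w}_k$ bound and the $L^{2,w+\varepsilon}_{k+1}$ regularity of the right-hand side through $D$, whose Fredholmness for small weights gives the elliptic estimate). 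Then the compact embedding $L^{2,w+\varepsilon'}_{k+1}\hookrightarrow L^{2,w}_k$ (compactness of weighted Sobolev inclusions when both the weight and the order drop, as already used in Step~5) produces a subsequence converging strongly in $L^{2,w}_k$. Since the $L^{2,\lambda}_{k-1}$-limit and the $L^{2,w}_k$-limit must agree as distributions, the limit lies in $L^{2,w}_k$.

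Finally I would check the limit lies in the fiber. Strong $L^{2,w}_k$ convergence implies $A_0+v_n\to A_0+v_\infty$ and $\phi_n\to\phi_\infty$, hence $v_\infty\in\overline{d^*(L^{2,w}_{k+1}(i\Lambda^2_+))}=d^*(L^{2,w}_{k+1}(i\Lambda^2_+))$ since $d^*$ has closed image (Corollary~\ref{isomor}), $h_\infty\in\Pi$, and by continuity of $SW$ in the $L^{2,w}_k$ topology (Propositions~\ref{extension},~\ref{sw}) one gets $SW(A_0+v_\infty+h_\infty,\phi_\infty)=\lim SW(A_0+v_n+h_n,\phi_n)=(\psi,\chi)$. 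Thus every sequence in the fiber has a subsequence converging in the fiber, i.e. the fiber is sequentially compact, hence compact. The main obstacle I expect is the bootstrapping step: one must verify carefully that the $L^{2,w+\varepsilon}_{k+1}$-regularity of $(\psi,\chi)$ genuinely transfers, through the quadratic term $(v,\phi)\sharp(v,\phi)$ and the weighted multiplication theorems, to a uniform gain of both regularity and weight for $(v_n,h_n,\phi_n)$ — this is where the precise Sobolev multiplication estimates on manifolds with periodic ends must be invoked, in the spirit of Steps 4 and 5 of the proof of Theorem~\ref{coerc}.
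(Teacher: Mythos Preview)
The paper states this corollary without proof, treating it as a direct consequence of Theorem~\ref{coerc-new}; your outline is precisely the natural way to fill in the omitted details, and it is correct in spirit. In particular you have correctly isolated the only nontrivial point: Theorem~\ref{coerc-new} by itself yields boundedness in $L^{2,w}_k$ and strong convergence only in $L^{2,\lambda}_{k-1}$ for $\lambda<w$, so the extra regularity and weight in the hypothesis $(\psi,\chi)\in L^{2,w+\varepsilon}_{k+1}$ must be fed back through the decomposition $D\gamma = SW(\gamma)-C_{A_0}-\gamma\sharp\gamma$ and the Fredholm estimate for $D$ at a slightly larger weight to gain the missing decay, after which the compact embedding $L^{2,w+\varepsilon'}_{k+1}\hookrightarrow L^{2,w}_k$ (Theorem~\ref{rellichk}) finishes the argument.

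One small correction: the continuity of $SW$ you need at the end is not quite Propositions~\ref{extension} or~\ref{sw} (those are $L^2_1$ statements) but the continuity of $SW:L^{2,w}_k\to L^{2,w}_{k-1}$ for $k\ge 2$, which follows from the same Sobolev multiplication estimates (Proposition~\ref{multi}) used in Steps~2--5. Also, the weight-gain regularity step you flag as the ``main obstacle'' does require knowing that $D$ remains Fredholm at weight $w+\varepsilon'$ (which holds for $\varepsilon'$ small by the openness of the Fredholm locus, cf.\ the argument in Corollary~\ref{h+=h-}) together with a weighted elliptic regularity statement for end-periodic operators; this is standard once the Fredholm property is in hand, but it is indeed the place where one must be careful, and you are right to single it out.
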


\chapter{Cohomotopy invariants}

\section{Cohomotopy invariants}

 We will follow the formalism introduced in \cite{OT}  (sections 3.1 and 3.3).
 Let $\E$ and $\F$ be a pair of Hermitian Hilbert vector bundles over B a compact Hausdorff space and let $\V$ and $\W$ be a pair of real Hilbert vector spaces. Note that there is a natural $S^1$-action on $\E\ti\V$ and $\F\ti\W$.

Suppose $\mu:\E\ti\V\ra \F\ti\W$ is an $S^{1}$-equivariant map over B such that
\begin{enumerate}
\item
$\mu$ is fiberwise differentiable at the origin of each fiber with its fiberwise differential continuous on $\E\ti\V$,
\item
the fiberwise differentials at the origin
\begin{align*}
D_{b}=d_{0_{b}}\mu_{b}:\E_{b}\ti\V\ra\F_{y}\ti\W
\end{align*}
are Fredholm, and that the linear operator $D_{b}$ has the form $D_{b}=(\Xi_{b},L_{b})$, where $\Xi_{b}:\E_{b}\ra\F_{b}$ 
and $L_{b}:\V\ra\W$, are the Fredholm maps defined by the derivatives of the restrictions $\mu_{|_{\E_{b}\ti {0^{\V}}}}$ and $\mu_{{|}_{0^{\E}_{b}\ti\V}}$. 
\end{enumerate}
Note that $\Xi$ defines a family of complex operators over B, and, therefore, determines a class $\mathrm{ ind }(\Xi)\in K(B)$.

\begin{de} 
An $S^1$-equivariant map $\mu:\E\ti \V\ra \F\ti \W$ over $B$, verifiying conditions 1) and 2) above, is said to be 
\begin{enumerate}
\item coercive, if for every $c>0$, there exists $C_c>0$, such that any $(e,v)\in\E\ti\V$ with $\|\mu(e,v)\|\leq c $ has to verifiy $\|(e,v)\|\leq C_c$,
\item admissible if, moreover, the following holds:
\begin{enumerate}
\item
There is an orthogonal decomposition $\W=H\oplus \W_{0}$, where H is finite dimensional subspace and   $\mu(0_{b}^{\E},v)=\kappa(b)+L(v)$. Here $L:\V\ra \W_{0}$ induces a linear isometry and $\kappa:B\ra H$ is nowhere vanishing map
\item
The above map $\kappa:B\ra H\subseteq \W$ is nowhere vanishing on B. (Existing then $\epsilon$ such that $ \|\kappa(b)\|=\|\pi_{H}\mu(0^{E}_{b},v)\|\geq 0. $)
\item
$\mu-D$ is compact i.e. for every $R>0$ the image 
$$ \mathrm{Im}[\mu-D: \mathrm{Disk}_{R}(\E\ti\V)\ra \F\ti\W] $$
 is relatively compact in the total space $\F\ti\W$. 
\end{enumerate}
\end{enumerate}
\end{de}

In   \cite{OT} the authors construct  a graded cohomotopy group $\alpha^*(\mathrm{ ind }(\Xi))$ and, in the presence of a admissible map $\mu$, and invariant
$$\{\mu\}\in \alpha^{\dim(H)-1}(\mathrm{ ind }(\Xi)) .
$$

\section{The Seiberg-Witten cohomotopy map}

Let $(X,g,\tau,A_0)$ be an admissible 4-tuple such that $H_{1}(W_e,\Z)$ is torsion for every end $e\in\Eg$. 

Following the ideas explained in section \ref{intro2} put
$$\A_0:=A_0+\mathbb{H}^1_w\ .
$$

In section \ref{gaugegroup}, we introduced the group

$$G:=\{\theta\in\mathscr{G}_{w,k+1}|\ \theta^{-1} d\theta\in \mathbb{H}^1_w\},$$
which fits the short exact sequence
$$
0\ra S^1\ra G\ra H^{1}(X,2\pi i\Z)\ra 0.
$$

Fix a point $x_0\in X$ and let $G_{x_0}\simeq H^{1}(X,2\pi i\Z) $ be the kernel of the evaluation map 
$$\mathrm{ev}_{x_0}:G\ra S^1.$$ 

The group $G_{x_0}$ acts freely on $\A_0$ and the quotient $\A_{0}/G_{x_0}$ can be identified with the quotient $\mathbb{H}^1_w/G_{x_0}$.
Using Proposition \ref{kerp} (4) we see that this quotient can be identified with $H^{1}(X,i\R)/H^{1}(X,2\pi i\Z)$. By Corollary \ref{tensor}, the group $H^{1}(X,2\pi i\Z)$ is a lattice in  the finitely dimensional vector space $ H^{1}(X,i\R)$. Note that, although very natural, this statement is not at all obvious. Therefore the free quotient
$$B:=\A_0/G_0
$$
is a torus of dimension $\dim(H^{1}(X,i\R))$.  We put now as in section \ref{intro2} 
\begin{align*}
\mathscr{E}&=\A_0 \ti_{G_{x_0}}L^{2,w}_{k}(\Sigma^{+})&\V&=\textrm{Im }[d^{*}:L^{2,w}_{k+1}(\Lambda^{2}_{+})\ra L^{2,w}_{k}(\Lambda^1)]\\
\mathscr{F}&=\A_0 \ti_{G_{x_0}}L^{2,w}_{k-1}(\Sigma^{-})&\W&=L^{2,w}_{k}(i\Lambda^{2}_{+})\ ,
\end{align*}
and we see that $\E$, $\F$ are Hilbert bundles over the torus $B$ and that the    Seiberg-Witten map $SW$ induces an $S^1$-equivariant bundle map

\begin{equation}\label{SWoverT}
\begin{array}{c}
\unitlength=1mm
\begin{picture}(20,12)(-5,-4)
\put(-6,4){$\V\times \E$}
\put(5,5){\vector(2,0){10}}
\put(16,4){$ \W\times \F$}
\put(7.5,6.5){$\mu_{SW}$}
\put(2,2){\vector(2, -3){5}}
\put(9,-8){$B$}
\put(18,2){\vector(-2, -3){5}}
\end{picture} 
\end{array} 
\end{equation}
\vspace{2mm}
over the torus $B$. 

Our main result Theorem \ref{coerc-new} shows that 
\begin{thh}
Let $(X,g,\tau,A_0)$ be an admissible 4-tuple, such that $H_{1}(W_e,\Z)$ is torsion for every end $e\in\Eg$. Then the Seiberg-Witten $\mu_{SW}$ over the torus $B$ is coercive.
\end{thh}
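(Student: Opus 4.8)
The plan is to reduce the global coercivity of $\mu_{SW}$ to the fiberwise coercivity already established in Theorem \ref{coerc-new}, exploiting that the base $B$ is compact. First I would fix a compact fundamental domain $\Pi\subset\mathbb{H}^1_w$ for the action of the lattice $G_{x_0}\simeq H^1(X,2\pi i\Z)$ on $\mathbb{H}^1_w$; this makes sense precisely because, by Corollary \ref{tensor}, $H^1(X,i\R)$ is finite dimensional and $G_{x_0}$ is a lattice in it, so $B=\mathbb{H}^1_w/G_{x_0}$ is an honest torus. Every element of $\V\ti\E$ lying over a point of $B$ then admits a representative $(v,h,\phi)$ with $v\in\V$, $h\in\Pi$, $\phi\in L^{2,w}_k(\Sigma^+)$, and in this description $\mu_{SW}$ sends $(v,h,\phi)$ to a point represented by $SW(A_0+v+h,\phi)$.

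Next I would record the routine but necessary norm comparison. Since the gauge action is compatible with the connections (for $\theta\in G_{x_0}$ one has $\nabla_{A+\theta^{-1}d\theta}(\theta\phi)=\theta\nabla_A\phi$ pointwise), the natural Hilbert-bundle metric on $\E$ over $[A]$ is the weighted Sobolev norm computed with the connection $A=A_0+h$ itself, and likewise for $\F$. Because $\Pi$ is compact, the connections $A_0+h$, $h\in\Pi$, form a compact family differing from the fixed connection $A_0$ by a uniformly bounded end-periodic term with uniformly bounded derivatives, so the weighted Sobolev norms built from $A_0+h$ are uniformly equivalent (with constants depending only on $\Pi$) to those built from $A_0$. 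Consequently there are constants $0<c_1\le c_2$ such that over every $b\in B$ one has
$$c_1\big(\|h\|+\|\phi\|_{L^{2,w}_k}+\|v\|_{L^{2,w}_k}\big)\le \|(e,v)\|\le c_2\big(\|h\|+\|\phi\|_{L^{2,w}_k}+\|v\|_{L^{2,w}_k}\big),$$
and $\|\mu_{SW}(e,v)\|$ is uniformly comparable to $\|SW(A_0+v+h,\phi)\|_{L^{2,w}_{k-1}}$; in particular $\|h\|$ is bounded by a constant depending only on $\Pi$.

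With this in hand I would argue as follows. It suffices to show that for every $c>0$ the sublevel set $S_c:=\{(e,v)\in\V\ti\E\mid\|\mu_{SW}(e,v)\|\le c\}$ is bounded, and for that it is enough that every sequence in $S_c$ have a bounded subsequence (otherwise one could extract from $S_c$ a sequence with norms tending to infinity). Given such a sequence $(e_n,v_n)_n$, I would pick lifts $(v_n,h_n,\phi_n)$ with $h_n\in\Pi$; the comparison of the previous step gives $\|SW(A_0+v_n+h_n,\phi_n)\|_{L^{2,w}_{k-1}}\le C$ for a constant $C=C(c,\Pi)$. Theorem \ref{coerc-new}, applied with the compact set $\Pi$, then produces a subsequence along which $(v_n,\phi_n)$ is bounded in $L^{2,w}_k$; since $h_n\in\Pi$ is automatically bounded, the norm comparison yields that $(e_n,v_n)$ is bounded along that subsequence. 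Note that only part (1) of Theorem \ref{coerc-new} (the $L^{2,w}_k$-boundedness) is needed here; the convergence statement (2) is not used for coercivity.

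The only point requiring genuine care is the compatibility between the abstract Hilbert-bundle norm on $\V\ti\E\to B$ and the concrete weighted Sobolev norms appearing in the fiberwise statement, i.e. the uniform equivalence over the compact family $\Pi$ recorded above; this in turn rests on the fact that a compact family of smooth end-periodic connections differs from a fixed one by a term that is bounded, together with all its derivatives, uniformly on $X$. Granting this, the proof is a straightforward globalization of Theorem \ref{coerc-new} and introduces no new analytic difficulty. I do not expect any serious obstacle beyond this bookkeeping, since the hard analysis — the energy-identity estimates, the weak/strong convergence steps, and the bootstrapping — has already been carried out in the proof of Theorem \ref{coerc} and its extension Theorem \ref{coerc-new}.
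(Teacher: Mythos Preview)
Your proposal is correct and follows exactly the paper's approach: the paper's entire proof is the single sentence ``Our main result Theorem \ref{coerc-new} shows that'', and you have simply spelled out what this invocation means---choosing a compact fundamental domain $\Pi\subset\mathbb{H}^1_w$ for the lattice $G_{x_0}$, lifting points of $\V\times\E$ to representatives $(v,h,\phi)$ with $h\in\Pi$, and applying part (1) of Theorem \ref{coerc-new}. Your additional remarks on the uniform norm comparison over the compact family $\{A_0+h:h\in\Pi\}$ are correct bookkeeping that the paper leaves implicit.
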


\begin{re}
The same property holds for the perturbed Seiberg-Witten map $\mu_{SW_\eta}$. Moreover, assuming $\dim (\mathbb{H}^{+}_{2})>0$, then $\mu_{SW_\eta}$ is admissible for a generic perturbation $\eta$. Therefore a cohomotopy invariant $\{\mu_{SW_\eta}\}$ can be defined.  
\end{re} 

Note however that a priori this invariant might depend on $\eta$, $A_0$, $g$ and even on the fixed periodic end structure on $X$. The dependence of $\{\mu_{SW_\eta}\}$ in these parameters will be studied in a future work.

\appendix

\chapter{}

\begin{lemma}
Suppose $(X,g)$ is an oriented 4 manifold with bounded geometry. Then the composition $I:L^{2}_{1}(\Lambda^{1})\ti L^{2}_{1}(\Lambda^{2})\ra L^{1}(\Lambda^{4})\ra\R$ given by
\begin{align*}
I(u,v)=\int_{X} du\wedge v-u\wedge dv
\end{align*}
is identically zero.
\end{lemma}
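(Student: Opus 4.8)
The plan is to reduce the statement to an ordinary Stokes/integration-by-parts identity and then deal with the non-compactness by an exhaustion argument. First I would observe that for compactly supported smooth forms $u \in \mathscr{C}_c^\infty(\Lambda^1)$, $v \in \mathscr{C}_c^\infty(\Lambda^2)$, one has the pointwise identity $d(u \wedge v) = du \wedge v - u \wedge dv$ (the sign coming from $\deg u = 1$), hence by Stokes' theorem on the manifold without boundary $X$,
\begin{align*}
I(u,v) = \int_X du \wedge v - u \wedge dv = \int_X d(u\wedge v) = 0.
\end{align*}
So $I$ vanishes identically on $\mathscr{C}_c^\infty(\Lambda^1) \times \mathscr{C}_c^\infty(\Lambda^2)$.

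**Passing to the Sobolev completion.** The second step is to check that $I$ is continuous as a bilinear map $L^2_1(\Lambda^1) \times L^2_1(\Lambda^2) \to \R$. Each term is of the form $\int_X \alpha \wedge \beta$ with $\alpha, \beta$ forms whose pointwise norms are controlled by $|u|, |du|$ and $|v|, |dv|$; by Cauchy--Schwarz $|\int_X \alpha \wedge \beta| \leq \|\alpha\|_{L^2}\|\beta\|_{L^2}$, so $|I(u,v)| \leq C(\|u\|_{L^2_1}\|v\|_{L^2_1})$. Since $(X,g)$ has bounded geometry, the Aubin--Sobolev density theorem quoted in the text gives that $\mathscr{C}_c^\infty$ is dense in $L^2_1$ for both bundles. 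A bilinear continuous map vanishing on a dense subset of a product of Banach spaces vanishes identically, so $I \equiv 0$ on $L^2_1(\Lambda^1) \times L^2_1(\Lambda^2)$.

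**Main obstacle.** The one point requiring care — and the only genuine content beyond bookkeeping — is the density of compactly supported forms in $L^2_1$ on the non-compact manifold $X$; this is exactly why the hypothesis of bounded geometry is imposed, and it is supplied by the Aubin--Sobolev theorem already stated in the excerpt. Once density is in hand, the argument is purely formal: Stokes on compactly supported forms plus continuity plus a limiting argument. I would also double-check the sign convention ($d(u\wedge v) = du\wedge v - u\wedge dv$ for a $1$-form $u$) so that the integrand in the definition of $I$ is precisely an exact $4$-form; this is the reason the two terms are combined with a minus sign rather than a plus.
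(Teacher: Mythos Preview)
Your proposal is correct and follows precisely the paper's own argument: vanish on compactly supported smooth forms via Stokes, then extend by the continuity of the bilinear form $I$ together with the density of $\mathscr{C}^\infty_c$ in $L^2_1$ guaranteed by bounded geometry. Your write-up is in fact more detailed than the paper's one-line proof, but the method is identical.
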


\begin{proof}
The result is trivial for smooth forms with compact support, hence it follows for general forms by continuous bilinearity of $I$ together with the density of the space of smooth forms with compact support.
\end{proof}

\begin{lemma}\label{kerd}
Suppose $(X,g)$ is an oriented 4-manifold with bounded geometry. Then $d:L^{2}_2(\Lambda^{1})\ra L^{2}_1(i\Lambda^{2})$ and $d^{+}=\pi_{\Lambda^{2}_{+}}\circ d$.  
Then $\ker d^{+}=\ker d$.
\end{lemma}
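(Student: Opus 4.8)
The inclusion $\ker d\subseteq\ker d^{+}$ is immediate from $d^{+}=\pi_{\Lambda^{2}_{+}}\circ d$, so the real content is the reverse inclusion. The plan is: given $\alpha\in L^{2}_{2}(\Lambda^{1})$ with $d^{+}\alpha=0$, to show $\int_{X}d\alpha\wedge d\alpha=0$ by invoking the preceding integration-by-parts Lemma, and then to deduce $d\alpha=0$ from the pointwise algebra of anti-self-dual $2$-forms.

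First I would observe that $d^{+}\alpha=0$ says precisely that $\beta:=d\alpha$ is anti-self-dual, $\ast\beta=-\beta$ (a.e.), and that $\beta\in L^{2}_{1}(\Lambda^{2})$ (since $\alpha\in L^{2}_{2}$) with $d\beta=d^{2}\alpha=0$ as a distribution. Then I apply the previous Lemma, $I(u,v)=\int_{X}du\wedge v-u\wedge dv\equiv 0$, with $u=\alpha\in L^{2}_{1}(\Lambda^{1})$ and $v=\beta=d\alpha\in L^{2}_{1}(\Lambda^{2})$; the integrability hypotheses are met because $du\wedge v=d\alpha\wedge d\alpha\in L^{1}$ by Cauchy--Schwarz (as $d\alpha\in L^{2}$) and $u\wedge dv=0$. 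This yields $\int_{X}d\alpha\wedge d\alpha=0$.

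Finally, for any $2$-form $\beta$ on the oriented Riemannian $4$-manifold one has the pointwise identity $\beta\wedge\beta=(\beta,\ast\beta)\,\mathrm{vol}_{g}$, which for anti-self-dual $\beta$ reduces to $\beta\wedge\beta=-|\beta|^{2}\,\mathrm{vol}_{g}$. Hence $\int_{X}|d\alpha|^{2}\,\mathrm{vol}_{g}=-\int_{X}d\alpha\wedge d\alpha=0$, so $d\alpha=0$, i.e. $\alpha\in\ker d$. (If one works with imaginary-valued forms, the same computation gives $\int_{X}|d\alpha|^{2}\,\mathrm{vol}_{g}=\int_{X}d\alpha\wedge d\alpha=0$, with the same conclusion.) I expect no genuine obstacle here; the only point requiring a little care is checking that the hypotheses of the integration-by-parts Lemma are satisfied for $v=d\alpha$ --- namely that $d\alpha$ lies in $L^{2}_{1}$ and is closed as a distribution --- and noting that the bounded-geometry assumption on $(X,g)$ is exactly what underlies that Lemma, via the density of compactly supported smooth forms.
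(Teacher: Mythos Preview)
Your proof is correct and follows the same approach as the paper: apply the preceding integration-by-parts lemma with $u=\alpha$ and $v=d\alpha$ to obtain $\int_X d\alpha\wedge d\alpha=0$, then use the (anti-)self-duality of $d\alpha$ to conclude $\|d\alpha\|_{L^2}=0$. Your version is in fact more carefully written, correctly identifying $d\alpha$ as anti-self-dual and tracking the sign in both the real and imaginary conventions.
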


\begin{proof}
Let $\alpha\in L^{2}_2(i\Lambda^{1})$ be such that $d^{+}\alpha=0$. Then $\beta=d\alpha\in L^{2}_1(i\Lambda^{1})$ is a self dual $2$-form.
By the previous lemma we have
\begin{align*}
0&=\int_{X} d\alpha\wedge \beta-\alpha\wedge d\beta=\int_{X} \beta\wedge\beta \\
&=\int_{X} \beta\wedge \ast \beta=\|\beta\|^{2}_{L^{2}}.
\end{align*}
\end{proof}

\begin{thh}\label{inclusion}
Let $(X,g)$ be a Riemannian manifold with periodic ends of dimension $n$ . Suppose that
 \begin{enumerate}
\item $k - \bar{k}\geq  n/p -n/\bar{p}$ , 
\item$k\geq\bar{k}\geq 0$ and either
\item  $1<p\leq \bar{p}<\infty$  with $\bar{w}\leq w$ or
\item $ 1<\bar{p}<p<\infty$  with $\bar{w}<w$. 
\end{enumerate}
Then there is a continuous embedding $L^{p,w}_{k}(E)\ra L^{\bar{p},\bar{w}}_{\bar{k}}(E)$.
\end{thh}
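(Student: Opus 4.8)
The plan is to reduce the statement to two ingredients: the classical (unweighted) Sobolev embeddings on a manifold of bounded geometry, and the elementary behaviour of the weight factor $e^{w\tau}$ and its derivatives. First I would record that a Riemannian manifold with periodic ends has bounded geometry, so by the Aubin--Sobolev result quoted earlier in this chapter the \emph{unweighted} embeddings $L^p_k(E)\hookrightarrow L^{\bar p}_{\bar k}(E)$ are continuous under hypotheses (1) and (2) alone. Next, the map $\varphi\mapsto e^{w\tau}\varphi$ is, by the very definition of the weighted norms, an \emph{isometry} $L^{p,w}_k(E)\to L^p_k(E')$ up to the lower-order terms coming from differentiating $e^{w\tau}$; the point is that $\nabla^j(e^{w\tau})=e^{w\tau}\cdot P_j(d\tau,\nabla d\tau,\dots)$ where each $P_j$ is a universal polynomial in $d\tau$ and its covariant derivatives, and since $\tau$ is end-periodic all these tensors are \emph{bounded} on $X$. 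Hence $\|\varphi\|_{L^{p,w}_k}$ is equivalent to $\|e^{w\tau}\varphi\|_{L^p_k}$ with constants depending only on $w$, $k$ and the geometry. This converts the weighted embedding into the unweighted one composed with multiplication by $e^{(w-\bar w)\tau}$.

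With that reduction in hand, the heart of the argument is the multiplication estimate. Given $\varphi\in L^{p,w}_k(E)$, set $\psi:=e^{w\tau}\varphi\in L^p_k$; by the unweighted embedding $\psi\in L^{\bar p}_{\bar k}$ with $\|\psi\|_{L^{\bar p}_{\bar k}}\le C\|\psi\|_{L^p_k}\le C'\|\varphi\|_{L^{p,w}_k}$. I then need $\|\varphi\|_{L^{\bar p,\bar w}_{\bar k}}\sim\|e^{\bar w\tau}\varphi\|_{L^{\bar p}_{\bar k}}=\|e^{(\bar w-w)\tau}\psi\|_{L^{\bar p}_{\bar k}}$ to be controlled by $\|\psi\|_{L^{\bar p}_{\bar k}}$. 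In case (3), $p\le\bar p$ and $\bar w\le w$, so $\bar w-w\le 0$ and $e^{(\bar w-w)\tau}$ together with all its derivatives is bounded (again because $d\tau,\nabla d\tau,\dots$ are bounded and $\tau\ge$ some constant on each end, $\tau$ being bounded below on any given sublevel set but growing to $+\infty$; since $\bar w-w\le 0$ the factor decays or stays bounded as $\tau\to+\infty$). Multiplication by a function all of whose derivatives up to order $\bar k$ are bounded is continuous on $L^{\bar p}_{\bar k}$, giving the claim. In case (4), where $\bar p<p$ but we are allowed the \emph{strict} inequality $\bar w<w$, multiplication by the bounded function $e^{(\bar w-w)\tau}$ is not by itself enough to pass from integrability exponent $p$ down to $\bar p$ on a noncompact manifold; here I would instead use H\"older's inequality with exponent $p/\bar p>1$, writing $e^{(\bar w-w)\tau}=e^{(\bar w-w)\tau}$ and absorbing $\int e^{\bar p(\bar w-w)\tau\cdot\frac{p}{p-\bar p}}\,d\mathrm{vol}_g<\infty$, which is finite precisely because $\bar w-w<0$ is strictly negative and $e^{-c\tau}$ is integrable on $X$ (the sublevel sets $X^{\le a}$ being compact and each periodic end contributing a convergent geometric-type series once a genuinely negative exponential weight is present). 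This is where the strictness hypothesis in (4) is essential and where the bounded-geometry/periodicity of the ends is used to guarantee $e^{-c\tau}\in L^1$.

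The main obstacle, and the only place that requires real care, is the case (4) H\"older step: one must be sure that on each periodic end the function $\tau$ is genuinely proper and comparable to the ``number of the copy $W_{e,i}$'' it sits in, so that $\int_{\mathrm{End}_e(X)}e^{-c\tau}\,d\mathrm{vol}_g$ is dominated by a convergent series $\sum_i e^{-c i}\,\mathrm{vol}(W_e)$ for every $c>0$. This is exactly the content of the construction of $\tau$ in Section~\ref{periodicend} (it is built from the lift $\tau_{\widetilde Y}$ satisfying $\tau_{\widetilde Y}(T(y))=\tau_{\widetilde Y}(y)+1$), so I would cite that construction and the compactness of the sublevel sets $X^{\le a}$ to close the gap. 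The remaining bookkeeping—tracking constants through the chain of equivalences $\|\cdot\|_{L^{p,w}_k}\sim\|e^{w\tau}\cdot\|_{L^p_k}$ and verifying the polynomial formula for $\nabla^j(e^{w\tau})$—is routine and I would not spell it out in full.
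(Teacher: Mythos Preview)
Your proposal is correct and follows the standard route---the paper itself does not give an argument, it simply refers to Lockhart--McOwen \cite{LM} Lemma~7.2 and Lockhart \cite{L} 3.10, and what you have written is precisely the natural adaptation of that method to the periodic-end setting. The two key mechanisms you isolate (norm equivalence $\|\varphi\|_{L^{p,w}_k}\sim\|e^{w\tau}\varphi\|_{L^p_k}$ via boundedness of $\nabla^j\tau$, then either the bounded-geometry Sobolev embedding in case~(3) or H\"older against $e^{-c\tau}\in L^q$ in case~(4)) are exactly the ingredients those references use, and your observation that the strict inequality $\bar w<w$ is what makes $e^{-c\tau}$ integrable over a periodic end is the correct explanation of why hypothesis~(4) is stated as it is.
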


\begin{proof}
This follows along the lines of the proof of lemma 7.2 in \cite{LM}, page 435 (see also  3.10 in \cite{L}, page 14).
\end{proof}

\begin{thh}\label{rellichk}
Let $(X,g)$ be a Riemannian manifold with periodic ends of dimension $n$ . Suppose that
 \begin{enumerate}
\item $k - \bar{k}> n/p -n/\bar{p}$ , 
\item$k>\bar{k}\geq 0$ and
\item $\bar{w} < w$.
\end{enumerate}
Then the embedding $L^{p,w}_{k}(E)\ra L^{\bar{p},\bar{w}}_{\bar{k}}(E)$  is compact.
\end{thh}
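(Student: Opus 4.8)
The plan is to combine the ordinary Rellich--Kondrachov theorem on compact pieces of $X$ with the exponential decay gained from the \emph{strict} weight inequality $\bar w<w$ near infinity. Fix a sequence $(u_j)_j$ bounded in $L^{p,w}_k(E)$; we must extract a subsequence which is Cauchy — hence convergent — in $L^{\bar p,\bar w}_{\bar k}(E)$. For a regular value $a>0$ of the proper function $\tau$ of (\ref{tau}) set $\chi_a:=\chi(\tau-a)$, where $\chi:\R\to[0,1]$ is a fixed smooth function equal to $1$ on $(-\infty,0]$ and $0$ on $[1,\infty)$. Since $g$ and $\tau$ are end-periodic, all covariant derivatives $\nabla^l\tau$ are bounded on $X$, so by the chain rule $\|\nabla^l\chi_a\|_{L^\infty}$ is bounded by a constant independent of $a$; consequently multiplication by $\chi_a$ and by $1-\chi_a$ defines bounded operators on $L^{p,w}_k(E)$ with operator norm $\le C_0$ independent of $a$.

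First I would handle the compact part. The section $\chi_a u_j$ is supported in the compact manifold with boundary $X^{\le a+1}$, on which $e^{w\tau}$ is bounded above and below, so there the $L^{p,w}_k$-norm is equivalent to the unweighted $L^p_k$-norm, and likewise for $L^{\bar p,\bar w}_{\bar k}$ versus $L^{\bar p}_{\bar k}$. By hypotheses (1) (in its strict form) and (2) the inclusion $L^p_k(X^{\le a+1})\hookrightarrow L^{\bar p}_{\bar k}(X^{\le a+1})$ is compact (ordinary Rellich--Kondrachov on a compact manifold with boundary, \cite{A}, 2.25), so $(\chi_a u_j)_j$ has a subsequence converging in $L^{\bar p,\bar w}_{\bar k}(E)$. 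Doing this for a sequence of regular values $a=a_1<a_2<\cdots\to\infty$ and passing to a diagonal subsequence, I may assume $(\chi_{a_m}u_j)_j$ converges in $L^{\bar p,\bar w}_{\bar k}(E)$ for every $m$.

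Next I would control the tail. Choose $w'$ with $\bar w<w'<w$. By Theorem \ref{inclusion}, applied with target exponent $\bar p$, target regularity $\bar k$ and target weight $w'$ — its hypothesis (1) holds because $k-\bar k> n/p-n/\bar p$, and the weight condition holds since $w'<w$, whether $\bar p\le p$ or $\bar p>p$ — there is a continuous embedding $L^{p,w}_k(E)\hookrightarrow L^{\bar p,w'}_{\bar k}(E)$ of norm $C_1$. Now $(1-\chi_a)u_j$ is supported in $X^{>a}$, where $\tau>a$, so $e^{(\bar w-w')\bar p\,\tau}\le e^{(\bar w-w')\bar p\,a}$ because $\bar w-w'<0$; integrating and taking $\bar p$-th roots yields
\[
\|(1-\chi_a)u_j\|_{L^{\bar p,\bar w}_{\bar k}(E)}\ \le\ e^{(\bar w-w')a}\,\|(1-\chi_a)u_j\|_{L^{\bar p,w'}_{\bar k}(E)}\ \le\ C_0C_1\,e^{(\bar w-w')a}\,\|u_j\|_{L^{p,w}_k(E)}.
\]
Since $\bar w-w'<0$ and $(\|u_j\|_{L^{p,w}_k})_j$ is bounded, the right-hand side tends to $0$ as $a\to\infty$, uniformly in $j$.

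Finally I would assemble these by a three-$\varepsilon$ argument: given $\varepsilon>0$, fix $a=a_m$ large enough that $\|(1-\chi_a)u_j\|_{L^{\bar p,\bar w}_{\bar k}}<\varepsilon/3$ for all $j$, and then use the convergence of $(\chi_a u_j)_j$ to get $\|\chi_a(u_i-u_j)\|_{L^{\bar p,\bar w}_{\bar k}}<\varepsilon/3$ for $i,j$ large; summing the three terms shows $(u_j)_j$ is Cauchy, hence convergent, in $L^{\bar p,\bar w}_{\bar k}(E)$, which is the claimed compactness. The result is essentially routine; the only point requiring care is the \emph{uniform} boundedness of the cutoffs $1-\chi_a$ on the weighted Sobolev spaces, which is exactly where end-periodicity of $g$ and $\tau$ is used (to bound $\nabla^l\tau$, and hence $\nabla^l\chi_a$, uniformly in $l$ and $a$), together with the correct invocation of Theorem \ref{inclusion} with an intermediate weight $w'$ to turn the mere continuity of the tail embedding into exponential smallness.
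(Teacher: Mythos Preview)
Your proof is correct and is precisely the standard argument the paper defers to: split via cutoffs into a compactly supported piece (handled by ordinary Rellich--Kondrachov) and a tail piece (made uniformly small by the strict weight drop $\bar w<w$, via Theorem~\ref{inclusion} with an intermediate weight). The paper gives no details beyond citing \cite{L}, Theorem~3.12, whose proof proceeds exactly along these lines.
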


\begin{proof}
This follows along the lines of the proof of theorem 3.12 in \cite{L}, page 15.
\end{proof}

\begin{coro}\label{compa}
Let $(X,g)$ be a Riemannian 4-manifold with periodic ends.
Then for $\bar{w}<w$  and $k\geq 1$ we have the following compact injections
\begin{align*}
L^{2,w}_{k}&\hookrightarrow L^{3,\bar{w}}_{k-1}    \\
L^{2,w}_{k}&\hookrightarrow L^{2,\bar{w}}_{k-1}     \\
L^{2,w}_{k}&\hookrightarrow L^{2,\bar{w}}_{k-1}      \\
\end{align*}
 \end{coro}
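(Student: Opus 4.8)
The plan is to deduce the three compact embeddings directly from the weighted Rellich–Kondrashov theorem, Theorem \ref{rellichk}, by checking in each case the numerical hypotheses with $n=4$ and the given exponents and weights. First I would record the general dimensional condition: for the embedding $L^{p,w}_k(E)\hookrightarrow L^{\bar p,\bar w}_{\bar k}(E)$ to be compact, Theorem \ref{rellichk} requires $k-\bar k>n/p-n/\bar p$, together with $k>\bar k\geq 0$ and $\bar w<w$. Here we always take $k\geq 1$, $\bar k=k-1$, so $k>\bar k\geq 0$ holds, and $\bar w<w$ is assumed; the only thing left to verify is the inequality $k-\bar k=1>4/p-4/\bar p$.

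For the first embedding, $p=2$ and $\bar p=3$, so $4/p-4/\bar p=2-\tfrac43=\tfrac23<1$, and Theorem \ref{rellichk} applies, giving the compact injection $L^{2,w}_k\hookrightarrow L^{3,\bar w}_{k-1}$. For the remaining two embeddings, $p=\bar p=2$, so $4/p-4/\bar p=0<1$, and again Theorem \ref{rellichk} applies directly, giving $L^{2,w}_k\hookrightarrow L^{2,\bar w}_{k-1}$. (The second and third displayed lines of the corollary are literally the same statement; one of them is presumably a typographical repetition, but in any case both are covered by this single application.) One subtlety worth a sentence: these are compact embeddings between \emph{weighted} Sobolev spaces, and strictly decreasing the weight ($\bar w<w$) is what converts the merely continuous embedding of Theorem \ref{inclusion} into a compact one — this is the weighted analogue of the fact that, on a compact manifold, one gains compactness by dropping a derivative, whereas on the non-compact end one must in addition pay with a loss of decay rate.

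I do not anticipate a genuine obstacle here: the corollary is a routine specialization of Theorem \ref{rellichk}, whose proof (following \cite{L}) is cited but not reproduced. If anything, the only point requiring care is bookkeeping — making sure the strict inequality $1>4/p-4/\bar p$ is genuinely strict in each case (it is, with slack $\tfrac13$ and $1$ respectively), since Theorem \ref{rellichk}, unlike the continuous-embedding Theorem \ref{inclusion}, demands the strict versions of both the dimensional inequality and the weight inequality. Once these are checked, each line of the corollary follows immediately.
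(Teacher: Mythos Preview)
Your proposal is correct and matches the paper's intent: the corollary is stated without proof precisely because it is an immediate specialization of Theorem~\ref{rellichk}, and your verification of the strict inequalities $1>4/2-4/3=\tfrac23$ and $1>0$ (together with $k>\bar k\geq 0$ and $\bar w<w$) is exactly the check required. Your observation that the second and third lines are identical is also right---this appears to be a typographical duplication in the original.
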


\begin{prop}\label{multi}
$(X,g)$ be a Riemannian manifold with periodic ends. The following continuous Sobolev multiplications hold:
\begin{align*}
L^{4,a}\ti L^{3,b}_{1}\ra L^{3,a+b}\\
L^{2,a}_{2}\ti L^{3,b}_{1}\ra L^{2,a+b}_{1} \\
L^{2,a}_{3}\ti L^{2,b}_{2}\ra L^{2,a+b}_{2}
\end{align*}
\end{prop}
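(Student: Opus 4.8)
The plan is to reduce each of the three claimed multiplications to its \emph{unweighted} counterpart and then to verify those in dimension four by the embedding Theorem \ref{inclusion} (with all weights set to $0$) together with H\"older's inequality.

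First I would record the elementary but crucial \emph{weight-splitting} fact: for every $c\in\R$, $p\in(1,\infty)$ and $k\in\Nat$, multiplication by $e^{c\tau}$ is a topological isomorphism $L^{p,w}_{k}(E)\ra L^{p,w-c}_{k}(E)$. This uses only that $\tau$, as in (\ref{tau}), is end-periodic, so that every covariant derivative $\nabla^{j}\tau$ is bounded on $X$: expanding $\nabla^{l}(e^{c\tau}\varphi)$ by the Leibniz rule one has $\nabla^{j}e^{c\tau}=e^{c\tau}P_{j}$ with $P_{j}$ a universal polynomial in $d\tau,\nabla d\tau,\dots$, whence $|\nabla^{j}e^{c\tau}|\le C_{j}\,e^{c\tau}$ and therefore $|e^{(w-c)\tau}\nabla^{l}(e^{c\tau}\varphi)|\le C\sum_{j\le l}e^{w\tau}|\nabla^{j}\varphi|$; the opposite estimate follows by interchanging $c$ and $-c$. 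Granting this, if $f\in L^{4,a}$ and $g\in L^{3,b}_{1}$ then $\tilde f:=e^{a\tau}f\in L^{4}$, $\tilde g:=e^{b\tau}g\in L^{3}_{1}$, and $e^{(a+b)\tau}(fg)=\tilde f\tilde g$, so the first claim is equivalent to the unweighted multiplication $L^{4}\ti L^{3}_{1}\ra L^{3}$ (norm constants being carried along the three isomorphisms), and likewise the other two reduce to $L^{2}_{2}\ti L^{3}_{1}\ra L^{2}_{1}$ and $L^{2}_{3}\ti L^{2}_{2}\ra L^{2}_{2}$.

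Next I would prove these unweighted multiplications. For $L^{4}\ti L^{3}_{1}\ra L^{3}$: Theorem \ref{inclusion} gives $L^{3}_{1}\hookrightarrow L^{12}$ in dimension four, and H\"older with $\tfrac14+\tfrac1{12}=\tfrac13$ gives $\|fg\|_{L^{3}}\le\|f\|_{L^{4}}\|g\|_{L^{12}}$. For $L^{2}_{2}\ti L^{3}_{1}\ra L^{2}_{1}$ one bounds $fg$ and $d(fg)=(df)g+f\,dg$ in $L^{2}$ using the four-dimensional embeddings $L^{2}_{2}\hookrightarrow L^{6}$, $L^{2}_{1}\hookrightarrow L^{4}$ and $L^{3}_{1}\hookrightarrow L^{12}$ (hence $L^{3}_{1}\hookrightarrow L^{4}$): one gets $\|fg\|_{L^{2}}\le\|f\|_{L^{4}}\|g\|_{L^{4}}$, $\|(df)g\|_{L^{2}}\le\|df\|_{L^{4}}\|g\|_{L^{4}}$ and $\|f\,dg\|_{L^{2}}\le\|f\|_{L^{6}}\|dg\|_{L^{3}}$. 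For $L^{2}_{3}\ti L^{2}_{2}\ra L^{2}_{2}$ one expands $fg$, $d(fg)$ and $d^{2}(fg)$ by Leibniz: the terms $fg$, $f\,dg$, $f\,\nabla^{2}g$ are handled by $\|f\|_{L^{\infty}}\|\cdot\|_{L^{2}}$ using the Sobolev injection $L^{2}_{3}\hookrightarrow L^{\infty}$ valid on a manifold with bounded geometry (the Aubin--Sobolev theorem, see \cite{A}), while the remaining terms $(df)g$, $(\nabla^{2}f)g$, $(df)(dg)$ are handled by $L^{4}\cdot L^{4}$ via $L^{2}_{2}\hookrightarrow L^{4}$ and $L^{2}_{1}\hookrightarrow L^{4}$; the curvature terms arising because covariant derivatives do not commute are bounded, again by bounded geometry, and the whole Leibniz expansion is justified by density of $\mathscr{C}^{\infty}_{c}$.

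The only point requiring genuine care is that dimension four is the borderline range: $L^{2}_{2}$ is \emph{not} contained in $L^{\infty}$, so in the second multiplication every factor must be distributed into honest $L^{p}$ spaces with $p<\infty$ so that the H\"older exponents close, and the embeddings $L^{2}_{1}\hookrightarrow L^{4}$, $L^{2}_{2}\hookrightarrow L^{4}$, $L^{3}_{1}\hookrightarrow L^{12}$ invoked throughout are precisely the limiting cases of Theorem \ref{inclusion}. Once the exponents have been chosen so that the H\"older sums equal the target exponent, nothing further intervenes.
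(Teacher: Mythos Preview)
Your proof is correct and follows essentially the same approach as the paper: reduce to the unweighted case, then use the Sobolev embeddings of Theorem \ref{inclusion} together with H\"older's inequality and the Leibniz rule. Your treatment is in fact more complete than the paper's: you justify the weight-splitting reduction explicitly (the paper simply asserts ``it is sufficient to prove the above inclusions without the weights''), and you spell out the third multiplication in detail (the paper says only ``Follows similarly''), correctly invoking the $L^{2}_{3}\hookrightarrow L^{\infty}$ embedding from Aubin which is not covered by Theorem \ref{inclusion} itself.
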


\begin{proof}
It is sufficient to prove the above inclusions without the weights.

\item 1) \underline{$L^{4}\ti L^{3}_{1}\ra L^{3}$}

From  (\ref{inclusion}), we have $L^{2}_{1}\hookrightarrow L^{4}$ and $L^{3}_{1}\hookrightarrow L^{12}$.
From H\"older and the equality $1/4+1/12 =1/3$ it follows
\begin{align*}
\|uv\|_{L^3}\leq\|u\|_{L^4}\|v\|_{L^{12}} \leq \|u\|_{L^{2}_{1}}\|v\|_{L^{3}_{1}}
\end{align*}

\item 2) \underline{$L^{2}_{2}\ti L^{3}_{1}\ra L^{2}_{1}$}

From  (\ref{inclusion}), we have $L^{2}_{2}\hookrightarrow L^{4}_{1}\cap L^6$ and $L^{3}_{1}\hookrightarrow{L^4}$. It follows
\begin{align*}
\nabla (uv) =\nabla u\cdot v+u\cdot\nabla v \in L^2
\end{align*}
using $\nabla u \in L^{4}$ and  $v\in L^4$ , $u\in L^6$ and $\nabla v\in L^{3}$.  From $u\in L^6$ and $v\in L^{3}$ it follows $uv\in L^2$

\item 3) \underline{$L^{2}_{3}\ti L^{2}_{2}\ra L^{2}_{2}$}

Follows similarly.
\end{proof}


\begin{thebibliography}{BBB}


\bibitem[Ad-F]{Adams}
 R. Adams, J. Fournier,
  \emph{Sobolev Spaces},  Vol. 140, Pure and Applied Mathematics series, Academic Press, (2003)

\bibitem[AM]{AM}
  M.Artin, B. Mazur,
  \emph{\'Etale homotopy}, 
Lecture Notes in Mathematics, vol.100, (1969)
\bibitem[Au]{A}
  T. Aubin,
  \emph{Some Nonlinear Problems in Riemannian Geometry},
  Springer Monographs in Mathematics, (1998).
\bibitem[Ba]{B}
 M. Bader,
  \emph{Cohomotopy invariants and gauge theoretical Gromov-Witten theory},
 PhD thesis, University of Z\"urich, (2009).
\bibitem[Bi]{Bi} O. Biquard, \textit{  M\'etriques d'Einstein \`a cusps et \'equations de Seiberg-Witten}, Journal f\"ur die reine und angewandte Mathematik 
Vol. 490, p. 129-154 (1997).
\bibitem[BLPR]{BLPR} Y. Byun, Y. Lee, J. Park, S. Ryu, \textit{Constructing the K\"ahler and the symplectic structures from certain spinors on 4-manifolds}, Proceedings A.M.S. Vol. 129, N. 4, 1161-1168, (2000).

\bibitem[D]{D} S. Donaldson, \textit{  Floer homology groups in Yang-Mills theory},  Cambridge University Press (2004).

\bibitem[DK]{DK} S. Donaldson, P. Kronheimer, \textit{ The Geometry of  Four-Manifolds}, Oxford Mathematical Monographs, OUP, (1990)


\bibitem[E]{E} J. Eichhorn, \textit{Global Analysis on Open Manifolds}, Nova Science Publishers, (2007)


\bibitem[F1]{F1}
  K. Fr\o yshov,
  \emph{The Seiberg-Witten equation and four manifolds with boundary},
  Mathematical Research Letters, Vol. 3,  373-390, (1996)
\bibitem[F2]{F2}
   K. Fr\o yshov,
  \emph{Compactness and gluing theory for monopoles},
 Geometry and Topology Monographs, Vol.15,
 (2008).
\bibitem[F3]{F3}
  K. Fr\o yshov,
  \emph{Monopole Floer homology for rational homology 3-spheres},
 Duke Mathematical Journal, Vol.155,  519-576, (2010)
\bibitem[KM1]{KM}
P. Kronheimer, T. Mrowka,
  \emph{Monopoles and Three Manifolds},
  Cambridge University Press, 
(2007).

\bibitem[KM2]{KM2}  P. B. Kronheimer, T. S. Mrowka, \textit{Monopoles and contact structures}, Inventiones mathematicae,   Vol.130,  209-255 (1997)

\bibitem[LM]{LM}
R. Lockhart R. McOwen,
  \emph{Elliptic differential operators on noncompact manifolds},
Ann. Scuola Norm. Sup. Pisa, Cl. Sci. 4, Vol.12, No.3,  409-447, (1985)
\bibitem[L]{L}
R. Lockhart,
  \emph{Fredholm, Hodge and Liouville Theorems on Noncompact Manifolds},
Trans. of the A.M.S., Vol. 301, No. 1,  1-35, (1987)


\bibitem[MR]{MR} T. Mrowka and Y. Rollin, \textit{Legendrian knots and monopoles}, Algebr. Geom. Topol. 6,  1-69,  (2006).

\bibitem[MMR]{MMR} J. Morgan, T. Mrowka, D. Ruberman, \textit{The $L^2$-Moduli Space and A Vanishing Theorem for Donaldson Polynomial Invariants}, International Press (1994).


\bibitem[MRS]{MRS} T. Mrowka, D. Ruberman, N. Saveliev, \textit{ Seiberg-Witten Equations, End-Periodic Dirac Operators and a Lift of Rohlin's Invariant}, J. Differential Geom.
Vol. 88,  No.2, 333-377, (2011),

\bibitem[OT1]{OT}
Ch. Okonek, A. Teleman,
  \emph{Cohomotopy invariants and the universal cohomotopy invariant
jump formula},
J. Math. Sc. Univ. Tokyo, (2009)

\bibitem[OT2]{OT1} Ch. Okonek, A. Teleman, \textit{ Seiberg-Witten invariants for 4-manifolds with $b_+=1$ and the universal wall crossing formula},
Internat. J. Math. Vol. 7, No. 6, 811-832, (1996).
\bibitem[M]{radon}
  A. Megginson,
  \emph{An introduction to Banach Space Theory},
  Springer, G.T.M. Vol. 183, (1998).

\bibitem[N]{N} L. Nicolaescu, \emph{Notes on Seiberg-Witten Theory}, G.S.M., Vol. 28, American Mathematical Society, (2000)



\bibitem[R2]{R2} Y. Rollin, \textit{Surfaces K\"ahleriennes de volume fini et equations de Seiberg-Witten}, Bull. Soc. math. France, Vol. 130,  409-456, (2002) 


\bibitem[RS1]{RS1} D. Ruberman, N. Saveliev, \textit{Dirac operators on manifolds with periodic ends}, J. G\"okova Geom. Top. GGT 1, 33-50, (2007)

\bibitem[RS2]{RS2} D. Ruberman, N. Saveliev, \textit{The $\mu$-invariant of Seifert fibered homology spheres and the Dirac operator}, Geometriae Dedicata,  Vol. 154, Issue 1, 93-101,  (2011).


\bibitem[Sp]{spanier}  Spanier, \emph{Algebraic Topology}, Mc-Graw-Hill, (1981)


\bibitem[T1]{Taubes} C. Taubes, \emph{Gauge Theory on asymptotically periodic 4-manifolds}, Journal of Differential Geometry, Vol. 25, 363-430, (1987)

\bibitem[T2]{T2} C. Taubes, \textit{$L^2$-Moduli spaces on 4-manifolds with cylindrical ends},  International Press (1993)

\bibitem[T3]{T3} C. Taubes, \textit{Seiberg-Witten invariants and 4-manifolds with essential tori}, Geometry and Topology, Vol. 5, 441-519 (2001)

\bibitem[W]{W} E. Witten, \textit{Monopoles and Four-Manifolds}, Math. Res. Lett., Vol.1, 769-796. (1994)
\end{thebibliography}
\end{document}